\theoremstyle{plain}
\newtheorem{theorem}{Theorem}[section]
\newtheorem{corollary}[theorem]{Corollary}
\newtheorem{proposition}[theorem]{Proposition}
\newtheorem{lemma}[theorem]{Lemma}
\newtheorem{observation}[theorem]{Observation}
\theoremstyle{definition}
\newtheorem{definition}[theorem]{Definition}
\newtheorem{remark}[theorem]{Remark}
\newtheorem{example}[theorem]{Example}
\newtheorem{examples}[theorem]{Examples}
\newcommand\sSet {\textnormal{sSet}}
\newcommand{\Set}{\textnormal{Set}} 
\newcommand{\op}{\mathrm{op}}
\newcommand{\Wbar}{\overline{W}} 
\newcommand{\Dec}{\mathrm{Dec}} 
\newcommand{\PSh}{\mathit{PSh}} 
\newcommand{\Maps}{\textnormal{Maps}}
\newcommand{\tfleft}{\!\xymatrix@C=1.3pc{&\ar@{->>}_\sim[l]}\!}
\newcommand{\tfright}{\!\xymatrix@C=1.3pc{\ar@{->>}^\sim[r]&}\!}
\newcommand{\RR}{\mathbb{R}}
\def\endofproof{\hfill{$\square$}\\}
\def\proofoftheorem #1 {{Proof of theorem \ref{#1}.}\hspace{7pt}}
\title{Principal $\infty$-bundles -- Presentations}
\author{Thomas Nikolaus, Urs Schreiber, Danny Stevenson}
\date{\today}
\begin{document}

\maketitle

\begin{abstract}
 We discuss two aspects of the presentation of the theory of principal $\infty$-bundles in an 
 $\infty$-topos, introduced in \cite{NSSa}, in terms of categories of simplicial (pre)sheaves.
 
 First we show that
 over a \emph{cohesive site} $C$ and for $G$ a presheaf of simplicial groups
 which is \emph{$C$-acyclic}, $G$-principal $\infty$-bundles over any
 object in the $\infty$-topos over $C$ are
 classified by hyper-{\v C}ech-cohomology with coefficients in $G$.
 Then we show that over a site $C$ with enough points, 
 principal $\infty$-bundles in the $\infty$-topos are presented
 by ordinary simplicial bundles in the sheaf topos that 
 satisfy principality by stalkwise weak equivalences.
 Finally we discuss explicit details of these presentations for the 
 discrete site (in discrete $\infty$-groupoids) and the smooth site
 (in smooth $\infty$-groupoids, generalizing Lie groupoids and differentiable stacks).
 
 In the companion article \cite{NSSc} we use these presentations for
 constructing classes of examples of (twisted) principal $\infty$-bundles and
 for the discussion of various applications.

\end{abstract}

 \newpage
 
 \tableofcontents
 
\newpage

\section{Overview}

In \cite{NSSa} we have described a general theory of 
geometric principal $\infty$-bundles (possibly twisted by local coefficients)
and their classification by (twisted) nonabelian cohomology in $\infty$-toposes.
A certain charm of this theory is that, formulated the way it is in the  
abstract language of $\infty$-topos theory, it is not only more general
but also more elegant than the 
traditional theory. For instance every $\infty$-group action is principal over its
homotopy quotient, the quotient map is automatically locally trivial,
the principal $\infty$-bundle corresponding to a classifying map
is simply its homotopy fiber (hence the universal principal $\infty$-bundle is the point), 
and the fact that all principal
$\infty$-bundles arise this way is a fairly direct consequence of the axioms 
that characterize $\infty$-toposes in the first place: the Giraud-Rezk-Lurie axioms.

While this abstract formulation provides a useful means to reason about general properties of 
principal $\infty$-bundles, it is desireable to complement this with explicit
\emph{presentations} of the structures involved (notably of $\infty$-groups,
of $\infty$-actions and of principal $\infty$-bundles) by \emph{generators and relations}.
This is typically the way that explicit examples are constructed and in terms of
which properties of these specific examples are computed in applications.

In recent years it has been well understood that the method of choice for 
presenting $\infty$-categories by generators and relations is the homotopical 
category theory of categories of simplicial presheaves, i.e.\  presheaves of simplicial sets. 
The techniques themselves have a long history, dating back to 
work of Illusie \cite{Illusie}, continued in the 
foundational work of \cite{Brown} and developed further in \cite{Joyal-lett,Jardine}, which will play a prominent role below.
Their interpretation as a \emph{generators and relations presentation}
for homotopy theoretic structures
has been amplified in the exposition of \cite{DuggerSheavesAndHomotopy}, 
and was formalized 
in terms of model category theory by the main theorem in \cite{Dugger}.
Finally \cite{Lurie} has provided the general abstract essence of this theorem
in terms of the notion of \emph{presentable $\infty$-categories}.
This is the notion of presentation that we are concerned with here.

We formalize and prove the following statements.
\begin{enumerate}
  \item 
    Over a site $C$ with a terminal object, every $\infty$-group is presented by
	a presheaf of simplicial groups $G$. (Proposition \ref{InftyGroupsBySimplicialGroups})
  \item 
    If the ambient $\infty$-topos is locally $\infty$-connected and local
	over an \emph{$\infty$-cohesive site} $C$, and if $G$ is \emph{$C$-acyclic} 
	(Definition~\ref{CAcyclic}) 
	then $G$-principal $\infty$-bundles over any object $X$
	are classified by simplicial hyper-{\v C}ech-cohomology of $X$ with coefficients in $G$.
	In fact, the $\infty$-groupoid of geometric $G$-principal $\infty$-bundles, morphisms
	and higher homotopies between these
	is equivalent to the $\infty$-groupoid of {\v C}ech cocycles, {\v C}ech coboundaries
	and higher order coboundaries
	(Theorem \ref{CAcyclicityAndLocalFibrancy}).
	
  \item 
    If $C$ is a site with enough points, then principal $\infty$-bundles over $C$ 
	are presented by ordinary simplicial bundles in sheaves over $C$ which satisfy 
	a weakened notion of principality 
	(Theorem \ref{Classification theorem for weakly G-principal bundles}).
\end{enumerate}
The first and the third statement may be thought of as strictification results, showing that
every principal $\infty$-bundle is equivalent to one that is presented by an ordinary group
object with strict group law (not up to homotopy) acting strictly on a simplicial object. 
This makes available classical principal bundle theory as a tool for constructing and
analyzing $\infty$-bundles. The second statement provides good control over the cocycles
underlying principal $\infty$-bundles. 

In Section \ref{Models} we discuss
details of the presentations for examples of sites 
that satisfy the assumptions 1, 2 and 3 above: 
\begin{itemize}
  \item the trivial site, modelling \emph{discrete geometry};
  \item the site of smooth manifolds, modelling \emph{smooth/differential geometry}.
\end{itemize}

\medskip

The presentations of higher 
principal bundles and their interpretation as cocycles 
in non-abelian cohomology has a long history. We close this introduction 
with a short historical overview and indicate how our results 
both relate to and extend previous works of other authors.  

Following the foundational work of Giraud \cite{Giraud}, it seems 
that the first paper to consider the problem of giving a geometric 
description of non-abelian cohomology was the paper \cite{Duskin-gerbes} 
of Duskin (this paper was intended as a pre-cursor to a more 
substantial discussion, which unfortunately never materialized).  This was followed by the more comprehensive treatment 
of Breen in \cite{Breen-Grothendieck}.  This paper of Breen's is noteworthy 
in that it treats non-abelian cohomology within the natural context of the 
homotopy theory of simplicial sheaves and it also introduces the notion of 
a {\em pseudo-torseur} for a group stack; a notion which is closely related 
to our notion of weakly principal simplicial bundle.  In \cite{Ulbrich1,Ulbrich2} 
Ulbrich gave a different interpretation of Duskin's work, in particular 
introducing the notion of {\em cocycle bitorsor} which is closely related 
to Murray's later notion of \emph{bundle gerbe} \cite{Mur}.  
Joyal and Tierney in \cite{Joyal-Tierney93} 
introduced a notion of {\em pseudo-torsor} 
which is again closely related to our notion of weakly principal simplicial bundle; 
their notion of pseudo-torsor was more general than the 
corresponding notion of Breen's, since Breen restricted his attention to the case 
of 1-truncated group objects while Joyal and Tierney worked with simplicial 
groupoids.  

The mid 1990s saw a flurry of interest in interpreting geometrically the standard characteristic 
classes as higher principal bundles with structure $\infty$-group 
of the form $K(\pi,n)$ for some abelian group $\pi$; the works \cite{Bry, BryMcL1,BryMcL2,Mur} 
of Brylinski, Brylinski and McLaughlin and Murray are landmarks 
from this period.  The overarching theme of these papers is to develop and apply a Chern-Weil theory 
for `higher line bundles', in particular they focus attention on the $\infty$-groups 
$\mathbf{B}U(1)$ and $\mathbf{B}^2U(1)$.
Our aim here is less restrictive; we want to develop the 
theory of principal $\infty$-bundles as a whole.  

To continue the historical 
discussion, the 2004 thesis \cite{Bartels} gave a treatment of 1-truncated 
principal $\infty$-bundles --- \emph{principal 2-bundles} --- while \cite{Jurco} gave such a treatment from 
the point of view of bundle gerbes (we note that \cite{Jurco} appeared in pre-print form in 2005).  
In \cite{Bakovic} these constructions were generalized from structure 2-groups
to structure 2-groupoids. The gauge 2-groups of principal 2-bundles were 
studied in \cite{Wockel}.  A comprehensive account is given in \cite{NikolausWaldorf}.

Continuing in this vein, a discussion of 2-truncated principal $\infty$-bundles,
\emph{principal 3-bundles}, was given in \cite{Jurco2} in the guise of 
\emph{bundle 2-gerbes}, generalizing the abelian bundle 2-gerbes 
($\mathbf{B}^2 U(1)$-principal 3-bundles) of \cite{Stevenson}.

The work that is closest to our discussion in 
Section \ref{Principal infinity-bundles presentations} is the paper \cite{JardineLuo} 
of Jardine and Luo.  
This paper goes beyond the previous work of Breen \cite{Breen-Grothendieck} 
and Joyal and Tierney \cite{Joyal-Tierney93}; it introduces a notion of 
$G$-torsor for $G$ a group in $\mathrm{sPSh}(C)$ 
for some site $C$ 
and shows that isomorphism classes of $G$-torsors in $\mathrm{sPSh}(C)$ over 
$\ast$ are in a bijective correspondence with the set of connected components 
of $\Maps(\ast,\Wbar G)$.  The presentation that we discuss in Section 
\ref{Principal infinity-bundles presentations} is similar, differing in that 
it allows the base space to be an {\em arbitrary} simplicial presheaf and in that it
reproduces the {\em full homotopy type} of the space of cocycles, not just their connected components.

Closely related also is the discussion in \cite{RobertsStevenson, Stevenson3},
which is concerned with principal $\infty$-bundles over topological spaces and 
in particular discusses their classification by traditional classifying spaces.

In summary, our work goes beyond that of all the works cited above in two 
directions; firstly we show that our notion of weakly principal bundle suffices to 
interpret the full homotopy type of the cocycle $\infty$-groupoid, and secondly, 
we work over arbitrary bases: our base need not be just a space, it could be a 1-stack 
or even an $\infty$-stack, or differentiable versions of all of these (we 
remark that the study of bundles on differentiable stacks plays an 
important role in recent work on twisted $K$-theory \cite{FHT,Xu}).

\section{Presentations of $\infty$-toposes}

The presentations of principal $\infty$-bundles and related 
structures in an $\infty$-topos, discussed below in section \ref{structures},
builds on the presentation of the $\infty$-topos itself by categories of
simplicial (pre)sheaves.
We assume the reader to be familiar with the basics of this theory
(a good starting point is the appendix of \cite{Lurie}, a classical reference is 
\cite{DwyerKanComputations}), 
but in order to set up our notation and in order to record some statements,
needed below, which are not easily found in the literature in 
the explicit form in which we will need them, we briefly
recall some basics in section \ref{PresentationBasics}. 
In \ref{SimplicialObjectsInCoproductCompleteSites} we discuss 
a general result about the representability of 
general objects in an $\infty$-topos by simplicial objects in the site.

\subsection{By simplicial presheaves}
\label{PresentationBasics}
\label{InfinityToposPresentation}

The monoidal functor $\pi_0\colon \mathrm{sSet}\to \mathrm{Set}$ 
that sends a simplicial set to its set of connected components
induces a functor $\mathrm{Ho}\colon \mathrm{sSet}\mathrm{Cat}\to 
\mathrm{Cat}$, where $\mathrm{sSet}\mathrm{Cat}$ denotes the 
category of $\mathrm{sSet}$-enriched categories 
\cite{Kelly}.  Thus if $\mathcal{C}$ is an $\mathrm{sSet}$-enriched category 
then $\mathrm{Ho}(\mathcal{C})$ is the category with the same 
underlying objects as $\mathcal{C}$ and with 
$\mathrm{Ho}(\mathcal{C})(X,Y) := \pi_0 \mathcal{C}(X,Y)$ 
for all objects
$X, Y \in \mathcal{C}$. 
An  $\mathrm{sSet}$-enriched functor
$f : \mathcal{C} \to \mathcal{D}$ is called a \emph{DK-equivalence}
if $\mathrm{Ho}(f)$ is essentially surjective and if for all $X,Y \in \mathcal{C}$
the morphism $f_{X,Y} : \mathcal{C}(X,Y) \to \mathcal{D}(f(X), f(Y))$ is a
weak homotopy equivalence. Write $W_{\mathrm{DK}} \subset \mathrm{sSet}\mathrm{Cat}$
for the inclusion of the full sub-category whose morphisms are DK-equivalences.
This is a \emph{wide subcategory}: an inclusion of categories that is bijective on objects.

For $D$ a category and $W \subset D$ a wide subcategory,
to be called the subcategory of \emph{weak equivalences}, 
the \emph{simplicial localization} $L_W D$ is the universal
$\mathrm{sSet}$-enriched category 
with the property that morphisms in $W \subset D$ become homotopy
equivalences in $L_W D$ \cite{DwyerKanComputations}. 
For $X, Y \in C$ two objects, the Kan complex
$L_W D(X,Y)$ is called the \emph{derived hom-space} or \emph{derived function complex}
or \emph{hom-$\infty$-groupoid} between these objects, in $L_W D$.

We write 
$$
  \mathrm{Grpd}_\infty := L_{W_{\mathrm{wh}}} \mathrm{sSet}
$$
for the simplicial localization of the category of simplicial sets at the 
simplicial weak homotopy equivalences, and we write
$$
  \mathrm{Cat}_\infty := L_{W_{\mathrm{DK}}} \mathrm{sSet}\mathrm{Cat}
$$
for the simplicial localization of the category of simplicial categories at the
Dwyer-Kan equivalences (both $\mathrm{Grpd}_{\infty}$ and 
$\mathrm{Cat}_{\infty}$ are large $\mathrm{sSet}$-categories). 

If a wide subcategory $W \subset D$ on a category $D$ extends to the structure of an 
$\mathrm{sSet}_{\mathrm{Quillen}}$-enriched model category on $D$ in 
the sense that $W$ coincides with the class of weak equivalences for the model structure, 
then the
full subcategory $D^\circ$ on the fibrant and cofibrant objects is enriched
in Kan complexes and DK-equivalent to its simplicial localization:
$$
  D^\circ \simeq L_W D \;\;\; \in \mathrm{Cat}_\infty
  \,.
$$

We write $\mathrm{sSet}_{\mathrm{Quillen}}$ for the standard model category structure
on simplicial sets, whose weak equivalences are the simplicial weak homotopy equivalences
$W_{\mathrm{wh}}$ and whose fibrations are the Kan fibrations. 
Then $\mathrm{Grpd}_\infty \simeq \mathrm{KanCplx} = (\mathrm{sSet}_{\mathrm{Quillen}})^\circ$.

For $C$ any category, there is a model structure 
$[C^{\mathrm{op}}, \mathrm{sSet}]_{\mathrm{proj}}$
on the category of simplicial presheaves over $C$ 
(the {\em projective} model structure), whose weak equivalences and fibrations are
those transformations that are objectwise so in $\mathrm{sSet}_{\mathrm{Quillen}}$.
If $C$ is equipped with the structure of a \emph{site} given by a (pre)topology, 
then there are corresponding localizations of the simplicial presheaves. We are
interested here in the case that $C$ has enough points.
\begin{definition}
\label{def:enough points}
  A site $C$ has \emph{enough points} if 
  a morphism $(A \stackrel{f}{\to} B)\in \mathrm{Sh}(C)$ in its sheaf topos
  is an isomorphism precisely if for every \emph{topos point}, hence for
  every geometric morphism
  $$
    (x^* \dashv x_*) : 
    \xymatrix{
      \mathrm{Set}
      \ar@<+4pt>@{<-}[r]^{x^*}
      \ar@<-4pt>[r]_{x_*}
      &
      \mathrm{Sh}(C)
    }
  $$
  from the topos $\mathrm{Set}$ of sets
  we have that $x^*(f) : x^* A \to x^* B$ is an isomorphism.
  \label{site with enough points}
\end{definition}
  Notice here that, by definition of geometric morphism, the 
  functor $i^*$ is left adjoint to $i_*$ -- hence
  preserves all colimits --
  and in addition preserves all \emph{finite} limits. 
\begin{example}
  The following sites have enough points.
  \begin{itemize}
    \item
       The categories $\mathrm{Mfd}$ ($\mathrm{SmoothMfd}$) of 
      (smooth) finite-dimensional, paracompact manifolds
       and smooth functions between them;
     \item
       the category $\mathrm{CartSp}$ of Cartesian spaces 
       $\mathbb{R}^n$ for $n \in \mathbb{N}$ and continuous (smooth) functions
       between them.
  \end{itemize}
\end{example}
These examples are discussed in more depth in \cite{NSSc} --- we refer the 
reader there for further details.
We restrict from now on attention to the case that $C$ has enough points.

A \emph{$C$-local weak equivalence} in the category
$[C^{\mathrm{op}}, \mathrm{sSet}]$ of simplicial presheaves is a natural transformation
which is \emph{stalkwise} a weak homotopy equivalence of simplicial sets. 
Let $W_C\subset [C^\op,\mathrm{sSet}]$ denote the wide sub-category 
of $C$-local weak equivalences. The simplicial 
localization
$$
  \mathrm{Sh}_\infty(C) := L_{W_C} [C^{\mathrm{op}}, \mathrm{sSet}] \;\;\; \in \mathrm{Cat}_\infty
$$
is the \emph{hypercompletion} of the \emph{$\infty$-topos} of \emph{$\infty$-sheaves} or of
\emph{$\infty$-stacks} over $C$. 

This is the statement of Proposition 6.5.2.14 of \cite{Lurie} 
together with Theorem 17 in \cite{JardineBoolean}, which gives a refinement of the
above weak equivalences to the \emph{local injective model structure} 
$[C^{\mathrm{op}}, \mathrm{sSet}]_{\mathrm{inj}, \mathrm{loc}}$ whose cofibrations are the objectwise simplicial weak equivalences. We will be interested here instead in the 
\emph{local projective model structure} $[C^{\mathrm{op}}, \mathrm{sSet}]_{\mathrm{proj},\mathrm{loc}}$
obtained as the left Bousfield localization of $[C^{\mathrm{op}}, \mathrm{sSet}]_{\mathrm{proj}}$
at the covering sieve inclusions. For the \emph{cohesive sites} $C$ considered
in Definition \ref{CohesiveSite} below this localization will already be hypercomplete and
hence we obtain the above $\infty$-topos equivalently as
$$
  \mathrm{Sh}_\infty(C) \simeq ([C^{\mathrm{op}}, \mathrm{sSet}]_{\mathrm{proj},\mathrm{loc}})^\circ
  \,.
$$

\subsection{By simplicial objects in the site}
\label{SimplicialObjectsInCoproductCompleteSites}

Sometimes it is considered desireable to present an $\infty$-stack
by a simplicial presheaf which in turn is presented by a simplicial 
object in the underlying site. We observe here that this is always possible
provided the site has arbitrary coproducts. 

\begin{definition}
  Let $C$ be a small site with enough points. Write
  $\bar C \subset [C^{\mathrm{op}}, \mathrm{sSet}]$
  for the free coproduct completion.
  Let $(\bar C^{\Delta^{\mathrm{op}}}, W)$
  be the category of simplicial objects in $\bar C$ equipped
  with the stalkwise weak equivalences inherited from the
  canonical embedding
  $$
    i 
    :
    \bar C^{\Delta^{\mathrm{op}}}
    \hookrightarrow
    [C^{\mathrm{op}}, \mathrm{sSet}]
    \,.
  $$
\end{definition}

\begin{example}
  Let $C$ be a category of \emph{connected} topological
  spaces with given extra structure and properties 
  (for instance smooth manifolds). Then 
  $\bar C$ is the category of all such spaces
 (with arbitrary many connected components). 
\end{example} 

\begin{proposition}
  \label{ModelStructureModelsSimplicialLocalization}
  The induced $\infty$-functor
  $$
     L_{W_C} \bar C^{\Delta^{\mathrm{op}}}
    \to 
    L_{W_C} [C^{\mathrm{op}}, \mathrm{sSet}]_{\mathrm{proj},\mathrm{loc}}
  $$
  is an equivalence of $\infty$-categories.
\end{proposition}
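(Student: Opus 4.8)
The plan is to exhibit the inclusion $i$ as a (left) deformation retract of $[C^\op,\mathrm{sSet}]$ in the sense of homotopical categories. Concretely, I would construct a functor $Q\colon [C^\op,\mathrm{sSet}] \to \bar C^{\Delta^\op}$ together with a natural transformation $\epsilon\colon i\,Q \Rightarrow \mathrm{id}$ each component of which is an objectwise — hence, a fortiori, a stalkwise — weak equivalence. Granting this, the standard deformation-retract principle for simplicial localizations \cite{DwyerKanComputations} applies: $L_{W_C} i$ and $L_{W_C} Q$ are mutually inverse equivalences. Indeed $\epsilon\colon i\,Q \Rightarrow \mathrm{id}$ shows $L_{W_C} i \circ L_{W_C} Q \simeq \mathrm{id}$, and since $\bar C^{\Delta^\op}$ is a \emph{full} subcategory of $[C^\op,\mathrm{sSet}]$ carrying the weak equivalences restricted from $W_C$, the same transformation restricted to $\bar C^{\Delta^\op}$ witnesses $L_{W_C} Q \circ L_{W_C} i \simeq \mathrm{id}$. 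Fullness holds here because the representables are \emph{connected}, so that maps between coproducts of representables are exactly the coproduct-completion maps; this is precisely the situation of the Example preceding the statement. The whole problem is thereby reduced to the construction of $Q$.

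To build $Q$ I would resolve each presheaf of sets by coproducts of representables, compatibly in the simplicial direction. Let $U\colon [C^\op,\mathrm{Set}] \to \prod_{c\in C}\mathrm{Set}$ be evaluation at all objects, with left adjoint $L$ sending $(S_c)_c$ to $\coprod_{c}\coprod_{S_c}\mathrm{y}(c)$, and let $P := L\,U$ be the resulting comonad, whose value on any presheaf is by construction a coproduct of representables and so lies in $\bar C$. Viewing a simplicial presheaf $X$ as a simplicial object $m \mapsto X_m$ in presheaves of sets (its internal direction), I would apply the bar resolution of $P$ levelwise to obtain the bisimplicial object $(k,m) \mapsto P^{k+1}X_m$ and set
\[
 QX := \diag\bigl((k,m)\mapsto P^{k+1}X_m\bigr), \qquad (QX)_n = P^{n+1}X_n.
\]
Each level is a coproduct of representables, and every face and degeneracy map is a genuine morphism of $\bar C$: the bar faces are instances of the counit $\epsilon$ evaluated on objects of $\bar C$, and the internal faces are $P^{n}$ applied to maps of presheaves, both of which are manifestly coproduct-completion maps (reindexings together with maps between representables). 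Hence $QX \in \bar C^{\Delta^\op}$, and $Q$ is functorial.

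It remains to see that the augmentation $\epsilon_X\colon QX \to X$ induced by the counit is an objectwise weak equivalence. For this I would evaluate at an object $c\in C$: because $P = L\,U$ with $U$ evaluation at all objects, the augmented simplicial set $(P^{\bullet+1}X_m)(c) \to X_m(c)$ carries the extra degeneracies coming from the unit of the adjunction, so for each fixed $m$ it is a contractible simplicial homotopy equivalence. Applying the bisimplicial diagonal lemma to the map of bisimplicial sets $(k,m)\mapsto (P^{k+1}X_m)(c)$ over the object $X_\bullet(c)$ that is constant in $k$ then gives that $QX(c)\to X(c)$ is a weak homotopy equivalence. As this holds for all $c$, the transformation $\epsilon_X$ is an objectwise weak equivalence; and objectwise weak equivalences are stalkwise weak equivalences, since stalks are filtered colimits and these preserve weak homotopy equivalences. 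In particular $\epsilon_X \in W_C$, and — a pleasant feature — because $\epsilon_X$ is already an honest \emph{objectwise} equivalence the argument is completely insensitive to the passage to the Bousfield-localized structure $[C^\op,\mathrm{sSet}]_{\mathrm{proj},\mathrm{loc}}$.

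I expect the main obstacle to be exactly this last verification: confirming that the comonad bar resolution has the required extra degeneracies and that the diagonal of the resulting bisimplicial replacement is objectwise equivalent to $X$, together with the bookkeeping needed to check that \emph{every} structure map of $QX$ really is a morphism in the coproduct completion $\bar C$ and not merely in $[C^\op,\mathrm{Set}]$. Once $Q$ is in hand with these properties, the deformation-retract principle closes the argument with no further work.
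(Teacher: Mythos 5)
Your proposal is correct and follows essentially the same route as the paper: the paper also reduces the statement to producing a functor $Q\colon [C^{\op},\mathrm{sSet}]\to \bar C^{\Delta^{\op}}$ with a natural objectwise (hence stalkwise) weak equivalence $i\,Q\Rightarrow \mathrm{id}$ and then invokes Proposition~3.5 of \cite{DwyerKanComputations}. The only difference is that the paper simply cites Dugger's cofibrant replacement functor and his Proposition~2.8 for the existence of $Q$ and of the equivalence $QX\to X$, whereas you reconstruct that functor by hand as the diagonal of the comonad bar resolution (which is literally Dugger's $(QX)_k=\coprod_{U_0\to\cdots\to U_k\to X_k}j(U_0)$ up to reindexing) and reprove the objectwise equivalence via extra degeneracies and the bisimplicial diagonal lemma; note also that fullness of $\bar C^{\Delta^{\op}}$ in $[C^{\op},\mathrm{sSet}]$ is automatic because representable presheaves are indecomposable, independently of any connectedness hypothesis on the objects of $C$.
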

We will prove this shortly, after we have made the 
following observation.  
\begin{proposition}
  \label{DegreewiseRepresentability}
  Let $C$ be a category and $\bar C$ its free coproduct
  completion.  Then the following statements are true:
  \begin{enumerate}
 \item  Every simplicial presheaf over $C$ is equivalent in 
  $[C^{\mathrm{op}}, \mathrm{sSet}]_{\mathrm{proj}}$ to 
  a simplicial object in $\bar C$ under the image of the degreewise Yoneda
  embedding $j : \bar C^{\Delta^{\mathrm{op}}}
\to [C^{\mathrm{op}}, \mathrm{sSet}] $.

 \item  If moreover $C$ has pullbacks and sequential colimits, then the simplicial object 
  in $\bar C$  can be taken to be globally Kan, hence fibrant in 
  $[C^{\mathrm{op}}, \mathrm{sSet}]_{\mathrm{proj}}$.
  \end{enumerate}
\end{proposition}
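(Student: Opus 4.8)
The image of the degreewise Yoneda embedding $j$ consists precisely of those simplicial presheaves $Y\in[C^{\op},\mathrm{sSet}]$ whose presheaf of $m$-simplices $Y_m\in[C^{\op},\mathrm{Set}]$ is, for every $m$, a coproduct of representables. Both parts thus assert that an arbitrary $X$ can be resolved by such ``degreewise free'' objects: part (1) up to projective weak equivalence, part (2) by one that is in addition objectwise Kan. I would handle this with a resolution step followed by a fibrant-replacement step, the second carried out inside $\bar C^{\Delta^{\op}}$.

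For part (1) I would use the canonical comonad resolution. The restriction-along-objects functor $U:[C^{\op},\mathrm{Set}]\to\mathrm{Set}^{\mathrm{ob}\,C}$ has left adjoint $F((S_c)_c)=\coprod_c C(-,c)\times S_c$, so the induced comonad $\bot:=FU$ sends a presheaf $G$ to $\bot G=\coprod_c C(-,c)\times G(c)$, a coproduct of representables, with counit the Yoneda evaluation $\bot G\twoheadrightarrow G$. Applying $\bot$ levelwise to $X$ (viewed as the simplicial object $[m]\mapsto X_m$ in presheaves of sets) gives the bar resolution $[p]\mapsto\bot^{p+1}X$, a bisimplicial presheaf each of whose bidegrees $\bot^{p+1}X_q$ is a coproduct of representables; I set $\hat X:=\diag\,\bot^{\bullet+1}X$, which is then degreewise a coproduct of representables, i.e.\ $\hat X=j(\hat X^{\mathrm{res}})$ for some $\hat X^{\mathrm{res}}\in\bar C^{\Delta^{\op}}$. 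The augmentation $\hat X\to X$ is an objectwise, hence projective, weak equivalence: because $\bot$ comes from an adjunction, $U$ applied to the bar resolution acquires extra degeneracies, so for each $c$ and each $q$ the augmented simplicial set $(\bot^{\bullet+1}X_q)(c)\to X_q(c)$ is a simplicial homotopy equivalence, and the bisimplicial diagonal lemma upgrades this to a weak equivalence $\hat X(c)\to X(c)$ for all $c$. This proves (1).

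For part (2) I would fibrantly replace $\hat X$ without leaving $\bar C^{\Delta^{\op}}$, by the enriched small object argument against the generating trivial cofibrations $C(-,c)\otimes(\Lambda^n_k\hookrightarrow\Delta^n)$ of $[C^{\op},\mathrm{sSet}]_{\mathrm{proj}}$. Since $(C(-,c)\otimes K)_m=\coprod_{K_m}C(-,c)$ for any simplicial set $K$, each horn inclusion is in every simplicial degree a coproduct inclusion of representables, so every horn-filling pushout merely attaches a coproduct of representables and keeps one inside the image of $j$. The domains $C(-,c)\otimes\Lambda^n_k$ are compact (evaluation $\mathrm{ev}_c$ preserves all colimits, so representables are compact, and these are finite colimits of representables), whence the argument closes after an $\omega$-indexed composite --- exactly the sequential colimit in the hypothesis; as it runs along coproduct inclusions, its value is again a coproduct of representables. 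The resulting $Y\in\bar C^{\Delta^{\op}}$ has $j(Y)$ objectwise Kan, and $\hat X\to j(Y)$ is a transfinite composite of trivial cofibrations, hence a projective weak equivalence; composing with (1) gives the claim. The classical alternative, presumably the reason the statement also assumes pullbacks, is to apply Kan's $\Ex^\infty$ objectwise: here $\Ex(K)_n=\Hom_{\mathrm{sSet}}(\mathrm{sd}\,\Delta^n,K)$ is a finite \emph{connected} limit, and since coproducts of presheaves are disjoint and stable such limits distribute over the coproducts and reduce to connected finite limits in $C$, which exist precisely because $C$ has pullbacks; the tower is then collapsed by the sequential colimit.

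The main obstacle in part (2) is exactly the closure of $\bar C^{\Delta^{\op}}$ under the fibrant-replacement functor, i.e.\ that the construction be carried out \emph{internally} to $\bar C$ and be compatible with $j$. For the $\Ex^\infty$ route this is delicate, because $j$ need not preserve sequential colimits --- the representables $C(-,c)$ need not be compact in $C$ itself --- so the objectwise $\Ex^\infty\hat X$ is in general only an ind-object of $\bar C$, and one must establish the closure by hand; this is what forces the two hypotheses to appear separately (pullbacks to keep $\Ex$ inside $\bar C$, sequential colimits to collapse the tower). The small object argument sidesteps precisely this difficulty, since there the relevant colimit runs along coproduct inclusions, which $j$ does preserve; I would therefore present it as the primary argument and invoke the distributivity/pullback computation only to record the $\Ex^\infty$ model.
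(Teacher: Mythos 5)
Your proof is correct, but it departs from the paper's at both steps, so a comparison is in order. For item 1 the paper does not build a resolution from scratch: it cites Proposition 2.8 of \cite{Dugger}, which supplies the functor $Q$ with $(QX)_k=\coprod_{U_0\to\cdots\to U_k\to X_k} j(U_0)$ together with a natural objectwise weak equivalence $QX\to X$; this $Q$ is moreover projective-cofibrant, a fact the paper reuses in the proof of Proposition~\ref{ModelStructureModelsSimplicialLocalization}. Your comonad bar resolution $\mathrm{diag}\,\bot^{\bullet+1}X$, with the extra degeneracies coming from the unit of $F\dashv U$ and the bisimplicial diagonal lemma, is an equally standard and perfectly valid substitute: it is likewise functorial with a natural augmentation, and since the coproducts-of-representables form a \emph{full} subcategory of presheaves equivalent to $\bar C$, its degreewise description does identify it with an object of $\bar C^{\Delta^{\mathrm{op}}}$ (you do not address cofibrancy, but that is not needed for this statement). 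For item 2 the paper takes exactly your ``classical alternative'': postcompose with $\mathrm{Ex}^\infty$, pullbacks in $C$ being what keeps each $\mathrm{Ex}$ degreewise in $\bar C$ and sequential colimits what collapses the tower. Your primary argument --- the small object argument against the generating trivial cofibrations $C(-,c)\otimes(\Lambda^k[n]\hookrightarrow \Delta[n])$ --- is genuinely different and in fact sharper: every cell attachment and the $\omega$-composite proceed along degreewise coproduct inclusions of coproducts of representables, so the construction never leaves the image of $j$ and the hypotheses of item 2 become superfluous. What the $\mathrm{Ex}^\infty$ route buys in exchange is an explicit, objectwise, classical construction; as you rightly observe, it also carries the subtlety that the presheaf-level colimit of the tower need not be $j$ of a colimit in $\bar C$, a point the paper's two-line proof glosses over. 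The one loose end on your side is the parenthetical reduction of $\mathrm{Ex}(K)_n=\mathrm{Hom}(\mathrm{sd}\,\Delta[n],K)$ to pullbacks: finite connected limits in general also require equalizers, so one should instead compute this hom by gluing the nondegenerate simplices of $\mathrm{sd}\,\Delta[n]$ along their faces, exhibiting it as an iterated pullback --- which is the same level of detail at which the paper itself leaves the matter.
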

This proposition 
can be interpreted as follows: every $\infty$-stack over
  $C$ has a presentation by a simplicial object in $\bar C$.
  Moreover this is true with respect to {\em any} Grothendieck topology 
  on $C$, since
  the weak equivalences in the global projective model
  structure remain weak equivalences in any left Bousfield localization.
 If moreover $C$ has all pullbacks (for instance for 
  topological spaces, but not for smooth manifolds) then
  every $\infty$-stack over $C$ even has a presentation by a
  globally Kan simplicial object in $\bar C$.

\proof
  The first statement is Proposition 2.8 in \cite{Dugger}, which
  says that for every $X \in [C^{\mathrm{op}}, \mathrm{sSet}]$
  the canonical morphism $QX\to X$ is a global weak equivalence.  
  Here $QX$ is the simplicial presheaf defined by the formula 
  $$
    (Q X) : [k] \mapsto \coprod_{U_0 \to \cdots \to U_k \to X_k} j(U_0)
	\,,
  $$  
  where the coproduct runs over all sequences of morphisms between representables
  $U_i$ as indicated and with the evident face and degeneracy maps. 
   The second statement follows by postcomposing with Kan's
  fibrant replacement functor (see for instance section 3 in 
  \cite{Jardine})
  $$
    \mathrm{Ex}^\infty : \mathrm{sSet} \to \mathrm{KanCplx} \hookrightarrow
	\mathrm{sSet}.	
  $$
  This functor forms new simplices by subdivision, which only involves
  forming iterated pullbacks over the spaces of the original simplices. 
\endofproof

 \begin{proof}[Proof of Proposition~\ref{ModelStructureModelsSimplicialLocalization}]
  Let 
   $Q : [C^{\mathrm{op}}, \mathrm{sSet}] 
    \to     
  \bar C^{\Delta^{\mathrm{op}}}$
  be Dugger's functor from the proof
  of Proposition~\ref{DegreewiseRepresentability}.  
  In \cite{Dugger} it is shown that for all $X$ the simplicial 
  presheaf  $Q X$
  is cofibrant in $[C^{\mathrm{op}}, \mathrm{sSet}]_{\mathrm{proj}}$
  and that the natural morphism $Q X \to X$ is a weak equivalence 
  (as we have observed previously).
  Since left Bousfield localization does not affect the cofibrations
  and only enlarges the weak equivalences, the same is still true in 
  $[C^{\mathrm{op}}, \mathrm{sSet}]_{\mathrm{proj}, \mathrm{loc}}$.
    
  Therefore we have a natural transformation
  $$
    i \circ Q \to \mathrm{Id} 
    : 
   [C^{\mathrm{op}}, \mathrm{sSet}]
   \to
   [C^{\mathrm{op}}, \mathrm{sSet}]
  $$
  whose components are weak equivalences. From this the claim
  of Proposition~\ref{ModelStructureModelsSimplicialLocalization} 
  follows by Proposition~3.5 in \cite{DwyerKanComputations}.
\end{proof}

\begin{remark}
  \index{$\infty$-topos theory!presentation by simplicial $C$-manifolds}
  \index{$\infty$-topos!presentation by simplicial $C$-manifolds}
  If the site $C$ is moreover equipped with the structure
  of a \emph{geometry} as in \cite{LurieSpaces} then there is 
 a canonical notion of a \emph{$C$-manifold}:
  a sheaf on $C$ that is \emph{locally} isomorphic to a
  representable in $C$. Write $C \mathrm{Mfd}$
  for the full subcategory of the category of presheaves on the $C$-manifolds.
 
  Then Proposition \ref{DegreewiseRepresentability} applies to 
  the category $C\mathrm{Mfd}^{\Delta^{\mathrm{op}}}$ of
  simplicial $C$-manifolds. Therefore we find that
  the $\infty$-topos over $C$ is presented by the
  simplicial localization of simplicial $C$-manifolds at
  the stalkwise weak equivalences:
  $$
    \mathrm{Sh}_\infty(C) \simeq L_{W_C} C \mathrm{Mfd}^{\Delta^{\mathrm{op}}}
    \,.
  $$
\end{remark}
\begin{example}
  \label{SmoothInfinityGroupoidByManifolds}
  \index{smooth $\infty$-groupoid!presentation by simplicial smooth manifolds}
  Let $C = \mathrm{CartSp}_{\mathrm{smooth}}$ be the 
  full subcategory of the category $\mathrm{SmthMfd}$ of
  smooth manifolds on the Cartesian spaces, $\mathbb{R}^n$,
for $n \in \mathbb{R}$. Then $\bar C \subset \mathrm{SmthMfd}$
is the full subcategory on manifolds that  are disjoint unions of
Cartesian spaces and $C \mathrm{Mfd} \simeq \mathrm{SmthMfd}$.
Therefore we have an equivalence of $\infty$-categories
$$
  \mathrm{Sh}_\infty(\mathrm{SmthMfd})
    \simeq
  \mathrm{Sh}_\infty(\mathrm{CartSp})
   \simeq
  L_{W_C}  \; \mathrm{SmthMfd}^{\Delta^{\mathrm{op}}}
  \,.
$$
\end{example}

\begin{remark}
  While the above gives fairly general conditions on a site $C$ under which 
  every $\infty$-stack is presented by a simplicial object in the site,
  and in fact by a simplicial object which is cofibrant in the 
  projective model structure on the simplicial presheaves over the site,
  this simplicial object is in general not fibrant in that model structure,
  nor will it be stalkwise fibrant in general. 
  
  In parts of the literature special attention is paid to 
  $\infty$-stacks (or just stacks) that admit a presentation by a simplicial 
  presheaf which is both: 1. represented by a simplicial object in the site
  and 2. stalkwise Kan fibrant in a suitable sense.
  (For instance Schommer-Pries discusses this for 1-stacks on manifolds and
  \cite{Wolfson} (see there for further references) for $\infty$-stacks on manifolds.) 
  It is an interesting question -- which is open at the time of this writing --  
  what these conditions on the presentation of 
  an $\infty$-stack mean intrinsically, for instance if they can be interpreted 
  as ensuring an abstract geometricity condition on an $\infty$-stack, such as
  considered for instance in \cite{LurieSpaces}.
  \label{LocallyKanAndSimplicialInSite}
\end{remark}

\section{Presentation of structures in an $\infty$-topos}
\label{structures}

In the companion article \cite{NSSa} we considered a list of structures
present in any $\infty$-topos, which form the fabric for our discussion
of principal (and associated/twisted) $\infty$-bundles. 
Here we go through the same list of notions and 
discuss aspects of their
\emph{presentation} in categories of simplicial (pre)sheaves.

\subsection{Cones}

\begin{proposition}
  \label{ConstructionOfHomotopyLimits}
  Let $A \to C \leftarrow B$ be a cospan diagram in a model category.
  Sufficient conditions for the ordinary pullback $A \times_C B$ to be 
  a homotopy pullback are
  \begin{itemize}
    \item 
	   one of the two morphisms is a fibration and all three objects are fibrant;
	\item 
	   one of the two morphisms is a fibration and the model structure is right proper.
  \end{itemize}
\end{proposition}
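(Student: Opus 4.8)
The plan is to anchor the notion of homotopy pullback in the derived-functor picture and then verify that, under either hypothesis, the ordinary pullback already computes it. Recall that the homotopy pullback of $A \to C \leftarrow B$ is the value of the right derived functor of $\lim_{\mathcal I} \colon [\mathcal I, M] \to M$ on this cospan, where $\mathcal I = (\bullet \to \bullet \leftarrow \bullet)$. Since $\mathcal I$ is an inverse Reedy category, $\lim_{\mathcal I}$ is right Quillen for the Reedy model structure on $[\mathcal I, M]$, and a cospan is Reedy fibrant precisely when all three objects are fibrant and both legs are fibrations; for such a cospan the strict pullback already computes the homotopy pullback. My strategy is therefore to produce a weak equivalence from the given cospan to a Reedy fibrant one and to check that it induces a weak equivalence on strict pullbacks.

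First I would set up the comparison. Assume without loss of generality that $g \colon B \to C$ is the fibration, and factor the other leg as $A \xrightarrow{\sim} \tilde A \twoheadrightarrow C$, a weak equivalence followed by a fibration. Keeping $g$ fixed, this yields a morphism of cospans from $(A \to C \leftarrow B)$ to $(\tilde A \to C \leftarrow B)$ whose only nontrivial component is $A \xrightarrow{\sim} \tilde A$, and the target now has both legs fibrations. By the pasting law for pullbacks there is a canonical identification $A \times_C B \cong A \times_{\tilde A}(\tilde A \times_C B)$, under which the induced comparison map $A \times_C B \to \tilde A \times_C B$ is exactly the base change of the weak equivalence $A \xrightarrow{\sim} \tilde A$ along the projection $\tilde A \times_C B \to \tilde A$. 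The latter is a fibration, being the base change of $g$ along $\tilde A \to C$.

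It remains to see that this base change of a weak equivalence along a fibration is again a weak equivalence, and this is the only place where the two hypotheses are used. Under the second bullet, right properness gives the conclusion immediately, since it is by definition the statement that the pullback of a weak equivalence along a fibration is a weak equivalence; here $\tilde A \times_C B$ is then the homotopy pullback. Under the first bullet I would first note that $C$ fibrant forces $\tilde A$ to be fibrant (a fibration into a fibrant object) and hence $\tilde A \times_C B$ to be fibrant and the target cospan to be Reedy fibrant, so that $\tilde A \times_C B$ is the homotopy pullback; it then suffices to invoke the fibrant-object version of right properness, namely that the base change of a weak equivalence between fibrant objects along a fibration is a weak equivalence, applied to $A \xrightarrow{\sim} \tilde A$ with $A$ and $\tilde A$ both fibrant.

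I expect this fibrant-object lemma to be the main obstacle, since it is precisely the content that is lost when right properness is dropped. I would establish it by the standard deformation-retract argument: factor the weak equivalence as a trivial cofibration followed by a trivial fibration; the base change of the trivial fibration along any map is again a trivial fibration, hence a weak equivalence, so the problem reduces to the base change of a trivial cofibration $X \hookrightarrow X'$ between fibrant objects along a fibration. Such a trivial cofibration admits a retraction and is a strong deformation retract, and using the homotopy lifting property of the fibration one transports this retraction and the accompanying right homotopy to the pullback, exhibiting the base-changed map as a homotopy equivalence and therefore a weak equivalence. Assembling the pieces and applying two-out-of-three then completes both cases.
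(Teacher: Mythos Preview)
The paper does not actually prove this statement; it simply records that it appears as Proposition~A.2.4.4 of Lurie's \emph{Higher Topos Theory}. Your proposal is therefore supplying an argument the paper omits, and the route you take---factor one leg to a fibration, reduce via the pasting law to the stability of weak equivalences under base change along a fibration, and in the non-right-proper case establish that stability for fibrant objects by the deformation-retract argument---is the standard one and is essentially correct.

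There is one small gap worth closing. In the right-proper case you assert that ``$\tilde A \times_C B$ is then the homotopy pullback'', but under your own definition the homotopy pullback is the value of the right derived limit, computed on a Reedy fibrant replacement, and Reedy fibrancy requires all three objects to be fibrant, not merely that both legs be fibrations. Arranging both legs to be fibrations does not by itself make the cospan Reedy fibrant when $A$, $B$, $C$ carry no fibrancy hypothesis. What you still need is to compare $\tilde A \times_C B$ with the strict pullback of an honest Reedy fibrant replacement; this comparison is exactly the cogluing lemma (a levelwise weak equivalence between cospans with fibration legs induces a weak equivalence on pullbacks), and that lemma is itself proved by two further applications of right properness of the same shape as your first reduction. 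Once that step is made explicit, both bullets go through as you describe. In the all-fibrant case this issue does not arise, since there your replacement cospan is genuinely Reedy fibrant, and your sketch of the fibrant-object right-properness lemma is the standard one.
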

This appears for instance as Proposition A.2.4.4 in \cite{Lurie}.

\begin{proposition}
  A finite homotopy limit computed in 
  $[C^{\mathrm{op}}, \mathrm{sSet}]_{\mathrm{proj}}$
  presents also the homotopy limit in 
  $[C^{\mathrm{op}}, \mathrm{sSet}]_{\mathrm{proj},\mathrm{loc}}$.
 \label{FiniteHomotopyLimitsInPresheaves}
\end{proposition}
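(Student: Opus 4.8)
The plan is to pass to the underlying $\infty$-categories and recognize the assertion as the left exactness of sheafification. Write $\mathrm{PSh}_\infty(C) := L_{W_{\mathrm{proj}}}[C^\op,\mathrm{sSet}]$ for the $\infty$-category presented by the global projective structure, and recall that $\mathrm{Sh}_\infty(C) = L_{W_C}[C^\op,\mathrm{sSet}]$. Since the global weak equivalences are contained in the $C$-local ones and the two model structures share their cofibrations, the inclusion $W_{\mathrm{proj}}\subset W_C$ induces a localization functor $\gamma\colon \mathrm{PSh}_\infty(C)\to \mathrm{Sh}_\infty(C)$ which on a cofibrant object is the identity up to $C$-local equivalence. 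A finite homotopy limit computed in $[C^\op,\mathrm{sSet}]_{\mathrm{proj}}$ presents the corresponding finite limit in $\mathrm{PSh}_\infty(C)$ --- this is the general fact that homotopy limits in a model category present $\infty$-limits in its underlying $\infty$-category \cite{Lurie} --- and likewise a finite homotopy limit in $[C^\op,\mathrm{sSet}]_{\mathrm{proj},\mathrm{loc}}$ presents the finite limit in $\mathrm{Sh}_\infty(C)$. Thus the proposition is equivalent to the statement that $\gamma$ preserves finite limits.

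The key input is then that $\gamma$ is left exact. By construction $\mathrm{Sh}_\infty(C)$ arises from $\mathrm{PSh}_\infty(C)$ by left Bousfield localization at the covering sieve inclusions; this is a \emph{topological} localization in the sense of \cite{Lurie}, and for the cohesive sites of interest it is moreover already hypercomplete. Topological localizations (and hypercompletions) are left exact, i.e.\ their localization functors preserve finite limits (\cite{Lurie}, \S 6.2.1 and \S 6.5.2--6.5.3). Granting this, I would argue as follows. It suffices to treat finite homotopy products and homotopy pullbacks, since every finite homotopy limit is built from these; finite products present finite $\infty$-products and are handled identically, while a homotopy pullback is realized concretely by Proposition~\ref{ConstructionOfHomotopyLimits}, factoring one leg of the cospan through a global fibration and taking the strict pullback. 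The resulting object presents the finite limit in $\mathrm{PSh}_\infty(C)$; applying the left exact functor $\gamma$ shows that the \emph{same} object, now read in the $C$-local structure where $\gamma$ acts as the identity up to local equivalence, presents the finite limit in $\mathrm{Sh}_\infty(C)$, i.e.\ the homotopy limit in $[C^\op,\mathrm{sSet}]_{\mathrm{proj},\mathrm{loc}}$.

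The main obstacle is precisely the left exactness of $\gamma$, which is not a formal consequence of the model-categorical comparison: for a \emph{general} left Bousfield localization finite homotopy limits are \emph{not} preserved, so no argument using only right properness of the two structures can succeed. This is visible already in the hands-on attempt. To exhibit the global homotopy pullback $A\times_C B'$, built from a global factorization $B\xrightarrow{\sim} B'\twoheadrightarrow C$, as a \emph{local} homotopy pullback, one would compare it along a lift with the pullback $A\times_C\tilde B$ formed from a \emph{local} fibrant replacement $\tilde B\to C$; but local right properness only permits pulling back $C$-local equivalences along \emph{local} fibrations, whereas the map $B'\to C$ produced by the global factorization is merely a global fibration. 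Closing this gap is exactly what requires the special feature of our localization --- that it is performed at covering sieves and is therefore left exact --- which is why I route the proof through \cite{Lurie} rather than through a direct properness manipulation.
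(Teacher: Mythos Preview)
The paper states this proposition without proof and without an explicit citation, treating it as a known background fact alongside the other unproved propositions in the ``Cones'' subsection. So there is no paper argument to compare against directly.

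Your argument is correct and is the standard one: the identity Quillen functor from the global to the local projective structure presents the sheafification left adjoint $\gamma : \mathrm{PSh}_\infty(C) \to \mathrm{Sh}_\infty(C)$, and the proposition reduces to left exactness of $\gamma$. You correctly identify that this left exactness is \emph{not} automatic for arbitrary left Bousfield localizations and must be supplied as input; invoking that the localization at covering sieves is topological (hence left exact by definition), and that any further hypercompletion is also left exact, is exactly the right justification. Your closing paragraph, explaining why a purely model-categorical manipulation via right properness cannot close the gap, shows good understanding of where the content lies.

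One small remark: the phrase ``for the cohesive sites of interest it is moreover already hypercomplete'' is unnecessary for the argument and slightly misleading in placement --- the left exactness of the topological localization already suffices for the proposition as stated, regardless of whether the resulting $\infty$-topos happens to be hypercomplete. You might streamline by simply citing that $\mathrm{Sh}_\infty(C)$ is, by construction, a left exact accessible localization of $\mathrm{PSh}_\infty(C)$ (this is Proposition 6.2.2.7 in \cite{Lurie}), which is all that is needed.
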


\begin{proposition}
  For $C$ a model category and $X \in C$ any object, the slice category
  $C_{/X}$ inherits a model category structure transferred along the forgetful
  functor $C_{/X} \to C$. If $X$ is fibrant in $C$, then $C_{/X}$ presents
  the slice of the $\infty$-category presented by $C$:
  $$
    \left(
	   C_{/X}
	\right)^\circ
	\simeq
	(C ^\circ)_{/X}
	\,.
  $$
  \label{SliceModelPresentsInfinitySlicing}
  \label{SliceModelStructure}
\end{proposition}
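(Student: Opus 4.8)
The plan is to prove the two assertions in turn. For the first -- existence of the transferred model structure, which needs no hypothesis on $X$ -- I would declare a morphism of $C_{/X}$ to be a weak equivalence, fibration, or cofibration precisely when the forgetful functor $U\colon C_{/X}\to C$ sends it to one in $C$. Since $U$ both preserves and reflects these three classes, the two-out-of-three property and closure under retracts are inherited at once from $C$. The factorization axioms transfer because a factorization of an underlying map $A\to B$ through an object $E$ equips $E$ with a canonical map to $X$ (compose with $B\to X$), and the lifting axioms transfer because any solution of the underlying lifting problem in $C$ automatically commutes over $X$. Together with the fact that $C_{/X}$ has all small limits and colimits (created, resp.\ computed, from those of $C$) this yields the standard overcategory model structure.

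Assuming, as is implicit throughout, that $C$ is $\mathrm{sSet}_{\mathrm{Quillen}}$-enriched, the slice $C_{/X}$ is again simplicially enriched, with
$$
  \underline{\Hom}_{C_{/X}}\big((A,f),(B,g)\big)
  =
  \{f\}\times_{\underline{\Hom}_C(A,X)}\underline{\Hom}_C(A,B)
$$
the fibre of $g_\ast\colon\underline{\Hom}_C(A,B)\to\underline{\Hom}_C(A,X)$ over the vertex $f$. I would first identify $(C_{/X})^\circ$: an object $(A,f)$ is cofibrant iff $A$ is cofibrant in $C$, and -- as the terminal object of $C_{/X}$ is $\mathrm{id}_X$ -- it is fibrant iff $f$ is a fibration. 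Since $X$ is fibrant, such an $f$ has fibrant source, so the objects of $(C_{/X})^\circ$ are exactly the fibrations $f\colon A\to X$ with $A$ fibrant and cofibrant; in particular $U$ restricts to $U\colon(C_{/X})^\circ\to C^\circ$, and the structure maps $f$ form a cone from $U$ to the constant functor at $X$, hence by the universal property of the slice define a functor $(C_{/X})^\circ\to (C^\circ)_{/X}$.

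The heart of the proof is that this functor is fully faithful. For $(A,f),(B,g)\in(C_{/X})^\circ$ the enriched hom above is already the correct derived mapping space: with $A$ cofibrant and $g$ a fibration, $g_\ast$ is a Kan fibration by the pushout-product axiom, so its strict fibre over $f$ is a homotopy fibre; and since $A$ is cofibrant while $B$ and $X$ are fibrant, $\underline{\Hom}_C(A,B)$ and $\underline{\Hom}_C(A,X)$ compute $\Maps_{C^\circ}(A,B)$ and $\Maps_{C^\circ}(A,X)$. Hence
$$
  \underline{\Hom}_{C_{/X}}\big((A,f),(B,g)\big)
  \;\simeq\;
  \mathrm{hofib}_f\big(\Maps_{C^\circ}(A,B)\xrightarrow{\,g\circ-\,}\Maps_{C^\circ}(A,X)\big),
$$
and the right-hand side is, by the standard description of mapping spaces in a slice $\infty$-category, exactly $\Maps_{(C^\circ)_{/X}}\big((A,f),(B,g)\big)$. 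Essential surjectivity is then easy: an object of $(C^\circ)_{/X}$ is a map $A\to X$ with $A\in C^\circ$, and factoring it as an acyclic cofibration $A\xrightarrow{\sim}A'$ followed by a fibration $A'\twoheadrightarrow X$ produces an object of $(C_{/X})^\circ$ -- here $A'$ is cofibrant over cofibrant $A$ and fibrant over fibrant $X$ -- that is equivalent to it in the slice. A fully faithful and essentially surjective functor is a DK-equivalence, giving the asserted equivalence in $\mathrm{Cat}_\infty$.

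I expect the main obstacle to be precisely the hom-space comparison: establishing that the strict simplicial fibre formed inside the model category $C_{/X}$ agrees with the homotopy-coherent mapping space of the slice $\infty$-category. Everything hinges on the two fibrancy inputs -- that $g$ is a fibration, so the strict fibre is a homotopy fibre, and that $X$ is fibrant, so that $\underline{\Hom}_C(A,X)$ computes the derived mapping space into $X$; this last point is exactly where the hypothesis on $X$ is used. A related subtlety is that, when $X$ is not also cofibrant, $(C^\circ)_{/X}$ must be read as the slice of the $\infty$-category $L_{W}C\simeq C^\circ$ over the object represented by $X$; this causes no difficulty, since for cofibrant $A$ and fibrant $X$ the complex $\underline{\Hom}_C(A,X)$ still models $\Maps_{L_W C}(A,X)$, so the mapping-space computation above is unaffected.
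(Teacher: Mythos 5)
The paper offers no proof of this proposition at all --- it is stated as a standing fact (the slice model structure is classical, and the identification of $(C_{/X})^\circ$ with the $\infty$-categorical slice is quoted from the standard literature in the spirit of Proposition A.2.4.4 and its relatives in \cite{Lurie}). Your argument therefore cannot be compared to one in the text, but it is a correct and complete proof, and it follows the route one would expect: transfer of the three classes along the forgetful functor (with the observations that factorizations acquire a canonical map to $X$ and that lifts are automatically over $X$), identification of the fibrant--cofibrant objects of $C_{/X}$ as the fibrations $A\twoheadrightarrow X$ with $A$ cofibrant (and hence fibrant, using fibrancy of $X$), the hom-space comparison via SM7 --- the strict fibre of $g_*\colon \underline{\Hom}_C(A,B)\to\underline{\Hom}_C(A,X)$ over $f$ is a homotopy fibre because $g_*$ is a Kan fibration, and both source and target compute derived mapping spaces --- matched against the standard fibre description of mapping spaces in a slice $\infty$-category, and essential surjectivity by factoring $A\to X$ as an acyclic cofibration followed by a fibration. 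You correctly isolate the two places where fibrancy of $X$ enters, and you rightly flag the one genuine subtlety, namely that $(C^\circ)_{/X}$ must be interpreted as the slice of $L_W C$ over the object presented by $X$ when $X$ is not cofibrant; your resolution (for $A$ cofibrant and $X$ fibrant, $\underline{\Hom}_C(A,X)$ still models the derived mapping space) is the right one. Your added hypothesis that $C$ be $\mathrm{sSet}_{\mathrm{Quillen}}$-enriched is consistent with the paper's conventions, since the notation $(-)^\circ$ is only defined there for such model categories.
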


\subsection{Effective epimorphisms}

We  discuss  aspects of the presentation of
effective epimorphisms in an $\infty$-topos.  
We begin with the following observation.  

\medskip

\begin{observation}
  \label{CanonicalAtlasOfSimplicialPresheaf}
  If the $\infty$-topos $\mathbf{H}$ is presented by a category of
  simplicial presheaves, Section~\ref{InfinityToposPresentation},
  then for $X$ a simplicial presheaf, the canonical morphism
    $\mathrm{const} X_0 \to X$ in $[C^{\mathrm{op}},\mathrm{sSet}]$ 
    that includes the presheaf of 
  0-cells as a simplicially constant simplicial presheaf
  presents an effective epimorphism in $\mathbf{H}$.
\end{observation}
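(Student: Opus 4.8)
The plan is to recognize that effective epimorphisms in an $\infty$-topos are characterized intrinsically by the $\infty$-topos axioms, and that for an $\infty$-topos presented by simplicial presheaves this characterization admits a concrete description in terms of the \v{C}ech nerve. Recall that a morphism $f : Y \to X$ is an effective epimorphism precisely when it is the colimit of its own \v{C}ech nerve $\check{C}(f)$, i.e.\ when the augmented simplicial diagram
$$
  \cdots Y \times_X Y \times_X Y \;\substack{\to\\[-0.6ex]\to\\[-0.6ex]\to} \; Y \times_X Y \;\substack{\to\\[-0.6ex]\to}\; Y \to X
$$
exhibits $X$ as the $\infty$-colimit of the truncated diagram. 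So the strategy is to identify the \v{C}ech nerve of $\mathrm{const}\,X_0 \to X$ and to show its realization (homotopy colimit) recovers $X$.

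First I would pass to a convenient model: by Section~\ref{InfinityToposPresentation} the $\infty$-topos is presented by $[C^{\mathrm{op}}, \mathrm{sSet}]_{\mathrm{proj},\mathrm{loc}}$, and effective epimorphisms in $\mathbf{H}$ can be detected after left Bousfield localization from the global projective structure, since the relevant colimit is a homotopy colimit and these are computed objectwise in the projective structure before localizing. Thus it suffices to analyze the morphism $\mathrm{const}\,X_0 \to X$ objectwise, where over each $U \in C$ it becomes the inclusion of $0$-simplices $\mathrm{const}\,(X(U)_0) \to X(U)$ of a simplicial set. Next I would compute the \v{C}ech nerve of this map. The key simplicial-set fact is that for any simplicial set $S$, the inclusion of the discrete set of vertices $\mathrm{const}\,S_0 \to S$ has \v{C}ech nerve whose geometric realization (the homotopy colimit over $\Delta^{\mathrm{op}}$) is weakly equivalent to $S$ itself; indeed the iterated fiber products over $X(U)$ reassemble the nondegenerate simplex data, so that the homotopy colimit of the \v{C}ech nerve recovers $X(U)$ up to weak equivalence. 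I would make this precise by exhibiting the \v{C}ech nerve as (a simplicial resolution weakly equivalent to) the canonical simplicial object whose realization is $X$, using that homotopy colimits over $\Delta^{\mathrm{op}}$ are computed by the diagonal or by $\Tot$-type formulas and are preserved objectwise.

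The main obstacle I anticipate is the fiber-product computation: over a fixed $U$ the pullback $\mathrm{const}\,X_0 \times_X \mathrm{const}\,X_0$ is taken in simplicial sets, and one must verify both that these ordinary fiber products already compute the \emph{homotopy} fiber products (so that the \v{C}ech nerve is the correct $\infty$-categorical one) and that the resulting simplicial diagram has the right realization. For the first point I would invoke Proposition~\ref{ConstructionOfHomotopyLimits}: since $\mathrm{const}\,X_0$ is objectwise a discrete (hence Kan) simplicial set and $X$ need only be replaced by a fibrant model, the relevant maps can be arranged to be fibrations between fibrant objects, so the strict pullbacks are homotopy pullbacks; Proposition~\ref{FiniteHomotopyLimitsInPresheaves} then ensures these finite homotopy limits are also correct after localization. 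For the second point, the realization claim is a statement purely about simplicial sets and is stable under the objectwise homotopy colimit, so it transports to $[C^{\mathrm{op}}, \mathrm{sSet}]_{\mathrm{proj}}$ and thence to the localization. Assembling these gives that $\mathrm{const}\,X_0 \to X$ equals the colimit of its \v{C}ech nerve in $\mathbf{H}$, which is exactly the condition that it be an effective epimorphism.
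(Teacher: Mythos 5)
Your overall strategy (verify the effective-epimorphism condition by computing the \v{C}ech nerve of $\mathrm{const}\,X_0 \to X$ and showing its realization is $X$) is not the paper's, and as written it has a genuine gap at its central step. The morphism $\mathrm{const}\,X_0 \to X$ is a \emph{monomorphism} of simplicial sets in each object of $C$ (a vertex $x$ is sent to its iterated degeneracies), hence a cofibration and in general \emph{not} a Kan fibration: a $1$-simplex with both endpoints at $x$ need not be degenerate, so horn-lifting fails. Proposition~\ref{ConstructionOfHomotopyLimits} requires one of the two legs of the cospan to be a fibration -- fibrancy of the three objects alone does not suffice -- so your claim that "the strict pullbacks are homotopy pullbacks" does not go through. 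Indeed the strict self-fiber-product of a monomorphism is the domain itself, so the strict \v{C}ech nerve of $\mathrm{const}\,X_0 \to X$ is the constant simplicial object on $\mathrm{const}\,X_0$, whose realization is $\mathrm{const}\,X_0$, not $X$. To run your argument honestly you would first have to replace $\mathrm{const}\,X_0 \to X$ by a fibration weakly equivalent to it -- exactly what the paper's d\'ecalage morphism $\mathrm{Dec}_0 X \to X$ of Proposition~\ref{MorphismsOutOfPlainDecalage} and Corollary~\ref{DecalageIsFibrationResolution} provides -- and then actually compute the realization of the resulting \v{C}ech nerve. The sentence "the iterated fiber products reassemble the nondegenerate simplex data, so that the homotopy colimit recovers $X(U)$" is precisely the content of the theorem and is asserted rather than proved.

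The paper's own argument is much shorter and avoids the \v{C}ech nerve entirely: one regards $X$ as the homotopy colimit of the simplicial object $[n] \mapsto \mathrm{const}\,X_n$ in simplicial presheaves (this is the diagonal computation of Proposition~\ref{SimplicialHocolimGivenByDiagonal}) and then invokes Proposition~7.2.1.14 of \cite{Lurie}, which states that for any simplicial object $U_\bullet$ in an $\infty$-topos with colimit $|U_\bullet|$, the canonical map $U_0 \to |U_\bullet|$ is an effective epimorphism. An alternative elementary repair of your approach would be to use the characterization of effective epimorphisms as those maps inducing epimorphisms on $\pi_0$-sheaves, which for $\mathrm{const}\,X_0 \to X$ is immediate since $X(U)_0 \to \pi_0(X(U))$ is surjective for every $U$; either route replaces the unproved realization claim by a citable general fact.
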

 This follows with Proposition 7.2.1.14 in \cite{Lurie}.
\begin{remark}
  In practice the presentation of an $\infty$-stack by a simplicial presheaf is
  often taken to be understood, and then Observation~\ref{CanonicalAtlasOfSimplicialPresheaf}
  induces also a canonical {\em atlas}, i.e.\ $\mathrm{const}X_0 \to X$. 
\end{remark}

We now discuss a fibration resolution of the canonical atlas.
 Write $\Delta_a$ for the \emph{augmented simplex category},
  which is the simplex category $\Delta$ with an initial object $[-1]$ (the empty set) adjoined.  
  The operation of {\em ordinal sum} 
  \[
  [k], [l] \mapsto [k+l +1]
  \]
  equips $\Delta_a$ with the structure of a symmetric monoidal category 
  with unit $[-1]$ (see for instance \cite{CWM}).  
  Write
  \[
    \sigma : \Delta \times \Delta \to \Delta
  \]
  for the restriction of this tensor product along the canonical 
  inclusion $\Delta \subset \Delta_a$.   

\begin{definition}
  \label{Decalage}
  Write
  $$
    \mathrm{Dec}_0 : \mathrm{sSet} \to \mathrm{sSet}
  $$
  for the functor given by precomposition with $\sigma(-,[0]) : \Delta \to \Delta$.
  This is called the plain \emph{d{\'e}calage functor} or \emph{shifting functor}.
\end{definition}
This functor was introduced in \cite{Illusie2}. A discussion in the present 
context can be found in section 2.2 of \cite{Stevenson2}, amongst 
other references.
\begin{proposition}
  \label{MorphismsOutOfPlainDecalage}
  Let $X$ be a simplicial set.  Then $\mathrm{Dec}_0X$ is 
  isomorphic to the simplicial set 
  \[
  [n]\mapsto \mathrm{Hom}( \Delta[n] \star \Delta[0], X),
  \]
  where $(-)\star (-) : \mathrm{sSet} \times \mathrm{sSet} \to \mathrm{sSet}$
  is the {\em join} of simplicial sets. 
  The canonical inclusions $\Delta[n], \Delta[0] \subset \Delta[n] \star \Delta[0]$
  induce morphisms
  $$
    \xymatrix{
	   \mathrm{Dec}_0 X \ar[d]^\simeq \ar@{->>}[r] & X
	   \\
	   \mathrm{const} X_0
	}
	\,,
  $$
  where 
  \begin{itemize}
    \item the horizontal morphism is given in degree $n$ by $d_{n+1} : X_{n+1} \to X_n$;
	\item the horizontal morphism is a Kan fibration if $X$ is a Kan complex;
	\item the vertical morphism is a simplicial deformation retraction, 
	in particular a weak homotopy equivalence. 
	\end{itemize}
\end{proposition}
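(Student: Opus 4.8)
The plan is to reduce every clause to the elementary combinatorics of the join, so that each becomes a statement about the single monomorphism $\Delta[n]\hookrightarrow\Delta[n]\star\Delta[0]$ and the complementary apex $\Delta[0]\hookrightarrow\Delta[n]\star\Delta[0]$. I would first establish the asserted isomorphism. The key input is the standard fact that the join restricts on representables to the ordinal sum, i.e.\ there is an isomorphism $\Delta[m]\star\Delta[n]\cong\Delta[m+n+1]$ natural in $[m],[n]\in\Delta$; taking $n=0$ gives $\Delta[n]\star\Delta[0]\cong\Delta[\sigma([n],[0])]=\Delta[n+1]$, natural in $[n]$ for the cosimplicial structure induced by $\sigma(-,[0])$. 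Composing with the Yoneda identification $\mathrm{Hom}(\Delta[n+1],X)\cong X_{n+1}=(\mathrm{Dec}_0X)_n$ yields the claimed isomorphism of simplicial sets; the only point to check is that both identifications are natural in $[n]$, which is immediate once $\Delta[n]\star\Delta[0]\cong\Delta[n+1]$ is taken as the defining compatibility between $\star$ and $\sigma$.

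I would then read off the two morphisms from the two inclusions into the join. Under the identification above, $\Delta[n]\hookrightarrow\Delta[n]\star\Delta[0]$ is the coface $\delta^{n+1}\colon[n]\hookrightarrow[n+1]$ omitting the top vertex; naturality of this family in $[n]$ is exactly the identity $\sigma(\alpha,[0])\circ\delta^{m+1}=\delta^{n+1}\circ\alpha$, so that applying $\mathrm{Hom}(-,X)$ produces a simplicial map whose degree-$n$ component is $X(\delta^{n+1})=d_{n+1}\colon X_{n+1}\to X_n$. This is the horizontal morphism, and it proves the first clause outright. Dually the apex inclusion $\Delta[0]\hookrightarrow\Delta[n]\star\Delta[0]$ is fixed by every $\sigma(\alpha,[0])$, so $\mathrm{Hom}(-,X)$ sends it to maps $(\mathrm{Dec}_0X)_n\to\mathrm{Hom}(\Delta[0],X)=X_0$ compatible with all structure maps into a constant cosimplicial set; this is the vertical morphism $\mathrm{Dec}_0X\to\mathrm{const}\,X_0$, given in each degree by restriction to the last vertex.

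For the fibration clause I would argue by directly solving the lifting problem against a horn inclusion $\Lambda^k[m]\hookrightarrow\Delta[m]$ with $0\le k\le m$. Using $(\mathrm{Dec}_0X)_m=X_{m+1}$ and the fact that the horizontal map is $d_{m+1}$ in degree $m$, such a lifting problem unwinds to the following data in $X$: a bottom element $b\in X_m$ together with faces $\alpha_i\in X_m$ for $i\in\{0,\dots,m\}\setminus\{k\}$, mutually compatible because the square commutes and the horn map is well defined. A lift is an element $\widetilde\sigma\in X_{m+1}$ with $d_i\widetilde\sigma=\alpha_i$ for $i\le m,\ i\neq k$ and $d_{m+1}\widetilde\sigma=b$ --- that is, with every face except $d_k$ prescribed. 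This is precisely the problem of filling the horn $\Lambda^k[m+1]\hookrightarrow\Delta[m+1]$ in $X$, which is solvable because $X$ is a Kan complex. (Conceptually the same fact follows by noting that the discrete target splits $\mathrm{Dec}_0X\cong\coprod_{x\in X_0}X_{/x}$ over $\mathrm{const}\,X_0$, so the horizontal map is a coproduct of slice projections $X_{/x}\to X$, each a right fibration and hence, over the Kan complex $X$, a Kan fibration.)

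Finally, for the deformation retraction I would produce the section $r\colon\mathrm{const}\,X_0\to\mathrm{Dec}_0X$ carrying a vertex $x$ to the totally degenerate $(n+1)$-simplex on $x$; this splits the vertical morphism, since the last vertex of a degenerate simplex on $x$ is again $x$. It then remains to exhibit a simplicial homotopy from $\mathrm{id}_{\mathrm{Dec}_0X}$ to $r$ followed by the vertical map, relative to the image of $r$. For this I would use that the top degeneracy $s_{n+1}$ of $X$ equips $\mathrm{Dec}_0X$, augmented over $X_0$ by the last vertex, with an extra degeneracy, and invoke the standard lemma that an augmented simplicial set carrying an extra degeneracy is simplicially contractible onto its augmentation (as in \cite{Illusie2,Stevenson2}); a simplicial homotopy equivalence is in particular a weak homotopy equivalence, giving the last clause. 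I expect this to be the main locus of genuine work: the conceptual content is light, but writing the contracting homotopy explicitly and checking the simplicial identities that make it relative to the retract is the one calculational point, and where an error would most easily hide.
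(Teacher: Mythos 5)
Your proposal is correct and follows essentially the same route as the paper: the identification via the ordinal sum $\Delta[n]\star\Delta[0]\cong\Delta[n+1]$, the reduction of the lifting problem for the horizontal map to filling a horn $\Lambda^k[m+1]\hookrightarrow\Delta[m+1]$ in $X$ (your element-wise unwinding is exactly the transposed form of the paper's adjunction argument, and you also note the same slice-decomposition alternative the paper mentions), and the classical extra-degeneracy contraction for the retraction, which the paper simply cites as classical.
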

\proof
  The relation to the join of simplicial sets is clear (the point being that  
  the nerve functor sends joins of categories to joins of 
  simplicial sets). The deformation retraction is classical 
  and can be found in many sources.
  To see that $\mathrm{Dec}_0 X \to X$ is 
  a Kan fibration, using the fact that 
   $(\mathrm{Dec}_0 X)_n = \mathrm{Hom}(\Delta[n] \star \Delta[0], X)$ 
   for any $n\in \mathbb{N}$, we see that the lifting problem for the diagram 
  $$
    \xymatrix{
	  \Lambda^i[n] \ar[r] \ar[d] & \mathrm{Dec}_0 X \ar[d]
	  \\
	  \Delta[n] \ar[r] & X
	}
  $$
 has a solution if and only if it is the lifting problem for the diagram 
  $$
   \raisebox{20pt}{
    \xymatrix{
	  (\Lambda^i[n] \star \Delta[n] ) \coprod_{\Lambda^i[n]} \Delta[n]
	  \ar[r]
	  \ar[d]
	  &
	  X
	  \ar[d]
	  \\
	  \Delta[n] \star \Delta[0]
	  \ar[r]
	  &
	  \ast 
	}
	}
  $$
  has a solution.  
  Here the left hand vertical 
  morphism is an anodyne morphism --- in fact an inclusion of an $(n+1)$-horn. 
  Hence a lift exists if $X$ is a Kan complex. 
  (Alternatively, one may argue by observing that $\mathrm{Dec}_0X$ 
  is the disjoint union of slices $X_{/x}$ for $x\in X_0$, and it is known 
  that $X_{/x}\to X$ is a Kan fibration if $X$ is a Kan complex --- see for 
  instance \cite{Lurie}).  
   \endofproof
\begin{corollary}
  \label{DecalageIsFibrationResolution}
 For $X$ in $[C^{\mathrm{op}}, \mathrm{sSet}]_{\mathrm{proj}}$ 
 fibrant, a fibration resolution of the 
 canonical effective epimorphism $\mathrm{const} X_0 \to X$
 from Observation~\ref{CanonicalAtlasOfSimplicialPresheaf}
 is given by the d{\'e}calage morphism
 $\mathrm{Dec}_0 X \to X$, Proposition~\ref{MorphismsOutOfPlainDecalage}.
\end{corollary}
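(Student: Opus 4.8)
The plan is to recognize the statement as a levelwise transport of Proposition~\ref{MorphismsOutOfPlainDecalage} into the projective model structure. First I would recall that, by definition, a morphism in $[C^{\mathrm{op}}, \mathrm{sSet}]_{\mathrm{proj}}$ is a fibration, respectively a weak equivalence, exactly when it is objectwise a Kan fibration, respectively a weak homotopy equivalence, in $\mathrm{sSet}_{\mathrm{Quillen}}$, and that fibrancy of $X$ in this model structure means precisely that each $X(U)$ is a Kan complex. I would also note that $\mathrm{Dec}_0$ acts only on the simplicial direction, so that $(\mathrm{Dec}_0 X)(U) = \mathrm{Dec}_0(X(U))$ naturally in $U \in C$; in other words $\mathrm{Dec}_0$ is computed objectwise on simplicial presheaves.

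Given this, the argument is essentially immediate. Applying Proposition~\ref{MorphismsOutOfPlainDecalage} to the Kan complex $X(U)$ for each object $U$ produces, naturally in $U$, a Kan fibration $(\mathrm{Dec}_0 X)(U) \twoheadrightarrow X(U)$ together with a deformation retraction $(\mathrm{Dec}_0 X)(U) \xrightarrow{\simeq} \mathrm{const}\,(X(U))_0$. By the objectwise characterization of the projective structure, the former assembles into a projective fibration $\mathrm{Dec}_0 X \twoheadrightarrow X$ and the latter into a projective weak equivalence $\mathrm{Dec}_0 X \xrightarrow{\simeq} \mathrm{const} X_0$.

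It then remains to check that these two morphisms genuinely resolve the canonical atlas $\mathrm{const} X_0 \to X$ of Observation~\ref{CanonicalAtlasOfSimplicialPresheaf}, rather than some unrelated map into $X$. For this I would use the section $s : \mathrm{const} X_0 \to \mathrm{Dec}_0 X$ of the deformation retraction, which is a weak equivalence since it is homotopy inverse to the retraction, and verify that the composite $\mathrm{const} X_0 \xrightarrow{s} \mathrm{Dec}_0 X \twoheadrightarrow X$ equals the atlas map. This reduces to a bookkeeping check with the simplicial identities: in degree $n$ the section is the iterated degeneracy landing in $X_{n+1}$ and the fibration is $d_{n+1}$, so their composite is the iterated degeneracy $X_0 \to X_n$ that defines $\mathrm{const} X_0 \to X$. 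I do not expect a genuine obstacle here; the only point demanding attention is precisely this identification of the section and the commutativity of the triangle over $X$, with everything else following directly from the objectwise definition of the projective model structure and the already established Proposition~\ref{MorphismsOutOfPlainDecalage}.
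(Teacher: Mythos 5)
Your proposal is correct and follows essentially the same route as the paper's own proof: both reduce the statement to Proposition~\ref{MorphismsOutOfPlainDecalage} applied objectwise and then verify the commuting triangle over $X$ using the degeneracy-induced section $s : \mathrm{const}\,X_0 \to \mathrm{Dec}_0 X$, which is a weak equivalence. The only difference is that you spell out the objectwise nature of the projective model structure and the simplicial-identity bookkeeping, which the paper leaves implicit.
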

\proof
  It only remains to observe that we have a commuting diagram
  $$
    \raisebox{20pt}{
    \xymatrix{
	  \mathrm{const}X_0 \ar[r]^s \ar[d] & \mathrm{Dec}_0 X \ar[d]
      \\
      X \ar[r]^= & X	  
	}
	}
  $$
  where the top morphism, given degreewise by the degeneracy maps in $X$,
  is a weak homotopy equivalence by classical results. 
\endofproof

\subsection{Connected objects}
\label{ConnectedObjects}
\label{Connected objects presentations}

In every $\infty$-topos $\mathbf{H}$ there is a notion of \emph{connected objects},
which form the objects of the full sub-$\infty$-category $\mathbf{H}_{\geq 1}$.
We discuss here presentations of connected and of \emph{pointed}
connected objects in $\mathbf{H}$ 
by means of presheaves of pointed or reduced simplicial sets.

\medskip

\begin{observation}
  Under the presentation 
  $\mathrm{Grpd}_{\infty} \simeq (\mathrm{sSet}_{\mathrm{Quillen}})^\circ$,
  a Kan complex $X \in \mathrm{sSet}$ presents an $n$-connected $\infty$-groupoid
  precisely if
  \begin{enumerate}  
    \item $X$ is inhabited (not empty);
	\item all simplicial homotopy groups $\pi_k(X)$ of $X$ in degree $k \leq n$ are trivial.
  \end{enumerate}
\end{observation}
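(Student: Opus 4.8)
The plan is to pass through the $n$-truncation and reduce the claim to the classical recognition of contractible Kan complexes. Recall that an object $X \in \mathbf{H}$ is \emph{$n$-connected} exactly when its $n$-truncation is terminal, $\tau_{\leq n} X \simeq \ast$ (equivalently, when $X \to \ast$ is an $(n{+}1)$-connective morphism). Hence, under the presentation $\mathrm{Grpd}_\infty \simeq \mathrm{KanCplx} = (\mathrm{sSet}_{\mathrm{Quillen}})^\circ$, the statement reduces to two ingredients: (i) a Kan-complex model for $\tau_{\leq n} X$ together with its homotopy groups, and (ii) the elementary fact that a Kan complex presents $\ast \in \mathrm{Grpd}_\infty$ precisely if it is inhabited and all of its simplicial homotopy groups vanish.

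First I would record the model for (i). Under this presentation the abstract functor $\tau_{\leq n}$ is computed, on a Kan complex $X$, by a classical Postnikov truncation, one convenient model being the coskeleton $\cosk_{n+1} X$. The input I need is its action on homotopy: the unit $X \to \cosk_{n+1} X$ induces isomorphisms $\pi_k(X) \cong \pi_k(\cosk_{n+1} X)$ for all $k \leq n$ and all basepoints, whereas $\pi_k(\cosk_{n+1} X) = 0$ for $k > n$, and $\cosk_{n+1}$ sends the empty simplicial set to itself. I would also note that the intrinsic homotopy groups of an object of $\mathrm{Grpd}_\infty$ coincide with the simplicial homotopy groups of any representing Kan complex, since the loop-object and mapping constructions defining the former are modelled by the standard simplicial ones; this is what licenses phrasing the final criterion in terms of the $\pi_k(X)$.

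The proof is then a single chain of equivalences. By the definition above, $X$ is $n$-connected iff $\cosk_{n+1} X \simeq \ast$, i.e.\ iff $\cosk_{n+1} X$ is a contractible Kan complex; by Whitehead's theorem this holds iff $\cosk_{n+1} X$ is inhabited and all of its homotopy groups vanish. Feeding in the computation of ingredient (i), vanishing of $\pi_k(\cosk_{n+1} X)$ for every $k$ is equivalent to the vanishing of $\pi_k(X)$ for $k \leq n$ (the higher groups vanish automatically), while inhabitedness of $\cosk_{n+1} X$ is equivalent to that of $X$. Combining, $X$ is $n$-connected iff $X$ is inhabited and $\pi_k(X) = 0$ for all $k \leq n$, as claimed. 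The only point requiring genuine care is ingredient (i) — that the abstract $\tau_{\leq n}$ really is presented by this truncation with the stated effect on homotopy groups; for the particular $\infty$-topos $\mathrm{Grpd}_\infty$ this is the classical theory of Postnikov towers of spaces, so the work here is to assemble standard facts rather than to overcome a new difficulty, and the remaining manipulations are purely formal.
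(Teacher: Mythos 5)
The paper states this as an \emph{Observation} and supplies no proof at all, so there is nothing to compare your argument against line by line; the question is only whether your justification is sound, and it is. Your route --- identify $n$-connectedness with $\tau_{\leq n}X\simeq \ast$, model $\tau_{\leq n}$ on Kan complexes by $\mathrm{cosk}_{n+1}$, and invoke the simplicial Whitehead theorem --- is correct, and the facts you lean on ($\pi_k(\mathrm{cosk}_{n+1}X)\cong\pi_k(X)$ for $k\leq n$ and $0$ above, $\mathrm{cosk}_{n+1}\emptyset=\emptyset$, $(\mathrm{cosk}_{n+1}X)_0=X_0$) are all standard and correctly deployed. It is, however, a slightly roundabout path for what the authors treat as immediate: once you grant your own auxiliary remark that the intrinsic homotopy groups of the object of $\mathrm{Grpd}_\infty$ presented by a Kan complex $X$ are its simplicial homotopy groups, the statement follows directly from the characterization of $n$-connected objects as those that are inhabited with vanishing $\pi_k$ for $k\leq n$, with no need to introduce $\mathrm{cosk}_{n+1}$ or Whitehead at all. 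The coskeletal detour buys you a self-contained reduction to the recognition of contractible Kan complexes, at the cost of having to verify the (easy but nontrivial) assertion that $\mathrm{cosk}_{n+1}$ really presents the abstract truncation functor --- which is itself no more elementary than the fact you are trying to prove. Either way, the argument is valid.
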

\begin{definition}
  For $n \in \mathbb{N}$ a simplicial set $X \in \mathrm{sSet}$
  is \emph{$n$-reduced} if its $n$-skeleton is the point
  $$
    \mathrm{sk}_n X = *
	\,,
  $$
  in other words, if it has a single $k$-simplex for all $k \leq n$.
  For \emph{0-reduced} we also just say \emph{reduced}. 
  Write
  $$
    \mathrm{sSet}_n \hookrightarrow \mathrm{sSet}
  $$
  for the full subcategory of $n$-reduced simplicial sets.
\end{definition}
\begin{proposition}
  \label{PropertiesOfTheReducedsSetInclusion}
  The $n$-reduced simplicial sets form a reflective subcategory
  $$
    \xymatrix{
	  \mathrm{sSet}_n
	  \ar@{<-}@<+4pt>[r]^{\mathrm{red}_n}
	  \ar@{^{(}->}@<-4pt>[r]
	  &
	  \mathrm{sSet}
	}
  $$
  of the category of simplicial sets, 
  with the reflector $\mathrm{red}_n$ given on a simplicial set $X$ by
  $\mathrm{red}_n(X) = X/\mathrm{sk}_n X$, 
  in other words it
  identifies all the $(k \leq n)$-vertices
  of $X$.
  
  The inclusion $\mathrm{sSet}_n \hookrightarrow \mathrm{sSet}$ 
  uniquely factors through the forgetful functor $\mathrm{sSet}^{*/} \to \mathrm{sSet}$
  from pointed simplicial sets, and that factorization is co-reflective
  $$
    \xymatrix{
	  \mathrm{sSet}_n
	  \ar@{^{(}->}@<+4pt>[r]^{}
	  \ar@{<-}@<-4pt>[r]_{E_{n+1}}
	  &
	  \mathrm{sSet}^{*/}
	}
	\,.
  $$
  Here the co-reflector $E_{n+1}$ sends a pointed simplicial 
  set $* \stackrel{x}{\to} X$ to the sub-object $E_{n+1}(X,x)$,
  the $(n+1)$st Eilenberg subcomplex of the pointed simplicial set $X$.
\end{proposition}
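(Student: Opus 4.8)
The plan is to treat the two displayed adjunctions in turn, in each case reducing the statement to a single universal factorization that follows directly from the defining property of $n$-reducedness, namely that an object of $\mathrm{sSet}_n$ has exactly one simplex in each degree $k \le n$.

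For the reflection, I would first check that $\mathrm{red}_n(X) = X/\mathrm{sk}_n X$ really lands in $\mathrm{sSet}_n$: collapsing the subcomplex $\mathrm{sk}_n X$ to a point makes every simplex of dimension $\le n$ equal to the image of the basepoint, so $\mathrm{sk}_n(\mathrm{red}_n X) = \ast$. I would then establish the adjunction by the universal property of the quotient. Unwinding $\mathrm{sk}_n Y = \ast$ shows that an object $Y \in \mathrm{sSet}_n$ has $Y_k = \ast$ for all $k \le n$; hence any simplicial map $f : X \to Y$ sends all simplices of dimension $\le n$, and therefore the whole subcomplex $\mathrm{sk}_n X$, into the single point of $Y$. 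By the universal property of $X \to X/\mathrm{sk}_n X$ the map $f$ factors uniquely through $\mathrm{red}_n X$, giving a natural bijection
$$
  \mathrm{Hom}_{\mathrm{sSet}}(X, Y)
  \cong
  \mathrm{Hom}_{\mathrm{sSet}_n}(\mathrm{red}_n X, Y)
$$
and exhibiting $\mathrm{red}_n$ as left adjoint to the inclusion.

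For the factorization through pointed simplicial sets, the key observation is that an $n$-reduced simplicial set is \emph{canonically} pointed: since $X_0 = \ast$ for $X \in \mathrm{sSet}_n$ (as $n \ge 0$), there is a unique $0$-simplex available to serve as basepoint, and every morphism in $\mathrm{sSet}_n$ preserves it automatically. This simultaneously produces the lift $\mathrm{sSet}_n \to \mathrm{sSet}^{\ast/}$ and forces its uniqueness, since the underlying object is prescribed and the basepoint is not a free choice. It then remains to exhibit $E_{n+1}$ as right adjoint to this lift. I would pin down the definition as follows: $E_{n+1}(X,x)$ is the subobject of $X$ whose $k$-simplices are those $\sigma \in X_k$ whose restriction along $\mathrm{sk}_n \Delta[k] \hookrightarrow \Delta[k]$ is constant at $x$, i.e. all of whose faces of dimension $\le n$ are degenerate on the basepoint. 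This is visibly $n$-reduced, since for $k \le n$ the condition forces $\sigma$ to be the degenerate basepoint simplex. The adjunction then follows from one factorization argument: given a pointed map $f : W \to X$ with $W \in \mathrm{sSet}_n$ and a simplex $w \in W_k$, every face of $w$ of dimension $j \le n$ lies in $W_j = \ast$ and hence equals the basepoint, so its image under the pointed map $f$ is degenerate on $x$; thus $f(w)$ satisfies the defining condition of $E_{n+1}(X,x)$. Consequently $f$ factors through $E_{n+1}(X,x) \hookrightarrow X$, uniquely because the inclusion is a monomorphism, yielding
$$
  \mathrm{Hom}_{\mathrm{sSet}^{\ast/}}(W, X)
  \cong
  \mathrm{Hom}_{\mathrm{sSet}_n}(W, E_{n+1}(X,x))
$$
naturally in both variables.

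The main obstacle — indeed the only step that requires genuine care rather than unwinding of definitions — is the verification that $E_{n+1}(X,x)$ is closed under all face and degeneracy maps, so that it is a bona fide subcomplex. This is where the combinatorics of the simplicial identities enters: one must argue that a face of dimension $\le n$ of $d_i\sigma$ or of $s_i\sigma$ is, up to a degeneracy, again a face of $\sigma$ of dimension $\le n$, hence degenerate on $x$. Everything else in the proof reduces to the single structural fact that an $n$-reduced object has a unique simplex in each degree $\le n$.
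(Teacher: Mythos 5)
Your proof is correct. Note that the paper itself offers no proof of this proposition: it is stated as a classical fact, with the subsequent remark merely recalling from Definition 8.3 of \cite{May} that $E_{n+1}(X,x)$ is the fiber of $X \to \mathrm{cosk}_n X$ over the basepoint. Your two factorization arguments --- maps out of $X$ into an $n$-reduced $Y$ kill $\mathrm{sk}_n X$ and hence descend to $X/\mathrm{sk}_n X$; pointed maps out of an $n$-reduced $W$ land in $E_{n+1}(X,x)$ and hence restrict --- are exactly the right universal properties to check, and your observation that the lift to $\mathrm{sSet}^{*/}$ is forced because an $n$-reduced simplicial set has a unique $0$-simplex correctly accounts for the uniqueness clause. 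One small comment: the step you single out as the only delicate one, namely that $E_{n+1}(X,x)$ is closed under faces and degeneracies, does not really require any combinatorics of the simplicial identities. It is immediate either from the pullback description $E_{n+1}(X,x) = \{*\} \times_{\mathrm{cosk}_n X} X$ (a limit of simplicial sets is automatically a simplicial set), or from your own reformulation of the condition as ``$\sigma|_{\mathrm{sk}_n \Delta[k]}$ is constant at $x$'' together with the fact that every simplicial operator $\Delta[l] \to \Delta[k]$ carries $\mathrm{sk}_n \Delta[l]$ into $\mathrm{sk}_n \Delta[k]$, by functoriality of the skeleton. The only genuine convention to watch is the value of $\mathrm{red}_n$ on the empty simplicial set, where one should read $X/\mathrm{sk}_n X$ as the pushout of $* \leftarrow \mathrm{sk}_n X \to X$ so that the reflector actually lands in $\mathrm{sSet}_n$; with that reading your bijection holds there as well.
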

\begin{remark}
  Recall, see for instance Definition 8.3  in \cite{May}, that for a 
  pointed simplicial set $\ast\xrightarrow{x} X$, the simplicial set
  $E_{n+1}(X,x)$ is the subcomplex of $X$ consisting 
  of cells whose $n$-faces coincide with the base point, hence is the fiber
  \[
   \xymatrix{ 
     E_{n+1}(X,x) \ar[r] \ar[d] & X \ar[d] 
	  \\ 
     \{\ast\} \ar[r] & \mathrm{cosk}_n X } 
  \]
  of the projection to the $n$-coskeleton $\mathrm{cosk}_nX$.
  
  For $(* \to X) \in \mathrm{sSet}^{*/}$ such that $X \in \mathrm{sSet}$ is 
  Kan fibrant and $n$-connected,
  the counit $E_{n+1}(X,*) \to X$ is a homotopy equivalence.
  This statement appears for instance as part of Theorem 8.4 in \cite{May}.
\end{remark}
\begin{proposition}
  \label{PresentationOfPointedConnectedObjectsByPresheavesOfReducedsSets}
  Let $C$ be a site with a terminal object and let 
  $\mathbf{H} := \mathrm{Sh}_\infty(C)$.
  Then under the presentation
  $\mathbf{H} \simeq ([C^{\mathrm{op}}, \mathrm{sSet}]_{\mathrm{proj}, \mathrm{loc}})^\circ$ 
  every pointed $n$-connected object in $\mathbf{H}$ is presented by a
  presheaf of $n$-reduced simplicial sets, under the canonical inclusion
  $[C^{\mathrm{op}}, \mathrm{sSet}_n]
	\hookrightarrow
    [C^{\mathrm{op}}, \mathrm{sSet}]$.
\end{proposition}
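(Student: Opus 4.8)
The plan is to present the given object by a pointed, locally fibrant simplicial presheaf and then to replace it objectwise by its Eilenberg subcomplex, using the coreflective structure of Proposition~\ref{PropertiesOfTheReducedsSetInclusion}; the actual content of the proof is to check that the resulting comparison map, while generally \emph{not} an objectwise weak equivalence, is nonetheless a $C$-local one. Concretely, I would first choose a representative of the given pointed $n$-connected object by a map $\ast \to X$ in $[C^{\op},\mathrm{sSet}]_{\mathrm{proj},\mathrm{loc}}$ in which $X$ is locally fibrant (pointed objects being the coslice under the terminal object, this is the coslice analogue of Proposition~\ref{SliceModelStructure}). Since the local projective structure is a left Bousfield localization of the projective one, a locally fibrant $X$ is in particular projectively fibrant, hence objectwise a Kan complex; moreover the global point endows each $X(U)$ with a compatible basepoint $x_U$, so that $X$ is a presheaf of pointed Kan complexes.

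Next I would set $\tilde X := E_{n+1}(X)$, the objectwise Eilenberg subcomplex taken with respect to the basepoints $x_U$. By Proposition~\ref{PropertiesOfTheReducedsSetInclusion} this lies in $[C^{\op},\mathrm{sSet}_n]$, it is again objectwise Kan (the Eilenberg subcomplex of a Kan complex being Kan), and the counit $\tilde X \to X$ is the objectwise subcomplex inclusion. I would then show that this counit is a $C$-local weak equivalence by comparing homotopy sheaves. Objectwise, the Eilenberg subcomplex theorem (the Remark following Proposition~\ref{PropertiesOfTheReducedsSetInclusion}, cf.\ \cite{May}) gives that $E_{n+1}(X(U),x_U)\to X(U)$ induces an isomorphism on $\pi_k(-,x_U)$ for $k\geq n+1$ and has trivial source for $k\leq n$. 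Hence on homotopy presheaves the comparison is an isomorphism in degrees $>n$, whereas in degrees $\leq n$ its source is the terminal presheaf. After sheafification the target too becomes trivial in degrees $\leq n$, because $X$ presents an $n$-connected object and so its homotopy sheaves $\pi_k$ vanish for $k\leq n$. Thus $\tilde X\to X$ induces isomorphisms on all homotopy sheaves, i.e.\ it is a $C$-local weak equivalence (\cite{Jardine}); consequently $\tilde X$ and $X$ become equivalent in $L_{W_C}[C^{\op},\mathrm{sSet}]$, and $\tilde X \in [C^{\op},\mathrm{sSet}_n]$ is the sought-after presentation.

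The main obstacle is exactly the discrepancy between objectwise and local connectedness: $X$ is $n$-connected only after sheafification, so the individual maps $E_{n+1}(X(U))\to X(U)$ are typically not weak equivalences and one cannot argue objectwise. The crux is to see that the low-degree homotopy discarded by the Eilenberg subcomplex is precisely the homotopy that dies under sheafification, which is why the argument should be run on homotopy sheaves rather than on stalks or objectwise; this also keeps it valid under the sole hypothesis that $C$ has a terminal object, with no recourse to enough points. A secondary point requiring care is the basepoint bookkeeping---taking $E_{n+1}$ consistently with respect to the global section $x_U$---so that the homotopy-sheaf comparison is genuinely based at the distinguished point and the resulting equivalence is one of \emph{pointed} objects.
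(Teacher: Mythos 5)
Your construction is the same as the paper's -- present the pointed object by a pointed, objectwise-Kan simplicial presheaf (the point being representable, hence cofibrant, so the basepoint is an honest global section) and pass to the objectwise Eilenberg subcomplex $E_{n+1}(X,\ast)$ -- but the key verification is carried out differently, and your version is the more careful one. The paper simply asserts that the counit $E_{n+1}(\mathrm{Ex}^\infty X,\ast)\to \mathrm{Ex}^\infty X$ is a \emph{global} weak equivalence, which via the Remark after Proposition~\ref{PropertiesOfTheReducedsSetInclusion} requires each value $X(U)$ to be an $n$-connected Kan complex; as you rightly point out, $n$-connectedness of the presented object is only a condition after sheafification (the paper itself draws exactly this distinction between ``$C$-connected'' and connected later, around Theorem~\ref{CAcyclicityAndLocalFibrancy}), so this step is not justified as stated for an arbitrary fibrant presentation -- think of a locally fibrant model of $\mathbf{B}G$, whose $\pi_0$-presheaf $U\mapsto H^1(U,G)$ is typically nontrivial. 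Your replacement argument -- the counit is an isomorphism on homotopy presheaves at the basepoint in degrees $>n$, has $n$-reduced (hence objectwise $n$-connected) source, and the target's homotopy sheaves vanish in degrees $\leq n$ by hypothesis, so the counit is a local weak equivalence -- is exactly what closes this gap, at the cost of invoking the homotopy-sheaf characterization of local weak equivalences (valid here since $\mathrm{Sh}_\infty(C)$ is taken to be the hypercompletion) rather than a purely objectwise statement. The basepoint bookkeeping you flag is also handled correctly: since $E_{n+1}(X,\ast)$ is reduced, the fibered-over-vertices form of the homotopy-sheaf condition collapses to the comparison at the distinguished section. In short: same construction, but your proof of the comparison being an equivalence is the one that actually works in the stated generality.
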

\proof
  Let $X \in [C^{\mathrm{op}}, \mathrm{sSet}]$ be a simplicial presheaf presenting
  the given pointed, connected object. 
  Then its objectwise Kan fibrant replacement
  $\mathrm{Ex}^\infty X$ is still a presentation, fibrant in the 
  global projective model structure. 
  Since the terminal object in $\mathbf{H}$ is presented by the 
  terminal simplicial presheaf and since by assumption on $C$
  this is representable
  and hence cofibrant in the projective model structure, the point inclusion is
  presented by a morphism of simplicial presheaves $* \to \mathrm{Ex}^\infty X$,
  hence by a presheaf of pointed simplicial sets 
  $(* \to \mathrm{Ex}^\infty X) \in [C^{\mathrm{op}}, \mathrm{sSet}^{*/}]$. 
  So with Proposition
  \ref{PropertiesOfTheReducedsSetInclusion}
  we obtain the presheaf of $n$-reduced simplicial sets
  $$
    E_{n+1}(\mathrm{Ex}^\infty X, *) 
	 \in 
	[C^{\mathrm{op}}, \mathrm{sSet}_n] 
	  \hookrightarrow
	[C^{\mathrm{op}}, \mathrm{sSet}]
  $$
  and the inclusion $E_{n+1}(\mathrm{Ex}^\infty X,*) \to \mathrm{Ex}^\infty X$
  is a global weak equivalence, hence a local weak equivalence, hence
  exhibits $E_{n+1}(\mathrm{Ex}^\infty X,*)$ as another presentation of the
  object in question.
\endofproof

We next describe a slightly enhanced version of the model structure on reduced simplicial sets 
introduced by Quillen in \cite{QuillenRHT}.   
\begin{proposition}
  \label{ModelStructureOnReducedSimplicialSets}
  The category $\mathrm{sSet}_{0}$ of reduced simplicial sets 
  carries a left proper combinatorial model category structure
  whose weak equivalences and cofibrations are those in $\mathrm{sSet}_{\mathrm{Quillen}}$
  under the inclusion $\mathrm{sSet}_{0} \hookrightarrow \mathrm{sSet}$.
\end{proposition}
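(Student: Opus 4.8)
The plan is to obtain the model structure from the recognition theorem for combinatorial model categories (Jeff Smith's theorem, see e.g.\ Proposition A.2.6.13 in \cite{Lurie}), transporting the required properties from $\mathrm{sSet}_{\mathrm{Quillen}}$ along the reflective inclusion $\iota : \mathrm{sSet}_0 \hookrightarrow \mathrm{sSet}$ with reflector $\mathrm{red}_0$ from Proposition~\ref{PropertiesOfTheReducedsSetInclusion}. First I would record that $\mathrm{sSet}_0$ is locally presentable: it is the full subcategory of the presheaf topos $\mathrm{sSet}$ on those $X$ with $\mathrm{sk}_0 X = \ast$, which is closed under limits and under filtered colimits, hence reflective and accessibly embedded; its colimits are computed by applying $\mathrm{red}_0$ to the corresponding colimit in $\mathrm{sSet}$. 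Observe also that $\ast$ is a zero object, so that every object is cofibrant for the cofibrations declared below.

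As generating cofibrations I would take $I_0 := \{\, \mathrm{red}_0(\partial\Delta[n] \hookrightarrow \Delta[n]) \,\}_{n \geq 0}$, and as weak equivalences the class $W := \iota^{-1}(W_{\mathrm{wh}})$ of those maps $f$ for which $\iota f$ is a simplicial weak homotopy equivalence. Since $\iota$ preserves filtered colimits, the induced functor of arrow categories is accessible, and as $W_{\mathrm{wh}}$ is an accessible, accessibly embedded subcategory of the arrow category of $\mathrm{sSet}$, the class $W$ is again accessible; it visibly satisfies two-out-of-three and is closed under retracts. The two substantial hypotheses of Smith's theorem then read: (i) every map in $\mathrm{inj}(I_0)$ lies in $W$, and (ii) the class $W \cap \mathrm{cof}(I_0)$ is closed under pushout and transfinite composition. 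For (i) I would use the adjunction $\mathrm{red}_0 \dashv \iota$: a map $f$ of reduced simplicial sets has the right lifting property against $I_0$ if and only if $\iota f$ has the right lifting property against all $\partial\Delta[n] \hookrightarrow \Delta[n]$, i.e.\ is a trivial Kan fibration, and in particular $\iota f \in W_{\mathrm{wh}}$.

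The crux --- and the place where the reduced setting is delicate, because colimits in $\mathrm{sSet}_0$ are \emph{a priori} not those of $\mathrm{sSet}$ --- is the following observation: the class of simplicial sets with a single $0$-simplex is closed in $\mathrm{sSet}$ under pushouts and transfinite composites, since the set of $0$-simplices of such a connected colimit is the corresponding colimit of one-point sets and hence again a point. Therefore pushouts and transfinite composites of reduced simplicial sets \emph{along maps of reduced simplicial sets} are created by $\iota$, i.e.\ computed as in $\mathrm{sSet}$. Granting this, condition (ii) is immediate: a pushout or transfinite composite of maps in $W \cap \mathrm{cof}(I_0)$ becomes, after applying $\iota$, a pushout or transfinite composite of acyclic monomorphisms in $\mathrm{sSet}_{\mathrm{Quillen}}$, hence again an acyclic monomorphism, hence lies again in $W \cap \mathrm{cof}(I_0)$. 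The same observation shows that $\mathrm{cof}(I_0)$ is exactly the class of monomorphisms, since the standard skeletal cell-attachment argument for monomorphisms now takes place, legitimately, inside $\mathrm{sSet}_0$; thus the cofibrations and weak equivalences are as claimed. This establishes the combinatorial model structure. Left properness follows from the same colimit-agreement together with left properness of $\mathrm{sSet}_{\mathrm{Quillen}}$: every object is cofibrant, and a pushout of a weak equivalence along a monomorphism of reduced simplicial sets is computed as in $\mathrm{sSet}$, hence is again a weak equivalence. The main obstacle is exactly this colimit-compatibility observation; everything else is a routine transfer of the corresponding facts for $\mathrm{sSet}_{\mathrm{Quillen}}$ (and recovers Quillen's model structure of \cite{QuillenRHT}).
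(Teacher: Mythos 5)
Your proposal is correct and follows essentially the same route as the paper, which likewise obtains the model structure from Proposition A.2.6.13 of \cite{Lurie} applied to the set of reduced boundary inclusions; you simply spell out the verification of the hypotheses (local presentability, accessibility of $W$, the adjunction argument for $\mathrm{inj}(I_0)\subseteq W$, and the key point that pushouts and transfinite composites of reduced simplicial sets agree with those in $\mathrm{sSet}$) that the paper leaves implicit. Note that you correctly take the generating set to be $\{\mathrm{red}(\partial\Delta[n]\to\Delta[n])\}$, whereas the paper's displayed set $\{\mathrm{red}(\Lambda^k[n]\to\Delta[n])\}$ appears to be a typo, since its saturation would not yield all monomorphisms as cofibrations.
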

\proof
 This enhanced version of the classical theorem from \cite{QuillenRHT} follows from 
  Proposition A.2.6.13 in \cite{Lurie}, taking the set $C_0$ there to be
  $$
    C_0 := \{ \mathrm{red}(\Lambda^k[n] \to \Delta[n])\}_{n \in \mathbb{N}, 0 \leq k \leq n}
	\,,
  $$
  the image 
  of the generating cofibrations in 
  $\mathrm{sSet}_{\mathrm{Quillen}}$ 
  under the left adjoint $\mathrm{red}$ to the inclusion 
  functor (Proposition~\ref{PropertiesOfTheReducedsSetInclusion}).
\endofproof
\begin{lemma}
  \label{HomotopyPropertyOfInclusionOfReducedsSets}
  A fibration $f\colon X\to Y$ in $\mathrm{sSet}_{0}$ (for 
  the model structure of Proposition~\ref{ModelStructureOnReducedSimplicialSets}) is a 
  Kan fibration precisely if
  it has the right lifting property against the morphism
  $(* \to S^1) := \mathrm{red}(\Delta[0] \to \Delta[1])$.  In 
  particular every fibrant object in $\mathrm{sSet}_{0}$ 
  is a Kan complex.  
 \end{lemma}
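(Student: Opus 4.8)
The plan is to exploit the reflection $\mathrm{red}\dashv\iota$ of Proposition~\ref{PropertiesOfTheReducedsSetInclusion} to transport lifting problems between $\mathrm{sSet}_0$ and $\mathrm{sSet}$, and then to observe that among the reduced horn inclusions only the $1$-dimensional one fails to become a trivial cofibration in $\mathrm{sSet}_0$. Recall that $f$ is a Kan fibration exactly when $\iota f$ has the right lifting property against every horn inclusion $\Lambda^k[n]\hookrightarrow\Delta[n]$ with $n\ge 1$. The strategy is to show that, once $f$ is already a fibration in $\mathrm{sSet}_0$, all horns of dimension $n\ge 2$ are lifted automatically, so that the only remaining condition is the one for $n=1$, which is precisely the stated lifting property against $(*\to S^1)=\mathrm{red}(\Delta[0]\to\Delta[1])$.

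First I would record the adjunction translation. Since $\mathrm{red}$ is left adjoint to $\iota$, for any map $\phi\colon A\to B$ in $\mathrm{sSet}$ a lifting problem for $\mathrm{red}(\phi)$ against $f$ in $\mathrm{sSet}_0$ corresponds bijectively, under the isomorphisms $\mathrm{Hom}_{\mathrm{sSet}_0}(\mathrm{red}(-),X)\cong \mathrm{Hom}_{\mathrm{sSet}}(-,\iota X)$, to a lifting problem for $\phi$ against $\iota f$ in $\mathrm{sSet}$. Hence $f$ has the right lifting property against $\mathrm{red}(\phi)$ if and only if $\iota f$ has it against $\phi$. Applied to $\phi=(\Lambda^k[1]\hookrightarrow\Delta[1])$ --- whose reduction is exactly $(*\to S^1)$ for either value of $k$ --- this already yields the ``only if'' direction and identifies the stated condition with the $n=1$ horn-filling condition on $\iota f$.

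The key step, and the main obstacle, is to prove that for $n\ge 2$ the reduced horn inclusion $\mathrm{red}(\Lambda^k[n]\hookrightarrow\Delta[n])$ is a trivial cofibration in $\mathrm{sSet}_0$. It is a cofibration because $\mathrm{red}$ carries the generating cofibrations of $\mathrm{sSet}_{\mathrm{Quillen}}$ to the generating set $C_0$ of the model structure of Proposition~\ref{ModelStructureOnReducedSimplicialSets} and more generally preserves monomorphisms. For the weak equivalence I would use that $\mathrm{red}(Z)=Z/\mathrm{sk}_0 Z$ is the pushout of $*\leftarrow \mathrm{sk}_0 Z\hookrightarrow Z$ along a cofibration, hence a model for the homotopy cofiber of $\mathrm{sk}_0 Z\hookrightarrow Z$. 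For $n\ge 2$ the horn $\Lambda^k[n]$ contains all vertices of $\Delta[n]$, so $\mathrm{sk}_0\Lambda^k[n]=\mathrm{sk}_0\Delta[n]$; thus the map of cofiber sequences induced by $\Lambda^k[n]\hookrightarrow\Delta[n]$ is the identity on the $0$-skeleton and the acyclic horn inclusion on the total object, and the gluing lemma for homotopy pushouts gives that $\mathrm{red}(\Lambda^k[n])\to\mathrm{red}(\Delta[n])$ is a weak equivalence. This is where the dimension matters: for $n=1$ one has $\mathrm{red}(\Delta[1])=S^1$, which is not contractible, so $(*\to S^1)$ is not a weak equivalence and the argument genuinely breaks down --- exactly the content that must be imposed by hand. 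Note that $\mathrm{red}$ is \emph{not} left Quillen (it does not preserve all weak equivalences), so one cannot simply invoke preservation of trivial cofibrations; the equality of $0$-skeleta is what makes the gluing lemma applicable.

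Assembling the pieces: since $f$ is a fibration in $\mathrm{sSet}_0$ it lifts against every trivial cofibration, in particular against $\mathrm{red}(\Lambda^k[n]\hookrightarrow\Delta[n])$ for all $n\ge 2$, so by the adjunction $\iota f$ lifts against all horns of dimension $\ge 2$. If in addition $f$ lifts against $(*\to S^1)$, then $\iota f$ also lifts against the $1$-dimensional horns, whence $\iota f$ is a Kan fibration; the converse was noted above. For the final assertion I would observe that any map $X\to *$ in $\mathrm{sSet}_0$ has the right lifting property against $(*\to S^1)$ for free, since a lift $S^1\to X$ extending the basepoint may always be taken to send the nondegenerate edge to the degenerate edge $s_0$ of the unique vertex of the reduced complex $X$. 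Hence a fibrant $X$ automatically satisfies the hypothesis of the main statement and is therefore a Kan complex.
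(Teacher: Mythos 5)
Your argument is correct. The paper does not actually prove this lemma --- it simply cites V Lemma 6.6 and V Corollary 6.8 of Goerss--Jardine --- so what you have written is a self-contained proof of a statement the paper outsources, and it follows the standard line of reasoning behind the cited result: use the adjunction $\mathrm{red}\dashv\iota$ to convert lifting problems, and observe that for $n\ge 2$ the horn $\Lambda^k[n]$ contains every vertex of $\Delta[n]$, so that $\mathrm{sk}_0\Lambda^k[n]=\mathrm{sk}_0\Delta[n]$ and the gluing lemma applied to the pushout presentation $\mathrm{red}(Z)=Z/\mathrm{sk}_0Z$ shows $\mathrm{red}(\Lambda^k[n]\hookrightarrow\Delta[n])$ is an acyclic cofibration in $\mathrm{sSet}_0$, hence lifted by any fibration there; the $n=1$ case is exactly the extra hypothesis, and your observation that every reduced $X$ lifts $*\to S^1$ against $X\to *$ (send the nondegenerate edge of $S^1$ to $s_0$ of the unique vertex) gives the final assertion. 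One cosmetic slip: the horn inclusions are the generating \emph{acyclic} cofibrations of $\mathrm{sSet}_{\mathrm{Quillen}}$, not the generating cofibrations (the paper's own description of the set $C_0$ has the same confusion), so the cofibrancy of the reduced horn inclusions should rest only on your second justification, namely that $\mathrm{red}$ preserves monomorphisms --- which is true, and which you do supply.
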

\begin{proof} 
The first statement appears as V Lemma 6.6. in \cite{GoerssJardine}. 
The second (an immediate consequence) as V Corollary 6.8.
\end{proof} 

\begin{proposition}
  The adjunction
  $$
    \xymatrix{
	  \mathrm{sSet}_{0}
	  \ar@{^{(}->}@<+4pt>[r]^{i}
	  \ar@{<-}@<-4pt>[r]_{E_1}
	  &
	  \mathrm{sSet}_{\mathrm{Quillen}}^{*/}
	}
  $$
  from Proposition~\ref{PropertiesOfTheReducedsSetInclusion}
  is a Quillen adjunction between the model structure from 
  Proposition~\ref{ModelStructureOnReducedSimplicialSets}
  and the co-slice model structure, Proposition~\ref{SliceModelStructure},
  of $\mathrm{sSet}_{\mathrm{Quillen}}$ under the point. 
  This presents the 
  full inclusion
  $$
    {(\mathrm{Grpd}_\infty)}^{*/}_{\geq 1} \hookrightarrow
	{\mathrm{Grpd}_\infty}^{*/}
  $$
  of connected pointed $\infty$-groupoids into all pointed $\infty$-groupoids.
\end{proposition}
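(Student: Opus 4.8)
The statement has two essentially independent halves: first that $i \dashv E_1$ is a Quillen adjunction, and second that the induced derived adjunction presents the claimed full inclusion. The plan is to dispatch the first half formally from the way the two model structures were set up, and to concentrate the real effort on the second, whose homotopical heart is the Eilenberg-subcomplex counit property recorded in the Remark following Proposition~\ref{PropertiesOfTheReducedsSetInclusion}.

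For the Quillen adjunction I would argue that $i$ is left Quillen. The point is that in both model categories the cofibrations and the weak equivalences are created by the forgetful functor to $\mathrm{sSet}_{\mathrm{Quillen}}$: on the left by Proposition~\ref{ModelStructureOnReducedSimplicialSets}, and in the coslice $\mathrm{sSet}_{\mathrm{Quillen}}^{*/}$ by Proposition~\ref{SliceModelStructure}. Since the composite $\mathrm{sSet}_0 \xrightarrow{i} \mathrm{sSet}^{*/} \to \mathrm{sSet}$ is just the defining inclusion, $i$ preserves cofibrations and weak equivalences, hence also trivial cofibrations, and is therefore left Quillen. No lifting arguments are required.

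For the second half I would first record that in both model categories every object is cofibrant and the fibrant objects are exactly the Kan complexes (reduced Kan complexes on the left, by Lemma~\ref{HomotopyPropertyOfInclusionOfReducedsSets}; pointed Kan complexes on the right, since $(* \to Y)$ is fibrant iff $Y$ is Kan). Consequently, on fibrant-cofibrant objects $\mathbb{L}i$ is computed by $i$ on the nose: for a reduced Kan complex $X$ the object $i(X) = (* \to X)$ is already fibrant, so no fibrant replacement intervenes and the derived unit at $X$ coincides with the ordinary unit $X \xrightarrow{\cong} E_1\,i(X)$ of Proposition~\ref{PropertiesOfTheReducedsSetInclusion}, which is an isomorphism precisely because $X$ is $0$-reduced. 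The derived unit is thus an equivalence on all fibrant-cofibrant objects, which suffices to conclude that $\mathbb{L}i$ is homotopically fully faithful and hence presents a full inclusion onto its essential image.

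It then remains to identify that essential image with the connected pointed $\infty$-groupoids. One inclusion is immediate: a reduced simplicial set has a single vertex, so it presents a connected (that is, $\geq 1$) object. For the converse I would take a connected pointed Kan complex $(* \to Y)$ and pass to $E_1(Y,*)$, which is $0$-reduced by construction and fibrant in $\mathrm{sSet}_0$ because $E_1$, being right Quillen, preserves fibrant objects; the counit $i\,E_1(Y,*) \to Y$ is then a weak equivalence by the Remark following Proposition~\ref{PropertiesOfTheReducedsSetInclusion} (whose hypotheses, $Y$ Kan and connected, are exactly at hand), so $\mathbb{L}i\big(E_1(Y,*)\big) \simeq Y$ and $(* \to Y)$ lies in the image. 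Hence the essential image is precisely $(\mathrm{Grpd}_\infty)^{*/}_{\geq 1}$, and $\mathbb{L}i$ presents the asserted inclusion. I expect the only genuine subtlety to be the fibrancy bookkeeping — confirming that $E_1(Y,*)$ is fibrant and that the counit lands where claimed — since the homotopical input, the equivalence $E_1(Y,*) \simeq Y$ for connected Kan $Y$, is already available to us.
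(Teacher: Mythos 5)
Your proof is correct and follows essentially the same route as the paper: both establish the Quillen adjunction by noting that $i$ preserves cofibrations and all weak equivalences, both use Lemma~\ref{HomotopyPropertyOfInclusionOfReducedsSets} (fibrant reduced simplicial sets are Kan) to see that the derived unit reduces to the literal identity $E_1 \circ i = \mathrm{id}$ after fibrant replacement, and both identify the essential image via the Eilenberg-subcomplex counit equivalence for connected Kan complexes. The only cosmetic difference is that you inline that last step, whereas the paper delegates it to Proposition~\ref{PresentationOfPointedConnectedObjectsByPresheavesOfReducedsSets}, whose proof is the same argument.
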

\proof
  It is clear that the inclusion $i\colon \mathrm{sSet}_0\hookrightarrow 
  \mathrm{sSet}^{\ast/}_{\mathrm{Quillen}}$ preserves cofibrations 
  and acyclic cofibrations, in fact all weak equivalences.
  Since the point is cofibrant in $\mathrm{sSet}_{\mathrm{Quillen}}$, 
  the model structure on the 
  right is by Proposition~\ref{SliceModelPresentsInfinitySlicing} 
  indeed a presentation of $\mathrm{Grpd}_{\infty}^{*/}$.
  
  We claim now that the derived $\infty$-adjunction of this 
  Quillen adjunction presents a homotopy full and faithful inclusion whose
  essential image consists of the connected pointed objects.
  To show this it is 
  sufficient to show that for the derived functors there is a natural weak equivalence
  $$
    \mathrm{id} \simeq \mathbb{R}E_1\circ \mathbb{L}i
	\,.
  $$
  This is the case, because by Proposition~\ref{HomotopyPropertyOfInclusionOfReducedsSets} 
  the composite derived functors are computed by
  the composite ordinary functors precomposed with a fibrant replacement 
  functor $P$, so that
  we have a natural morphism
  $$
    X \stackrel{\simeq}{\to} P X = E_1 \circ i (P X) \simeq 
	(\mathbb{R}E_1)\circ (\mathbb{L}i) (X)
    \,.	
  $$
  Hence $\mathbb{L} i$ is homotopy full-and faithful and 
  by Proposition~\ref{PresentationOfPointedConnectedObjectsByPresheavesOfReducedsSets}
  its essential image consists of the connected pointed objects. 
\endofproof

\subsection{Groupoids}
\label{StrucInftyGroupoids}

We discuss aspects of the presentation of \emph{groupoid objects} in
an $\infty$-sheaf topos $\mathbf{H} = \mathrm{Sh}_\infty(C)$,
notably of the realization $\infty$-functor
$$
  \varinjlim : \mathrm{Grpd}(\mathbf{H}) \to \mathbf{H}
$$
given by the $\infty$-colimit over the underlying simplicial diagram of the
groupoid object.

In \cite{BergnerInvertibleb} a presentation of groupoid objects in 
$\infty\mathrm{Grpd}$ is discussed in terms of simplicial objects in $\mathrm{sSet}_{\mathrm{Quillen}}$, 
called `invertible Segal spaces' in \cite{BergnerInvertibleb}. This has a straightforward generalization
to a presentation of groupoid objects in a sheaf $\infty$-topos $\mathrm{Sh}_\infty(C)$
by simplicial objects in a category of simplicial presheaves. 
We discuss here a presentation of 
homotopy colimits over such simplical diagrams given by the 
\emph{diagonal simplicial set} or the \emph{total simplicial set} 
associated with a bisimplicial set. This serves as the basis for the 
discussion of universal weakly principal simplicial bundles below in 
Section \ref{Universal princial bundles}.
For some general background on homotopy colimits 
the way we need them here, a good survey is \cite{Gambino}.

\medskip

\begin{proposition}
  \label{TheSimplexAndTheFatSimplex}
  Write $[\Delta, \mathrm{sSet}]$ for the category of cosimplicial simplicial sets.
  For $\mathrm{sSet}$ equipped with its cartesian monoidal structure, the
  tensor unit is the terminal object $*$.
  \begin{itemize}
    \item The \emph{simplex functor}
	  $$
	    \Delta : [n] \mapsto \Delta[n] := \Delta(-,[n])
	  $$
	  is a cofibrant resolution of $*$ in $[\Delta, \mathrm{sSet}_{\mathrm{Quillen}}]_{\mathrm{Reedy}}$;
    \item 
     the \emph{fat simplex functor}
     $$
	   \mathbf{\Delta} : [n] \mapsto N(\Delta/[n])
     $$	 
	 is a cofibrant resolution of $*$ in $[\Delta, \mathrm{sSet}_{\mathrm{Quillen}}]_{\mathrm{proj}}$.
  \end{itemize}
\end{proposition}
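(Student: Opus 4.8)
The plan is to treat the two bullets separately, using at the outset that in both the Reedy and the projective model structures on $[\Delta,\mathrm{sSet}_{\mathrm{Quillen}}]$ the weak equivalences are exactly the levelwise weak homotopy equivalences. Hence in each case the augmentation to the constant cosimplicial object $\ast$ is a weak equivalence as soon as every component is a weak homotopy equivalence, and ``cofibrant resolution of $\ast$'' reduces to two checks: (i) each simplicial set $\Delta[n]$, resp.\ $N(\Delta/[n])$, is contractible, and (ii) the functor is cofibrant in the relevant structure. Claim (i) is immediate: $\Delta[n]$ is the standard simplex, which is weakly contractible, while $\Delta/[n]$ has the terminal object $\mathrm{id}_{[n]}$ (any $f\colon[k]\to[n]$ admits the unique morphism $f$ into $\mathrm{id}_{[n]}$), so its nerve is contractible. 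Thus both augmentations are levelwise, hence Reedy resp.\ projective, weak equivalences.

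For the first bullet I would establish Reedy cofibrancy of $\Delta$ by the standard latching-object computation. The latching object of the cosimplicial simplicial set $[n]\mapsto\Delta[n]$ at stage $n$ is the boundary $\partial\Delta[n]$, and the latching map $L_n\Delta\to\Delta[n]$ is precisely the boundary inclusion $\partial\Delta[n]\hookrightarrow\Delta[n]$, a monomorphism and hence a cofibration in $\mathrm{sSet}_{\mathrm{Quillen}}$. Since a cosimplicial object is Reedy cofibrant exactly when all its latching maps are cofibrations, this shows that $\Delta$ is Reedy cofibrant (this is classical; see for instance \cite{GoerssJardine}). Note that $\Delta$ is \emph{not} projectively cofibrant, which is precisely why the fattened version is required for the second bullet.

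The heart of the proof is the projective cofibrancy of the fat simplex $\mathbf{\Delta}$. I would exhibit $\mathbf{\Delta}$ as the geometric realization of a simplicial object $G_\bullet$ in cosimplicial sets. A $k$-simplex of $N(\Delta/[n])$ is a chain $[m_0]\to\cdots\to[m_k]$ together with a map $[m_k]\to[n]$, so in simplicial degree $k$ the cosimplicial set $G_k\colon[n]\mapsto N(\Delta/[n])_k$ is the coproduct $\coprod_{[m_0]\to\cdots\to[m_k]}\Delta([m_k],-)$ of corepresentable cosimplicial sets, and by the density of representables one has $\mathbf{\Delta}\cong\int^{[k]}\Delta[k]\cdot G_k=|G_\bullet|$. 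Each $G_k$ is a coproduct of corepresentables and hence projectively cofibrant. The remaining point, which I expect to be the main obstacle, is to verify that $G_\bullet$ is Reedy cofibrant as a simplicial object in $[\Delta,\mathrm{sSet}]_{\mathrm{proj}}$, i.e.\ that each latching map $L_kG\to G_k$ is a projective cofibration. Here I would use the Eilenberg--Zilber decomposition in the simplicial ($k$) direction, which identifies $G_k\cong L_kG\,\sqcup\,\coprod_{c}\Delta([m_k(c)],-)$, the second summand indexed by the \emph{nondegenerate} $k$-chains $c$; this splitting is natural in the cosimplicial variable $[n]$ because a map $[n]\to[n']$ only postcomposes the final morphism $[m_k]\to[n]$ and so preserves (non)degeneracy of the chain. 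Thus $L_kG\to G_k$ is the inclusion of a coproduct summand whose complement is a coproduct of corepresentables, hence a projective cofibration. With $G_\bullet$ Reedy cofibrant, the conclusion follows because realization $|{-}|\colon ([\Delta,\mathrm{sSet}]_{\mathrm{proj}})^{\Delta^{\mathrm{op}}}_{\mathrm{Reedy}}\to[\Delta,\mathrm{sSet}]_{\mathrm{proj}}$ is left Quillen and therefore sends the Reedy cofibrant $G_\bullet$ to a projectively cofibrant object; together with the contractibility from paragraph one this exhibits $\mathbf{\Delta}$ as a projectively cofibrant resolution of $\ast$. Alternatively, one may simply invoke that $\mathbf{\Delta}$ is the standard cosimplicial frame underlying the Bousfield--Kan formula for homotopy colimits over $\Delta^{\mathrm{op}}$, whose projective cofibrancy is recorded in \cite{Dugger} and in the survey \cite{Gambino}.
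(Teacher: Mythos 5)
The paper does not actually prove this proposition: it is stated as background, with the reader pointed to the survey \cite{Gambino} (cited immediately before) for homotopy colimits ``the way we need them here''. So there is no argument in the paper to compare against; judged on its own, your proof is correct and complete. The easy parts (levelwise contractibility of $\Delta[n]$ and of $N(\Delta/[n])$ via the terminal object $\mathrm{id}_{[n]}$, and Reedy cofibrancy of the simplex via the latching maps $\partial\Delta[n]\hookrightarrow\Delta[n]$) are handled exactly as one should. The genuinely nontrivial point, projective cofibrancy of $\mathbf{\Delta}$, is also done correctly: the identification $N(\Delta/[n])_k\cong\coprod_{[m_0]\to\cdots\to[m_k]}\Delta([m_k],[n])$ is right, the Eilenberg--Zilber splitting of $G_k$ off its latching object is legitimately natural in $[n]$ since $[n]\to[n']$ only postcomposes the augmentation and so cannot create or destroy identities in the chain, and the conclusion via the left Quillen realization functor from Reedy-cofibrant simplicial objects in $[\Delta,\mathrm{sSet}]_{\mathrm{proj}}$ is standard. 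Your closing remark is also apt: this is precisely the cofibrancy statement underlying the Bousfield--Kan formula, recorded for instance in \cite{Gambino} (your use of the term ``cosimplicial frame'' there is slightly off --- it is the projectively cofibrant replacement of the terminal weight --- but nothing rests on it). In short, you have supplied a correct proof of a statement the paper leaves to the literature.
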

\begin{proposition}
  \label{BousfieldKanFormula}
  Let $C$ be a simplicial model category and $F : \Delta^{\mathrm{op}} \to C$
  a simplicial diagram
  \begin{enumerate}
    \item 
	  If every monomorphism in $C$ is a cofibration, then the homotopy colimit over
	  $F$ is given by the realization, i.e.\ 
	  $$
	    \mathbb{L}\varinjlim F \simeq \int^{[n] \in \Delta} F([n]) \cdot \Delta[n]
		\,.
	  $$
	\item
	  If $F$ takes values in cofibrant objects, then the homotopy colimit 
	  over $F$ is given by the fat realization, i.e.\ 
	  $$
	    \mathbb{L}\varinjlim F \simeq \int^{[n] \in \Delta} F([n]) \cdot \mathbf{\Delta}[n]
		\,.
	  $$
	\item If $F$ is Reedy cofibrant, then the canonical morphism
	$$
	  \int^{[n] \in \Delta} F([n]) \cdot \mathbf{\Delta}[n]
	  \to 
	  \int^{[n] \in \Delta} F([n]) \cdot \Delta[n]
	$$
	(the \emph{Bousfield-Kan map})
	is a weak equivalence.
  \end{enumerate}
\end{proposition}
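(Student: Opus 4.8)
The plan is to treat all three statements as instances of one principle: the homotopy colimit of a simplicial diagram $F : \Delta^{\mathrm{op}} \to C$ is computed by the derived coend $\int^{[n] \in \Delta} F([n]) \cdot W[n]$ against any cofibrant resolution $W$ of the constant cosimplicial object $*$, once the cofibrancy of $W$ is matched by a corresponding cofibrancy of $F$. The starting observation is that the ordinary colimit is the coend against the terminal weight, $\varinjlim F \cong \int^{[n]} F([n]) \cdot *$, so that $\mathbb{L}\varinjlim F$ is the left derived functor of the coend bifunctor
$$
  \otimes_\Delta : [\Delta, \mathrm{sSet}] \times [\Delta^{\mathrm{op}}, C] \to C,
  \qquad
  (W, F) \longmapsto \int^{[n]} F([n]) \cdot W[n]
$$
in the weight variable. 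The technical engine, which I would establish first, is that this bifunctor is suitably homotopical: using that $C$ is a simplicial model category, the copower $(-)\cdot(-) : \mathrm{sSet} \times C \to C$ is a left Quillen bifunctor, and from this the coend inherits the pushout--product property for the Reedy structure on $[\Delta, \mathrm{sSet}]$ paired with the Reedy structure on $[\Delta^{\mathrm{op}}, C]$ (pairing~(a)), while for a projectively cofibrant weight it is homotopical in the objectwise cofibrant diagrams, recovering the two-sided bar construction (pairing~(b); see \cite{Gambino}).

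Granting this, the three parts separate cleanly. For statement~(2), the fat simplex $\mathbf{\Delta}$ is a projectively cofibrant resolution of $*$ by Proposition~\ref{TheSimplexAndTheFatSimplex}, so pairing~(b) identifies $\int^{[n]} F([n]) \cdot \mathbf{\Delta}[n]$ with the Bousfield--Kan homotopy colimit, valid as soon as $F$ is objectwise cofibrant --- precisely the hypothesis that $F$ take values in cofibrant objects. For statement~(1), the simplex $\Delta$ is a Reedy cofibrant resolution of $*$ by the same Proposition, so pairing~(a) shows that $\int^{[n]} F([n]) \cdot \Delta[n]$ computes the homotopy colimit whenever $F$ is Reedy cofibrant; I would then argue that under the hypothesis ``every monomorphism of $C$ is a cofibration'' every simplicial object of $C$ is automatically Reedy cofibrant, since the latching maps $L_n F \to F_n$ are monomorphisms (as is $\emptyset \to F_0$), hence cofibrations.

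For statement~(3), both weights are Reedy cofibrant --- $\mathbf{\Delta}$ because projectively cofibrant weights are Reedy cofibrant --- and the Bousfield--Kan map is the map of coends induced by the canonical weight morphism $\mathbf{\Delta}[n] = N(\Delta/[n]) \to \Delta[n]$. Each of its components is a weak equivalence, since $\Delta/[n]$ has the terminal object $\mathrm{id}_{[n]}$ and hence contractible nerve while $\Delta[n]$ is itself contractible; thus $\mathbf{\Delta} \to \Delta$ is an objectwise, a fortiori Reedy, weak equivalence of Reedy cofibrant weights. With $F$ Reedy cofibrant, the functor $(-)\otimes_\Delta F$ preserves weak equivalences between Reedy cofibrant weights by Ken Brown's lemma applied to pairing~(a), and therefore sends $\mathbf{\Delta} \to \Delta$ to the asserted weak equivalence.

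The step I expect to be the main obstacle is not the abstract bifunctor bookkeeping but the clean identification in statement~(1) of \emph{every} simplicial object of $C$ as Reedy cofibrant, which rests on the latching maps being monomorphisms. This is transparent when $C$ is a category of simplicial (pre)sheaves --- the case that is actually used below, where monomorphisms are the objectwise, degreewise injections --- via the Eilenberg--Zilber decomposition of a simplicial object into degenerate and nondegenerate parts; for a fully general simplicial model category $C$ this is the point that would require the most care, and I would either fold it into the standing hypotheses on $C$ or restrict the statement to the (pre)sheaf setting in which it is invoked.
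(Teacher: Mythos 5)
The paper offers no proof of Proposition~\ref{BousfieldKanFormula} at all: it is recalled as a standard fact, with \cite{Gambino} cited just above as the background reference, so there is no in-text argument to compare yours against. Judged on its own terms, your proof is the standard one and its architecture is sound: the coend against a weight is a left Quillen bifunctor for the (Reedy, Reedy) pairing and is homotopical in objectwise cofibrant diagrams for projectively cofibrant weights, $\Delta$ and $\mathbf{\Delta}$ are respectively Reedy and projectively cofibrant resolutions of the terminal weight (which is exactly Proposition~\ref{TheSimplexAndTheFatSimplex}), and part~(3) follows from Ken Brown's lemma applied to the objectwise weak equivalence $\mathbf{\Delta}\to\Delta$ of Reedy cofibrant weights. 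Parts~(2) and~(3) are complete as you present them.

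The one substantive issue is the step you yourself flag in part~(1): the claim that ``every monomorphism is a cofibration'' forces every simplicial object to be Reedy cofibrant. This rests on the latching maps $L_nF\to F_n$ being monomorphisms, and that is \emph{not} a formal consequence of the hypotheses on $C$; it is where the Eilenberg--Zilber lemma for $\Delta$ (unique representation of a degenerate simplex as $s^*y$ with $s$ a surjection and $y$ nondegenerate, equivalently the absolute-pushout property of surjections in $\Delta$) genuinely enters. That lemma is available for simplicial objects in a (pre)sheaf topos, which covers the only place the paper uses part~(1), namely Proposition~\ref{SimplicialHocolimGivenByDiagonal} for bisimplicial (pre)sheaves, where the relevant Reedy cofibrancy is classical (cf.\ Chapter IV of \cite{GoerssJardine}). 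So your proposed repair --- either adding the latching-map condition to the hypotheses or restricting the statement to the presheaf setting in which it is invoked --- is exactly right, but as the proposition is literally stated for an arbitrary simplicial model category, part~(1) is not fully proved without one of those amendments.
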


\begin{proposition}
  \label{SimplicialHocolimGivenByDiagonal}
  The homotopy colimit of a simplicial diagram in $\mathrm{sSet}_{\mathrm{Quillen}}$,
  or more generally of a simplicial diagram of simplicial presheaves, is given by
  the diagonal of the corresponding bisimplicial set / bisimplicial presheaf.
  
  More precisely, for
  $$
    F : \Delta^{\mathrm{op}} \to [C^{\mathrm{op}}, \mathrm{sSet}_{\mathrm{Quillen}}]_{\mathrm{inj}, \mathrm{loc}}
  $$
  a simplicial diagram, its homotopy colimit is given by
  $$
    \mathbb{L} \varinjlim F_\bullet 
	  \simeq 
	  d F : ([n] \mapsto (F_n)_n)
	\,.
  $$
\end{proposition}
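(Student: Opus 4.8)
The plan is to reduce the statement to the realization formula for homotopy colimits (Proposition~\ref{BousfieldKanFormula}) and then identify the resulting coend with the diagonal by a co-Yoneda computation.

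First I would apply the first clause of Proposition~\ref{BousfieldKanFormula} to the simplicial model category $[C^{\mathrm{op}}, \mathrm{sSet}]_{\mathrm{inj},\mathrm{loc}}$. The point is that in the injective model structure the cofibrations are exactly the (objectwise) monomorphisms, and left Bousfield localization does not change the cofibrations, so in the localized structure every monomorphism is still a cofibration. Hence the hypothesis of the first clause is met (and likewise for $\mathrm{sSet}_{\mathrm{Quillen}}$ itself), and it yields directly
$$
  \mathbb{L}\varinjlim F_\bullet
  \;\simeq\;
  \int^{[n]\in\Delta} F_n \cdot \Delta[n]
  \,,
$$
the realization of the simplicial diagram, computed as the homotopy colimit in the \emph{local} structure. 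Since this clause applies to the local structure itself, there is nothing further to check about the localization: the coend is a fixed colimit, and the cited proposition asserts that it presents the homotopy colimit in whatever such simplicial model category one works in.

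Second, I would identify this coend with the diagonal. The copower $F_n \cdot \Delta[n]$ over $\mathrm{sSet}$ is computed objectwise over $C$ and degreewise as the cartesian product with the corresponding set, so evaluating the coend at an object $U \in C$ in simplicial degree $m$ gives
$$
  \Big(\int^{[n]} F_n \cdot \Delta[n]\Big)(U)_m
  \;\cong\;
  \int^{[n]} (F_n(U))_m \times \Delta([m],[n])
  \,.
$$
For fixed $U$ and $m$ the assignment $[n] \mapsto (F_n(U))_m$ is an ordinary simplicial set, and the co-Yoneda (density) formula collapses this coend to its value at $[m]$, namely $(F_m(U))_m$. As $(dF)(U)_m = (F_m(U))_m$ by definition of the diagonal, this assembles into a natural isomorphism $\int^{[n]} F_n \cdot \Delta[n] \cong dF$, which together with the first step gives the claim. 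The case of a simplicial diagram in $\mathrm{sSet}_{\mathrm{Quillen}}$ is the special case of this computation with trivial $C$.

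The argument is short because the two ingredients do all the work, but the step requiring genuine care is the coend-to-diagonal identification: one must keep track of the fact that the copower is the \emph{pointwise} cartesian product and that the co-Yoneda reduction is applied one simplicial degree $m$ and one representable $U$ at a time, rather than to the bisimplicial object globally. Beyond this bookkeeping I do not expect a real obstacle, since every other ingredient is a direct invocation of Proposition~\ref{BousfieldKanFormula} and of the fact that injective cofibrations are monomorphisms.
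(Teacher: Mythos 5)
Your proposal is correct and follows essentially the same route as the paper: the paper likewise invokes the first clause of Proposition~\ref{BousfieldKanFormula} (applicable since injective cofibrations are the monomorphisms) to express the homotopy colimit as the coend $\int^{[n]}F_n\cdot\Delta[n]$, and then identifies that coend with the diagonal, citing a standard fact (exercise 1.6 of \cite{GoerssJardine}) where you instead write out the degreewise co-Yoneda reduction explicitly.
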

\proof
  By Proposition~\ref{BousfieldKanFormula}
  the homotopy colimit is given by the coend
  $$
    \mathbb{L}\varinjlim F_\bullet \simeq 
	\int^{[n] \in \Delta} F_n \cdot \Delta[n]
	\,.
  $$
  By a standard fact (e.g. exercise 1.6 in \cite{GoerssJardine}), 
  this coend is in fact isomorphic to the diagonal.
\endofproof

\begin{definition}
  \label{TotalSimplicialSetAndTotalDecalage}
  \label{TotalSimplicialSet}
 Let $\sigma\colon \Delta\times \Delta\to \Delta$ denote 
 ordinal sum.  Write
  $$
    \sigma^* : \mathrm{sSet} \to [\Delta^{\mathrm{op}}, \mathrm{sSet}]
  $$
  for the operation of precomposition with this functor. By right
  Kan extension this induces an adjoint pair of functors
  $$
    (\sigma_* \dashv \sigma^*)
	:
    \xymatrix{
	  [\Delta^{\mathrm{op}}, \mathrm{sSet}]
	  \ar@<+4pt>@{<-}[r]^>>>>{\sigma^*}
	  \ar@<-2pt>@{->}[r]_>>>>{\sigma_*}
	  &
	  \mathrm{sSet},
	}
  $$
  where
  \begin{itemize}
    \item $\mathrm{Dec} := \sigma^*$ is called the \emph{total d{\'e}calage} functor;
	\item $\sigma_*$ is called the \emph{total simplicial set} functor.
  \end{itemize}
\end{definition}
The total simplicial set functor was introduced in \cite{ArtinMazur2}, for 
further discussion see \cite{CegarraRemedios,Stevenson2}.
\begin{remark}
  By definition, for  $X \in \mathrm{sSet}$,
  its total d{\'e}calage is the bisimplicial set $\mathrm{Dec} X$ whose set 
  of $(k,l)$ bisimplices is given by  
  $$
    (\mathrm{Dec} X)_{k,l}  = X_{k+l+1}
	\,.
  $$
\end{remark}
\begin{remark}
  \label{Total simplicial object is built from finite limits}
  For $X \in [\Delta^{\mathrm{op}}, \mathrm{sSet}]$, the simplicial set
  $\sigma_*X$ is in each degree given by an equalizer of maps between finite products
  of components of $X$ (see for instance equation (2) of 
  \cite{Stevenson2}).  Hence forming $\sigma_*$ is compatible with sheafification
  and other processes that preserve finite limits.
\end{remark}

\begin{proposition}
  \label{TotalSimpSetEquivalentToDiagonal}
  The following statements are true: 
  \begin{itemize}
    \item 
	  for every $X \in [\Delta^{\mathrm{op}}, \mathrm{sSet}]$, the canonical morphism 
	  $$ 
	    d X \to \sigma_* X
	  $$
	  from the diagonal to the total simplicial set
	  is a weak equivalence in $\mathrm{sSet}_{\mathrm{Quillen}}$;
	\item
	  for every $X\in \mathrm{sSet}$ the adjunction unit
	  $$
	    X \to \sigma_* \sigma^* X
	  $$
	  is a weak equivalence in $\mathrm{sSet}_{\mathrm{Quillen}}$.
  \end{itemize}
  For every $X \in \mathrm{sSet}$
  \begin{itemize}
    \item 
	   there is a natural isomorphism $\sigma_* \mathrm{const} X \simeq X$.
  \end{itemize}
\end{proposition}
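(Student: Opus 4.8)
The plan is to handle the three assertions in increasing order of difficulty: the last (a strict isomorphism) is a direct computation from the explicit description of $\sigma_*$, the first bullet (the comparison $dX \to \sigma_* X$) is the genuine crux and the fundamental input, and the second bullet (the unit) I would then derive from the first by applying it to the total d\'ecalage and exploiting the monoidal unit $[-1]$ of ordinal sum.

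First the isomorphism $\sigma_*\,\mathrm{const}\, X \cong X$. By Remark~\ref{Total simplicial object is built from finite limits}, $(\sigma_* Y)_n$ is computed as an equalizer of maps between finite products of the components $Y_{p,q}$; concretely an $n$-simplex of $\sigma_* Y$ is a compatible family $(y_0,\dots,y_n)$ with $y_p \in Y_{p,\,n-p}$ subject to glueing conditions identifying a vertical face of $y_p$ with a horizontal face of $y_{p+1}$. When $Y = \mathrm{const}\, X$ is constant in the external (first) direction, the horizontal faces act as identities, so the glueing reads $d_0 y_p = y_{p+1}$; hence the whole family is determined by $y_0 \in X_n$ and every $y_0$ occurs. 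This yields a natural bijection $(\sigma_*\,\mathrm{const}\, X)_n \cong X_n$ respecting faces and degeneracies, which is the claimed isomorphism. The same computation settles the comparison map $dX \to \sigma_* X$ for external-constant $X = \mathrm{const}\, A$, where both sides equal $A$ and the map is the identity; this will be the base case for the first bullet.

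The crux is the comparison $dX \to \sigma_* X$. The guiding principle is that both $d$ and $\sigma_* = \overline{W}$ model the homotopy colimit of the external simplicial diagram $X_\bullet\colon [p] \mapsto X_{p,\bullet}$: for the diagonal this is exactly Proposition~\ref{SimplicialHocolimGivenByDiagonal}, which identifies $dX$ with $\hocolim_{\Delta^{\mathrm{op}}} X_\bullet$. I would therefore reduce the general case to the external-constant case above, where the comparison is an isomorphism. Two ingredients make this work: both functors send levelwise weak equivalences of bisimplicial sets to weak equivalences --- for $d$ this is classical (see \cite{GoerssJardine}), and for $\overline{W} = \sigma_*$ this is the content of \cite{CegarraRemedios} --- and the external simplicial object $X_\bullet$ admits a Bousfield--Kan-type resolution by external-constant pieces. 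The main obstacle is precisely that $\sigma_*$, being a right adjoint, does not commute with the colimits assembling $X$ from those pieces, so the reduction cannot be carried out naively; overcoming this is the technical heart of the comparison theorem, which I would import from \cite{CegarraRemedios,Stevenson2} (the map itself going back to \cite{ArtinMazur2}).

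Finally I would deduce $X \to \sigma_* \sigma^* X$ from the first bullet applied to the total d\'ecalage $\sigma^* X = \mathrm{Dec}\, X$. By the first bullet, $d\,\sigma^* X \to \sigma_* \sigma^* X$ is a weak equivalence, so it suffices to identify $d\,\sigma^* X$ with $X$. Viewing $\sigma^* X$ as the external simplicial object $[k] \mapsto (\sigma^* X)_{k,\bullet}$, Proposition~\ref{SimplicialHocolimGivenByDiagonal} gives $d\,\sigma^* X \simeq \hocolim_{\Delta^{\mathrm{op}}} (\sigma^* X)_\bullet$. This diagram extends over the augmented simplex category $\Delta_a$: since $\sigma$ is ordinal sum with monoidal unit $[-1]$, one has $\sigma([-1],[l]) = [l]$, so the $[-1]$-level is $X$ itself, and the unit supplies the extra degeneracies exhibiting the augmented object $(\sigma^* X)_\bullet \to X$ as split --- the total-d\'ecalage refinement of the plain-d\'ecalage contraction of Proposition~\ref{MorphismsOutOfPlainDecalage}. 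A split augmented simplicial object has its augmentation as homotopy colimit, whence $d\,\sigma^* X \simeq X$. The only remaining point is to check that the resulting equivalence is the adjunction unit, which follows by tracing $\eta_X$ through the comparison map and the contraction and applying two-out-of-three; this routine diagram chase I would relegate to the details.
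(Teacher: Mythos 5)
The paper offers no proof of this proposition at all: the three statements are attributed wholesale to Cegarra--Remedios and (unpublished) Joyal--Tierney, with Stevenson's paper as a further reference. Measured against that, your proposal is correct and, on the genuinely hard point, makes the same move: for the first bullet you end up importing the comparison $dX \to \sigma_* X$ from exactly the sources the paper cites, after correctly diagnosing why the naive reduction to external-constant pieces fails ($\sigma_*$ is a right adjoint, so it does not commute with the colimits assembling the resolution). What you add beyond the paper is real and sound. The computation of $\sigma_*\,\mathrm{const}\,X \cong X$ via the compatible-families description of $(\sigma_* Y)_n$ is correct (with the standard gluing $d^v_0 y_p = d^h_{p+1} y_{p+1}$, constancy in the external direction forces $y_{p+1} = d_0 y_p$, so the family is determined by $y_0 \in X_n$), and you rightly record that the comparison map is the identity on external-constant objects, which you then need as a base case. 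Your derivation of the unit statement from the first bullet is also how the cited literature proceeds: the levels of $\sigma^* X = \mathrm{Dec}\,X$ carry extra degeneracies making the augmentation $\alpha \colon \sigma^* X \to \mathrm{const}\,X$ (induced by $[l] = \sigma([-1],[l]) \hookrightarrow \sigma([k],[l])$) split in the external direction, whence $d\,\sigma^* X \to X$ is a weak equivalence. The one step you relegate --- that the resulting equivalence is the unit --- closes cleanly: applying $\sigma_*$ to $\alpha$ and using the isomorphism of the third bullet gives a map $\sigma_*\alpha$ whose composite with $\eta_X$ is the adjunct of $\alpha$, which one checks is the identity; naturality of $d \to \sigma_*$ together with your base case shows $\sigma_*\alpha$ is a weak equivalence, and two-out-of-three finishes. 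So there is no gap relative to the paper; your write-up is strictly more informative, isolating the diagonal-versus-total comparison as the only input that must be taken from Cegarra--Remedios/Stevenson.
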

These statements are due to Cegarra and Remedios in \cite{CegarraRemedios} 
and independently Joyal and Tierney (unpublished) --- see also \cite{Stevenson2}.
\begin{corollary}
  \label{SimplicialHomotopyColimitByCodiagonal}
  For 
  $$
    F : \Delta^{\mathrm{op}} \to [C^{\mathrm{op}}, \mathrm{sSet}_{\mathrm{Quillen}}]_{\mathrm{inj}, \mathrm{loc}}
  $$ a simplicial object in simplicial presheaves, its homotopy colimit is given by
  applying objectwise over each $U \in C$ the total simplicial set functor $\sigma_*$, i.e.\ 
  $$
    \mathbb{L} \lim\limits_{\longrightarrow} F \simeq (U \mapsto \sigma_* F(U))
	\,.
  $$
\end{corollary}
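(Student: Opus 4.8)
The plan is to derive this directly from the two preceding results, Proposition~\ref{SimplicialHocolimGivenByDiagonal} and Proposition~\ref{TotalSimpSetEquivalentToDiagonal}, with only a small amount of bookkeeping needed to pass from objectwise statements to statements in the local model structure. First I would regard the simplicial diagram $F : \Delta^{\mathrm{op}} \to [C^{\mathrm{op}}, \mathrm{sSet}]$ as a bisimplicial presheaf, i.e.\ as an object of $[C^{\mathrm{op}}, [\Delta^{\mathrm{op}}, \mathrm{sSet}]]$, so that for each $U \in C$ we obtain a bisimplicial set $F(U) \in [\Delta^{\mathrm{op}}, \mathrm{sSet}]$. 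By Proposition~\ref{SimplicialHocolimGivenByDiagonal} the homotopy colimit of $F$ in $[C^{\mathrm{op}}, \mathrm{sSet}]_{\mathrm{inj}, \mathrm{loc}}$ is presented by the diagonal $dF$, and since the diagonal is formed degreewise by evaluation it is computed objectwise, so $(dF)(U) = d(F(U))$.

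Next I would apply Proposition~\ref{TotalSimpSetEquivalentToDiagonal} objectwise. The total simplicial set functor $\sigma_*$ is, by Remark~\ref{Total simplicial object is built from finite limits}, built degreewise from finite limits in $\mathrm{sSet}$, and these are preserved by the evaluation functors $[C^{\mathrm{op}}, \mathrm{sSet}] \to \mathrm{sSet}$; hence the presheaf $U \mapsto \sigma_* F(U)$ is precisely the value of $\sigma_*$ applied to the bisimplicial presheaf $F$. The canonical comparison map $d X \to \sigma_* X$ of Proposition~\ref{TotalSimpSetEquivalentToDiagonal} is natural in $X$, so it assembles into a morphism of simplicial presheaves
$$
  dF \longrightarrow (U \mapsto \sigma_* F(U))
$$
whose component at each $U$ is the weak equivalence $d(F(U)) \to \sigma_* F(U)$. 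Thus this morphism is an objectwise, i.e.\ global, weak equivalence.

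Finally I would invoke the fact that a global weak equivalence of simplicial presheaves is in particular a $C$-local weak equivalence: objectwise weak equivalences are stalkwise weak equivalences, since stalks are filtered colimits and filtered colimits preserve weak homotopy equivalences, and more generally global weak equivalences survive any left Bousfield localization. Consequently $dF$ and $U \mapsto \sigma_* F(U)$ are equivalent in $[C^{\mathrm{op}}, \mathrm{sSet}]_{\mathrm{inj}, \mathrm{loc}}$ and therefore present the same object of $\mathrm{Sh}_\infty(C)$; combined with the first step this yields $\mathbb{L}\varinjlim F \simeq (U \mapsto \sigma_* F(U))$.

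I do not expect a genuine obstacle here, as the entire content is already packaged in the two cited propositions. The only points requiring care are purely formal: checking that both $d$ and $\sigma_*$ on bisimplicial presheaves are computed by objectwise evaluation (immediate, since both are assembled from limits and colimits of sets that evaluation preserves), and verifying that the comparison map is sufficiently natural in the presheaf variable to define an honest morphism of simplicial presheaves rather than merely a levelwise family of maps.
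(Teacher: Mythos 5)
Your proposal is correct and follows exactly the paper's own (one-line) argument: the paper simply deduces the corollary by combining Proposition~\ref{SimplicialHocolimGivenByDiagonal} with Proposition~\ref{TotalSimpSetEquivalentToDiagonal}. The extra bookkeeping you supply (that $d$ and $\sigma_*$ are computed objectwise, that the comparison map assembles into a natural objectwise weak equivalence, and that global weak equivalences remain weak equivalences after localization) is exactly the implicit content of that citation.
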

\proof
  By Proposition~\ref{TotalSimpSetEquivalentToDiagonal} this follows from 
  Proposition~\ref{SimplicialHocolimGivenByDiagonal}.
\endofproof
\begin{remark}
  The use of the total simplicial set instead of the diagonal simplicial set in the 
  presentation of simplicial homotopy colimits is useful and 
  reduces to various traditional notions in particular
  in the context of group objects and action groupoid objects. We discuss this further in  
  Section~\ref{InfinityGroupPresentations} and Section~\ref{Universal princial bundles} below.
\end{remark}

\subsection{Groups}
\label{StrucInftyGroups}
\label{InfinityGroupPresentations}

Every $\infty$-topos $\mathbf{H}$ 
comes with a notion of \emph{$\infty$-group object} that generalizes the
ordinary notion of group object in a topos as well as that of
grouplike $A_\infty$ space in $\mathrm{Top} \simeq \mathrm{Grpd}_{\infty}$.
We discuss presentations of $\infty$-group objects by 
presheaves of simplicial groups.

\medskip

\begin{definition}
  \label{BarWAsCompositeWithTotal}
  One writes $\Wbar$ for the composite functor from simplicial
  groups to simplicial sets given by
  \[
\Wbar \colon [\Delta^{\mathrm{op}},\mathrm{Grp}] 
\xrightarrow{[\Delta^{\mathrm{op}},\mathbf{B}]} 
[\Delta^\op,\mathrm{Grpd}] 
\xrightarrow{[\Delta^{\mathrm{op}},N]} 
[\Delta^{\mathrm{op}},\mathrm{sSet}] 
\xrightarrow{\sigma_*} 
\mathrm{sSet},  
  \]
   where
   $[\Delta^{\mathrm{op}},\mathbf{B}] : [\Delta^{\mathrm{op}}, \mathrm{Grp}] \to 
  [\Delta^{\mathrm{op}}, \mathrm{Grpd}]$  is the functor from simplicial groups
  to simplicial groupoids that degreewise sends a group to the corresponding one-object 
  groupoid.
\end{definition}
This simplicial 
delooping $\Wbar$ was originally introduced in 
\cite{MacLane}. 
The above formulation is due to Duskin, see Lemma 15 in \cite{Stevenson2}.

\begin{remark}
  The functor $\overline{W}$ takes values in \emph{reduced} simplicial sets, 
  i.e.\ $\overline{W}\colon [\Delta^{\mathrm{op}},\mathrm{Grp}]\to \mathrm{sSet}_{\mathrm{red}}$.  
 \end{remark}
\begin{remark}
  For $G$ a simplicial group, the simplicial set $\Wbar G$ is,
  by Corollary~\ref{SimplicialHomotopyColimitByCodiagonal}, the
  homotopy colimit over a simplicial diagram in simplicial sets. 
  Below in \ref{Principal infinity-bundles presentations} we see that 
  this simplicial diagram is that presenting the groupoid object 
  $*/\!/G$ which is the action groupoid of $G$ acting trivially on the point.
\end{remark}
\begin{proposition}
  \label{ModelStructureOnSimplicialGroups}
  The category $\mathrm{sGrp}$ of simplicial groups carries a cofibrantly
  generated model structure for which the fibrations and the weak equivalences
  are those of $\mathrm{sSet}_{\mathrm{Quillen}}$ under the forgetful functor
  $\mathrm{sGrpd} \to \mathrm{sSet}$.
\end{proposition}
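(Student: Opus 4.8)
The plan is to produce this model structure by \emph{right transfer} (Kan transfer) of $\mathrm{sSet}_{\mathrm{Quillen}}$ along the free--forgetful adjunction
\[
  F : \mathrm{sSet} \rightleftarrows \mathrm{sGrp} : U,
\]
where $U$ is the forgetful functor appearing in the statement and $F$ sends a simplicial set to the degreewise free simplicial group on it. The existence of $F$, together with completeness and cocompleteness of $\mathrm{sGrp}$, is automatic: $\mathrm{sGrp} = \mathrm{Grp}^{\Delta^{\mathrm{op}}}$ is the category of models of a finitary algebraic theory, hence locally presentable. First I would recall the transfer theorem for cofibrantly generated model categories (due to Quillen and Crans): if the generating cofibrations $I = \{\partial\Delta[n] \hookrightarrow \Delta[n]\}$ and the generating trivial cofibrations $J = \{\Lambda^k[n] \hookrightarrow \Delta[n]\}$ of $\mathrm{sSet}_{\mathrm{Quillen}}$ satisfy a smallness and an acyclicity condition after transport along $F$, then $\mathrm{sGrp}$ acquires a cofibrantly generated model structure, with generating (trivial) cofibrations $F(I)$ (resp.\ $F(J)$), in which a morphism $f$ is a weak equivalence, respectively a fibration, precisely when $U f$ is one in $\mathrm{sSet}_{\mathrm{Quillen}}$. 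This is exactly the structure claimed.

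Second, the smallness hypothesis is dispatched by local presentability: every object of $\mathrm{sGrp}$ is small, so in particular the domains of $F(I)$ and $F(J)$ are small relative to the transfinite composites of their pushouts, and Quillen's small-object argument applies. This step is routine bookkeeping.

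The only real content, and the step I expect to be the main obstacle, is the \emph{acyclicity condition}: every relative $F(J)$-cell complex must become a weak homotopy equivalence after applying $U$. Equivalently, every morphism of $\mathrm{sGrp}$ with the left lifting property against all $U$-fibrations must be a $U$-weak equivalence. Here I would invoke Quillen's path-object argument, for which it suffices to produce (i) a functorial fibrant replacement and (ii) a functorial path object in the putative transferred structure. Both are supplied by the classical theory of simplicial groups. For (i) one uses the theorem of Moore that every simplicial group is a Kan complex, so that every object of $\mathrm{sGrp}$ is already fibrant and the identity serves as fibrant replacement. For (ii) one observes that for any simplicial set $K$ and any simplicial group $G$ the internal mapping complex $\underline{\mathrm{Hom}}(K, UG)$ inherits a pointwise group structure from $G$ --- because the product-preserving functor $\underline{\mathrm{Hom}}(K,-)$ carries the group object $G$ to a group object --- and is thus again a simplicial group $G^{K}$ with $U(G^{K}) = \underline{\mathrm{Hom}}(K, UG)$. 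Taking $K = \Delta[1]$ and the homomorphisms induced by $\Delta[1] \to \Delta[0]$ and by $\{0\} \sqcup \{1\} \hookrightarrow \Delta[1]$ yields a functorial factorization of the diagonal
\[
  G \xrightarrow{\ \sim\ } G^{\Delta[1]} \longrightarrow G \times G
\]
in $\mathrm{sGrp}$ whose underlying maps are the standard simplicial-set path object; in particular the first map is a $U$-weak equivalence and the second a Kan fibration, since $U$ preserves these limits and $UG$ is a Kan complex.

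With a fibrant replacement and functorial path objects in hand, the path-object argument verifies the acyclicity condition, and the transfer theorem then produces the asserted model structure, cofibrantly generated by $F(I)$ and $F(J)$. I would expect the write-up to spend essentially all of its effort on the acyclicity step; everything else --- the existence of $F$, completeness and cocompleteness, and smallness --- follows formally from the local presentability of $\mathrm{sGrp}$.
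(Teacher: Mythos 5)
Your proposal is correct, and it is essentially the argument that the paper delegates to the literature: the paper gives no proof of its own beyond citing Quillen and V Theorem 2.3 of Goerss--Jardine, together with the remark that the structure is transferred along the forgetful functor and therefore inherits its generating (acyclic) cofibrations from $\mathrm{sSet}_{\mathrm{Quillen}}$ --- which is exactly the transfer you carry out. Your identification of the acyclicity condition as the real content, and its verification via Moore's theorem (every simplicial group is a Kan complex, so every object is fibrant) together with the cotensor path object $G^{\Delta[1]}$, is the standard and correct way to discharge it, and is in substance Quillen's original argument.
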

\proof
  This is originally due to \cite{Quillen}, for a more recent account see
  V Theorem 2.3 in \cite{GoerssJardine}. Note that since the model structure is
  therefore transferred along the forgetful functor, it inherits 
  generating (acyclic) cofibrations from those of $\mathrm{sSet}_{\mathrm{Quillen}}$.
\endofproof
We now consider a presentation of the 
looping/delooping equivalence $\mathrm{Grp}(\mathbf{H}) \simeq \mathbf{H}^{*/}_{\geq 1}$
due to Lurie, recalled as Theorem 2.14  in \cite{NSSa}.
\begin{theorem}[\cite{QuillenRHT}]
  \label{SimplicialLoopingQuillenEquivalence}
  The functor $\Wbar$ is the right adjoint of a Quillen equivalence
$$
  (L \dashv \Wbar) 
    :
   \xymatrix{ 
    \mathrm{sGrp} 
      \ar@<-3pt>[r]_{\Wbar}
      \ar@{<-}@<+3pt>[r]^{L}
      &
    \mathrm{sSet}_{0}
   }
$$
with respect to the model structures of Proposition~\ref{ModelStructureOnSimplicialGroups}
and Proposition~\ref{ModelStructureOnReducedSimplicialSets}.
In particular 
\begin{itemize}
  \item the adjunction unit is a weak equivalence 
$$
  Y \stackrel{\simeq}{\to} \Wbar L Y  
$$
for every reduced simplicial set $Y$, 
\item  $\Wbar G$ is a Kan complex for any simplicial group $G$.
\end{itemize}
\end{theorem}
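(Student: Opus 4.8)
The plan is to recognize the asserted adjunction as the classical \emph{Kan loop group} adjunction and to deduce the equivalence together with its two stated corollaries by combining Quillen's theorem with the explicit model structures recorded in Proposition~\ref{ModelStructureOnReducedSimplicialSets} and Proposition~\ref{ModelStructureOnSimplicialGroups}. The left adjoint $L$ is Kan's loop group functor, sending a reduced simplicial set to the free simplicial group generated by its nondegenerate simplices; that this functor is left adjoint to the $\overline{W}$ of Definition~\ref{BarWAsCompositeWithTotal} is classical, so I would recall the unit and counit rather than reconstruct the adjunction from scratch.

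First I would verify that $(L \dashv \overline{W})$ is a Quillen adjunction. Since the cofibrations and weak equivalences of $\mathrm{sSet}_{0}$ are created from $\mathrm{sSet}_{\mathrm{Quillen}}$ along the inclusion, while the fibrations and weak equivalences of $\mathrm{sGrp}$ are created from $\mathrm{sSet}_{\mathrm{Quillen}}$ along the forgetful functor, it suffices to check that $L$ is a left Quillen functor. Concretely I would show that $L$ carries monomorphisms of reduced simplicial sets to (free, hence) cofibrations of simplicial groups, and that $L$ preserves weak equivalences, the latter being Kan's theorem that $LY$ models the based loop space of $Y$. Together these give preservation of cofibrations and of acyclic cofibrations.

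Next comes the equivalence. Here I would use that every reduced simplicial set is cofibrant --- the point is a zero object of $\mathrm{sSet}_{0}$ and its inclusion is a monomorphism --- and that every simplicial group is fibrant, by Moore's classical theorem that the underlying simplicial set of a simplicial group satisfies the Kan condition. Consequently $LY$ is already fibrant and $\overline{W}G$ is already cofibrant, so the derived unit and counit agree with the underived ones, and it is enough to prove that the unit $Y \to \overline{W}LY$ and the counit $L\overline{W}G \to G$ are weak equivalences; both are classical consequences of Kan's theory, which I would cite.

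Finally I would read off the two itemized statements. The first, that $Y \to \overline{W}LY$ is a weak equivalence, is exactly the unit computation just made. For the second, $\overline{W}$ is right Quillen and hence preserves fibrant objects; since every simplicial group is fibrant, $\overline{W}G$ is fibrant in $\mathrm{sSet}_{0}$, and by Lemma~\ref{HomotopyPropertyOfInclusionOfReducedsSets} every fibrant object of $\mathrm{sSet}_{0}$ is a Kan complex, so $\overline{W}G$ is a Kan complex. The main obstacle lies not in this bookkeeping but in the classical input --- that $L$ preserves weak equivalences and that the unit is an equivalence (equivalently, that Kan's loop group models based loops) --- which I would invoke from the literature rather than reprove; the contribution specific to the present setting is the matching of the explicit model structures of Propositions~\ref{ModelStructureOnReducedSimplicialSets} and~\ref{ModelStructureOnSimplicialGroups} and the identification of $\overline{W}G$ as a Kan complex via Lemma~\ref{HomotopyPropertyOfInclusionOfReducedsSets}.
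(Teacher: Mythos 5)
Your proposal is correct and takes essentially the same route as the paper, which itself offers no new argument but defers to the classical Kan loop group theory (Goerss--Jardine Chapter V, with the unit statement reproved in the cited work of Stevenson); the standard reductions you supply --- all reduced simplicial sets are cofibrant, all simplicial groups are fibrant by Moore's theorem, so the underived unit and counit suffice --- are exactly the bookkeeping implicit in that citation. Your derivation of the Kan-complex statement from right-Quillen-ness plus Lemma~\ref{HomotopyPropertyOfInclusionOfReducedsSets} is precisely the use the paper intends for that lemma.
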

This result is discussed for instance in chapter V of \cite{GoerssJardine};
a new proof that the unit of the adjunction is a weak equivalence is given in \cite{Stevenson2}.
\begin{definition}
  \label{WGToWbarG}
  For $G$ a simplicial group, write $WG = \mathrm{Dec}_0\overline{W}G$ (see Definition~\ref{Decalage}) and 
  write 
  $$
    W G \to \Wbar G
  $$
  for the canonical morphism $\mathrm{Dec}_0\overline{W}G\to \overline{W}G$ of 
  Corollary~\ref{DecalageIsFibrationResolution}.
\end{definition}
This morphism is the standard presentation of the 
\emph{universal $G$-principal simplicial bundle}. We discuss this further in
Section~\ref{Universal princial bundles} below.
The characterization by d{\'e}calage of the total space $W G$ 
is made fairly explicit on p. 85 of \cite{Duskin}; a fully explicit statement can be found in
\cite{RobertsStevenson}.
\begin{proposition}
  \label{WGToWbarGIsFibrationResolution}
  The morphism $W G \to \Wbar G$ is a Kan fibration resolution of the point inclusion
  ${*} \to \Wbar G$. 
\end{proposition}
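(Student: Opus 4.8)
The plan is to obtain the claimed factorization by specializing the general décalage results, Proposition~\ref{MorphismsOutOfPlainDecalage} and Corollary~\ref{DecalageIsFibrationResolution}, to the particular simplicial set $X = \Wbar G$. The two features of $\Wbar G$ that make those general statements collapse into the statement we want are that $\Wbar G$ is a Kan complex and that it is reduced. By Theorem~\ref{SimplicialLoopingQuillenEquivalence} the object $\Wbar G$ is fibrant, i.e.\ a Kan complex, for every simplicial group $G$; and by the remark following Definition~\ref{BarWAsCompositeWithTotal} the functor $\Wbar$ lands in reduced simplicial sets, so that $(\Wbar G)_0 = *$.

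First I would establish that $W G \to \Wbar G$ is a Kan fibration. By Definition~\ref{WGToWbarG} this map is, by construction, the décalage morphism $\mathrm{Dec}_0 \Wbar G \to \Wbar G$ of Proposition~\ref{MorphismsOutOfPlainDecalage} applied to $X = \Wbar G$. Since $\Wbar G$ is a Kan complex, that proposition says precisely that this morphism is a Kan fibration.

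It remains to exhibit it as a resolution of the point inclusion $* \to \Wbar G$. Here I would invoke reducedness: because $(\Wbar G)_0 = *$, the canonical atlas $\mathrm{const}(\Wbar G)_0 \to \Wbar G$ of Observation~\ref{CanonicalAtlasOfSimplicialPresheaf} is nothing but the point inclusion $* \to \Wbar G$. Corollary~\ref{DecalageIsFibrationResolution} then supplies the commuting triangle whose hypotenuse is this point inclusion, whose fibration leg is $W G \to \Wbar G$, and whose remaining leg is the section $s : * = \mathrm{const}(\Wbar G)_0 \to \mathrm{Dec}_0 \Wbar G = W G$ built from the degeneracies; by the cited corollary this section is a weak homotopy equivalence. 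Equivalently, the vertical deformation retraction of Proposition~\ref{MorphismsOutOfPlainDecalage} exhibits $W G$ as contractible onto $\mathrm{const}(\Wbar G)_0 = *$, which re-proves that $s$ is a weak equivalence and that the total space $W G$ of the universal bundle is weakly contractible, as expected.

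Since there is essentially nothing to compute, the only point requiring care --- and the closest thing to an obstacle --- is the bookkeeping identification in the last step: one must check that reducedness genuinely turns the general atlas $\mathrm{const} X_0 \to X$ into the point inclusion, and hence that the general décalage resolution of the atlas specializes to a resolution of $* \to \Wbar G$ rather than of some other map. Granting Theorem~\ref{SimplicialLoopingQuillenEquivalence}, Proposition~\ref{MorphismsOutOfPlainDecalage} and Corollary~\ref{DecalageIsFibrationResolution}, the three properties needed --- weak equivalence of $s$, Kan fibrancy of $W G \to \Wbar G$, and commutativity of the triangle --- are then immediate, completing the proof.
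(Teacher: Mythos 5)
Your proposal is correct and follows exactly the route the paper takes: the paper's proof is the one-line observation that the statement "follows directly from the characterization of $W G \to \Wbar G$ by d\'ecalage (Corollary~\ref{DecalageIsFibrationResolution})", and your write-up simply makes explicit the two facts that proof implicitly uses, namely that $\Wbar G$ is a Kan complex (Theorem~\ref{SimplicialLoopingQuillenEquivalence}) and that it is reduced, so the canonical atlas $\mathrm{const}(\Wbar G)_0 \to \Wbar G$ is the point inclusion. Nothing is missing; you have just unpacked the paper's argument.
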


\proof
  This follows directly from the characterization of $W G \to \Wbar G$ by d{\'e}calage 
  (Corollary~\ref{DecalageIsFibrationResolution}).
\endofproof

This statement appears in \cite{May} as the union of two results there: Lemma 18.2 of \cite{May} gives the
fibration property; Proposition~ 21.5 of \cite{May} gives the contractibility of $W G$. 
\begin{corollary}
  \label{GToWGToWbarGPresentsLoopingFiberSequence}
  For $G$ a simplicial group, the sequence of simplicial sets
  $$
    \xymatrix{
      G \ar[r] & W G \ar@{->>}[r] & \Wbar G
	}
  $$
  is a presentation in $\mathrm{sSet}_{\mathrm{Quillen}}$ 
  by a pullback of a Kan fibration of the looping fiber sequence
  $$
    G \to * \to \mathbf{B}G
  $$
  in $\mathrm{Grpd}_{\infty}$.
\end{corollary}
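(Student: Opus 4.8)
The plan is to assemble the corollary directly from the results already established, treating it as a matter of identifying each piece of the strict sequence with the corresponding piece of the abstract fiber sequence. Concretely, I would verify three things: that $\Wbar G$ presents the delooping $\mathbf{B}G$ and that ${*} \to \Wbar G$ presents the point inclusion ${*} \to \mathbf{B}G$; that the ordinary pullback of $WG \to \Wbar G$ along this point inclusion computes the homotopy pullback; and that this pullback is isomorphic to $G$. Granting these, the sequence $G \to WG \to \Wbar G$ exhibits $G$ as the homotopy fiber of ${*} \to \mathbf{B}G$, computed as a genuine pullback of a Kan fibration, which is exactly the asserted presentation.

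First I would set up the homotopy pullback. By Proposition~\ref{WGToWbarGIsFibrationResolution} the morphism $WG \to \Wbar G$ is a Kan fibration, and since it is a fibration \emph{resolution} of ${*} \to \Wbar G$ its total space $WG$ is weakly equivalent to the point, hence contractible. By Theorem~\ref{SimplicialLoopingQuillenEquivalence} the object $\Wbar G$ is a Kan complex, so all three of ${*}$, $WG$, $\Wbar G$ are fibrant in $\mathrm{sSet}_{\mathrm{Quillen}}$. The hypotheses of Proposition~\ref{ConstructionOfHomotopyLimits} are therefore met: one leg of the cospan ${*} \to \Wbar G \leftarrow WG$ is a fibration and all objects are fibrant, so the ordinary pullback ${*} \times_{\Wbar G} WG$ is a homotopy pullback. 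This pullback is by definition the strict fibre of the Kan fibration $WG \to \Wbar G$ over the basepoint.

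The remaining input is the identification of that fibre with $G$ itself. This is the classical fact that $G \to WG \to \Wbar G$ is the universal $G$-principal simplicial bundle; it follows from the d{\'e}calage description $WG = \mathrm{Dec}_0 \Wbar G$ of Definition~\ref{WGToWbarG} together with the explicit formula for $\Wbar G$. Combining this with the delooping equivalence of Theorem~\ref{SimplicialLoopingQuillenEquivalence}, which identifies $\Wbar$ as the right adjoint presenting $\mathbf{B}(-)$ on pointed connected objects, we conclude that the homotopy fibre of ${*} \to \mathbf{B}G$ is $\Omega \mathbf{B}G \simeq G$, presented on the nose by the strict fibre just computed. I expect the only genuinely computational point -- and hence the main obstacle -- to be the explicit verification that the fibre of $WG \to \Wbar G$ is \emph{precisely} $G$ rather than merely weakly equivalent to it; once this is in hand, the corollary is a direct consequence of Propositions~\ref{WGToWbarGIsFibrationResolution} and~\ref{ConstructionOfHomotopyLimits} and Theorem~\ref{SimplicialLoopingQuillenEquivalence}.
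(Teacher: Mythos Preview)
Your proposal is correct and follows essentially the same route as the paper's proof: the paper simply observes that $G$ is the strict fibre of $WG \to \Wbar G$ and then invokes Proposition~\ref{WGToWbarGIsFibrationResolution} together with Proposition~\ref{ConstructionOfHomotopyLimits}. Your version is more explicit about the auxiliary points (Kan-ness of $\Wbar G$, the identification of $\Wbar G$ with $\mathbf{B}G$ via Theorem~\ref{SimplicialLoopingQuillenEquivalence}), but the argument is the same; note also that since $\mathrm{sSet}_{\mathrm{Quillen}}$ is right proper you could invoke the second clause of Proposition~\ref{ConstructionOfHomotopyLimits} and dispense with checking fibrancy of all three objects.
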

\proof
  One finds that $G$ is the 1-categorical fiber of $W G \to \Wbar G$.
  The statement then follows using Proposition~\ref{WGToWbarGIsFibrationResolution} together with  
  Proposition~\ref{ConstructionOfHomotopyLimits}.
\endofproof
The universality of $W G \to \Wbar G$ for $G$-principal simplicial bundles is the 
topic of section 21 in \cite{May}.
\begin{corollary}
  \label{SimplicialLoopingModelsInfinityLooping}
  The Quillen equivalence $(L \dashv \Wbar)$ from Theorem 
  \ref{SimplicialLoopingQuillenEquivalence} is a presentation of the looping/delooping
  equivalence $\mathrm{Grp}(\mathbf{H}) \simeq \mathbf{H}^{*/}_{\geq 1}$  
  for the $\infty$-topos $\mathbf{H} = \mathrm{Grpd}_{\infty}$.
\end{corollary}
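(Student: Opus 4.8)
The plan is to read this corollary as the assembly of the preceding results into a single statement. Since $(L \dashv \Wbar)$ is a Quillen equivalence by Theorem~\ref{SimplicialLoopingQuillenEquivalence}, it induces an equivalence of the associated $\infty$-categories, i.e.\ of the simplicial localizations at the weak equivalences, $L_W \mathrm{sGrp} \simeq L_W \mathrm{sSet}_{0}$; this is the general fact that a Quillen equivalence presents an equivalence of the presented $\infty$-categories (see \cite{DwyerKanComputations}, and cf.\ the identification $D^\circ \simeq L_W D$ recalled in Section~\ref{PresentationBasics}). It then remains only to identify the two sides and the induced functor with the abstract looping/delooping data of Theorem~2.14 of \cite{NSSa}.

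For the target, the Quillen adjunction $i \dashv E_1$ of the proposition preceding Section~\ref{StrucInftyGroupoids} (built on the model structure of Proposition~\ref{ModelStructureOnReducedSimplicialSets} and Lemma~\ref{HomotopyPropertyOfInclusionOfReducedsSets}) already exhibits $\mathrm{sSet}_0$ as a presentation of the connected pointed $\infty$-groupoids $(\mathrm{Grpd}_\infty)^{*/}_{\geq 1} = \mathbf{H}^{*/}_{\geq 1}$. Hence the codomain of the equivalence presented by $\Wbar$ is correctly identified, and it remains to match the functor itself.

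The key step is to verify that $\Wbar$ presents the \emph{delooping} $\mathbf{B}\colon \mathrm{Grp}(\mathbf{H}) \to \mathbf{H}^{*/}_{\geq 1}$ and, dually, that $L$ presents looping $\Omega$. For this I would invoke Corollary~\ref{GToWGToWbarGPresentsLoopingFiberSequence}: for a simplicial group $G$ the sequence $G \to W G \to \Wbar G$ presents the looping fiber sequence $G \to * \to \mathbf{B}G$ in $\mathrm{Grpd}_\infty$. In particular $\Wbar G$ presents $\mathbf{B}G$ and $G$ presents $\Omega \Wbar G$, so $\Wbar$ inverts looping objectwise. Since the abstract $\mathbf{B}$ is characterized up to coherent equivalence as the inverse of $\Omega$ on $\mathbf{H}^{*/}_{\geq 1}$, this identifies $\Wbar$ with $\mathbf{B}$; combined with the abstract equivalence $\mathrm{Grp}(\mathbf{H}) \simeq \mathbf{H}^{*/}_{\geq 1}$ and the identification of the target, it forces $L_W \mathrm{sGrp} \simeq \mathrm{Grp}(\mathbf{H})$ and shows the presented equivalence is the looping/delooping one.

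I expect the main obstacle to lie precisely in upgrading the objectwise identity $G \simeq \Omega \Wbar G$ to a match of group-\emph{object} structures: the domain $\mathrm{sGrp}$ carries a strict $1$-categorical group law, whereas $\mathrm{Grp}(\mathbf{H})$ consists of objects whose multiplication is only coherently associative, and one must check that the strict structure transported along the fiber sequence reproduces the coherent loop-group structure on $\Omega \Wbar G$. The delooping half of Theorem~\ref{SimplicialLoopingQuillenEquivalence} (that the unit $Y \to \Wbar L Y$ is a weak equivalence and that $\Wbar G$ is Kan) is what makes $\Wbar$ land in connected pointed objects as an equivalence; the residual naturality and coherence check is the genuinely technical point, which one settles by appealing to the functoriality of the $W$–$\Wbar$ construction together with the uniqueness of inverse equivalences in the abstract looping/delooping statement.
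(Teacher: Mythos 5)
Your proposal is correct and follows exactly the route the paper intends: the paper states this corollary without proof, treating it as the immediate assembly of Theorem~\ref{SimplicialLoopingQuillenEquivalence}, the identification of $\mathrm{sSet}_0$ with pointed connected $\infty$-groupoids via the Quillen adjunction $(i \dashv E_1)$, and Corollary~\ref{GToWGToWbarGPresentsLoopingFiberSequence} identifying $\Wbar G$ with $\mathbf{B}G$. Your closing remark about upgrading the objectwise equivalence $G \simeq \Omega\Wbar G$ to a match of group-object structures correctly isolates the one point the paper elides entirely.
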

We now lift all these statements from simplicial sets to simplicial presheaves.
\begin{proposition} 
 \label{InftyGroupsBySimplicialGroups}
If the cohesive $\infty$-topos $\mathbf{H}$ has site of definition $C$ 
with a terminal object, then 
\begin{itemize}
\item every $\infty$-group object has a presentation by a presheaf of simplicial groups 
  $$
    G \in [C^{\mathrm{op}}, \mathrm{sGrp}] \stackrel{U}{\to} [C^{\mathrm{op}}, \mathrm{sSet}]
  $$ 
  which is fibrant in $[C^{\mathrm{op}}, \mathrm{sSet}]_{\mathrm{proj}}$;
\item
   the corresponding delooping object is presented by the presheaf 
  $$
    {\Wbar} G \in [C^{\mathrm{op}}, \mathrm{sSet}_{0}] \hookrightarrow [C^{\mathrm{op}}, \mathrm{sSet}]
  $$
obtained from $G$ by applying the functor $\overline{W}$ 
objectwise.   
\end{itemize}
\end{proposition}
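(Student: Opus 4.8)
The plan is to reduce to the objectwise (global projective) setting, where the simplicial-set results of the preceding subsections apply verbatim over each $U \in C$, and then descend to the local projective structure presenting $\mathbf{H}$ using that objectwise weak equivalences are local weak equivalences and that finite homotopy limits agree in both structures (Proposition~\ref{FiniteHomotopyLimitsInPresheaves}). First I would deloop: via the looping/delooping equivalence $\mathrm{Grp}(\mathbf{H}) \simeq \mathbf{H}^{*/}_{\geq 1}$ pass from the given $\infty$-group $\mathcal{G}$ to its delooping $\mathbf{B}\mathcal{G}$, a pointed $0$-connected object. Since $C$ has a terminal object, Proposition~\ref{PresentationOfPointedConnectedObjectsByPresheavesOfReducedsSets} with $n=0$ presents $\mathbf{B}\mathcal{G}$ by a presheaf of reduced simplicial sets $\bar X \in [C^{\mathrm{op}}, \mathrm{sSet}_0]$.

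Next I would transport $\bar X$ across an objectwise lift of Theorem~\ref{SimplicialLoopingQuillenEquivalence}: postcomposition with $(L \dashv \overline{W})$ yields a Quillen equivalence between $[C^{\mathrm{op}}, \mathrm{sGrp}]_{\mathrm{proj}}$ and $[C^{\mathrm{op}}, \mathrm{sSet}_0]_{\mathrm{proj}}$, since weak equivalences, fibrations and the adjunction are all defined objectwise. Set $G := L\bar X$. Because the unit $Y \to \overline{W}LY$ is a weak equivalence for \emph{every} reduced simplicial set $Y$ (Theorem~\ref{SimplicialLoopingQuillenEquivalence}), the objectwise unit $\bar X \to \overline{W}G$ is an objectwise, hence local, weak equivalence, so $\overline{W}G$ presents $\mathbf{B}\mathcal{G}$, which is the second bullet. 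Projective fibrancy of $G$ is immediate, as every simplicial group is a Kan complex and projective fibrancy is objectwise Kan-ness.

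It then remains to see that $G$, with its strict group structure, presents $\mathcal{G}$ itself. Applying Corollary~\ref{GToWGToWbarGPresentsLoopingFiberSequence} objectwise over each $U \in C$ exhibits $G \to WG \to \overline{W}G$ in $[C^{\mathrm{op}}, \mathrm{sSet}]_{\mathrm{proj}}$ as a presentation of the looping fiber sequence, with $WG \to \overline{W}G$ an objectwise Kan fibration out of an objectwise contractible total space; Proposition~\ref{ConstructionOfHomotopyLimits} then identifies the ordinary fiber $G$ with the homotopy fiber, so $G$ presents $\Omega \overline{W}G$ globally. As looping is the finite homotopy limit $* \times_{\overline{W}G} *$, Proposition~\ref{FiniteHomotopyLimitsInPresheaves} carries this to the local structure, where $\overline{W}G$ presents $\mathbf{B}\mathcal{G}$; hence $G$ presents $\Omega \mathbf{B}\mathcal{G} \simeq \mathcal{G}$.

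The hard part is not any single computation but the descent from the global to the local projective structure: one must ensure that both the transported unit and the objectwise fiber sequence remain valid presentations after Bousfield localization. This is precisely what the two general facts above deliver, so the only genuine input beyond formal bookkeeping is the strong form of Theorem~\ref{SimplicialLoopingQuillenEquivalence} --- that the unit is a weak equivalence on \emph{all} reduced simplicial sets, not merely the cofibrant ones.
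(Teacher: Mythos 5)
Your proposal is correct and follows essentially the same route as the paper's proof: deloop to a pointed connected object, present it by a presheaf of reduced simplicial sets via Proposition~\ref{PresentationOfPointedConnectedObjectsByPresheavesOfReducedsSets}, apply $L$ objectwise and use the unit weak equivalence of Theorem~\ref{SimplicialLoopingQuillenEquivalence} to identify $\overline{W}G$ with the delooping, then recover the group itself from Corollary~\ref{GToWGToWbarGPresentsLoopingFiberSequence} together with Proposition~\ref{FiniteHomotopyLimitsInPresheaves}. You merely spell out some steps the paper leaves implicit (objectwise Kan fibrancy of simplicial groups, the descent from the global to the local projective structure), which is fine.
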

\proof
By the fact recalled as Theorem 2.14 in \cite{NSSa}, every $\infty$-group is the loop
space object of a pointed connected object. By 
Proposition~\ref{PresentationOfPointedConnectedObjectsByPresheavesOfReducedsSets}
every such object is presented by a presheaf $X$ of reduced simplicial sets.
By the 
simplicial looping/delooping Quillen equivalence, 
Theorem~\ref{SimplicialLoopingQuillenEquivalence},
the presheaf 
$$
  \Wbar L X \in [C^{\mathrm{op}}, \mathrm{sSet}]_{\mathrm{proj}}
$$
is objectwise weakly equivalent to the simplicial presheaf $X$.
From this the statement follows with Corollary~\ref{GToWGToWbarGPresentsLoopingFiberSequence},
combined with Proposition~\ref{FiniteHomotopyLimitsInPresheaves}, which together say that
the presheaf $L X$ of simplicial groups presents the given $\infty$-group.
\endofproof
\begin{remark}
  We may read this as saying that every $\infty$-group may be 
  \emph{strictified}.
\end{remark}

\subsection{Cohomology}
\label{section.Cohomology}
\label{GroupoidsOfMorphisms}

We discuss presentations of the \emph{hom-$\infty$-groupoids}, hence of 
\emph{cocycle $\infty$-groupoids}, hence of the cohomology in an $\infty$-topos.

We consider two roughly complementary aspects

\begin{itemize}
  \item In section \ref{CocyclesByHyperCechCohomology} we study sufficient
  conditions on a simplicial presheaf $A$ such that the ordinary 
  simplicial hom $[C^{\mathrm{op}}, \mathrm{sSet}](Y,A)$ 
  out of a split hypercover $\xymatrix{Y \ar[r] & X}$ is already the
  correct derived hom out of $X$. Since this simplicial hom is the Kan complex of 
  simplicial hyper-{\v C}ech cocycles relative to $Y$ with coefficients in $A$,
  this may be taken to be a sufficient condition for $A$-{\v C}ech cohomology to 
  produce the correct intrinsic cohomology.
  \item 
   In section \ref{Categories of fibrant objects} we consider not a full
   model category structure but just the structure of a \emph{category of fibrant objects}.
   In this case there is no notion of split hypercover and instead one has to 
   consider all possible covers and refinements between them. 
   A central result of \cite{Brown} shows that this produces the
   correct cohomology classes. Here we discuss the refinement of this classical 
   statement to the full cocycle $\infty$-groupoids.
\end{itemize}

\subsubsection{By hyper-{\v C}ech-cohomology in $C$-acyclic simplicial groups}
\label{CocyclesByHyperCechCohomology}

The condition on an object 
$X \in [C^{\mathrm{op}}, \mathrm{sSet}]_{\mathrm{proj}}$
to be fibrant models the fact that $X$ is an 
\emph{$\infty$-presheaf of $\infty$-groupoids}. 
The condition that $X$ is also fibrant as an object in
$[C^{\mathrm{op}}, \mathrm{sSet}]_{\mathrm{proj},\mathrm{loc}}$
models the higher analog of the sheaf condition: it makes $X$ an
\emph{$\infty$-sheaf}/$\infty$-stack.
For generic sites, $C$-fibrancy in the local model structure is a
property rather hard to check or establish concretely. But 
often a given site can be replaced by another site on which 
the condition is easier to control, without changing the
corresponding $\infty$-topos, up to equivalence. Here we discuss
a particularly nice class of sites called
\emph{$\infty$-cohesive sites} \cite{Schreiber}, and describe explicit conditions for
a simplicial presheaf over them to be fibrant.
\begin{definition}
  \label{CohesiveSite}
  A site $C$ is \emph{$\infty$-cohesive} if
  \begin{enumerate}
    \item 
      it has a terminal object;
    \item
  there is a generating coverage
  such that for every generating cover
  $\{U_i \to U\}$ we have 
\begin{enumerate}
  \item 
     the {\v C}ech nerve 
       $\check{C}(\{U_i\}) \in [C^{\mathrm{op}}, \mathrm{sSet}]$ 
     is degreewise a coproduct of representables;
   \item
    the limit and colimit functors, $\varprojlim\colon [C^{\mathrm{op}},\mathrm{sSet}]\to 
     \mathrm{sSet}$ and 
     $\varinjlim\colon  [C^{\mathrm{op}}, \mathrm{sSet}] \to \mathrm{sSet}$ respectively, 
     send the {\v C}ech nerve projection $\check{C}(\{U_i\})\to U$ to a weak homotopy equivalence:
     $$
       \varinjlim \check{C}(\{U_i\}) \xrightarrow{\simeq}    
        \varinjlim U = *
     $$
    and 
     $$
       \varprojlim \check{C}(\{U_i\}) \stackrel{\simeq}{\to}    
        \varprojlim U  
        \,.            
     $$
 \end{enumerate}
 \end{enumerate}
 We call the generating covers satisfying the conditions of 2 (b) the
 \emph{good covers} in $C$.
\end{definition}
\begin{remark}
  Since $C$ is assumed to have a terminal object, 
  the limit over a functor $C^{\mathrm{op}} \to \mathrm{Set}$ 
  is the evaluation on that object: 
  $$
    \varprojlim U = C(*,U)
    \,.
  $$ 
  On the other hand, the colimit of a representable $\mathrm{Set}$-valued functor
  is the singleton set: $\varinjlim U \simeq *$.
  Therefore together with the assumption that the 
  {\v C}ech nerve is degreewise representable the condition 
  $\varinjlim \check{C}(\{U_i\}) \stackrel{\simeq}{\to} \varinjlim U$
  says that the simplicial set obtained from the {\v C}ech
  nerve by replacing each $k$-fold intersection with an abstract $k$-simplex
  is contractible. 
\end{remark}
This last condition is familiar from the \emph{nerve theorem}
\cite{borsuk}:
\begin{theorem}
  Let $X$ be a paracompact topological space. Let $\{U_i \to X\}$
  be a good open cover (all non-empty $k$-fold intersections
  $U_{i_1} \cap \cdots U_{i_k}$ for $k \in \mathbb{N}$ 
  are homeomorphic to an open ball).
  Then the simplicial set
  $$
    \Pi(X)
    :=
     \mathrm{contr}
     \left(
     \int^{[k] \in \Delta} \coprod_{i_0, \cdots, i_k} U_{i_0} \cap \cdots \cap U_{i_k} 
     \right)
     =
     \int^{[k] \in \Delta} \coprod_{i_0, \cdots, i_k} * 
    \in \mathrm{sSet}
    \,,
  $$
  where $\mathrm{contr}$ is the functor that degreewise sends contractible
  spaces to points,
  is weakly homotopy equivalent to the singular simplicial set
  of $X$:
  $$
    \Pi(X) \xrightarrow{\simeq} \mathrm{Sing} X,
  $$
  and hence presents the homotopy type of $X$.
\end{theorem}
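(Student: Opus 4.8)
The plan is to compare both $\Pi(X)$ and $\mathrm{Sing}\,X$ with the realization of the {\v C}ech nerve of the cover, regarded as a simplicial \emph{topological} space. Write $\mathfrak{U} = \{U_i\}$ and let $\check{C}(\mathfrak{U})_\bullet$ be the simplicial space with $\check{C}(\mathfrak{U})_k = \coprod_{i_0, \dots, i_k} U_{i_0} \cap \cdots \cap U_{i_k}$ (coproduct over tuples with nonempty intersection), equipped with the usual faces and degeneracies and with the canonical augmentation $\check{C}(\mathfrak{U})_\bullet \to X$. There are then two comparison maps of simplicial spaces: the augmentation to the constant simplicial space $X$, and the projection $\check{C}(\mathfrak{U})_\bullet \to \Pi(X)_\bullet$ onto the nerve of the cover, regarded as a discrete simplicial space. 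First I would show that both of these realize to weak equivalences, and then combine them.

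For the augmentation: since $X$ is paracompact, every open cover is numerable, and the classical result on realizations of {\v C}ech nerves of numerable covers (Segal, tom Dieck; see also Dugger--Isaksen) gives that $|\check{C}(\mathfrak{U})_\bullet| \to X$ is a weak homotopy equivalence, with a homotopy inverse built from a partition of unity subordinate to $\mathfrak{U}$. For the projection: the good-cover hypothesis says each nonempty $U_{i_0} \cap \cdots \cap U_{i_k}$ is homeomorphic to a ball, hence contractible, so each component map $U_{i_0} \cap \cdots \cap U_{i_k} \to \ast$ is a weak equivalence. Thus $\check{C}(\mathfrak{U})_\bullet \to \Pi(X)_\bullet$ is a levelwise weak equivalence, and $|\Pi(X)_\bullet| = |\Pi(X)|$ since $\Pi(X)$ is discrete in each degree.

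The key step is to promote these levelwise statements to the realizations. For this I would verify that $\check{C}(\mathfrak{U})_\bullet$ (and trivially $\Pi(X)_\bullet$) is a \emph{good}, i.e.\ Reedy cofibrant, simplicial space: the inclusions of higher intersections exhibit the degeneracies as closed cofibrations (NDR pairs), using paracompactness and the ball structure of the intersections. A levelwise weak equivalence between good simplicial spaces induces a weak equivalence on realizations (Segal's theorem, or May's), giving $|\check{C}(\mathfrak{U})_\bullet| \xrightarrow{\ \simeq\ } |\Pi(X)|$. Combining with the previous paragraph yields a zig-zag
$$
  X \xleftarrow{\ \simeq\ } |\check{C}(\mathfrak{U})_\bullet| \xrightarrow{\ \simeq\ } |\Pi(X)|,
$$
so that $|\Pi(X)| \simeq X$. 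Since the counit $|\mathrm{Sing}\,X| \to X$ is also a weak equivalence, $\Pi(X)$ and $\mathrm{Sing}\,X$ present the same homotopy type and are weakly equivalent; an explicit comparison in the stated direction can be produced from the partition-of-unity nerve map $X \to |\Pi(X)|$ by a standard argument.

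The main obstacle is exactly this passage from levelwise to realization-level equivalences: it is where paracompactness and the contractibility (indeed ball structure) of the intersections are genuinely used, since without goodness/cofibrancy of the simplicial spaces a levelwise weak equivalence need not survive realization. The numerability input for the augmentation is the second essential topological ingredient; once these two facts are in hand, the remainder of the argument is formal.
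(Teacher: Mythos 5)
The paper does not actually prove this statement: it is quoted as the classical \emph{nerve theorem} with a citation to Borsuk, and the text moves straight on to a remark. So there is no in-paper argument to compare against; your proposal has to stand on its own, and it does. What you give is the standard modern proof via simplicial spaces (Segal/tom Dieck/Dugger--Isaksen), which is a genuinely different route from Borsuk's original ANR-theoretic argument for compacta that the paper cites; your version has the advantage of making transparent exactly where paracompactness (numerability, for the augmentation $|\check{C}(\mathfrak{U})_\bullet|\to X$) and goodness of the cover (the levelwise equivalence to the discrete nerve) enter. Two small points. First, the Reedy cofibrancy of $\check{C}(\mathfrak{U})_\bullet$ does not need paracompactness or the ball structure at all: each degeneracy is an isomorphism onto a clopen summand of the next level (repeating an index does not change the intersection), so the latching maps are inclusions of disjoint summands and the simplicial space is automatically good. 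Second, the map $\Pi(X)\to \mathrm{Sing}\,X$ asserted in the statement is not canonical --- it requires choosing points in the intersections and filling in simplices using contractibility, or equivalently inverting one leg of your zig-zag --- so your closing sentence is doing slightly more work than it admits; but since the theorem is really only used for the weak homotopy type, this is a cosmetic issue rather than a gap.
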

\begin{remark}
  The conditions on an $\infty$-cohesive site ensure that
  the {\v C}ech nerve of a good cover
  is cofibrant in the projective model structure
  $[C^{\mathrm{op}}, \mathrm{sSet}]_{\mathrm{proj}}$
  and hence also in its localization
  $[C^{\mathrm{op}}, \mathrm{sSet}]_{\mathrm{proj}, \mathrm{loc}}$.
\end{remark}
In order to discuss descent over $C$ it is convenient to 
introduce the following notation for `cohomology over the site $C$'.
For the moment this is just an auxiliary technical notion. Later
we will see how it relates to an intrinsically defined notion of 
cohomology.
\begin{definition}
 For $C$ an  $\infty$-cohesive site,
 $A \in [C^{\mathrm{op}}, \mathrm{Set}]_{\mathrm{proj}}$ 
 fibrant, and $\{U_i \to U\}$ a good cover in $U$, we write 
 $$
   H^0_C(\{U_i\},A) := \pi_0 \mathrm{Maps}(\check{C}(\{U_i\}), A)
   \,.
 $$
 Moreover, if $A$ is equipped with the structure of a group object (respectively an 
 abelian group object)
 we write
 $$
   H^n_C(\{U_i\},A) := \pi_0 \mathrm{Maps}(\check{C}(\{U_i\}), \Wbar^n A),   
 $$
 if $n=1$ (respectively $n\geq 1$).  
 Here $\mathrm{Maps}(-,-)$ denotes the usual simplicial mapping 
 space in $[C^{\mathrm{op}},\mathrm{sSet}]$.  
\end{definition}

As is described in \cite{Jardine-Fields} the homotopy groups of a 
simplicial set $X$ have a base-point free interpretation 
as group objects over $X_0$: one defines $\pi_0(X)$ as a colimit in the usual  
way as 
\[
\pi_0(X) = \varinjlim (X_1\rightrightarrows X_0), 
\]
and, for any integer $n\geq 1$, one defines 
\[
\pi_n(X) = \bigsqcup_{x\in X_0} \pi_n(X,x),  
\]
so that $\pi_n(X)\to X_0$ has a natural structue as 
a group object over $X_0$.  If now $X\in [C^{\mathrm{op}},\mathrm{sSet}]$ 
we can perform these constructions object-wise to form presheaves 
$\pi_0^{\mathrm{PSh}}(X)$ and $\pi_n^{\mathrm{PSh}}(X)$, so that 
\[
\pi_n^{\mathrm{PSh}}(X)(U) = \bigsqcup_{x_U\in X_0(U)} \pi_n(X(U),x_U) 
\]
for instance.  Note that both constructions are functorial in $X$, and 
that $\pi_n^{\mathrm{PSh}}(X)\to X_0$ is 
a group object over $X_0$ in $[C^{\mathrm{op}},\mathrm{sSet}]$.  
If $x_U\in X_0(U)$ then we define the presheaf $\pi_n(X,x_U)$ by the 
pullback diagram 
\[
\xymatrix{ 
\pi_n(X,x_U) \ar[d] \ar[r] & \pi_n(X) \ar[d] \\ 
U \ar[r]^{x_U} & X_0 }
\]
so that $\pi_n(X,x_U)$ is naturally a presheaf of groups on the slice $C_{/U}$.  
Following \cite{Jardine-Fields} we make the following definition.     
\begin{definition}
 \label{HomotopySheaves} 
Let $C$ be a site, and let 
\[
\pi_0^{\mathrm{PSh}}\colon [C^{\mathrm{op}},\mathrm{sSet}]\to 
 \mathrm{PSh}(C)
\]
and 
\[
\pi_n^{\mathrm{PSh}}\colon [C^{\mathrm{op}},\mathrm{sSet}] 
\to \mathrm{PSh}(C),
\]
for $n\geq 1$, denote the functors described above.  We write 
\[
\pi_0\colon [C^{\mathrm{op}},\mathrm{sSet}]\to 
\mathrm{Sh}(C)
\]
and, for $n\geq 1$, 
\[
\pi_n\colon [C^{\mathrm{op}},\mathrm{sSet}]\to \mathrm{Sh}(C) 
\]
for their sheafified versions.  
\end{definition} 
Note that if $X$ is a simplicial presheaf on $C$, then $\pi_n(X)$ 
is naturally a group object over the sheaf associated to $X_0$.  
Using this we can state the main definition of this section. 
\begin{definition}
  \label{CAcyclic}
  An object $A \in [C^{\mathrm{op}}, \mathrm{sSet}]$
  is called \emph{$C$-acyclic} if 
  \begin{enumerate}
    \item it is fibrant in $[C^{\mathrm{op}}, \mathrm{sSet}]_{\mathrm{proj}}$;
    \item for all $n \in \mathbb{N}$ we have $\pi_n^{\mathrm{PSh}}(A) = \pi_n(A)$, in other words 
  the homotopy group presheaves 
  from Definition~\ref{HomotopySheaves}
  are already sheaves;
    \item the sheaves $\pi^{\PSh}_n(A)$ are acyclic with respect to 
    good covers; i.e.\ for every object $U$, for every point $a_U\in A_0(U)$, and for all 
    good covers $\{U_i\to U\}$ of $U$, we have 
    \[
      H^{1}_C(\{U_i\},\pi_1(A,a_U)) = 1
    \]
    and 
    \[
    H^k_C(\{U_i\},\pi_n(A,a_U)) = 1
    \]
    for all $k \geq 1$ if $n\geq 2$. 
  \end{enumerate}
\end{definition}
\begin{remark}
  This definition can be formulated and the following statements about it
  are true over any site whatsoever. 
  However, on generic sites $C$ the $C$-acyclic objects are not 
  very interesting. They become interesting on sites such as the $\infty$-cohesive
  sites considered here, whose topology sees all their objects as being contractible.
\end{remark}
\begin{observation}
  \label{LoopingOfHCAcyclicObjects}
  If $A$ is $C$-acyclic then $\Omega_x A$ is $C$-acyclic for every point $x : * \to A$ 
  (for any model of the loop space 
  object in $[C^{\mathrm{op}}, \mathrm{sSet}]_{\mathrm{proj}}$).
\end{observation}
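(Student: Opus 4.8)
The plan is to verify directly the three defining conditions of $C$-acyclicity (Definition~\ref{CAcyclic}) for a convenient fibrant model of the loop space object. Since conditions (2) and (3) depend only on the homotopy sheaves and are therefore invariant under weak equivalence between fibrant objects, it suffices to treat one model; I would take the objectwise based loop space $U \mapsto \Omega_{x_U} A(U)$. Because finite homotopy limits in $[C^{\mathrm{op}}, \mathrm{sSet}]_{\mathrm{proj}}$ are computed objectwise (Proposition~\ref{FiniteHomotopyLimitsInPresheaves}) and $A$ is objectwise Kan, this presheaf presents the loop space object and is again objectwise Kan --- the based loop space of a Kan complex is a Kan complex --- so condition (1), projective fibrancy, is immediate.

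The conceptual engine of the argument is the degree shift $\pi_n(\Omega_x A(U), \gamma) \cong \pi_{n+1}(A(U), x_U)$ for every object $U$ and every point $\gamma$, obtained from the long exact sequence of the objectwise path-loop fibration $\Omega_x A \to PA \to A$ together with the objectwise contractibility of the path space $PA$. Granting this, condition (2) for $\Omega_x A$ --- that each $\pi^{\mathrm{PSh}}_n(\Omega_x A)$ be a sheaf --- is reduced to the statement that $\pi^{\mathrm{PSh}}_{n+1}(A)$ is a sheaf, which is condition (2) for $A$ in degrees $\geq 1$; all of these are available. For condition (3) the same shift gives, for every good cover, $H^k_C(\{U_i\}, \pi_n(\Omega_x A, \gamma)) \cong H^k_C(\{U_i\}, \pi_{n+1}(A, x))$. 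The point is that looping raises the relevant degree by one, so that every cohomology group we must kill involves $\pi_{\geq 2}(A)$: the requirement $H^1_C(\{U_i\}, \pi_1(\Omega_x A, \gamma)) = 1$ becomes $H^1_C(\{U_i\}, \pi_2(A, x)) = 1$, and $H^k_C(\{U_i\}, \pi_n(\Omega_x A, \gamma)) = 1$ for $n \geq 2$ becomes a statement about $\pi_{n+1}(A, x)$ with $n+1 \geq 3$. Both fall squarely inside the clause of condition (3) for $A$ that forces $H^k_C(\{U_i\}, \pi_m(A, a_U)) = 1$ for all $k \geq 1$ whenever $m \geq 2$; in particular the potentially delicate $H^1$ of $\pi_1(A)$ never enters. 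All the groups $\pi_n(\Omega_x A) \cong \pi_{n+1}(A)$ with $n \geq 1$ are moreover abelian, as they should be for the higher $H^k_C$ to be defined.

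The one step requiring genuine care --- and the main obstacle --- is the passage between the base-point-free homotopy presheaves of Definition~\ref{HomotopySheaves}, which are fibered over the presheaf of $0$-simplices, and the based homotopy groups appearing in the shift above. Here I would use that $\Omega_x A$ is grouplike: concatenation with a loop $\gamma$ induces an isomorphism $\pi_n(\Omega_x A(U), e_U) \xrightarrow{\cong} \pi_n(\Omega_x A(U), \gamma)$ from the fiber over the canonical constant-loop basepoint $e$, naturally in $U$, so that the fibration $\pi^{\mathrm{PSh}}_n(\Omega_x A) \to (\Omega_x A)_0$ is trivialized and the base-point-free conditions reduce to the based ones. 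It then remains to observe that restricting a homotopy sheaf of $A$ along the global section $x \colon \ast \to A$ again yields a sheaf: a descent datum for $\pi^{\mathrm{PSh}}_{n+1}(A)$ whose base components are the compatible restrictions of $x$ glues uniquely, and by uniqueness the glued base component is forced to be $x$ itself, so the based presheaf $\pi^{\mathrm{PSh}}_{n+1}(A, x)$ inherits the sheaf property. Assembling these reductions with the fibrancy and the degree shift completes the verification of Definition~\ref{CAcyclic} for $\Omega_x A$.
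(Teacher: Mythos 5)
Your proof is correct and follows essentially the same route as the paper: the paper's entire argument is the one-line observation that the degree shift $\pi_n \Omega X \cong \pi_{n+1} X$ prolongs objectwise to $[C^{\mathrm{op}},\mathrm{sSet}]_{\mathrm{proj}}$, which is exactly the engine of your argument. You merely supply the bookkeeping (choice of fibrant model, the translation between based and base-point-free homotopy presheaves, and the check that the shifted degrees land in the $n\geq 2$ clause of condition (3)) that the paper leaves implicit.
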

\proof
   The standard statement in $\mathrm{sSet}_{\mathrm{Quillen}}$
   $$
     \pi_n \Omega X \simeq \pi_{n+1} X
   $$
   directly prolongs 
   to $[C^{\mathrm{op}}, \mathrm{sSet}]_{\mathrm{proj}}$.
\endofproof
\begin{theorem}
  \label{CAcyclicityAndLocalFibrancy}
  Let $C$ be an $\infty$-cohesive site.
  Sufficient conditions for an object 
  $A \in [C^{\mathrm{op}}, \mathrm{sSet}]$ 
  to be fibrant in the local 
  model structure 
  $[C^{\mathrm{op}}, \mathrm{sSet}]_{\mathrm{proj}, \mathrm{loc}}$
  are
  \begin{itemize}
    \item 
      $A$ is 0-$C$-truncated and $C$-acyclic;
    \item 
      $A$ is $C$-connected and $C$-acyclic;
    \item
      $A$ is a group object and $C$-acyclic.
  \end{itemize}
\end{theorem}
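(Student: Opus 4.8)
The plan is to reduce all three cases to a single \emph{descent} computation and then to distinguish the cases only by the behaviour of the lowest homotopy degrees. In each case $A$ is by hypothesis $C$-acyclic and hence, by the first clause of Definition~\ref{CAcyclic}, already fibrant in $[C^{\mathrm{op}},\mathrm{sSet}]_{\mathrm{proj}}$. Since $[C^{\mathrm{op}},\mathrm{sSet}]_{\mathrm{proj},\mathrm{loc}}$ is the left Bousfield localization of the projective structure at the covering sieve inclusions, a globally fibrant object is locally fibrant precisely if it is local with respect to these maps. Over an $\infty$-cohesive site the good covers generate the topology and their {\v C}ech nerves $\check{C}(\{U_i\})$ are cofibrant (by the Remark following Definition~\ref{CohesiveSite}); using the hypercompleteness of the localization noted in Section~\ref{InfinityToposPresentation}, I would first argue that local fibrancy of $A$ is equivalent to \emph{{\v C}ech descent along good covers}, i.e.\ that for every good cover $\{U_i\to U\}$ the canonical comparison map
$$
  A(U) \;\simeq\; \mathrm{Maps}(U,A) \longrightarrow \mathrm{Maps}(\check{C}(\{U_i\}),A)
$$
is a weak equivalence. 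The left identification uses that $U$ is representable, hence cofibrant, and the right-hand term is a genuine homotopy limit because $\check{C}(\{U_i\})$ is cofibrant and $A$ is fibrant; explicitly it is the totalization of the cosimplicial space $[n]\mapsto \prod_{i_0,\dots,i_n} A(U_{i_0\cdots i_n})$.

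The core of the argument is the analysis of this descent object. I would filter $A$ by its Postnikov tower $A\simeq \holim_n A^{(n)}$, whose successive fibres are the Eilenberg--MacLane objects $K(\pi_n A,n)$ built from the homotopy \emph{sheaves} of Definition~\ref{HomotopySheaves}; organising the same information is the homotopy spectral sequence of the cosimplicial space, with $E_2^{s,t}=H^s_C(\{U_i\},\pi_t A)$ abutting to $\pi_{t-s}\mathrm{Maps}(\check{C}(\{U_i\}),A)$. The second clause of $C$-acyclicity ($\pi_n^{\mathrm{PSh}}(A)=\pi_n(A)$) gives that the edge terms compute the correct sections, $H^0_C(\{U_i\},\pi_t A)=\pi_t(A(U))$, while the third clause makes every higher term $H^{s}_C(\{U_i\},\pi_t A)$ with $s\geq 1$ and $t\geq 1$ vanish. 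Tracking the indexing, the only contributions to $\pi_m\mathrm{Maps}(\check{C}(\{U_i\}),A)$ with $m\geq 0$ lie on the line $t=s+m$, where for $s\geq 1$ one always meets $\pi_{s+m}A$ with $s+m\geq 1$, killed by acyclicity (the case $s=1$, $m=0$ using precisely $H^1_C(\{U_i\},\pi_1 A)=1$). Hence the spectral sequence collapses onto its $s=0$ column, the comparison map is an isomorphism on every $\pi_m$, and descent holds. Equivalently, inducting up the Postnikov tower and using that $\mathrm{Maps}(\check{C}(\{U_i\}),-)$ preserves the fibre sequences $K(\pi_n A,n)\to A^{(n)}\to A^{(n-1)}$ (finite homotopy limits, Proposition~\ref{FiniteHomotopyLimitsInPresheaves} together with Proposition~\ref{ConstructionOfHomotopyLimits}), descent for each stage reduces to descent for the Eilenberg--MacLane fibres, which is exactly the acyclicity of the corresponding $\pi_n A$.

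It then remains to check that the three hypotheses are precisely what is needed to run this argument through the non-abelian, base-point-dependent lowest degrees, which is where I expect the real work to sit. When $A$ is $0$-$C$-truncated, $\pi_n A$ vanishes for $n\geq 1$, the tower stops at $A^{(0)}=\pi_0 A$, and descent is nothing but the ordinary sheaf condition for the sheaf $\pi_0 A$. When $A$ is $C$-connected, acyclicity forces $\pi_0^{\mathrm{PSh}}(A)=\pi_0(A)=\ast$, so every $A(U)$ is connected and supplies a single base point, each $\pi_n(A,a_U)$ is an unambiguous sheaf of groups (abelian for $n\geq 2$), and the $\pi_1$-fringe is controlled exactly by $H^1_C(\{U_i\},\pi_1(A,a_U))=1$. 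When $A$ is a group object, $\pi_0 A$ is again a sheaf of groups but now sits at the very bottom of the tower, so its descent is the plain sheaf condition and needs no cohomological vanishing, while the remaining $\pi_n A$ ($n\geq 1$) are abelian since $A\simeq \Omega\Wbar A$ is a loop object, and the homogeneity of a group object renders the base point in each higher fibre immaterial, so that the base-point-free acyclicity of Definition~\ref{CAcyclic} feeds uniformly into the inductive step. (One may also view this through the universal fibre sequence $A\to W A\to \Wbar A$ of Corollary~\ref{GToWGToWbarGPresentsLoopingFiberSequence}, in which $W A\simeq\ast$ descends trivially, exhibiting $\mathrm{Maps}(\check{C}(\{U_i\}),A)$ as the based loop object of $\mathrm{Maps}(\check{C}(\{U_i\}),\Wbar A)$, so that descent for $A$ becomes a statement about the component of the trivial cocycle in $\Wbar A$.) The main obstacle throughout is the convergence and low-degree fringe behaviour of the descent spectral sequence, together with the base-point bookkeeping for the non-abelian $\pi_0$ and $\pi_1$; separating out the truncated, connected, and group-object cases is exactly the device that keeps this bookkeeping under control.
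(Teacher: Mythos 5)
Your proposal is correct and is essentially the argument the paper gives, repackaged: the paper runs the same Postnikov-tower induction explicitly, via the fibre sequences $K(n)\to A(n)\to A(n-1)$ and the induced fibre sequences of mapping spaces out of $U$ and out of $\check{C}(\{U_i\})$, rather than assembling the vanishing into a descent spectral sequence; your ``equivalently, inducting up the Postnikov tower'' sentence is literally the paper's proof, including the final step that $A=\varprojlim A(n)$ is a homotopy limit of fibrations between fibrant objects. The one place where you genuinely diverge is the group-object case: the paper reduces it to the connected case by the \emph{strict} fibre sequence $G_0\to G\to G/G_0$ (Lemma~\ref{CanonicalFiberSequenceForSimplicialGroup}), which forces it to add the hypothesis that the presheaf of $0$-simplices $G_0$ be a sheaf, whereas you stay at the level of homotopy sheaves, using that $\pi_0^{\mathrm{PSh}}(G)=\pi_0(G)$ is a sheaf by clause 2 of $C$-acyclicity and that homogeneity of the simplicial group of cocycles lets you check all higher homotopy at the identity component. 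Your route has the advantage of matching the hypotheses of the theorem as stated (no condition on $G_0$); the paper's route has the advantage of never having to address base-point and fringe issues for a disconnected target, since $G/G_0$ is connected by construction. Either way the low-degree bookkeeping you flag --- the sheaf condition for $\pi_0$ and the $H^1$ of $\pi_1$ fringe at $s=1$, $m=0$ --- is exactly where the paper's Lemmas \ref{DescentOf0TruncatedObjects}--\ref{DescentFor1Truncated} do their work.
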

Here and in the following ``$C$-truncated'' and 
``$C$-connected'' means: as simplicial presheaves
(not after sheafification of homotopy presheaves).  So for example, here and in the 
following a simplicial presheaf $X$ is $C$-connected if it takes values in connected
simplicial sets.
\begin{remark}
  This means that with $A$ satisfying the conditions 
  of Theorem~\ref{CAcyclicityAndLocalFibrancy} above, with $X$ any simplicial presheaf
  and $\xymatrix{Y \ar[r]^{\simeq} & X}$ a {\em split} hypercover 
  (see Definition 4.8 of \cite{DHI}), the 
  cocycle $\infty$-groupoid $\mathbf{H}(X, A)$ is presented by 
  simplicial function complex
  $[C^{\mathrm{op}}, \mathrm{sSet}](Y, A)$.
  The vertices of this simplicial set are 
  simplicial hyper-{\v C}ech cocycles with coefficients in $A$,
  the edges are {\v C}ech coboundaries and so on. Specifically, if $\{U_i \to X\}$
  is a \emph{good cover} in that all finite non-empty intersections of patches
  are representable, then the {\v C}ech nerve $\check{C}(\{U_i\}) \to X$ is a split 
  hypercover, and a morphism of simplicial presheaves $\check{C}(\{U_i\}) \to A$
  is a hyper-{\v C}ech cocycle with respect to the given cover.
\end{remark}
We demonstrate Theorem \ref{CAcyclicityAndLocalFibrancy} 
in several stages in the following list of propositions.
\begin{lemma}
  \label{DescentOf0TruncatedObjects}
  A 0-$C$-truncated object is fibrant in
  $[C^{\mathrm{op}}, \mathrm{sSet}]_{\mathrm{proj}, \mathrm{loc}}$
  precisely if it is fibrant in 
  $[C^{\mathrm{op}}, \mathrm{sSet}]_{\mathrm{proj}}$
  and weakly equivalent to a sheaf: to an object in the image of the 
  canonical inclusion
  $$
   \mathrm{Sh}(C) \hookrightarrow
  [C^{\mathrm{op}}, \mathrm{Set}] \hookrightarrow
  [C^{\mathrm{op}}, \mathrm{sSet}]
    \,.
  $$
\end{lemma}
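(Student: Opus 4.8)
The plan is to reduce both implications to a single comparison between homotopy descent and the ordinary sheaf condition, exploiting the fact that $[C^{\mathrm{op}}, \mathrm{sSet}]_{\mathrm{proj}, \mathrm{loc}}$ is the left Bousfield localization of $[C^{\mathrm{op}}, \mathrm{sSet}]_{\mathrm{proj}}$ at the {\v C}ech-nerve projections $\check{C}(\{U_i\}) \to U$ of the good covers (Section~\ref{InfinityToposPresentation}, Definition~\ref{CohesiveSite}). By the general characterization of fibrancy in such a localization, an object $A$ is fibrant there precisely if it is fibrant in $[C^{\mathrm{op}}, \mathrm{sSet}]_{\mathrm{proj}}$ and moreover \emph{local}, i.e.\ for every good cover the comparison map
$$
A(U) = \Maps(U, A) \longrightarrow \Maps(\check{C}(\{U_i\}), A)
$$
is a weak equivalence. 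Here I would use that $U$ and, by the cofibrancy of {\v C}ech nerves of good covers noted after Definition~\ref{CohesiveSite}, also $\check{C}(\{U_i\})$ are cofibrant, so these simplicial hom-complexes already compute the derived mapping spaces. I read ``weakly equivalent to a sheaf'' as an objectwise (projective) weak equivalence; this reading is forced, since every presheaf of sets is \emph{locally} weakly equivalent to its sheafification, which would make a local reading of the condition vacuous and the converse false.

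The key step, which I expect to be the main obstacle, is to evaluate the right-hand mapping space for a \emph{discrete} coefficient object. If $F \in [C^{\mathrm{op}}, \mathrm{Set}] \hookrightarrow [C^{\mathrm{op}}, \mathrm{sSet}]$ is a presheaf of sets, then $\Maps(\check{C}(\{U_i\}), F)$ is the totalization of the cosimplicial simplicial set $[n] \mapsto \prod_{i_0, \dots, i_n} F(U_{i_0 \cdots i_n})$, which is levelwise discrete. A levelwise discrete cosimplicial simplicial set is Reedy fibrant, since its matching maps are maps of discrete simplicial sets and hence Kan fibrations; therefore its totalization computes the homotopy limit and, being again discrete, coincides with the ordinary limit over $\Delta$, namely the equalizer
$$
\Maps(\check{C}(\{U_i\}), F) \;=\; \mathrm{eq}\Big( \textstyle\prod_i F(U_i) \rightrightarrows \prod_{i,j} F(U_{ij}) \Big).
$$
Thus for discrete $F$ the homotopy-descent condition for $\{U_i \to U\}$ is exactly the $1$-categorical gluing condition, and since the good covers form a generating coverage (Definition~\ref{CohesiveSite}), $F$ satisfies descent for all good covers if and only if it is a sheaf.

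For the forward implication I would first note that a fibrant object of the localization is in particular projectively fibrant, hence objectwise Kan. As $A$ is $0$-$C$-truncated, the canonical map $A \to \pi_0^{\mathrm{PSh}}(A)$ (Definition~\ref{HomotopySheaves}) is then an objectwise weak equivalence onto the discrete, projectively fibrant presheaf $F := \pi_0^{\mathrm{PSh}}(A)$. Since $\Maps(\check{C}(\{U_i\}), -)$ preserves weak equivalences between projectively fibrant objects, transporting the descent equivalence for $A$ across $A \simeq F$ yields $F(U) \xrightarrow{\simeq} \Maps(\check{C}(\{U_i\}), F)$; by the previous paragraph this is a bijection for every good cover, so $F$ is a sheaf and $A \simeq F$ exhibits $A$ as weakly equivalent to a sheaf.

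For the converse, suppose $A$ is projectively fibrant and objectwise weakly equivalent to a sheaf $F$. Again $A \simeq \pi_0^{\mathrm{PSh}}(A)$, so $\pi_0^{\mathrm{PSh}}(A)$ and $F$ are linked by a zigzag of objectwise weak equivalences between discrete presheaves; applying $\pi_0^{\mathrm{PSh}}$ gives an isomorphism $\pi_0^{\mathrm{PSh}}(A) \cong F$, whence $\pi_0^{\mathrm{PSh}}(A)$ is a sheaf. Then for every good cover the comparison map $A(U) \to \Maps(\check{C}(\{U_i\}), A)$ is identified, via $A \simeq F$, with the gluing map $F(U) \to \mathrm{eq}(\prod_i F(U_i) \rightrightarrows \prod_{i,j} F(U_{ij}))$, which is a bijection because $F$ is a sheaf. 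Hence $A$ is local and projectively fibrant, i.e.\ fibrant in $[C^{\mathrm{op}}, \mathrm{sSet}]_{\mathrm{proj}, \mathrm{loc}}$, which would complete the argument.
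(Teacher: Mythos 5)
Your proof is correct and follows essentially the same route as the paper's: both reduce to the objectwise equivalence $A \to \pi_0^{\mathrm{PSh}}(A)$ furnished by $0$-truncatedness and then identify homotopy descent for a discrete presheaf with the ordinary sheaf gluing condition. The only cosmetic difference is that you compute $\mathrm{Maps}(\check{C}(\{U_i\}), F)$ as the totalization of a Reedy-fibrant levelwise discrete cosimplicial object, whereas the paper passes through $\mathrm{Maps}(\pi_0\check{C}(\{U_i\}), \pi_0(A)) \cong \mathrm{Sh}_C(S(\{U_i\}), \pi_0(A))$ with $S(\{U_i\})$ the covering sieve -- these are the same equalizer.
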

\proof
  From general facts of left Bousfield localization we have 
  that the fibrant objects in the local model structure
  are necessarily fibrant also in the global structure.
  Since moreover $A \to \pi_0(A)$ is a weak equivalence in the 
  global model structure by assumption, we have for every
  covering $\{U_i \to U\}$ in $C$
  a sequence of weak equivalences
  \begin{align*}
    \mathrm{Maps}(\check{C}(\{U_i\}), A) & \simeq 
    \mathrm{Maps}( \check{C}(\{U_i\}), \pi_0(A))  \\   
     & \simeq
    \mathrm{Maps}( \pi_0 \check{C}(\{U_i\}), \pi_0(A))    \\
     & \simeq
    \mathrm{Sh}_C(S(\{U_i\}), \pi_0(A)),    
  \end{align*}
  where $S(\{U_i\}) \hookrightarrow U$ is the sieve corresponding to the
  cover. Therefore the descent condition
  $$
    \mathrm{Maps}(U, A)\stackrel{\simeq}{\to}
    \mathrm{Maps}(\check{C}(\{U_i\}), A)
  $$
  is precisely the sheaf condition for $\pi_0(A)$.
\endofproof
\begin{lemma}
  \label{DescentOnPi0AndLooping}
  A pointed and $C$-connected fibrant object $A \in [C^{\mathrm{op}}, \mathrm{sSet}]_{\mathrm{proj}}$ 
  is fibrant in 
  $[C^{\mathrm{op}}, \mathrm{sSet}]_{\mathrm{proj}, \mathrm{loc}}$ 
  if for all objects $U \in C$
  \begin{enumerate}
    \item 
      $H^0_C(U, A) \simeq *$; 
    \item
      $\Omega_* A$ is fibrant in 
        $[C^{\mathrm{op}}, \mathrm{sSet}]_{\mathrm{proj}, \mathrm{loc}}$\,,
  \end{enumerate}
  where $\Omega_* A$ is any fibrant object in 
  $[C^{\mathrm{op}}, \mathrm{sSet}]_{\mathrm{proj}}$ representing the
  simplicial looping of $A$.
\end{lemma}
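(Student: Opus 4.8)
The plan is to reduce local fibrancy to a descent statement and then to verify that descent degreewise in homotopy groups, using hypothesis (2) to dispatch all positive degrees through a loop-space adjunction. Since $A$ is by assumption already fibrant in $[C^{\mathrm{op}}, \mathrm{sSet}]_{\mathrm{proj}}$ and since $[C^{\mathrm{op}}, \mathrm{sSet}]_{\mathrm{proj}, \mathrm{loc}}$ is the left Bousfield localization of the projective structure at the covering sieve inclusions, the object $A$ is fibrant in the local structure precisely if for every good cover $\{U_i \to U\}$ the canonical map
\[
  \mathrm{Maps}(U, A) \longrightarrow \mathrm{Maps}(\check{C}(\{U_i\}), A)
\]
is a weak equivalence of Kan complexes. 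As in the proof of Lemma~\ref{DescentOf0TruncatedObjects}, I would use that over the $\infty$-cohesive site $C$ the {\v C}ech nerve $\check{C}(\{U_i\})$ is projectively cofibrant, so that this simplicial mapping complex already computes the derived mapping space. Everything thus reduces to showing that the displayed map is a weak equivalence for each good cover.

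First I would treat $\pi_0$. Because $A$ is $C$-connected, $A(U) = \mathrm{Maps}(U, A)$ is objectwise connected, so its $\pi_0$ is trivial; and by hypothesis (1) the target has $\pi_0\,\mathrm{Maps}(\check{C}(\{U_i\}), A) = H^0_C(\{U_i\}, A) \simeq *$. Hence the map is a bijection on $\pi_0$, and in particular both Kan complexes have a single connected component. Next, for the positive homotopy groups I would exploit that $A$ is pointed: the global point $* \to A$ induces a canonical constant basepoint on both mapping complexes, natural in the cover. The key observation is the natural equivalence
\[
  \mathrm{Maps}(Y, \Omega_* A) \;\simeq\; \Omega_* \mathrm{Maps}(Y, A)
\]
(pointed at the constant maps), obtained from the exponential law together with the fact that $\mathrm{Maps}(Y, -)$ preserves the path--loop fibration; here I would take $\Omega_* A$ to be the objectwise loop object, which is projectively fibrant since $A$ is. This yields natural isomorphisms $\pi_n(\mathrm{Maps}(Y, A), \ast) \cong \pi_{n-1}(\mathrm{Maps}(Y, \Omega_* A), \ast)$ for all $n \geq 1$ and $Y \in \{U, \check{C}(\{U_i\})\}$.

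Now I would invoke hypothesis (2): the object $\Omega_* A$ is locally fibrant, so its descent map $\mathrm{Maps}(U, \Omega_* A) \to \mathrm{Maps}(\check{C}(\{U_i\}), \Omega_* A)$ is a weak equivalence and hence an isomorphism on all $\pi_{n-1}$. Transporting back along the isomorphisms above shows that $\mathrm{Maps}(U, A) \to \mathrm{Maps}(\check{C}(\{U_i\}), A)$ is an isomorphism on $\pi_n$ at the canonical basepoint for every $n \geq 1$. Since both complexes have trivial $\pi_0$, every basepoint lies in this single component, so checking one compatible basepoint suffices to conclude that the map is a weak equivalence, establishing descent and hence local fibrancy.

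The main obstacle I anticipate is the careful bookkeeping of basepoints in the loop-space step: the isomorphism $\pi_n(\mathrm{Maps}(Y, A)) \cong \pi_{n-1}(\mathrm{Maps}(Y, \Omega_* A))$ is naturally available only at the constant basepoint coming from the point of $A$, whereas a priori a weak equivalence of Kan complexes must be verified at all basepoints. This is precisely where the $\pi_0$-computation of the first step does the work: trivial $\pi_0$ on both sides collapses the basepoint dependence, so a single compatible basepoint is enough. A secondary point to check is that the conclusion is independent of the chosen model of $\Omega_* A$, which follows since local fibrancy is invariant under weak equivalence and any two models of the simplicial looping are weakly equivalent.
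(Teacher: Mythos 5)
Your proposal is correct and follows essentially the same route as the paper's own proof: reduce local fibrancy to the descent map for good covers, dispose of $\pi_0$ via $C$-connectedness and hypothesis (1), and handle all higher homotopy by commuting $\mathrm{Maps}(Y,-)$ past $\Omega_*$ and invoking hypothesis (2). Your extra care with basepoints and with the independence of the model of $\Omega_* A$ is a welcome refinement of details the paper leaves implicit, but it does not change the argument.
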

\proof
  For $\{U_i \to U\}$ a  good covering of an object $U$ we need to show that the 
  canonical morphism
  $$
    \mathrm{Maps}(U, A)
    \to
    \mathrm{Maps}(\check{C}(\{U_i\}), A)
  $$
  is a weak homotopy equivalence. This is equivalent to 
  the two morphisms
  \begin{enumerate}
    \item 
      $\pi_0 \mathrm{Maps}(U, A)
      \to
      \pi_0\mathrm{Maps}(\check{C}(\{U_i\}), A)$
    \item
       $\Omega_* \mathrm{Maps}(U, A)
       \to
      \Omega_* \mathrm{Maps}(\check{C}(\{U_i\}), A)$
  \end{enumerate}
  being weak equivalences.
  Since $A$ is $C$-connected the first of these says that there is 
  a weak equivalence
  $* \stackrel{\simeq}{\to} H_C^0(U,A)$.
  The second condition is equivalent to 
       $\mathrm{Maps}(U, \Omega_* A)
       \to
      \mathrm{Maps}(\check{C}(\{U_i\}), \Omega_* A)$,
      being a weak equivalence, hence to the descent of $\Omega_* A$.
\endofproof
\begin{lemma}
  \label{DescentFor1Truncated}
  An object $A$ which is $C$-connected, 1-$C$-truncated and $C$-acyclic is
  fibrant in $[C^{\mathrm{op}}, \mathrm{sSet}]_{\mathrm{proj}, \mathrm{loc}}$.
\end{lemma}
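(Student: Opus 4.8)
The plan is to deduce the statement from Lemma~\ref{DescentOnPi0AndLooping}, which reduces the local fibrancy of a pointed, $C$-connected, projectively fibrant object $A$ to two conditions: that $H^0_C(\{U_i\},A)\simeq *$ for every good cover, and that a projectively fibrant model of the looping $\Omega_* A$ be locally fibrant. First I would check that $A$ is indeed pointed: since $C$ has a terminal object $*_C$, the terminal simplicial presheaf is representable, so by the Yoneda lemma $\mathrm{Maps}(*,A)=A(*_C)$, which is an inhabited Kan complex because $A$ is $C$-connected, and hence supplies a global point $*\to A$.

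The looping condition is essentially formal. Since $A$ is $1$-$C$-truncated and $C$-connected, the relation $\pi_k(\Omega_* A)\cong\pi_{k+1}(A)$ used in Observation~\ref{LoopingOfHCAcyclicObjects} shows that $\Omega_* A$ is $0$-$C$-truncated. By that same Observation $\Omega_* A$ is $C$-acyclic, so its homotopy presheaf $\pi_0^{\mathrm{PSh}}(\Omega_* A)$ is already a sheaf, and the canonical map $\Omega_* A\to\pi_0^{\mathrm{PSh}}(\Omega_* A)$ is a global weak equivalence onto a sheaf. Thus $\Omega_* A$ is projectively fibrant and weakly equivalent to a sheaf, and Lemma~\ref{DescentOf0TruncatedObjects} yields its local fibrancy at once.

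The remaining vanishing $H^0_C(\{U_i\},A)=\pi_0\mathrm{Maps}(\check C(\{U_i\}),A)\simeq *$ is the main obstacle, and here I would strictify $A$. Using Proposition~\ref{PresentationOfPointedConnectedObjectsByPresheavesOfReducedsSets} (for $n=0$) I would replace $A$ by a weakly equivalent presheaf of reduced simplicial sets, and then apply the looping/delooping Quillen equivalence of Theorem~\ref{SimplicialLoopingQuillenEquivalence} objectwise to obtain $\overline{W}LA\simeq A$, where $LA$ is a presheaf of simplicial groups. As $A$ is $1$-truncated, $LA$ is $0$-truncated, hence globally equivalent to the discrete presheaf of groups $\pi_1(A)$, so that $A\simeq\overline{W}\pi_1(A)$ in $[C^{\mathrm{op}},\mathrm{sSet}]_{\mathrm{proj}}$. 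Because the {\v C}ech nerve of a good cover is projectively cofibrant (the Remark after Definition~\ref{CohesiveSite}) and both $A$ and $\overline{W}\pi_1(A)$ are projectively fibrant, this identifies $\pi_0\mathrm{Maps}(\check C(\{U_i\}),A)$ with $\pi_0\mathrm{Maps}(\check C(\{U_i\}),\overline{W}\pi_1(A))=H^1_C(\{U_i\},\pi_1(A))$, which is trivial by $C$-acyclicity (Definition~\ref{CAcyclic}, the case $n=1$). The delicate point, and the reason this identification is clean, is that $\overline{W}\pi_1(A)$ is \emph{reduced}: every cocycle then sends all vertices to the unique base point, so no base-point ambiguity intervenes and the single sheaf of groups $\pi_1(A)$ controls the entire set of components. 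Verifying carefully that passing from the possibly non-reduced $A$ to this reduced delooping model preserves the mapping spaces out of the cofibrant {\v C}ech nerve is the one step that requires genuine care; once it is in place the conclusion is immediate.
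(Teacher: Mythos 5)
Your proposal is correct and follows the same route as the paper: reduce via Lemma~\ref{DescentOnPi0AndLooping} to the two conditions, handle the looping by observing that $\Omega_* A$ is $0$-$C$-truncated, $C$-acyclic and hence a sheaf up to equivalence (Lemma~\ref{DescentOf0TruncatedObjects}), and handle $H^0_C(\{U_i\},A)\simeq *$ by the third $C$-acyclicity condition. The paper's own proof is just terser; your explicit identification $A\simeq\Wbar\pi_1(A)$ via the looping/delooping Quillen equivalence is exactly the step the paper leaves implicit when it says the first condition ``holds by the third condition of $C$-acyclicity.''
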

\proof
  The first condition of Lemma \ref{DescentOnPi0AndLooping}
  holds by the third condition of $C$-acyclicity.
  The second condition in Lemma \ref{DescentOnPi0AndLooping}
  is that $\pi_1(A)$ satisfies descent.
  By $C$-acyclicity this is a sheaf and it is 0-truncated
  by assumption, therefore it  
  satisfies descent by Lemma \ref{DescentOf0TruncatedObjects}.
\endofproof
\begin{proposition}
  \label{DescentOfConnectedHCacyclicObjects}
  Every pointed $C$-connected and $C$-acyclic object 
  $A \in [C^{\mathrm{op}}, \mathrm{sSet}]_{\mathrm{proj}}$ 
 is fibrant in 
  $[C^{\mathrm{op}}, \mathrm{sSet}]_{\mathrm{proj}, \mathrm{loc}}$.
\end{proposition}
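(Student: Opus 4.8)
The plan is to establish the descent property for $A$ — equivalently, since $A$ is already fibrant in $[C^{\op}, \mathrm{sSet}]_{\mathrm{proj}}$, its fibrancy in the localization $[C^{\op}, \mathrm{sSet}]_{\mathrm{proj}, \mathrm{loc}}$ — by first reducing the untruncated object to its Postnikov sections and then running an induction on the $C$-truncation degree. Because homotopy limits in the global projective structure are computed objectwise and Postnikov towers of Kan complexes converge, the canonical map $A \xrightarrow{\simeq} \holim_n A^{(n)}$ onto the tower of Postnikov sections is a global weak equivalence. Each $A^{(n)}$ is again pointed and $C$-connected, is $n$-$C$-truncated, and has the same homotopy sheaves as $A$ in degrees $\leq n$ (and vanishing ones above), so each $A^{(n)}$ is again $C$-acyclic. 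Since the local objects are closed under homotopy limits, once I know each $A^{(n)}$ is locally fibrant the homotopy limit is locally fibrant, and then $A$, being globally fibrant and globally equivalent to it, satisfies descent as well (descent being homotopy-invariant among globally fibrant objects). It therefore suffices to prove the proposition in the $n$-$C$-truncated case.

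For the truncated case I would induct on $n$, the base case $n=1$ being exactly Lemma~\ref{DescentFor1Truncated}. For the inductive step, let $A$ be pointed, $C$-connected, $n$-$C$-truncated and $C$-acyclic, and apply Lemma~\ref{DescentOnPi0AndLooping}: its first hypothesis $H^0_C(U,A)\simeq *$ holds for all $U$ by $C$-connectedness together with the vanishing of $H^1_C$ of $\pi_1(A)$ built into $C$-acyclicity (exactly as in the proof of Lemma~\ref{DescentFor1Truncated}), so everything reduces to showing that a model $\Omega_* A$ of the loop space satisfies descent. By Observation~\ref{LoopingOfHCAcyclicObjects} the object $\Omega_* A$ is again $C$-acyclic, and it is $(n-1)$-$C$-truncated; the one feature that prevents a naive re-application of Lemma~\ref{DescentOnPi0AndLooping} is that $\Omega_* A$ is no longer $C$-connected, its sheaf of components being $\pi_1(A)$.

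To handle the disconnected loop space I would use the canonical objectwise fibration $\Omega_* A \to \pi_0(\Omega_* A) = \pi_1(A)$ onto the discrete sheaf of components (here $\pi_0^{\mathrm{PSh}}(\Omega_* A)=\pi_1^{\mathrm{PSh}}(A)=\pi_1(A)$ by $C$-acyclicity). Its base is $0$-$C$-truncated and a sheaf, hence locally fibrant by Lemma~\ref{DescentOf0TruncatedObjects}; its basepoint fibre is the identity component, a $C$-connected $(n-1)$-$C$-truncated object whose homotopy sheaves are the shifted acyclic sheaves $\pi_{k+1}(A)$, hence itself $C$-acyclic and so locally fibrant by the inductive hypothesis. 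Since $\Omega_* A$ is grouplike, all of its connected components are equivalent to the identity component, so every homotopy fibre of the displayed map is locally fibrant. It then remains to invoke the closure of descent under fibrations: a global fibration whose base and all of whose homotopy fibres satisfy descent has total space satisfying descent. This I would prove by comparing, for a good cover with {\v C}ech nerve $\check{C}(\{U_i\}) \to U$, the two fibration sequences obtained by applying $\mathrm{Maps}(U,-)$ and $\mathrm{Maps}(\check{C}(\{U_i\}),-)$, and using that descent on base and fibre gives weak equivalences there, so the long exact sequences and the five lemma force a weak equivalence on total spaces.

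The main obstacle is precisely this fibration-closure step together with the bookkeeping that \emph{all} fibres, not just the basepoint one, are local. The former rests on the comparison of the homotopy fibre sequences of mapping spaces and must be arranged so that the five-lemma argument runs component-wise over the (equivalent, by base descent) components of $\mathrm{Maps}(-,\pi_1(A))$; the latter is exactly what forces the use of grouplikeness of $\Omega_* A$. Everything else is routine checking that truncation, looping and passage to the identity component preserve $C$-acyclicity, which is immediate from Definition~\ref{CAcyclic} once the homotopy sheaves are identified.
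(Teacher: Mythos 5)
Your skeleton coincides with the paper's: reduce the untruncated case to the truncated one via the Postnikov tower and a homotopy-limit comparison, then induct on the truncation degree with Lemma~\ref{DescentFor1Truncated} as base case. Where you genuinely diverge is the inductive step: the paper climbs the Moore--Postnikov tower of $A$ itself, using the fibrations $K(n) \to A(n) \to A(n-1)$ whose fibers are \emph{connected} Eilenberg--MacLane objects (so the comparison of the two fiber sequences of mapping spaces takes place entirely among connected spaces), whereas you loop once and then decompose the \emph{disconnected} object $\Omega_* A$ over its sheaf of components. That trade is where your two problems sit.

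First, to apply Lemma~\ref{DescentOnPi0AndLooping} to the $n$-truncated $A$ you need $H^0_C(U,A)\simeq *$, and you assert this holds ``exactly as in the proof of Lemma~\ref{DescentFor1Truncated}.'' It does not: in the $1$-truncated case $\pi_0\mathrm{Maps}(\check{C}(\{U_i\}),A)$ is literally the nonabelian $H^1_C$ of $\pi_1(A)$, killed by one clause of Definition~\ref{CAcyclic}; for higher truncations it is a nonabelian hypercohomology set whose triviality requires an obstruction-theoretic induction up the Postnikov tower of $A$, using the vanishing of $H^k_C(\{U_i\},\pi_n(A,a_U))$ at every stage. That sub-induction is essentially the argument the paper runs, so your detour through $\Omega_* A$ does not actually let you avoid it. Second, your ``fibration-closure'' lemma is false as stated: for a nonconstant section $g\in\pi_1(A)(U)$, the fiber of $\mathrm{Maps}(U,\Omega_* A)\to\mathrm{Maps}(U,\pi_1(A))$ over $g$ is the space of sections of the pullback $g^*\Omega_* A\to U$, not a mapping space into any fiber of $\Omega_* A\to\pi_1(A)$, so knowing that the (basepoint, or even all pointwise) fibers satisfy descent is not what the five-lemma comparison consumes. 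The step can be repaired here because $C$-acyclicity forces $\pi_1(A)=\pi_1^{\mathrm{PSh}}(A)$, so every such $g$ lifts to a vertex $\tilde g\in(\Omega_* A)_0(U)$ and translation by $\tilde g$ trivializes $g^*\Omega_* A$ over $U$ onto the identity component --- but that trivialization argument, not grouplikeness of the global fibers, is the actual content, and it is absent from your proposal. With both repairs made your route closes, but at that point it contains the paper's Postnikov induction as a subroutine and is strictly longer.
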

\proof
  We first show the statement for truncated $A$
  and afterwards for the general case.
  The $k$-truncated case in turn we consider by 
  induction over $k$.
  If $A$ is 1-truncated the proposition holds by
  Lemma~\ref{DescentFor1Truncated}.
  Assuming then that the statement has been shown for 
  $k$-truncated $A$, we need to show it for $(k+1)$-truncated $A$.

  We achieve this by decomposing $A$ into its Moore-Postnikov tower
  $$
    A \to \cdots \to A(n+1) \to A(n) \to \cdots \to *\,.
  $$
  It is a standard fact 
  (shown in \cite{GoerssJardine}, VI Theorem 3.5 for simplicial 
  sets, which generalizes immediately to the global model structure
  $[C^{\mathrm{op}}, \mathrm{sSet}]_{\mathrm{proj}}$ ) that 
  for all $n > 1$ we have sequences
  $$
    K(n) \to A(n) \to A(n-1)
    \,,
  $$
  where
  $A(n-1)$ is $(n-1)$-truncated with homotopy groups in degree
  $\leq n-1$ those of $A$, and 
  where the right morphism is a Kan fibration and the left 
  morphism is its kernel, such that 
  $$
    A = \varprojlim A(n)
    \,.
  $$
  Moreover, there are canonical weak homotopy equivalences
  $$
    K(n) \to \Xi ((\pi_{n-1} A)[n])
  $$
  to the Eilenberg-MacLane object on the $(n-1)$-st homotopy group in degree $n$.
  Since $A(n-1)$ is $(n-1)$-truncated and connected, the 
  induction assumption implies that it is fibrant in the 
  local model structure.
  
  Moreover we see that $K(n)$ is fibrant in 
  $[C^{\mathrm{op}}, \mathrm{sSet}]_{\mathrm{proj}, \mathrm{loc}}$:
  the first condition of \ref{DescentOnPi0AndLooping}
  holds by the assumption that $A$ is $C$-connected. The
  second condition is implied again by the induction hypothesis, 
  since $\Omega K(n)$ is $(n-1)$-truncated, connected and 
  still $C$-acyclic, by Observation \ref{LoopingOfHCAcyclicObjects}.

  Therefore in the diagram (where $\mathrm{Maps}(-,-)$ denotes the 
  simplicial hom complex)
 $$
   \xymatrix{
     \mathrm{Maps}(U,K(n))
     \ar[r]
     \ar[d]^\simeq
     &
     \mathrm{Maps}(U,A(n))
     \ar[r]
     \ar[d]
     &
     \mathrm{Maps}(U,A(n-1))
     \ar[d]^\simeq
     \\
     \mathrm{Maps}(\check{C}(\{U_i\}),K(n))
     \ar[r]
     &
     \mathrm{Maps}(\check{C}(\{U_i\}),A(n))
     \ar[r]
     &
     \mathrm{Maps}(\check{C}(\{U_i\}),A(n-1))     
   }
 $$
 for $\{U_i \to U\}$ any good cover in $C$
 the top and bottom rows are fiber sequences 
 (notice that all simplicial sets in the top row are connected because $A$ is connected) 
 and the left and
 right vertical morphisms are weak equivalences in 
 $[C^{\mathrm{op}}, \mathrm{sSet}]_{\mathrm{proj}}$
 (the right one since $A(n-1)$ is fibrant in the local model structure
 by induction hypothesis, as remarked before,
 and the left one by $C$-acyclicity of $A$). It follows
 that also the middle morphism is a weak equivalence.
 This shows that $A(n)$ is fibrant in 
 $[C^{\mathrm{op}}, \mathrm{sSet}]_{\mathrm{proj}, \mathrm{loc}}$.
 By completing the induction the same then follows for the 
 object $A$ itself.
 
 This establishes the claim for truncated $A$. To demonstrate
 the claim for general $A$ notice that the limit over a sequence
 of fibrations between fibrant objects 
 is a homotopy limit. 
 Therefore we have
 $$
   \raisebox{20pt}{
   \xymatrix{
      \mathrm{Maps}(U, A)
      \ar[d]
      & \simeq &
      \varprojlim_n\mathrm{Maps}(U, A(n))
      \ar[d]^{\simeq}
      \\
      \mathrm{Maps}(\check{C}(\{U_i\}), A)
      & \simeq &
      \varprojlim_n\mathrm{Maps}(\check{C}(\{U_i\}), A(n))
   }
   }
 $$
 where the right vertical morphism is a morphism between homotopy
 limits in $[C^{\mathrm{op}}, \mathrm{sSet}]_{\mathrm{proj}}$
 induced by a weak equivalence of diagrams, hence is itself a weak equivalence.
 Therefore $A$ is fibrant in 
 $[C^{\mathrm{op}}, \mathrm{sSet}]_{\mathrm{proj}, \mathrm{loc}}$.
\endofproof
\begin{lemma}
  \label{CanonicalFiberSequenceForSimplicialGroup}
  For $G \in [C^{\mathrm{op}}, \mathrm{sSet}]$ 
  a group object, the canonical sequence
  $$
    G_0 \to G \to G/G_0
  $$
  is a homotopy fiber sequence in 
  $[C^{\mathrm{op}}, \mathrm{sSet}]_{\mathrm{proj}}$.
\end{lemma}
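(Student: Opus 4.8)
The plan is to realize the sequence as the principal fibration associated to a simplicial subgroup, and then reduce the entire statement to an objectwise fact in $\mathrm{sSet}_{\mathrm{Quillen}}$. First I would make precise that $G_0$ denotes the constant simplicial presheaf on the presheaf of $0$-simplices of $G$, included into $G$ by the (iterated) degeneracy maps $G_0 \to G_n$. In a simplicial group the degeneracies are injective group homomorphisms compatible with all simplicial structure maps, so this exhibits $G_0$ as a simplicial subgroup-presheaf of $G$, and $G/G_0$ is the objectwise coset space $U \mapsto G(U)/G(U)_0$. Note that in degree $0$ this inclusion is the identity, so $(G/G_0)_0 = \ast$; hence $G/G_0$ is objectwise reduced and carries a canonical basepoint $\ast \to G/G_0$, which is what makes the fiber sequence well-pointed.

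Since weak equivalences and fibrations in $[C^{\mathrm{op}}, \mathrm{sSet}]_{\mathrm{proj}}$ are detected objectwise, it suffices to establish the statement objectwise. The key input is the classical fact that for a simplicial group $G(U)$ and a simplicial subgroup $H$ the coset projection $G(U) \to G(U)/H$ is a Kan fibration (being the quotient by the free right $H$-action). Applying this with $H = G(U)_0$ shows that $G \to G/G_0$ is a projective fibration. Moreover the $1$-categorical fiber of $G(U) \to G(U)/G(U)_0$ over the identity coset is exactly the subgroup $G(U)_0$, so the ordinary pullback of $\ast \to G/G_0 \leftarrow G$ is precisely $G_0$, objectwise and hence as presheaves.

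It then remains to observe that this ordinary pullback is a homotopy pullback. All three objects are projectively fibrant: $G$ is objectwise a simplicial group, hence objectwise a Kan complex; $G_0$ is objectwise discrete; and $G/G_0$ is objectwise a coset space of a simplicial group, hence again a Kan complex (lift a vertex of a horn $\Lambda^i[n] \to G/G_0$ to $G$, extend along the anodyne inclusion $\ast \hookrightarrow \Lambda^i[n]$ using that $G \to G/G_0$ is a fibration, fill the resulting horn in the Kan complex $G$, and project back down). Since $G \to G/G_0$ is a fibration with all objects fibrant, Proposition~\ref{ConstructionOfHomotopyLimits} identifies the ordinary fiber $G_0$ with the homotopy fiber; alternatively one may simply note that $[C^{\mathrm{op}}, \mathrm{sSet}]_{\mathrm{proj}}$ is right proper, being objectwise over the right proper $\mathrm{sSet}_{\mathrm{Quillen}}$. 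Either way, $G_0 \to G \to G/G_0$ is a homotopy fiber sequence.

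The main obstacle is the classical but non-formal input of the second step: that the coset quotient $G \to G/G_0$ of a simplicial group by a subgroup that need not be normal is a Kan fibration. Once this is in hand the argument is entirely formal, since the projective model structure is controlled objectwise and the remaining identifications (that $G_0$ is the honest fiber and that all objects are fibrant) are straightforward.
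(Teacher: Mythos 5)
Your proposal is correct and follows essentially the same route as the paper: reduce to the objectwise statement in $\mathrm{sSet}_{\mathrm{Quillen}}$, invoke the classical fact that the quotient of a simplicial group by the free (right-multiplication) action of the simplicial subgroup $G_0$ is a Kan fibration, identify the strict fiber with $G_0$, and conclude via fibrancy/right properness that the strict fiber computes the homotopy fiber. The only difference is cosmetic: you spell out the fibrancy of $G/G_0$ and offer right properness as an alternative, where the paper leaves these points implicit.
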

\proof
  Since homotopy pullbacks of presheaves are computed objectwise,
  it is sufficient to show this for $C = *$, hence in
  $\mathrm{sSet}_{\mathrm{Quillen}}$.
  One checks that generally, for $X$ a Kan complex and $G$ a simplicial group
  acting on $X$, the quotient morphism $X \to X/G$ is a Kan fibration.
  Therefore the homotopy fiber of $G \to G/G_0$ is presented by the
  ordinary fiber in $\mathrm{sSet}$. 
  Since the action of $G_0$ on $G$ is free, this is indeed $G_0 \to G$.   
\endofproof
\begin{proposition}
  Every $C$-acyclic group object 
  $G \in [C^{\mathrm{op}}, \mathrm{sSet}]_{\mathrm{proj}}$ 
  for which $G_0$ is a sheaf
  is fibrant in 
  $[C^{\mathrm{op}}, \mathrm{sSet}]_{\mathrm{proj}, \mathrm{loc}}$. 
\end{proposition}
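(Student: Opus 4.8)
The plan is to reduce the statement to a descent (locality) condition and then to verify it along the homotopy fibre sequence produced by Lemma~\ref{CanonicalFiberSequenceForSimplicialGroup}. Since $G$ is $C$-acyclic it is in particular fibrant in $[C^{\mathrm{op}}, \mathrm{sSet}]_{\mathrm{proj}}$ (condition 1 of Definition~\ref{CAcyclic}), so by the standard characterization of fibrant objects in a left Bousfield localization it suffices to show that $G$ is local, i.e.\ that for every good cover $\{U_i \to U\}$ the descent morphism $\mathrm{Maps}(U, G) \to \mathrm{Maps}(\check{C}(\{U_i\}), G)$ is a weak equivalence. First I would invoke the homotopy fibre sequence $G_0 \to G \to G/G_0$ and prove that the fibre $G_0$ and the base $G/G_0$ are both fibrant in the local model structure; the conclusion for the total space $G$ then follows by comparing the two fibre sequences obtained by mapping $U$ and $\check{C}(\{U_i\})$ into it.

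For the fibre, $G_0$ is a simplicially constant, hence $0$-$C$-truncated, object, and by hypothesis it is a sheaf; Lemma~\ref{DescentOf0TruncatedObjects} gives at once that $G_0$ is fibrant in $[C^{\mathrm{op}}, \mathrm{sSet}]_{\mathrm{proj},\mathrm{loc}}$. (This is the first place the sheaf hypothesis on $G_0$ is used.) For the base, $G/G_0$ is reduced, hence canonically pointed and $C$-connected, so I would apply Proposition~\ref{DescentOfConnectedHCacyclicObjects}, which requires showing that $G/G_0$ is $C$-acyclic. Here I would read off its homotopy (pre)sheaves from the long exact sequence of $G_0 \to G \to G/G_0$: since $G_0$ is discrete one gets $\pi_n(G/G_0) \cong \pi_n(G)$ for $n \geq 2$, which are sheaves that are acyclic on good covers by the $C$-acyclicity of $G$, while in degree one there is a short exact sequence $1 \to \pi_1(G) \to \pi_1(G/G_0) \to \ker(G_0 \to \pi_0 G) \to 1$. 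Because $G_0$ is a sheaf and $\pi_0 G$ is a sheaf (condition 2 of Definition~\ref{CAcyclic}), the kernel is a sheaf, and an extension of a sheaf of groups by a sheaf of groups is again a sheaf; this verifies condition 2 of $C$-acyclicity for $G/G_0$ (the second use of the sheaf hypothesis on $G_0$).

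Granting the $C$-acyclicity of $G/G_0$, Proposition~\ref{DescentOfConnectedHCacyclicObjects} makes $G/G_0$ local, and the argument closes as follows. Mapping the cofibrant objects $U$ and $\check{C}(\{U_i\})$ into the fibration $G \to G/G_0$ yields a morphism of fibration sequences whose fibre term (over $G_0$) and base term (over $G/G_0$) are weak equivalences by the local fibrancy just established. Since $G$ is a group object all the relevant mapping spaces are simplicial groups, so all path components are equivalent and the five lemma applied to the long exact sequences (at the identity component, together with the group structure on $\pi_0$) forces the middle term to be a weak equivalence. This is precisely descent for $G$, and with Proposition~\ref{FiniteHomotopyLimitsInPresheaves} and Proposition~\ref{ConstructionOfHomotopyLimits} one concludes that $G$ is fibrant in $[C^{\mathrm{op}}, \mathrm{sSet}]_{\mathrm{proj},\mathrm{loc}}$.

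The step I expect to be the main obstacle is the third (acyclicity) clause of Definition~\ref{CAcyclic} for $G/G_0$ in degree one, namely $H^1_C(\{U_i\}, \pi_1(G/G_0)) = 1$. Through the cohomology sequence of the extension above this reduces, given $H^1_C(\{U_i\}, \pi_1 G) = 1$, to controlling $H^1_C(\{U_i\}, \ker(G_0 \to \pi_0 G))$, and it is exactly here that the $C$-acyclicity of $G$ and the sheaf property of $G_0$ must be played off against one another. A clean way to avoid the extension bookkeeping is to replace $G_0 \to G \to G/G_0$ by $G^\circ \to G \to \pi_0 G$, where $G^\circ$ is the identity component: then $\pi_0 G$ is a $0$-truncated sheaf (local by Lemma~\ref{DescentOf0TruncatedObjects}) and $G^\circ$ is a \emph{connected} $C$-acyclic group, so its delooping $\overline{W}G^\circ$ is connected, simply connected and $C$-acyclic, hence local by Proposition~\ref{DescentOfConnectedHCacyclicObjects}; local fibrancy of $G^\circ \simeq \Omega \overline{W}G^\circ$ then follows from Corollary~\ref{GToWGToWbarGPresentsLoopingFiberSequence} together with Proposition~\ref{FiniteHomotopyLimitsInPresheaves}, and the same five-lemma comparison finishes the proof.
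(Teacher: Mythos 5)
Your main line of argument is exactly the paper's: the fibration sequence $G_0 \to G \to G/G_0$ of Lemma~\ref{CanonicalFiberSequenceForSimplicialGroup}, locality of the $0$-truncated sheaf $G_0$ via Lemma~\ref{DescentOf0TruncatedObjects}, locality of $G/G_0$ via Proposition~\ref{DescentOfConnectedHCacyclicObjects}, and the fibre-sequence comparison. The paper simply asserts that $G/G_0$ is ``evidently connected and $C$-acyclic''; you are right that connectedness and the conditions in degrees $\geq 2$ are immediate from the long exact sequence, and you are also right that the degree-one clause --- that $\pi_1(G/G_0)$ is a sheaf and that $H^1_C(\{U_i\},\pi_1(G/G_0))=1$ --- is the only non-formal point. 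Your reduction of the latter to controlling $H^1_C(\{U_i\},\ker(G_0\to\pi_0 G))$ via the extension $1\to\pi_1(G)\to\pi_1(G/G_0)\to\ker(G_0\to\pi_0 G)\to 1$ is correct, but you do not (and the paper does not) supply an argument for that vanishing, so as it stands your first route leaves open the same step that the paper elides.

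Your proposed ``clean'' repair, however, contains a genuine error. You claim that $\overline{W}G^{\circ}$ is $C$-acyclic, but $C$-acyclicity is not stable under delooping: Definition~\ref{CAcyclic} asks only for $H^1_C(\{U_i\},\pi_1(A))=1$, while for $\pi_n(A)$ with $n\geq 2$ it asks for $H^k_C(\{U_i\},\pi_n(A))=1$ for \emph{all} $k\geq 1$. Since $\pi_2(\overline{W}G^{\circ})\cong\pi_1(G^{\circ})=\pi_1(G,e)$, the $C$-acyclicity of $\overline{W}G^{\circ}$ would require $H^k_C(\{U_i\},\pi_1(G))=1$ for all $k\geq 1$, whereas the hypothesis on $G$ only gives the case $k=1$. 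The fix is not to deloop at all: $G^{\circ}$ is itself pointed (by the identity), $C$-connected, and $C$-acyclic --- its homotopy group presheaves are the pullbacks of those of $G$ along the subsheaf $(G^{\circ})_0=\ker(G_0\to\pi_0 G)\hookrightarrow G_0$, hence sheaves, and the acyclicity conditions at each basepoint are inherited from $G$ --- so Proposition~\ref{DescentOfConnectedHCacyclicObjects} applies to $G^{\circ}$ directly. Combined with locality of the $0$-truncated sheaf $\pi_0 G$ (condition 2 of Definition~\ref{CAcyclic} plus Lemma~\ref{DescentOf0TruncatedObjects}) and the objectwise Kan fibration $G\to\pi_0 G$, your five-lemma comparison then closes the argument, and this version genuinely circumvents the extension problem in $\pi_1(G/G_0)$ that obstructs both your first route and, implicitly, the paper's own.
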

\proof
  By lemma \ref{CanonicalFiberSequenceForSimplicialGroup}
  we have a fibration sequence
  $$  
     G_0 \to G \to G/G_0
     \,.
  $$
  Since $G_0$ is assumed to be a sheaf it is fibrant in the local
  model structure by 
  Lemma~\ref{DescentOf0TruncatedObjects}.
  Since $G/G_0$ is evidently connected and $C$-acyclic 
  it is fibrant in the local model structure by 
  Proposition~\ref{DescentOfConnectedHCacyclicObjects}.
  As before in the proof there this implies that also
  $G$ is fibrant in the local model structure.
\endofproof
This completes the proof of Theorem \ref{CAcyclicityAndLocalFibrancy}.

\subsubsection{By cocycles in a category of fibrant objects}
\label{Categories of fibrant objects}

We discuss here a presentation  
of the hom-$\infty$-groupoids of an
$\infty$-category which itself is presented by the homotopical structure
known as a \emph{category of fibrant objects} \cite{Brown}. The resulting
presentation is much `smaller' than the 
general Dwyer-Kan simplicial localization \cite{DwyerKanComputations}: 
where the latter
encodes a morphism in the localization by a zig-zag of arbitrary length 
(of morphisms in the presenting category), the following 
Theorem \ref{SimplicialLocalizationOfCatOfFibrantObjects} asserts that 
with the structure of a category of fibrant objects, we may restrict to zig-zags
of length 1.
A slight variant of this statement has been proven by Cisinski in \cite{Cisinski}. 
The following subsumes this variant and provides a maybe more direct proof.
\medskip

Before describing the hom-$\infty$-groupoids, 
we briefly recall some basic notions and facts from \cite{Brown}.
\begin{definition}[Brown]
  \label{CategoryOfFibrantObjects}
A \emph{category of fibrant objects} is a category $\mathcal{C}$ with finite products, which comes 
equipped with two distinguished full subcategories $\mathcal{C}_F$ and 
$\mathcal{C}_W$, whose morphisms are called \emph{fibrations} and \emph{weak equivalences} 
respectively, such that the following properties hold: 
\begin{enumerate}

\item $\mathcal{C}_F$ and $\mathcal{C}_W$ contain all of the isomorphisms 
of $\mathcal{C}$, 

\item weak equivalences satisfy the 2-out-of-3 property,

\item the subcategories $\mathcal{C}_F$ and $\mathcal{C}_F\cap \mathcal{C}_W$ 
are stable under pullback, 

\item there exist functorial path objects in $\mathcal{C}$.
\end{enumerate}
Morphisms in $\mathcal{C}_F\cap \mathcal{C}_W$ are called {\em acyclic 
fibrations}.
\end{definition}
The axioms for a category of fibrant objects give roughly 
half of the structure of a model category, however these axioms 
still suffice to give a calculus-of-fractions description of the associated 
homotopy category.  

\begin{examples}
  \label{BasicExamplesOfCatsOfFibObjects}
We have the following well known examples of categories of fibrant objects.  
\begin{itemize}
\item For any model category (with functorial factorization) the full subcategory of fibrant objects is a category of fibrant objects. 

\item The category of stalkwise Kan simplicial 
presheaves on any site with enough points. 
In this case the fibrations are the stalkwise fibrations 
and the weak equivalences are the stalkwise weak equivalences.
\end{itemize}
\end{examples}
\begin{remark}
 \label{StalkwiseFibrationsAreNotModelStructureFibrations}
Notice that (over a non-trivial site) the second example above is \emph{not} 
a special case of the first:
while there are model structures on categories of simplicial presheaves whose
weak equivalences are the stalkwise weak equivalences, their fibrations 
(even between fibrant objects) are much more 
restricted than just being stalkwise fibrations.  
\end{remark}

We will use repeatedly the following consequence of the 
axioms of a category of fibrant objects (this is called the 
{\em cogluing lemma} in \cite{GoerssJardine} where it appears 
as Lemma 8.10, Chapter II).  
\begin{lemma} 
\label{cogluing lemma} 
Let $\mathcal{C}$ be a category of fibrant objects.  Suppose given 
a diagram 
\[
\xymatrix{ 
A_1 \ar[d]_-{f_A} \ar[r]^-{p_1} & B_1 \ar[d]_-{f_B} & 
C_1 \ar[d]_-{f_C} \ar[l] \\ 
A_2 \ar[r]_-{p_2} & B_2 & C_2 \ar[l] } 
\]
in which $p_1$ and $p_2$ are fibrations, and $f_A$, $f_B$ and $f_C$ are 
weak equivalences.  Then the induced map 
\[
A_1\times_{B_1}C_1 \to A_2\times_{B_2} C_2 
\]
is also a weak equivalence.   
\end{lemma}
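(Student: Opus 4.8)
The plan is to deduce the statement from two facts available in any category of fibrant objects: K.~Brown's factorization lemma and the pullback-stability of fibrations and of acyclic fibrations (axiom (3) of Definition~\ref{CategoryOfFibrantObjects}), together with the 2-out-of-3 property. First I would recall Brown's factorization lemma: every morphism $f\colon X\to Y$ factors as $X\xrightarrow{\sigma} Z\xrightarrow{\rho} Y$ with $\rho$ a fibration and $\sigma$ a weak equivalence that is moreover a section of an acyclic fibration $Z\to X$. The object $Z=X\times_Y Y^I$ is built from the functorial path object $Y^I$ of axiom (4), with $\sigma=(\mathrm{id}_X,\,s\circ f)$ a section of the acyclic fibration $Z\to X$ (hence a weak equivalence by 2-out-of-3) and $\rho=d_1\circ\mathrm{pr}$ a fibration.

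The central tool I would then establish is a surrogate for right properness: \emph{if $w\colon A\to B$ is a weak equivalence and $p\colon E\to B$ is a fibration, then the base change $E\times_B A\to E$ of $w$ along $p$ is a weak equivalence.} To prove this, factor $w=\rho\sigma$ as above, with $\sigma\colon A\to Z$ a section of an acyclic fibration and $\rho\colon Z\to B$ a fibration which is acyclic by 2-out-of-3. Pulling $\rho$ back along $p$ gives the acyclic fibration $E\times_B Z\to E$ (axiom (3)), and since $A\to B$ factors through $Z$ we have $E\times_B A\cong (E\times_B Z)\times_Z A$, so the induced map $E\times_B A\to E\times_B Z$ is the pullback of $\sigma$ along the fibration $E\times_B Z\to Z$, hence again a section of an acyclic fibration and a weak equivalence. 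Composing $E\times_B A\to E\times_B Z\to E$ yields the claim.

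With these in hand I would assemble the cogluing lemma by factoring the comparison map through an intermediate pullback. Since $C_1\to B_1\xrightarrow{f_B}B_2$, the object $A_2\times_{B_2}C_1$ is defined, and the comparison factors as
\[
A_1\times_{B_1}C_1 \xrightarrow{\ \phi_1\ } A_2\times_{B_2}C_1 \xrightarrow{\ \phi_2\ } A_2\times_{B_2}C_2.
\]
Under the identification $A_2\times_{B_2}C_1\cong(A_2\times_{B_2}C_2)\times_{C_2}C_1$, the map $\phi_2$ is precisely the base change of the weak equivalence $f_C$ along the fibration $A_2\times_{B_2}C_2\to C_2$, so it is a weak equivalence by the previous paragraph. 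For $\phi_1$, I would introduce the canonical map $\theta\colon A_1\to A_2\times_{B_2}B_1$ induced by $f_A$ and $p_1$: since $A_2\times_{B_2}B_1\to A_2$ is the base change of $f_B$ along $p_2$, hence a weak equivalence, and its composite with $\theta$ equals $f_A$, the map $\theta$ is a weak equivalence by 2-out-of-3. Under $A_2\times_{B_2}C_1\cong(A_2\times_{B_2}B_1)\times_{B_1}C_1$ the map $\phi_1$ is the base change of $\theta$ along $C_1\to B_1$; once this is shown to be a weak equivalence, closure of weak equivalences under composition finishes the proof.

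The main obstacle is exactly this last base change, because $C_1\to B_1$ is an \emph{arbitrary} morphism, not a fibration, so the surrogate right-properness statement does not apply verbatim. The key point is that $\theta$ is a weak equivalence between two objects, $A_1$ and $A_2\times_{B_2}B_1$, that are \emph{both} fibrations over $B_1$ (this is where the hypothesis that $p_1,p_2$ are fibrations enters). Factoring $\theta$ over $B_1$ as a section of an acyclic fibration followed by an acyclic fibration, both factors lie over $B_1$ and hence pull back along $C_1\to B_1$: the acyclic-fibration factor pulls back to an acyclic fibration by axiom (3), and the section factor pulls back to a section of an acyclic fibration, so each remains a weak equivalence and their composite, namely $\phi_1$, is a weak equivalence. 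I expect the careful bookkeeping of the pullback identifications in this last step to be the most delicate part of the write-up.
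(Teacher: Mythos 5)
The paper does not actually prove this lemma --- it cites Goerss--Jardine II.8.10 --- so there is no in-paper argument to compare against; I am therefore assessing your proof on its own terms. Your overall architecture is the standard and correct one: reduce everything to (i) base change of a weak equivalence along a fibration, and (ii) base change, along an arbitrary map $C_1 \to B_1$, of a weak equivalence between fibrations over $B_1$. Your decomposition $\phi_2 \circ \phi_1$, the $2$-out-of-$3$ argument for $\theta$, and your treatment of (ii) are all sound, modulo one point you leave implicit: performing Brown's factorization \emph{over} $B_1$ requires knowing that the fibrations over $B_1$ again form a category of fibrant objects, which needs a construction of fiberwise path objects. That is standard (it is in Brown's paper) but should be said.

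The genuine gap is in your proof of (i). After factoring $w = \rho\sigma$ you assert that the base change of $\sigma$ along the fibration $E \times_B Z \to Z$ is ``again a section of an acyclic fibration.'' Nothing in the argument produces the required retraction: $\sigma\colon A \to Z$ is split by $q\colon Z \to A$, a map \emph{out of} $Z$, whereas base change along $E\times_B Z \to Z$ is only functorial on the slice \emph{over} $Z$; so $\sigma$ pulls back but $q$ does not, and the property ``section of an acyclic fibration'' is not preserved. (Contrast this with your $\phi_1$ step, where section and retraction both live over $B_1$ and genuinely both pull back along $C_1 \to B_1$ --- that is why that step works and this one does not.) The conclusion of (i) is still true, but it needs an extra idea; one repair that stays inside your toolkit is: given the fibration $\pi\colon W \to Z$ with $W = E\times_B Z$, form $V := W\times_A Z$ using $q\pi$ and $q$. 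Then $\mathrm{pr}_W\colon V \to W$ is an acyclic fibration, so its section $(1_W,\pi)\colon W \to V$ is a weak equivalence, and it is a map over $Z$ between the fibrations $\pi\colon W\to Z$ and $\mathrm{pr}_Z\colon V\to Z$. One checks $V\times_Z A \cong W$ and that the base change of $(1_W,\pi)$ along $\sigma\colon A\to Z$ is exactly $W\times_Z A \to W$; now your statement (ii), applied with base $Z$, finishes the job. As written, both $\phi_2$ and the verification that $\theta$ is a weak equivalence rest on the gapped step, so this repair is needed before the rest of the argument closes up.
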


We now come to the discussion of the hom-$\infty$-groupoids presented by
$\mathcal{C}$
\begin{definition}
Let $\mathcal{C}$ be a category 
of fibrant objects and let $X$ and $A$ be objects of $\mathcal{C}$. 

Write $\mathrm{Cocycle}(X,A)$ for the category whose
\begin{itemize}
  \item objects are spans, hence diagrams in $\mathcal{C}$ of the form
   $$
     \xymatrix{
	   X \ar@{<<-}[r]^p_{\simeq} & Y \ar[r]^g & A
	 }
	 \,,
   $$
   such that the left morphism is an acyclic fibration;
  \item
    morphisms $f : (p_1,g_1) \to (p_2, g_2)$ are given by 
	morphisms $f : X \to Y$ in $\mathcal{C}$, making the diagram
	$$
		\begin{xy} 
		(0,7.5)*+{Y_1}="1"; 
		(-15,0)*+{X}="2"; 
		(15,0)*+{A}="3"; 
		(0,-7.5)*+{Y_2}="4"; 
			{\ar@{->>}_-{\simeq} "1";"2"}; 
			{\ar "1";"4"}; 
			{\ar^{g_1} "1";"3"};
			{\ar_{g_2} "4";"3"};
			{\ar@{->>}^-{\simeq} "4";"2"};
	\end{xy}
   $$
   commute.
\end{itemize}
Similarly write $\mathrm{wCocycle}(X,A)$ for the category defined
analogously, where however the left legs are only required to be weak 
equivalences, not necessarily fibrations.
\end{definition}
\begin{remark}
In Section 3.3 of \cite{Cisinski} the category $\mathrm{Cocycle}(X,A)$
is denoted
$\underline{\mathrm{Hom}}_{\, \mathcal{C}}(X,A)$. In Section 1 of \cite{JardineCocycles}
the category $\mathrm{wCocycle}(X,A)$ (under different assumptions on $\mathcal{C}$)
is denoted $H(X,A)$ (and there only the connected components are analyzed).
\end{remark}
\begin{remark}
The morphisms $f$ in $\mathrm{Cocycle}(X,A)$ 
and $\mathrm{wCocycle}(X,A)$ are 
necessarily weak equivalences by the 2-out-of-3 property.  
The evident composition of spans under 
fiber product in $\mathcal{C}$ induces a 
functor 
\[
 \mathrm{Cocycle}(X,A) 
   \times 
 \mathrm{Cocycle}(A,B)
  \to 
 \mathrm{Cocycle}(X,B)
 \,,
\]
which defines the structure of a bicategory whose objects 
are the objects of $\mathcal{C}$.  
\end{remark}
\begin{theorem}
 \label{SimplicialLocalizationOfCatOfFibrantObjects}
Let $\mathcal{C}$ be a category of fibrant objects. Then 
for all objects $X,A \in \mathcal{C}$
the canonical inclusions 
\[
N\mathrm{Cocycle}(X,A) 
\to N\mathrm{wCocycle}(X,A) \to L^H\mathcal{C}(X,A)
\]
of the simplicial nerves of the categories of cocycles into 
the hom-space $L^H(X,A)$ of the \emph{hammock localization} \cite{DwyerKanComputations} 
of $\mathcal{C}$
are weak homotopy equivalences. 
\end{theorem}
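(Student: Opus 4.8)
The plan is to establish the two outer maps and recover the middle comparison by two-out-of-three. Write $\iota \colon N\mathrm{Cocycle}(X,A) \to N\mathrm{wCocycle}(X,A)$ for the first inclusion and let $\kappa \colon N\mathrm{Cocycle}(X,A) \to L^H\mathcal{C}(X,A)$ denote the composite. I would prove that both $\iota$ and $\kappa$ are weak homotopy equivalences; since the second comparison map $\beta \colon N\mathrm{wCocycle}(X,A) \to L^H\mathcal{C}(X,A)$ satisfies $\beta \circ \iota = \kappa$ and weak equivalences of simplicial sets obey two-out-of-three, it follows that $\beta$ is a weak equivalence too. Routing the argument through $\mathrm{Cocycle}(X,A)$, whose left legs are genuine acyclic fibrations, is convenient, because then the homotopy pullbacks that arise below are computed by strict pullbacks, over which the axioms of Definition~\ref{CategoryOfFibrantObjects} give direct control.

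For $\iota$ I would exploit the functorial path objects postulated in Definition~\ref{CategoryOfFibrantObjects}. Given a $w$-cocycle $X \xleftarrow{p} Y \xrightarrow{g} A$ with $p$ only a weak equivalence, form the mapping path space $N_p := Y \times_X X^I$, the pullback of the acyclic fibration $d_0 \colon X^I \to X$ along $p$. This comes with a canonical section $j \colon Y \to N_p$ and an acyclic-fibration projection $r \colon N_p \to Y$ satisfying $r \circ j = \mathrm{id}_Y$, while the evaluation-at-one map $\pi \colon N_p \to X$ is a fibration and, by two-out-of-three applied to $\pi \circ j = p$, a weak equivalence; hence $\pi$ is an acyclic fibration. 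Sending the cocycle to $X \xleftarrow{\pi} N_p \xrightarrow{g r} A$ is then functorial in the cocycle, defining $F \colon \mathrm{wCocycle}(X,A) \to \mathrm{Cocycle}(X,A)$, and the sections $j$ assemble into natural transformations $\mathrm{id} \Rightarrow \iota F$ and $\mathrm{id} \Rightarrow F \iota$ (they respect both legs, since $\pi j = p$ and $g r j = g$). A natural transformation induces a simplicial homotopy on nerves, so $N F$ is a homotopy inverse to $\iota$, proving $\iota$ is a weak equivalence.

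For $\kappa$ the strategy is to show that $\mathrm{Cocycle}(X,A)$ already computes the hammock localization. I would verify that $(\mathcal{C}, W)$ admits a homotopy calculus of fractions in the sense of Dwyer--Kan \cite{DwyerKanComputations}: the essential point is the homotopy Ore condition, namely that a configuration $X \to B \xleftarrow{\sim} C$ can be completed by a span $X \xleftarrow{\sim} D \to C$. In a category of fibrant objects this is exactly the existence of homotopy pullbacks, obtained by first factoring the weak equivalence $C \to B$ through an acyclic fibration via Brown's factorization \cite{Brown} and then pulling back along the resulting fibration, the stability clauses of Definition~\ref{CategoryOfFibrantObjects} guaranteeing that the completion exists and Lemma~\ref{cogluing lemma} guaranteeing that such completions preserve weak equivalences. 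Granting the calculus of fractions, the Dwyer--Kan comparison identifies $L^H\mathcal{C}(X,A)$ with the nerve of the category of zig-zags of cocycle type; concretely I would realize this by a Quillen Theorem~A argument, comparing $\mathrm{Cocycle}(X,A)$ with the category of all reduced zig-zags from $X$ to $A$ that models $L^H\mathcal{C}(X,A)$, and showing each relevant comma category is contractible by functorially reducing an arbitrary zig-zag to a span through repeated use of the homotopy Ore completions and composition of adjacent same-direction arrows.

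The main obstacle is precisely this last step. In a bare category of fibrant objects we have neither cofibrations nor a general factorization into a cofibration followed by an acyclic fibration, so every reduction of a zig-zag must be performed using only fibrations, path objects and pullbacks along fibrations; keeping these choices coherent enough to produce genuine functors and \emph{contractible} comma categories, rather than mere object-level existence statements, is the delicate part, and is where Lemma~\ref{cogluing lemma} and two-out-of-three are invoked repeatedly. This is also the point at which the argument must go beyond the $\pi_0$-level analysis of \cite{JardineCocycles} and recover the full homotopy type, thereby subsuming the variant of \cite{Cisinski}.
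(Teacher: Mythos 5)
Your treatment of the first inclusion is correct and coincides with the paper's: the mapping-path-space replacement $Y \mapsto N_p = Y\times_X X^I$ with its section $j$ and retraction $r$ is exactly Lemma~\ref{faclemma}, and the natural transformations $\mathrm{id}\Rightarrow \iota F$, $\mathrm{id}\Rightarrow F\iota$ are how the paper inverts the inclusion $F^{-1}\mathcal{C}(X,A)\to W^{-1}\mathcal{C}(X,A)$ in Lemma~\ref{lemmasix}. The two-out-of-three reduction of the middle map to the two outer ones is also unobjectionable.

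The gap is in the second half, and you have located it yourself: you never actually verify that $\mathcal{C}$ admits a homotopy calculus of left fractions. That condition is not the object-level Ore completion you state (which in a category of fibrant objects is immediate from Brown factorization plus pullback along the resulting fibration); it is the requirement that the ``fill in identities'' functors $W^{-1}\mathcal{C}^{i+j}(A,B)\to W^{-1}\mathcal{C}^{i}W^{-1}\mathcal{C}^{j}(A,B)$ and $W^{-1}W^{i+j}(A,B)\to W^{-1}W^{i}W^{-1}W^{j}(A,B)$ induce weak equivalences of \emph{nerves} for all $i,j>0$. Your proposed Quillen Theorem~A argument with contractible comma categories is precisely the step you concede is ``the main obstacle,'' so as written the proposal proves only the first of the three comparisons. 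The paper closes this gap without any comma-category analysis: it introduces the full subcategories $F^{-1}\mathcal{C}^{i}(A,B)$, $F^{-1}\mathcal{C}^{i}F^{-1}\mathcal{C}^{j}(A,B)$, etc., in which the left-pointing legs are acyclic fibrations, shows via the functorial replacement of Lemma~\ref{faclemma} that these include into their $W^{-1}$ counterparts as deformation retracts on nerves, and then constructs an explicit homotopy inverse $L_3$ to $F^{-1}\mathcal{C}^{i+j}(A,B)\to F^{-1}\mathcal{C}^{i}F^{-1}\mathcal{C}^{j}(A,B)$ by iterated pullback along the (now genuine) acyclic fibrations, with the cogluing Lemma~\ref{cogluing lemma} controlling the weak equivalences. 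The required equivalence for the $W^{-1}$ versions then follows by two-out-of-three in a commuting square of inclusions. If you want to complete your argument along the lines you sketch, you should adopt this decomposition: restricting to spans whose left legs are fibrations is exactly what makes the composition of adjacent zig-zag pieces functorial and strictly defined, which is the coherence problem you correctly identify as blocking the naive approach.
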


As remarked above, a variant of this statement has been proven by Cisinski \cite{Cisinski} --- more precisely 
he has shown that the inclusion $N\mathrm{Cocycle}(X,A) \to L^H\mathcal{C}(X,A)$ is a weak homotopy equivalence 
(see Proposition 3.23 of \cite{Cisinski}]).  We give here a direct proof of this result, which also establishes that 
$N\mathrm{Cocycle}(X,A) \to N\mathrm{wCocycle}(X,A)$  is also a weak equivalence. 

In order to write out the proof,
	we first need a little bit of notation. By $W^{-1}\mathcal{C}^i(A,B)$ we shall mean the category which has as objects zig-zags of the form
\begin{equation*}
  \xymatrix{
     A & X_1 \ar_\sim[l]\ar[r] & X_2 \ar[r] & ... \ar[r] & X_i \ar[r] &B
  }  
  \,,
\end{equation*}
where the morphism to the left is a weak equivalence, and as morphisms ladders of weak equivalences. 
By $W^{-1}W^i(A,B)$ we denote the full subcategory where also the arrows going to the right are weak equivalences. 
Analogously we have similar categories 
$W^{-1}\mathcal{C}^i W^{-1} \mathcal{C}^j(A,B)$ and $W^{-1}W^i W^{-1} W^j(A,B)$ for pair of integers $i, j > 0$. There are obvious functors
\begin{align*}
&W^{-1}\mathcal{C}^{i+j}(A,B) \to W^{-1}\mathcal{C}^i W^{-1} \mathcal{C}^j(A,B)  \qquad \text{and}\\
&W^{-1}W^{i+j}(A,B) \to W^{-1}W^i W^{-1} W^j(A,B).
\end{align*}  
given by filling in identity morphisms. If these inclusions induce weak equivalences on nerves, 
then $\mathcal{C}$ is said to admit a 
\emph{homotopy calculus of left fractions}, see 
\cite[Section 6]{DwyerKanComputations}. In this case they show that the canonical morphism
\begin{equation*}
N\mathrm{wCocycle}(A,B) = N\big(W^{-1}\mathcal{C}(A,B)\big) \to L^H(A,B)
\end{equation*} 
is a weak homotopy equivalence 
\cite[Proposition 6.2]{DwyerKanComputations}. 
Therefore we want to show that each category of 
fibrant objects $\mathcal{C}$ admits a homotopy calculus of left fractions.

Let $F^{-1}\mathcal{C}^{i}(A,B)$ be the full 
subcategory of $W^{-1}\mathcal{C}^{i}(A,B)$ 
consisting of the zig-zags where the left going 
morphim is as acyclic fibration rather than a weak 
equivalence. Analogously we write $F^{-1}\mathcal{C}^{i}F^{-1}\mathcal{C}^{j}(A,B)$. 
Note that in either case the morphisms of these spans 
still consist of weak equivalences and not necessarily of acyclic fibrations.

\begin{lemma}\label{faclemma}
Let $\xymatrix{A & X \ar_\sim[l] \ar[r] & B}$ be a 
span in $\mathcal{C}$ where the left leg is a weak 
equivalence. Then we can find $Y \in \mathcal{C}$ and a commuting diagram 
\begin{equation*}
\xymatrix@R=0.7pc {
& & X \ar_\sim[lld] \ar[drr]\ar^\sim[dd] & &\\
A & & & & B \\
& & Y \ar@{->>}^\sim[llu] \ar[urr] & & }.
\end{equation*}
In other words we find another span 
$\xymatrix{A & Y \ar@{->>}_\sim[l] \ar[r] & B}$ 
where the left leg is an acyclic fibration and a 
morphism of spans between them. Moreover this 
assignment is functorial in the original span.
\end{lemma}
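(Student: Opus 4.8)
The plan is to prove Lemma~\ref{faclemma} by the classical \emph{mapping path space} construction, i.e.\ Brown's factorization lemma adapted to a category of fibrant objects. Write $w\colon X \to A$ for the left (weak equivalence) leg and $g\colon X \to B$ for the right leg of the given span. The only axioms I will invoke are the existence of functorial path objects, the stability of (acyclic) fibrations under pullback, and the 2-out-of-3 property.

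First I would fix a functorial path object for $A$, i.e.\ a factorization of the diagonal
\[
  A \xrightarrow{\;s\;} P A \xrightarrow{\;(d_0,d_1)\;} A \times A
\]
with $s$ a weak equivalence, $(d_0,d_1)$ a fibration, and $d_0 s = d_1 s = \mathrm{id}_A$. I would then observe that each $d_i$ is an \emph{acyclic} fibration: it is the composite of the fibration $(d_0,d_1)$ with the projection $A \times A \to A$ (a fibration, since every object is fibrant), and it is a weak equivalence by 2-out-of-3 applied to $d_i s = \mathrm{id}_A$.

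Next I would set $Y := X \times_A P A$, the pullback of $w$ along $d_0$, with projections $q\colon Y \to X$ and $r\colon Y \to PA$. Since acyclic fibrations are stable under pullback, $q$ is an acyclic fibration. The map $\sigma := (\mathrm{id}_X,\, s w)\colon X \to Y$ is well defined (as $d_0 s w = w$) and is a section of $q$, hence a weak equivalence by 2-out-of-3. The new span is then $A \xleftarrow{\;p\;} Y \xrightarrow{\;g q\;} B$ with $p := d_1 r$, and the two triangles of the asserted diagram commute because $p\sigma = d_1 s w = w$ and $(g q)\sigma = g$.

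The one nonformal step --- and the main obstacle --- is to check that $p$ is a fibration (whence, being a weak equivalence by 2-out-of-3 on $w = p\sigma$, it is an \emph{acyclic} fibration). Here I would reinterpret the pair $(q,p)\colon Y \to X \times A$ as a pullback: pulling the path fibration $(d_0,d_1)\colon PA \to A \times A$ back along $w \times \mathrm{id}_A\colon X \times A \to A \times A$ reproduces exactly $Y$ together with the projection $(q,p)$. Thus $(q,p)$ is a fibration by stability under pullback, and composing it with the projection $X \times A \to A$ (a fibration, since every object is fibrant) exhibits $p$ as a fibration. Finally, functoriality in the span is immediate: $A$, and hence its chosen path object $PA$, is fixed, so a morphism $\phi\colon X \to X'$ of spans induces $\phi \times_A \mathrm{id}_{PA}\colon Y \to Y'$ by the universal property of the pullback, compatibly with all legs, making both $Y$ and the comparison $\sigma$ natural in the input span.
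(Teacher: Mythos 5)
Your proof is correct and uses the same essential idea as the paper's: Brown's mapping path space factorization via a functorial path object. The only (immaterial) difference is that the paper factors the combined morphism $(w,g)\colon X \to A\times B$ through $X\times_{A\times B}(A\times B)^I$ and then composes with the product projections, whereas you apply the construction only to the left leg $w\colon X\to A$ and carry the right leg along by composition with $q$; both yield a left leg that is an acyclic fibration, and your verification that $p=d_1 r$ is a fibration, via the identification of $(q,p)$ with a pullback of the path fibration, is exactly the standard step in Brown's factorization lemma.
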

\begin{proof}
We first note that in a category of fibrant objects we can always factor a morphism $X \to Z$ as 
\begin{equation*}
\xymatrix{X \ar[r]^s & Y \ar@{->>}^p[r] & Z}.
\end{equation*}
where $p$ is a fibration and $s$ is a weak equivalence 
which is a section of an acyclic fibration $\hat Z \tfright X$. 
To see this set $Y := X \times _Z \times Z^I$. 

Now a span between $A$ and $B$ is the same as a 
morphism $X \to A \times B$. Applying the factorization to this morphism yields a diagram 
\begin{equation*}
X \to Y \to A \times B
\end{equation*}
which tranlates exactly into the diagram from above. 
It only remains to check that the left leg is indeed a 
fibration since it is clearly a weak equivalence by the 
2-out-of-3 property. This follows by the fact that it 
can be expressed as the composition $Y \to A \times 
B \xrightarrow p A$ where $p$ is the projection to the first factor which is a fibration since $B$ is fibrant.
\end{proof}

\begin{lemma}\label{lemmasix}
The following inclusion functors induce weak equivalences on nerves for all $i,j > 0$.
\begin{align*}
F^{-1}\mathcal{C}^{i}(A,B) & \longrightarrow  W^{-1}\mathcal{C}^{i}(A,B)&  \\
F^{-1}W^{i}(A,B) & \longrightarrow  W^{-1}W^{i}(A,B)&  \\
F^{-1}\mathcal{C}^{i}F^{-1}\mathcal{C}^{j}(A,B) & \longrightarrow  W^{-1}\mathcal{C}^{i}W^{-1}\mathcal{C}^{j}(A,B)& \\
F^{-1}W^{i}F^{-1}W^{j}(A,B) & \longrightarrow  W^{-1}W^{i}W^{-1}W^{j}(A,B)& \\
F^{-1}\mathcal{C}^{i+j}(A,B) &\longrightarrow   F^{-1}\mathcal{C}^{i}F^{-1}\mathcal{C}^{j}(A,B)& \\
F^{-1}W^{i+j}(A,B) &\longrightarrow   F^{-1}W^{i}F^{-1}W^{j}(A,B).&
\end{align*}
\end{lemma}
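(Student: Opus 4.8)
The plan is to treat the six functors in two groups. The first four compare an $F^{-1}$-category with the corresponding $W^{-1}$-category, the only difference being whether the backward (left-going) legs are acyclic fibrations or merely weak equivalences; the last two are the subdivision maps that fill in identity legs, and these live entirely inside the $F^{-1}$-world. Throughout I would use the standard principle that a natural transformation between two functors induces a homotopy between the maps they induce on nerves. In particular, to show that a \emph{full} subcategory inclusion $i\colon \mathcal{A}\hookrightarrow \mathcal{B}$ is a weak equivalence on nerves, it suffices to produce a functor $R\colon \mathcal{B}\to\mathcal{A}$ together with a natural transformation $\mathrm{Id}_{\mathcal{B}}\Rightarrow iR$: restricting this transformation to $\mathcal{A}$ (legitimate since $\mathcal{A}$ is full and $R$ lands in $\mathcal{A}$) then exhibits $N(i)$ and $N(R)$ as mutually inverse up to homotopy.

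For the first four functors I would apply Lemma~\ref{faclemma} at each backward leg of a zig-zag. Given an object of $W^{-1}\mathcal{C}^{i}(A,B)$ (or of the two-fraction or $W^i$-variants), Lemma~\ref{faclemma} produces, functorially and for the span formed by each backward weak equivalence together with its adjacent forward map, a replacement whose left leg is an acyclic fibration, along with a comparison morphism of spans. Assembling these spotwise (the two backward legs in functors~$3$ and $4$ sit at disjoint positions, so the replacements are independent) yields a functor $R$ into the corresponding $F^{-1}$-category together with a natural transformation $\mathrm{Id}\Rightarrow iR$: its component at a zig-zag is the ladder which is the comparison map of Lemma~\ref{faclemma} at the modified legs and the identity elsewhere, and the two triangles furnished by that lemma are exactly the commutativity needed for this to be a morphism of zig-zags. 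For the variants $F^{-1}W^{i}\to W^{-1}W^{i}$ and $F^{-1}W^{i}F^{-1}W^{j}\to W^{-1}W^{i}W^{-1}W^{j}$ one checks in addition, using $2$-out-of-$3$, that the new forward map produced by Lemma~\ref{faclemma} remains a weak equivalence, so that $R$ indeed lands in the all-weak-equivalence subcategory. By the principle above, all four inclusions are weak equivalences on nerves.

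For the last two functors, let $s$ denote the map $F^{-1}\mathcal{C}^{i+j}(A,B)\to F^{-1}\mathcal{C}^{i}F^{-1}\mathcal{C}^{j}(A,B)$ (and its $W^{i+j}$-analogue) that inserts an identity acyclic fibration in the middle. I would build a functor $G$ in the opposite direction by \emph{sliding} the middle backward acyclic fibration to the left: given a forward map $f\colon X\to X'$ followed by an acyclic fibration $g\colon Q\to X'$, form the pullback $X\times_{X'}Q$, whose projection to $X$ is again an acyclic fibration by stability under pullback (Definition~\ref{CategoryOfFibrantObjects}, axiom~$3$), while its projection to $Q$ is a weak equivalence by the cogluing lemma (Lemma~\ref{cogluing lemma}); iterating past each forward leg moves the middle backward leg next to the initial one, whereupon the two adjacent acyclic fibrations compose to a single one. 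This $G$ lands in $F^{-1}\mathcal{C}^{i+j}$ (respectively $F^{-1}W^{i+j}$, the slid forward legs staying weak equivalences by Lemma~\ref{cogluing lemma}), and $G\circ s$ is naturally isomorphic to the identity, since sliding an \emph{identity} leg along these pullbacks changes nothing up to canonical isomorphism. For the other composite I would exhibit a natural transformation comparing $s\circ G$ with the identity whose components are assembled from the pullback projections, each a weak equivalence by Lemma~\ref{cogluing lemma}. This makes $N(s)$ and $N(G)$ mutually inverse up to homotopy.

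The routine part is the first group, where Lemma~\ref{faclemma} does essentially all the work. The main obstacle is the bookkeeping for $G$ in the last group: one must choose the iterated pullbacks functorially, track how the comparison ladder interacts with the forward legs that are \emph{not} acyclic fibrations, and verify by repeated application of Lemma~\ref{cogluing lemma} that every component of the comparison transformation, and every slid forward leg in the $W^{i+j}$-case, is a weak equivalence. Once the six functors are settled, the desired conclusion that $\mathcal{C}$ admits a homotopy calculus of left fractions follows by pasting the commuting square relating functors~$1$, $3$ and $5$ and invoking $2$-out-of-$3$ for weak equivalences on nerves.
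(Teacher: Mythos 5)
Your proposal follows essentially the same route as the paper: the first four inclusions are inverted by the functor that applies Lemma~\ref{faclemma} to each backward leg, with the comparison morphisms of that lemma assembling into the natural transformations $\mathrm{Id}\Rightarrow iR$ and $\mathrm{Id}\Rightarrow Ri$, and the last two are inverted by the iterated-pullback (``sliding'') construction, with the pullback projections furnishing the comparison with the identity. One harmless slip: in the general $F^{-1}\mathcal{C}^{i+j}$ case the projection $X\times_{X'}Q\to Q$ is just the new forward leg and is not (and need not be) a weak equivalence; the cogluing/2-out-of-3 argument is only needed for the $W$-variants, which you do treat separately and correctly.
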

\begin{proof}
We explicitly construct functors which are homotopy inverses on nerves. For the first functor 
$K_1: F^{-1}\mathcal{C}^{i}(A,B) \longrightarrow 
 W^{-1}\mathcal{C}^{i}(A,B)$ we define an inverse 
 $L_1: W^{-1}\mathcal{C}^{i}(A,B) \longrightarrow 
 F^{-1}\mathcal{C}^{i}(A,B)$ using the factorization from the last lemma as
\begin{eqnarray*}
&\Big(\xymatrix{A & X_1 \ar@{->}_\sim[l]\ar[r] & X_2 \ar[r] & ... \ar[r] & X_i \ar[r] &B}\Big)   \\
 &\qquad \qquad \qquad\mapsto \qquad \Big(\xymatrix{A & Y_1 \ar@{->>}_\sim[l]\ar[r] & X_2 \ar[r] & ... \ar[r] & X_i \ar[r] &B} \Big) .
\end{eqnarray*}
One checks that this indeed forms a functor and that the morphism
$X_1 \xrightarrow\sim Y_1$ from the last lemma form natural transformations $id \Rightarrow K_1 \circ G_1 $ and
$id \Rightarrow G_1 \circ K_1$. This shows that on nerves $NK_1$ and $NL_1$ are homotopy inverses.

Now the functor $L_1$ restricts to a functor 
$W^{-1}W^{i}(A,B) \to F^{-1}W^{i}(A,B)$ which is homotopy inverse to the second functor of the lemma.

For the third funtor $K_2: F^{-1}\mathcal{C}^{i}F^{-1}
\mathcal{C}^{j}(A,B) \to W^{-1}\mathcal{C}^{i}W^{-1}
\mathcal{C}^{j}(A,B)$ in the lemma an inverse $L_2:  
W^{-1}\mathcal{C}^{i}W^{-1}\mathcal{C}^{j}(A,B) 
\longrightarrow F^{-1}\mathcal{C}^{i}F^{-1}\mathcal{C}^{j}(A,B)$ is similarily constructed as
\begin{eqnarray*}
&\Big(\xymatrix@C=1.3pc{A & X_1 \ar@{->}_\sim[l]\ar[r]& 
X_2 \ar[r] &  &  \ar[r] & X_i  & X_{i+1} \ar[r] \ar@{->}_\sim[l] & X_{i+2}\ar[r] & & \ar[r] &B}\Big)   \\
 &\qquad \mapsto \qquad \Big(\xymatrix@C=1.3pc{A & Y_1 
 \ar@{->>}_\sim[l]\ar[r]& X_2 \ar[r] & & \ar[r] & X_i  & Y_{i+1} \ar[r] \ar@{->>}_\sim[l] & X_{i+2}\ar[r] & & \ar[r] &B}\Big) .
\end{eqnarray*}
using again the factorization of Lemma \ref{faclemma}. 
The morphisms $X_1 \xrightarrow\sim Y_1$ and 
$X_{i+1} \xrightarrow\sim Y_{i+1}$ 
provide natural transformations $id \Rightarrow 
K_2 \circ L_2$ and $id \Rightarrow L_2 \circ K_2$. As before the functor $L_2$ restrict to an inverse for the fourth functor in the lemma.

Now we come to the functor 
$K_3: F^{-1}\mathcal{C}^{i+j}(A,B) \longrightarrow F^{-1}\mathcal{C}^{i}F^{-1}\mathcal{C}^{j}(A,B).$
Its homotopy inverse $L_3$ is constructed by iterated pullbacks as indicated in the following diagram
\begin{equation*}
\xymatrix@C=1.3pc@R=1.3pc{
& & X_1'\ar@{-->>}_\sim[ld]\ar@{-->}[r] & X_2'\ar@{-->}[r] 
& & \ar@{-->}[r]& X'_{i-1} \ar@{-->}[r] & X_{i+1} \ar@{->>}_\sim[ld] \ar[r] & X_{i+2} \ar[r] & & \ar[r] & B \\
& X_1 \ar@{->>}_\sim[ld]\ar[r] & X_2 \ar[r]&  &\ar[r]& X_{i-1} \ar[r]& X_i  & & & & \\
A & & & & & & & & & & &
}
\end{equation*}
So, using this notiation, the functor $L_3: F^{-1}\mathcal{C}^{i}F^{-1}
\mathcal{C}^{j}(A,B) \to F^{-1}\mathcal{C}^{i+j}(A,B)$ can be defined as
 \begin{eqnarray*}
&\Big(\xymatrix@C=1.3pc{A & X_1 \ar@{->>}_\sim[l]\ar[r]& 
X_2 \ar[r] &  &  \ar[r] & X_{i-1}\ar[r]  & X_{i}  & X_{i+1}\ar[r]\ar@{->>}_\sim[l] & & \ar[r] &B}\Big)   \\
 &\qquad \mapsto \qquad \Big(\xymatrix@C=1.3pc{A & X_1' 
 \ar@{-->>}_\sim[l]\ar@{-->}[r]& X_2' \ar@{-->}[r] & & \ar@{-->}[r] & X_{i-1}' \ar@{-->}[r] & X_{i+1}\ar[r] & & \ar[r] &B}\Big) .
\end{eqnarray*}
The structure maps of the pullbacks $X_i' \to X_i$ and the 
map $X_{i+1} \to X_i$ provide a natural transformation 
$K_3 \circ L_3 \Rightarrow id$. The other composition 
$L_3 \circ K_3$ consists essentially of pullbacks along 
the identity and is therefor also naturally isomorphic to 
the identity. Finally we note that the functor $L_3$ restricts 
to a functor $F^{-1}W^{i}F^{-1}W^{j}(A,B) \to F^{-1}W^{i+j}(A,B)$ 
by iterated use of the 2-out-of-3 property. Hence it also provides an inverse for the last functor of the lemma.
\end{proof}

\begin{lemma}
Each category of fibrant objects $\mathcal{C}$ admits a 
homotopy calculus of fractions and the chain of inclusions 
$N\mathrm{Cocycle}(A,B) \to N\mathrm{wCocycle}(A,B) 
\to L^H\mathcal{C}(A,B)$ are all homotopy equivalences.
\end{lemma}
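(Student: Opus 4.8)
The plan is to treat this lemma as pure bookkeeping on top of Lemma~\ref{faclemma} and Lemma~\ref{lemmasix}, where the substantive work has already been done. The first task is to verify that $\mathcal{C}$ admits a homotopy calculus of left fractions in the sense of \cite[Section~6]{DwyerKanComputations}, i.e.\ that the two ``insert an identity'' comparison functors
\[
W^{-1}\mathcal{C}^{i+j}(A,B) \to W^{-1}\mathcal{C}^{i} W^{-1}\mathcal{C}^{j}(A,B),
\qquad
W^{-1}W^{i+j}(A,B) \to W^{-1}W^{i} W^{-1}W^{j}(A,B)
\]
induce weak equivalences on nerves for all $i,j>0$. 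The second task is then to read off the two maps in the chain of the statement from the categories already introduced.

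For the first task I would, for each pair $i,j>0$, consider the square of functors
\[
\xymatrix{
F^{-1}\mathcal{C}^{i+j}(A,B) \ar[r] \ar[d] & F^{-1}\mathcal{C}^{i} F^{-1}\mathcal{C}^{j}(A,B) \ar[d] \\
W^{-1}\mathcal{C}^{i+j}(A,B) \ar[r] & W^{-1}\mathcal{C}^{i} W^{-1}\mathcal{C}^{j}(A,B)
}
\]
in which the horizontal arrows insert an identity at the split point and the vertical arrows are the evident inclusions. This square commutes strictly, since both composites send a zig-zag to the one obtained by inserting an identity, with its distinguished left leg viewed either as an acyclic fibration (along the top and right) or merely as a weak equivalence (along the left and bottom). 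The top arrow is the fifth functor of Lemma~\ref{lemmasix}, the left arrow is the first, and the right arrow is the third; all three are weak homotopy equivalences on nerves. Since the composite along the top-then-right equals the composite along the left-then-bottom, two-out-of-three forces the bottom arrow to be a weak equivalence as well. Running the identical argument with the square assembled from the sixth, second, and fourth functors of Lemma~\ref{lemmasix} settles the $W^{-1}W$ comparison. This establishes the homotopy calculus of left fractions, whence by \cite[Proposition~6.2]{DwyerKanComputations} the canonical map
\[
N\mathrm{wCocycle}(A,B) = N\bigl(W^{-1}\mathcal{C}^{1}(A,B)\bigr) \to L^H\mathcal{C}(A,B)
\]
is a weak homotopy equivalence.

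It then remains only to identify the first inclusion of the chain. Unwinding the definitions, $\mathrm{Cocycle}(A,B)$ is exactly $F^{-1}\mathcal{C}^{1}(A,B)$, the category of spans whose left leg is an acyclic fibration, while $\mathrm{wCocycle}(A,B)$ is exactly $W^{-1}\mathcal{C}^{1}(A,B)$, and in both cases the morphisms are ladders of weak equivalences. Hence $N\mathrm{Cocycle}(A,B) \to N\mathrm{wCocycle}(A,B)$ is the nerve of the first functor of Lemma~\ref{lemmasix} specialised to $i=1$, which is a weak homotopy equivalence. Composing this with the map out of $N\mathrm{wCocycle}(A,B)$ just produced, every arrow in the chain $N\mathrm{Cocycle}(A,B) \to N\mathrm{wCocycle}(A,B) \to L^H\mathcal{C}(A,B)$ is a weak homotopy equivalence, which is the assertion.

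I do not expect a deep obstacle here, precisely because Lemma~\ref{faclemma} and Lemma~\ref{lemmasix} have already absorbed the real content. The one point I would treat with care, and which I regard as the thing to get right rather than a genuine difficulty, is the \emph{strict} commutativity of the two squares above: the horizontal and vertical functors move the distinguished left legs between two different classes (acyclic fibrations versus weak equivalences), so one must confirm that ``insert the identity'' and ``forget to a weak equivalence'' commute on the nose and that the comparison functors of Lemma~\ref{lemmasix} agree as functors, not merely up to natural isomorphism, so that two-out-of-three for nerve weak equivalences applies without further argument.
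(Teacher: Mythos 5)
Your proposal is correct and follows essentially the same route as the paper: the paper assembles the identical squares (up to transposition) from the comparison functors of Lemma~\ref{lemmasix}, deduces by two-out-of-three that the remaining arrow needed for the homotopy calculus of left fractions is a nerve weak equivalence, invokes Proposition~6.2 of \cite{DwyerKanComputations}, and identifies $N\mathrm{Cocycle}(A,B)\to N\mathrm{wCocycle}(A,B)$ with the nerve of the first functor of Lemma~\ref{lemmasix}. Your added care about strict commutativity of the squares is a sound (if implicit in the paper) observation and does not change the argument.
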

\begin{proof}
In order to show that $\mathcal{C}$ admits a homotopy calculus of fractions we consider the following commuting diagrams of inclusions
\begin{equation*}
\!\!\xymatrix@R=1.3pc@C=1.3pc{
F^{-1}\mathcal{C}^{i+j}(A,B) \ar[r]\ar[d] &  W^{-1}\mathcal{C}^{i+j}(A,B) \ar[d] \\
F^{-1}\mathcal{C}^{i}F^{-1}\mathcal{C}^{j}(A,B) \ar[r] &  W^{-1}\mathcal{C}^{i}W^{-1}\mathcal{C}^{j}(A,B)
} 
\xymatrix@R=1.3pc@C=1.3pc{
 F^{-1}W^{i+j}(A,B) \ar[r]\ar[d] &  W^{-1}W^{i+j}(A,B) \ar[d] \\
 F^{-1}W^{i}F^{-1}W^{j}(A,B) \ar[r] &  W^{-1}W^{i}W^{-1}W^{j}(A,B)
}
\end{equation*}
By definition of a homotopy calculus of fraction we have 
to show that the two right vertical maps induce weak 
equivalences on nerves. But this follows since we know 
from the last Lemma \ref{lemmasix} that all the other maps in the diagrams are weak equivalences.
From the fact that $\mathcal{C}$ admits a homotopy calculus 
of fractions and \cite[Proposition 6.2]{DwyerKanComputations} 
we know that the canonical map $N\mathrm{wCocycle}(A,B) \to 
L^H\mathcal{C}(A,B)$ is a weak equivalence of simplicial sets. 
The map 
$N\mathrm{Cocycle}(A,B) \to N\mathrm{wCocycle}(A,B)$ is 
just the nerve of the functor $F^{-1}\mathcal{C}(A,B) \to  
W^{-1}\mathcal{C}(A,B)$ which is a weak equivalence by the last lemma.
\end{proof}
This completes the proof of Theorem \ref{SimplicialLocalizationOfCatOfFibrantObjects}.

\subsection{Principal bundles}
\label{section.PrincipalInfinityBundle}
\label{PrincipalInfBundle}

We discuss a presentation of 
the theory of \emph{principal $\infty$-bundles}
from section 3 in \cite{NSSa}.

\subsubsection{Universal simplicial principal bundles and the Borel construction}
\label{Universal princial bundles}

By Proposition~\ref{InftyGroupsBySimplicialGroups} every $\infty$-group
in an $\infty$-topos over an $\infty$-cohesive site is
presented by a \mbox{(pre)}sheaf of simplicial groups, hence by a strict 
group object $G$ in a 1-category of simplicial (pre)sheaves. We have seen in 
Section~\ref{InfinityGroupPresentations} that, for such a presentation,
the abstract delooping $\mathbf{B}G$ is presented by $\Wbar G$. By 
Theorem 3.19 in \cite{NSSa}, the theory of
$G$-principal $\infty$-bundles is essentially that of homotopy fibers
of morphisms into $\mathbf{B}G$, and hence, for such a 
presentation, that of homotopy fibers of morphisms into $\Wbar G$. 
By Proposition~\ref{ConstructionOfHomotopyLimits} such homotopy fibers are
computed as ordinary pullbacks of fibration resolutions of the point inclusion
into $\Wbar G$. Here we discuss these fibration resolutions. They turn out 
to be the classical \emph{universal simplicial principal bundles} $W G \to \Wbar G$
of Definition \ref{WGToWbarG}.

\medskip

Let $C$ be a site; we consider group objects in 
$[C^{\mathrm{op}}, \mathrm{sSet}]$. 
In the following let $P \in [C^{\mathrm{op}}, \mathrm{sSet}]$ be an object 
equipped with an action $\rho : P \times G \to P$ by a group object $G$.
Since sheafification preserves finite limits,
all of the following statements hold verbatim also in the category 
$\mathrm{sSh}(C)$
of simplicial sheaves over $C$. 
\begin{definition}
 \label{BisimplicialActionGroupoid}
The \emph{action groupoid object}
$$
  P/\!/G \in [\Delta^{\mathrm{op}},[C^{\mathrm{op}}, \mathrm{sSet}]]
$$
is the simplicial object in $[C^\mathrm{op},\mathrm{sSet}]$ whose $n$-simplices are
\[
  (P/\!/G)_n :=  P\times G^{\times^n} \;\;\in [C^{\mathrm{op}}, \mathrm{sSet}]
  \,,
\]
whose face maps are given on elements by 
\[
d_i(p,g_1,\ldots, g_n) = \begin{cases} 
(pg_1,g_2,\ldots, g_n) & \text{if}\ i=0, \\ 
(p,g_1,\ldots, g_ig_{i+1},\ldots, g_n) & \text{if}\ 1\leq i\leq n-1, \\ 
(p,g_1,\ldots, g_{n-1}) & \text{if}\ i=n, 
\end{cases} 
\]
and whose degeneracy maps are given on elements by 
\[
s_i(p,g_1,\ldots, g_n) = (p,g_1,\ldots, g_{i-1},1,g_i,\ldots, g_n) 
\,.
\]
\end{definition}
\begin{definition}
\label{WeakQuotient}
Write 
$$
  P/_h G := \sigma_*(P/\!/G)\in [C^{\mathrm{op}}, \mathrm{sSet}]
$$
for the total simplicial object, Definition \ref{TotalSimplicialSet}.
\end{definition} 
\begin{remark}
  According to Corollary~\ref{SimplicialHomotopyColimitByCodiagonal}
  the object $P/_h G$ presents the homotopy colimit over the simplicial object
  $P/\!/G$. We say that $P/_h G$ is the \emph{homotopy quotient} of $P$ by the
  action of $G$.
\end{remark}
\begin{example}
  \label{ActionOfSimplicialGroupOnPointAndOnItself}
  The unique trivial action of a group object $G$ on the terminal object $*$
  gives rise to a canonical action groupoid $*/\!/G$. 
  According to Definition~\ref{BarWAsCompositeWithTotal} we have
  $$
    * /_h G = \Wbar G
	\,.
  $$
  The multiplication morphism $G \times G \to G$ regarded as an 
  action of $G$ on itself gives rise to a canonical action groupoid $G/\!/G$.
  The terminal morphism $G \to *$ induces a morphism of simplicial objects
  $$
    G/\!/G \to * /\!/G
	\,.
  $$
  Defined this way $g \in G$ acts naturally from the \emph{left} on
  $G/\!/G$. To adhere to our convention that actions on bundles are right actions,
  we consider instead the right action of $g \in G$ on $G$ given by
  left multiplication by $g^{-1}$.
  With respect to this action, the action groupoid object $G/\!/G$ 
  is canonically equipped with the right $G$-action
  by multiplication from the right. Whenever in the following we write
  $$
    G/\!/G \to */\!/G
  $$
  we are referring to this latter definition.
\end{example}
\begin{definition}
   \label{UniversalSimplicialPrincipalBundle}
   Given a group object in $[C^{\mathrm{op}}, \mathrm{sSet}]$,
   write $W G \to \Wbar G$ for the morphism of 
   simplicial presheaves 
   $$
    G/_h G \to */_h G
   $$
   induced on homotopy quotients, Definition~\ref{WeakQuotient}, 
   by the morphism of canonical
   action groupoid objects of example \ref{ActionOfSimplicialGroupOnPointAndOnItself}.
   
   We will call this the \emph{universal weakly $G$-principal bundle}. 
\end{definition}
\begin{remark}
  Traditionally, at least over the trivial site,  this is
known as a presentation of the \emph{universal $G$-principal simplicial bundle};
we review this traditional theory below in Section \ref{PrincipalBundlesDiscreteGeometry}.
However, when prolonged to presheaves of simplicial sets as considered
here, it is not quite accurate to speak of a genuine 
\emph{universal principal bundle}: because the pullbacks of this bundle
to hypercovers will in general only be ``weakly principal'' in a sense
that we discuss in a moment in Section \ref{Principal infinity-bundles presentations}.
Therefore it is more accurate to speak of the \emph{universal weakly $G$-principal bundle}.
\label{PrincipalityOfUniversalSimplicialBundle}
\end{remark}
   The following proposition (which appears as Lemma 10 in \cite{RobertsStevenson}) 
   justifies this terminology and the notation $WG$ (which, recall, 
   has already been used in Definition~\ref{WGToWbarG}).
\begin{proposition}
  \label{PropertiesOfUniversalGPrincipalBundle}
  For $G$ a group object in $[C^{\mathrm{op}}, \mathrm{sSet}]$, the 
  morphism $W G \to \Wbar G$ from Definition~\ref{UniversalSimplicialPrincipalBundle}
  has the following properties:
  \begin{enumerate}
	 \item it is isomorphic to the d{\'e}calage morphism $\mathrm{Dec}_0 \Wbar G \to \Wbar G$,
	  Definition~\ref{WGToWbarG},
	 \item $WG$ is canonically equipped with a right $G$-action over $\Wbar G$
	   that makes $WG\to \Wbar G$ a $G$-principal bundle.
  \end{enumerate}
\end{proposition}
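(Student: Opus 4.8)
The plan is to prove the two clauses separately, reducing the first to a comparison of d\'ecalage constructions and the second to the fact that the total simplicial set functor $\sigma_*$ preserves finite limits.

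For clause (1) I would start from the identifications $\Wbar G = \sigma_*(*/\!/G)$ (Example~\ref{ActionOfSimplicialGroupOnPointAndOnItself}) and $WG = \sigma_*(G/\!/G)$ (Definition~\ref{UniversalSimplicialPrincipalBundle}), the morphism $WG \to \Wbar G$ being $\sigma_*$ applied to the terminal projection $G/\!/G \to */\!/G$. The first step is to recognise $G/\!/G$ as a d\'ecalage of $*/\!/G$ in the simplicial (``bar'') direction. Reading off the face operators from Definition~\ref{BisimplicialActionGroupoid}, one has a natural identification $(G/\!/G)_n \cong (*/\!/G)_{n+1}$ under which the faces $d_0,\dots,d_n$ of $G/\!/G$ are the faces $d_1,\dots,d_{n+1}$ of $*/\!/G$, while the omitted face $d_0$ of $*/\!/G$ is exactly the terminal projection $G/\!/G \to */\!/G$. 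Thus $G/\!/G$ is the d\'ecalage of $*/\!/G$ obtained by precomposing the bar direction with the ordinal sum $\sigma([0],-)$, and the projection is its augmentation.

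The second step is to pass this d\'ecalage through $\sigma_*$. The total simplicial set functor intertwines d\'ecalage in a single simplicial variable with the plain d\'ecalage $\mathrm{Dec}_0$ of Definition~\ref{Decalage}; this is a consequence of the associativity of the ordinal sum, $\sigma(\sigma([k],[l]),[0]) \cong \sigma([k],\sigma([l],[0]))$, and is recorded in \cite{Stevenson2}. Invoking it yields a natural isomorphism $\sigma_*(G/\!/G) \cong \mathrm{Dec}_0\,\sigma_*(*/\!/G) = \mathrm{Dec}_0\,\Wbar G$ under which the terminal projection corresponds to the d\'ecalage morphism $d_{n+1}$ of Proposition~\ref{MorphismsOutOfPlainDecalage}, so that the two definitions of $WG$ (here and in Definition~\ref{WGToWbarG}) agree. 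I expect the main obstacle to lie precisely here: $\mathrm{Dec}_0$ appends a top vertex whereas the bar-direction d\'ecalage above prepends a bottom one, and $\sigma_*$ combines the two bisimplicial directions only up to a transpose convention, so reconciling orientations (so that the augmentations match) takes care. This is the bookkeeping carried out in Lemma~10 of \cite{RobertsStevenson}, to which I would ultimately appeal.

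For clause (2), recall from Example~\ref{ActionOfSimplicialGroupOnPointAndOnItself} that $G/\!/G$ carries a right $G$-action, by right multiplication on the slot on which $G$ acts, for which the terminal projection $G/\!/G \to */\!/G$ is invariant. Since $\sigma_*$ is built from finite limits (Remark~\ref{Total simplicial object is built from finite limits}) it preserves finite products and pullbacks, and $\sigma_*(\mathrm{const}\,G) \cong G$ by Proposition~\ref{TotalSimpSetEquivalentToDiagonal}; applying $\sigma_*$ to the action map therefore produces a right $G$-action on $WG = \sigma_*(G/\!/G)$ over $\Wbar G$. Because $\sigma_*$ preserves finite limits it suffices to verify principality already at the level of $G/\!/G \to */\!/G$. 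There the action is degreewise free, and in degree $n$ the fibres of the projection $(G/\!/G)_n = G \times G^{\times n} \to G^{\times n} = (*/\!/G)_n$ are copies of $G$ on which $G$ acts by right multiplication, hence freely and transitively; consequently the shear map
\[
  (G/\!/G)\times \mathrm{const}\,G \longrightarrow (G/\!/G)\times_{*/\!/G}(G/\!/G), \qquad (x,h)\mapsto (x,\, x\cdot h),
\]
is a degreewise bijection, hence an isomorphism. Applying $\sigma_*$ gives the principality isomorphism $WG \times G \xrightarrow{\ \cong\ } WG \times_{\Wbar G} WG$. Finally the degeneracy $s_n$ splits the projection in each degree, so $WG \to \Wbar G$ is a split epimorphism exhibiting $\Wbar G \cong WG/G$; together with freeness and the shear isomorphism this shows that $WG \to \Wbar G$ is a $G$-principal bundle.
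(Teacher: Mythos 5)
The paper does not actually prove this proposition: it is stated with only a pointer to Lemma~10 of \cite{RobertsStevenson} (``The following proposition (which appears as Lemma 10 in [RS]) justifies this terminology\dots''), so there is no internal argument to compare against. Your write-up is a sound reconstruction. For clause~(1) you correctly identify $G/\!/G$ with the d\'ecalage of $*/\!/G$ in the bar direction (the omitted face being the terminal projection) and reduce the rest to the compatibility of $\sigma_*$ with d\'ecalage coming from associativity of ordinal sum; you rightly flag the append-versus-prepend orientation issue as the only delicate point and defer it to the same Lemma~10 the paper cites, so your argument is no less complete than the paper's. For clause~(2) your argument --- shear map is a degreewise isomorphism already at the level of $G/\!/G \to */\!/G$, then apply $\sigma_*$, which preserves the relevant finite limits and satisfies $\sigma_*\mathrm{const}\,G\cong G$ --- is exactly the argument the paper itself gives a little later for the proposition that $WG\to\Wbar G$ is a (weakly, in fact strictly) $G$-principal bundle, where it observes that the square involving $(G/\!/G)\times G$ and $G/\!/G\times_{*/\!/G}G/\!/G$ is a pullback preserved by the right adjoint $\sigma_*$. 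One small caveat: the section of $(G/\!/G)_n\to(*/\!/G)_n$ given by the unit of $G$ is only a degreewise splitting, not a simplicial map, so it yields degreewise surjectivity (which is all you need for $\Wbar G\cong WG/G$) rather than a simplicial section; your phrasing ``in each degree'' is consistent with this, but it is worth being explicit that no global section exists.
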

In particular it follows from 2 that $WG\to \Wbar G$ is an objectwise 
Kan fibration replacement of the point inclusion $\ast\to \Wbar G$.  

   We now discuss some basic properties of the morphism $WG\to \Wbar G$.
\begin{definition}
  For $\rho : P \times G \to P$ a $G$-action in $[C^{\mathrm{op}}, \mathrm{sSet}]$, 
  we write 
  $$
    P \times_G W G := (P \times W G)/G \in [C^{\mathrm{op}}, \mathrm{sSet}]
  $$
  for the quotient by the diagonal $G$-action with respect to the
  given right $G$ action on $P$ and the canonical right $G$-action
  on $W G$ from Proposition~\ref{PropertiesOfUniversalGPrincipalBundle}. 
  We call this quotient the \emph{Borel construction} of the $G$-action 
  on $P$.
\end{definition}
\begin{proposition}
  \label{TotalSimplicialObjectByBorelConstruction}
For $P \times G \to P$ an action in $[C^{\mathrm{op}}, \mathrm{sSet}]$, there is an isomorphism
$$
  P/_h G
  =
  P\times_G WG
  \,,
$$
between the homotopy quotient, Definition~\ref{WeakQuotient},
and the Borel construction.
In particular, for all $n \in \mathbb{N}$ there are isomorphisms
\[
  (P/_h G)_n = P_n\times G_{n-1} \times \cdots \times G_0 
  \,.
\]
\end{proposition}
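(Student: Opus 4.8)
The plan is to exploit that the total simplicial set functor $\sigma_*$ is a right adjoint (Definition~\ref{TotalSimplicialSetAndTotalDecalage}), so it preserves all limits --- in particular finite products --- and that by Proposition~\ref{TotalSimpSetEquivalentToDiagonal} it fixes constant simplicial objects, $\sigma_*\,\mathrm{const}\,X \cong X$. Since the action groupoid, the functor $\sigma_*$ (built from finite limits, Remark~\ref{Total simplicial object is built from finite limits}) and the Borel construction are all formed objectwise over $C$, it suffices to argue at the level of simplicial sets after evaluation at each $U \in C$ and then to observe naturality in $U$.

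First I would present $P/\!/G$ as a strictly free quotient of an untwisted product. Writing $\mathrm{const}\,P$ for the object constant in the groupoid direction, the assignment
$$
  q : \mathrm{const}\,P \times (G/\!/G) \to P/\!/G, \qquad (x,k,g_1,\dots,g_p) \longmapsto (x\cdot k,\, g_1,\dots,g_p)
$$
is invariant under the diagonal right $G$-action (with $G$ acting on $G/\!/G$ by the convention of Example~\ref{ActionOfSimplicialGroupOnPointAndOnItself}), and checking it against the face and degeneracy formulas of Definition~\ref{BisimplicialActionGroupoid} shows that it is a morphism of simplicial objects exhibiting $P/\!/G \cong \bigl(\mathrm{const}\,P \times (G/\!/G)\bigr)/G$ degreewise.

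Applying $\sigma_*$, using product-preservation together with $\sigma_*\,\mathrm{const}\,P \cong P$ and $\sigma_*(G/\!/G) = WG$ (Definition~\ref{UniversalSimplicialPrincipalBundle}), I obtain a natural morphism
$$
  \sigma_*(q) : P \times WG \;\cong\; \sigma_*\bigl(\mathrm{const}\,P \times (G/\!/G)\bigr) \longrightarrow \sigma_*(P/\!/G) = P/_h G.
$$
Since $\sigma_*\,\mathrm{const}\,G \cong G$, applying $\sigma_*$ to the $G$-invariance equation for $q$ shows $\sigma_*(q)$ is invariant under the diagonal $G$-action on $P \times WG$, so it factors through the Borel quotient as $\phi : P \times_G WG \to P/_h G$. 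It then remains to see that $\phi$ is an isomorphism, which I would verify degreewise. On the source, $(WG)_n = G_n \times G_{n-1}\times\cdots\times G_0$ and the diagonal $G_n$-action is free on the leading factor, so normalizing it gives $(P\times_G WG)_n \cong P_n \times G_{n-1}\times\cdots\times G_0$; this already establishes the ``in particular'' clause. On the target I would use the explicit total-simplicial-set (Artin--Mazur codiagonal) description (Remark~\ref{Total simplicial object is built from finite limits}): an $n$-simplex of $\sigma_*(P/\!/G)$ is a matching family $(x_0,\dots,x_n)$ with $x_p$ an internal $(n-p)$-simplex of $(P/\!/G)_p$ subject to gluing relations of the form $d^h_0 x_p = d^v_{\,n-p+1} x_{p-1}$; solving these recursions expresses every component through the free datum $x_0 \in P_n$ together with one group element in $G_{n-p}$ for each $p = 1,\dots,n$, and one checks that this normal form is precisely the image of $\phi$.

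The main obstacle is bookkeeping rather than conceptual. Matching the two degreewise normal forms on the nose and verifying compatibility with every face and degeneracy map is delicate because of the left/right action conventions flagged in Example~\ref{ActionOfSimplicialGroupOnPointAndOnItself} (the passage to the right action $g \mapsto g^{-1}(-)$), and because one must confirm that the $G$-action transported by $\sigma_*$ onto $P \times WG$ really is the expected degreewise diagonal one acting on the leading $G_n$ of $(WG)_n$. Once the degreewise bijection is pinned down and seen to be natural both in $[n] \in \Delta$ and in $U \in C$, the claimed isomorphism $P/_h G = P \times_G WG$ of simplicial presheaves follows.
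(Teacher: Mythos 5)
Your argument is correct; the paper itself offers no proof beyond the remark that the statement ``follows by a straightforward computation,'' and your route --- presenting $P/\!/G$ degreewise as $\bigl(\mathrm{const}\,P \times (G/\!/G)\bigr)/G$, using that $\sigma_*$ preserves products and constants to obtain the comparison map $\phi : P\times_G WG \to P/_h G$, and then checking degreewise via the codiagonal normal form that $\phi$ is an isomorphism --- is a sound way of organizing exactly that computation. The only content is the bookkeeping you already flag (the action conventions of Example~\ref{ActionOfSimplicialGroupOnPointAndOnItself} and the identification of the transported action on $P\times WG$ with the diagonal one), and that does work out.
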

\begin{proof}
  This follows by a straightforward computation.
\end{proof}
\begin{lemma}
  \label{PropertiesOfSimplicialQuotientsBySimplicialGroups}
  Let $P$ be a Kan complex, $G$ a simplicial group and $\rho : P \times G \to P$
  a free action. The following holds.
  \begin{enumerate}
    \item 
	  The quotient map $P \to P/G$ is a Kan fibration.
	\item 
	  The quotient $P/G$ is a Kan complex. 
  \end{enumerate}
\end{lemma}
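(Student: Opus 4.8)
The plan is to prove both statements by direct horn-filling arguments, the one external ingredient being the classical fact that every simplicial group is a Kan complex (\cite{GoerssJardine}), which I apply to $G$. Throughout I treat the action as a right action $P\times G\to P$, so that the quotient map $\pi\colon P\to P/G$ is surjective in each simplicial degree (it is the quotient by the degreewise $G_n$-action) and its structure maps satisfy $d_j(pg)=(d_jp)(d_jg)$ and $s_j(pg)=(s_jp)(s_jg)$, since the action is a morphism of simplicial sets.

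First I would establish (1). Consider a lifting problem against $\pi$ given by a horn $\Lambda^k[n]\to P$, that is compatible faces $y_j\in P_{n-1}$ for $j\neq k$, together with an $n$-simplex $\bar z\in(P/G)_n$ satisfying $d_j\bar z=\pi(y_j)$. Using surjectivity of $\pi$ I lift $\bar z$ to some $w\in P_n$. For each $j\neq k$ the faces $d_jw$ and $y_j$ have the same image under $\pi$, hence lie in the same $G_{n-1}$-orbit, so there is $g_j\in G_{n-1}$ with $y_j=(d_jw)\,g_j$; by freeness this $g_j$ is unique. The key step is to show that the family $\{g_j\}_{j\neq k}$ is itself a horn $\Lambda^k[n]\to G$: for $j<l$ the horn compatibility $d_jy_l=d_{l-1}y_j$, expanded using the simplicial identity $d_jd_l=d_{l-1}d_j$ and the compatibility of the action with faces, yields $u\,(d_jg_l)=u\,(d_{l-1}g_j)$ with $u=d_jd_lw$, whereupon freeness cancels $u$ to give $d_jg_l=d_{l-1}g_j$. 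Since $G$ is a Kan complex this horn is fillable by some $g\in G_n$ with $d_jg=g_j$ for $j\neq k$, and then $z:=w\,g$ solves the lifting problem, because $d_jz=(d_jw)\,g_j=y_j$ while $\pi(z)=\pi(w)=\bar z$. Note that freeness enters precisely where $g_j$ must be unambiguous and where $u$ is cancelled.

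For (2) I would use that, by (1), $\pi$ is a surjective Kan fibration whose total space $P$ is fibrant, and reduce fillability of a horn $\Lambda^i[n]\to P/G$ to lifting it to $P$, filling there, and projecting back. To lift the horn I observe that the inclusion of the apex vertex $\{v_i\}\hookrightarrow\Lambda^i[n]$ (the vertex common to all facets of the horn) is a monomorphism between weakly contractible simplicial sets, hence a weak equivalence, hence an acyclic cofibration; in $\mathrm{sSet}_{\mathrm{Quillen}}$ this is the same as an anodyne map, so it has the left lifting property against the Kan fibration $\pi$. Lifting the image vertex through the surjection $\pi$ and solving the resulting lifting problem produces a lift $\Lambda^i[n]\to P$ of the given horn; since $P$ is a Kan complex this extends to $\Delta[n]\to P$, and composing with $\pi$ fills the original horn in $P/G$. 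I expect the main obstacle to be the bookkeeping in the crux of (1) — the combination of the simplicial identities with the freeness hypothesis needed to see that the correction elements $g_j$ assemble into a genuine horn in $G$; once that is in place, Moore's theorem closes (1), and (2) follows formally from the anodyne apex-vertex inclusion.
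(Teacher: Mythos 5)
Your proof is correct and is essentially the standard argument: the paper gives no proof of this lemma at all, deferring to Lemma 3.7 in Chapter V of \cite{GoerssJardine}, and the proof there is precisely your orbit-correction horn-filling for (1) (lift the simplex of $P/G$, measure the discrepancy on faces by unique elements $g_j\in G_{n-1}$, check via the simplicial identities and freeness that they form a horn in $G$, fill it, and translate) together with the surjective-fibration-with-Kan-total-space argument for (2). As a minor remark, your proof of (1) nowhere uses that $P$ is a Kan complex, which is consistent with the general fact that the quotient map of any degreewise free action of a simplicial group is a (principal) Kan fibration.
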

The second statement is for instance Lemma 3.7 in Chapter V of \cite{GoerssJardine}.
\begin{lemma}
 \label{KanSimplicialHomotopyQuotientIsKan}
 For $P$ a Kan complex and $P \times G \to P$ an action by a group object, 
 the homotopy quotient $P /_h G$, Definition~\ref{WeakQuotient}, is itself a Kan complex.
\end{lemma}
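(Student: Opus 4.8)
The plan is to reduce the statement to the known behaviour of quotients of Kan complexes by free simplicial group actions, as recorded in Lemma~\ref{PropertiesOfSimplicialQuotientsBySimplicialGroups}. First I would invoke Proposition~\ref{TotalSimplicialObjectByBorelConstruction} to replace the total simplicial set $\sigma_*(P/\!/G)$ by the Borel construction, rewriting
$$
  P/_h G \;\cong\; P\times_G WG \;=\; (P\times WG)/G
  \,,
$$
so that it suffices to exhibit $(P\times WG)/G$ as the quotient of a Kan complex by a free diagonal $G$-action, and then apply Lemma~\ref{PropertiesOfSimplicialQuotientsBySimplicialGroups}(2).

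Next I would verify the two hypotheses of that lemma. For fibrancy of $P\times WG$: the factor $P$ is a Kan complex by assumption, and $WG$ is a Kan complex because $WG = \mathrm{Dec}_0\Wbar G$ (Definition~\ref{WGToWbarG}), the décalage morphism $WG\to \Wbar G$ is a Kan fibration by Proposition~\ref{MorphismsOutOfPlainDecalage} (equivalently Proposition~\ref{WGToWbarGIsFibrationResolution}), and $\Wbar G$ is itself a Kan complex by Theorem~\ref{SimplicialLoopingQuillenEquivalence}; since the total space of a Kan fibration over a Kan complex is again a Kan complex, $WG$ is Kan, and hence so is the product $P\times WG$. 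For freeness of the diagonal action: by Proposition~\ref{PropertiesOfUniversalGPrincipalBundle}(2) the right $G$-action on $WG$ is principal, in particular free, and freeness on the second factor forces freeness of the diagonal action, since $(p,w)\cdot g = (p,w)$ implies $w\cdot g = w$ and hence that $g$ is the identity, independently of whether the $G$-action on $P$ is free.

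With both hypotheses in hand, Lemma~\ref{PropertiesOfSimplicialQuotientsBySimplicialGroups}(2) gives that $(P\times WG)/G$ is a Kan complex, which under the identification above is exactly $P/_h G$. I do not expect any single step to pose a serious obstacle; the one point requiring genuine care is the freeness reduction, namely making precise that principality of $WG\to\Wbar G$ yields a degreewise free action and that this is inherited by the diagonal action on $P\times WG$ even though the $G$-action on $P$ is \emph{not} assumed free. The fibrancy of $WG$ is the other place to be careful, but it follows cleanly once one records that Kan fibrations over Kan complexes have Kan total spaces.
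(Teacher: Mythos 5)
Your proposal is correct and follows exactly the paper's route: identify $P/_h G$ with the Borel construction $(P\times WG)/G$ via Proposition~\ref{TotalSimplicialObjectByBorelConstruction}, observe that freeness of the $G$-action on $WG$ forces freeness of the diagonal action, and conclude with Lemma~\ref{PropertiesOfSimplicialQuotientsBySimplicialGroups}. The only difference is that you spell out the verification that $P\times WG$ is a Kan complex and the freeness transfer in more detail than the paper, which leaves these implicit.
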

\proof
  By Proposition~\ref{TotalSimplicialObjectByBorelConstruction} the homotopy
  quotient is isomorphic to the Borel construction. Since
  $G$ acts freely on $WG$ it acts freely on $P \times WG$.
  The statement then follows with Lemma~\ref{PropertiesOfSimplicialQuotientsBySimplicialGroups}.
\endofproof
\begin{remark}
  \label{TheUniversalBundleIsTheUniversalBundle}
  \label{UniversalPrincipalInfinityBundle}
  Let $\hat X \to \Wbar G$ be a morphism in $[C^{\mathrm{op}}, \mathrm{sSet}]$,
  presenting, by Proposition~\ref{InftyGroupsBySimplicialGroups}, a morphism 
  $X \to \mathbf{B}G$
  in the $\infty$-topos $\mathbf{H} = \mathrm{Sh}_\infty(C)$.
  By  theorem 3.19 of \cite{NSSa} every $G$-principal
  $\infty$-bundle over $X$ arises as the homotopy fiber of such a morphism.  By using 
  Proposition~\ref{PropertiesOfUniversalGPrincipalBundle} together with Proposition~\ref{ConstructionOfHomotopyLimits}
  it follows that the principal $\infty$-bundle classified by $\hat X \to \Wbar G$ is 
  presented by the
  ordinary pullback of $W G \to \Wbar G$. This is the defining property of the
  universal principal bundle. 
\end{remark}
In section \ref{Principal infinity-bundles presentations} below we show how this
observation leads to a complete presentation of the theory of principal $\infty$-bundles
by {\em weakly principal simplicial bundles}.

\subsubsection{Presentation in locally fibrant simplicial sheaves}
\label{Principal infinity-bundles presentations}

We discuss a presentation of the general notion of principal $\infty$-bundles,
by \emph{weakly principal bundles}
in a 1-category of simplicial sheaves.

\medskip

Let $\mathbf{H}$ be a hypercomplete $\infty$-topos
(for instance a cohesive $\infty$-topos), which admits a 1-site $C$ with enough points. 
\begin{observation}
From  Section~\ref{InfinityToposPresentation}  
a category with weak equivalences that presents $\mathbf{H}$
under simplicial localization 
is the category $\mathrm{sSh}(C)$ of simplicial 1-sheaves 
on $C$ with the weak
equivalences $W \subset \mathrm{sSh}(C)$ 
being the stalkwise weak equivalences:
$$
  \mathbf{H} \simeq L_W \mathrm{sSh}(C)
  \,.
$$
Also the full subcategory
$$
  \mathrm{sSh}(C)_{\mathrm{lfib}} \hookrightarrow \mathrm{sSh}(C)
$$
on the locally fibrant objects is a presentation.
\label{SimplicialSheavesWithStalkwiseWeakEquivalencesModel1LocalicHypercompleteInfinityTopos}
\end{observation}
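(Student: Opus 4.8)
The plan is to realize the fully faithful inclusion $i : \mathrm{sSh}(C) \hookrightarrow [C^{\mathrm{op}}, \mathrm{sSet}]$ and its left adjoint, degreewise sheafification $a$, as inducing inverse equivalences after simplicial localization, and then to treat the locally fibrant subcategory by the very same mechanism. The decisive feature of a site with enough points, used throughout, is that the stalk functors $x^\ast$ factor through sheafification, so that for every simplicial presheaf $X$ the unit $\eta_X : X \to i a X$ is a stalkwise \emph{isomorphism}, and in particular $\eta_X \in W_C$.

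First I would record that both $a$ and $i$ preserve stalkwise weak equivalences: the inclusion $i$ does so trivially (a simplicial sheaf is a simplicial presheaf with the same stalks), while $a$ does so because sheafification preserves stalks. Writing $W := i^{-1}(W_C)$ for the stalkwise weak equivalences in $\mathrm{sSh}(C)$, the two functors therefore descend to $L i : L_W \mathrm{sSh}(C) \to L_{W_C}[C^{\mathrm{op}}, \mathrm{sSet}]$ and $L a$ in the opposite direction. Since $i$ is fully faithful we have $a i \cong \mathrm{id}$, whence $L a \circ L i = L(a i) = \mathrm{id}$; and the natural stalkwise weak equivalence $\eta : \mathrm{id} \Rightarrow i a$ shows, via Proposition~3.5 of \cite{DwyerKanComputations} (exactly as used in the proof of Proposition~\ref{ModelStructureModelsSimplicialLocalization}), that $L i \circ L a = L(i a) \simeq \mathrm{id}$. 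Hence $L i$ and $L a$ are inverse equivalences. Combining this with the identification $\mathbf{H} \simeq L_{W_C}[C^{\mathrm{op}}, \mathrm{sSet}]$ from Section~\ref{InfinityToposPresentation} (using that $\mathbf{H}$ is hypercomplete) yields the first claim $\mathbf{H} \simeq L_W \mathrm{sSh}(C)$.

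For the locally fibrant subcategory I would produce a functorial locally fibrant replacement \emph{inside} $\mathrm{sSh}(C)$. The candidate is $R := a \circ \mathrm{Ex}^\infty$, with $\mathrm{Ex}^\infty$ applied objectwise: since $\mathrm{Ex}^\infty$ is built from subdivision it commutes with the filtered colimits computing stalks, so $\mathrm{Ex}^\infty X$ is stalkwise Kan, and applying $a$ leaves the stalks unchanged, so that $R X$ is genuinely a locally fibrant sheaf. The natural map $X \to R X$ factors as the objectwise weak equivalence $X \to \mathrm{Ex}^\infty X$ followed by the stalkwise isomorphism $\mathrm{Ex}^\infty X \to a\,\mathrm{Ex}^\infty X$, hence is a stalkwise weak equivalence. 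With $j : \mathrm{sSh}(C)_{\mathrm{lfib}} \hookrightarrow \mathrm{sSh}(C)$ the inclusion, $R$ corestricts to a functor $\mathrm{sSh}(C) \to \mathrm{sSh}(C)_{\mathrm{lfib}}$ equipped with natural stalkwise weak equivalences $\mathrm{id} \Rightarrow j R$ and $\mathrm{id} \Rightarrow R j$; a further application of Proposition~3.5 of \cite{DwyerKanComputations} then gives $L j : L_W \mathrm{sSh}(C)_{\mathrm{lfib}} \xrightarrow{\ \simeq\ } L_W \mathrm{sSh}(C)$, which is the second claim.

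The main obstacle I anticipate is the interaction between sheafification and fibrant replacement in the second part: $\mathrm{Ex}^\infty$ is a sequential (filtered) colimit, and such colimits do not in general preserve sheaves, so a priori $\mathrm{Ex}^\infty$ of a sheaf is only a presheaf. The point to verify carefully is that resheafifying does not spoil local fibrancy, and this is precisely where the enough-points hypothesis is essential: because $x^\ast$ commutes with sheafification, the stalks of $a\,\mathrm{Ex}^\infty X$ coincide with those of $\mathrm{Ex}^\infty X$ and are therefore Kan. Beyond this, the only remaining thing to check is that the cited Dwyer--Kan result applies in the present (large) simplicially enriched setting exactly as it was invoked earlier in Section~\ref{SimplicialObjectsInCoproductCompleteSites}.
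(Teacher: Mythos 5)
Your argument is correct and fills in exactly the justification the paper leaves implicit: the paper states this as an Observation with only a back-reference to Section~2.1, and the intended reasoning is precisely your combination of the sheafification--inclusion adjunction (whose unit is a stalkwise isomorphism over a site with enough points), an objectwise $\mathrm{Ex}^\infty$ followed by re-sheafification as a locally fibrant replacement, and Proposition~3.5 of Dwyer--Kan applied to natural stalkwise weak equivalences --- the same mechanism the paper itself uses in the proof of Proposition~\ref{ModelStructureModelsSimplicialLocalization}. No gaps.
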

\begin{corollary}
  Regard $\mathrm{sSh}(C)_{\mathrm{lfib}}$ as a category
  of fibrant objects, Definition~\ref{CategoryOfFibrantObjects}, 
  with weak equivalences and fibrations 
  the stalkwise weak equivalences and fibrations in $\mathrm{sSet}_{\mathrm{Quillen}}$,
  respectively, as in Example \ref{BasicExamplesOfCatsOfFibObjects}.
   Then for any two objects $X, A \in \mathbf{H}$ there are simplicial sheaves, 
  to be denoted by the same symbols, such that the hom $\infty$-groupoid
  in $\mathbf{H}$ from $X$ to $A$ is presented in $\mathrm{sSet}_{\mathrm{Quillen}}$ 
  by the Kan complex of 
  cocycles from Section \ref{Categories of fibrant objects}. 
\end{corollary}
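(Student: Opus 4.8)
The plan is to assemble the statement from two results already in hand: the identification of $\mathbf{H}$ with a simplicial localization of locally fibrant simplicial sheaves (Observation~\ref{SimplicialSheavesWithStalkwiseWeakEquivalencesModel1LocalicHypercompleteInfinityTopos}), and the cocycle description of hom-spaces in a category of fibrant objects (Theorem~\ref{SimplicialLocalizationOfCatOfFibrantObjects}). The only genuine glue required is the Dwyer--Kan comparison between the simplicial localization $L_W$ that defines $\mathbf{H}$ and the hammock localization $L^H$ in which the cocycle theorem is phrased.

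First I would fix representatives. By Observation~\ref{SimplicialSheavesWithStalkwiseWeakEquivalencesModel1LocalicHypercompleteInfinityTopos}, the inclusion $\mathrm{sSh}(C)_{\mathrm{lfib}}\hookrightarrow \mathrm{sSh}(C)$ exhibits $\mathrm{sSh}(C)_{\mathrm{lfib}}$, equipped with the stalkwise weak equivalences $W$, as a presentation of $\mathbf{H}$, so that $\mathbf{H}\simeq L_W \mathrm{sSh}(C)_{\mathrm{lfib}}$ in $\mathrm{Cat}_\infty$. In particular every object of $\mathbf{H}$ is the image of a locally fibrant simplicial sheaf, and I choose such sheaves $X, A \in \mathrm{sSh}(C)_{\mathrm{lfib}}$ presenting the two given objects; this is the meaning of the phrase \emph{denoted by the same symbols}. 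Under this equivalence the hom-$\infty$-groupoid $\mathbf{H}(X,A)$ is computed as the derived function complex $L_W\mathrm{sSh}(C)_{\mathrm{lfib}}(X,A)$.

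Next I would pass to the hammock localization. For any category with weak equivalences the simplicial localization $L_W$ and the hammock localization $L^H$ are naturally DK-equivalent \cite{DwyerKanComputations}, hence their derived hom-spaces agree up to weak homotopy equivalence:
$$
  L_W\mathrm{sSh}(C)_{\mathrm{lfib}}(X,A) \;\simeq\; L^H\mathrm{sSh}(C)_{\mathrm{lfib}}(X,A)
  \,.
$$
By Examples~\ref{BasicExamplesOfCatsOfFibObjects} the category $\mathrm{sSh}(C)_{\mathrm{lfib}}$, with the stalkwise weak equivalences and stalkwise fibrations, is a category of fibrant objects, so Theorem~\ref{SimplicialLocalizationOfCatOfFibrantObjects} applies and yields weak homotopy equivalences
$$
  N\mathrm{Cocycle}(X,A) \xrightarrow{\;\simeq\;} N\mathrm{wCocycle}(X,A) \xrightarrow{\;\simeq\;} L^H\mathrm{sSh}(C)_{\mathrm{lfib}}(X,A)
  \,.
$$
Composing the two displays identifies $\mathbf{H}(X,A)$ with the cocycle Kan complex $N\mathrm{Cocycle}(X,A)$ (equivalently $N\mathrm{wCocycle}(X,A)$) of Section~\ref{Categories of fibrant objects}, as claimed.

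The main subtlety to keep in view --- and precisely the reason for routing the argument through the category-of-fibrant-objects machinery rather than a model structure --- is recorded in Remark~\ref{StalkwiseFibrationsAreNotModelStructureFibrations}: the stalkwise fibrations on $\mathrm{sSh}(C)_{\mathrm{lfib}}$ are \emph{not} the fibrations of any model structure presenting $\mathbf{H}$. One therefore cannot compute $\mathbf{H}(X,A)$ as an ordinary fibrant--cofibrant simplicial mapping space, and the real content is that spans whose left legs are merely stalkwise acyclic fibrations already suffice to recover the full homotopy type. Everything has been arranged so that this content is carried entirely by Theorem~\ref{SimplicialLocalizationOfCatOfFibrantObjects}; the remaining ingredients, namely the $L_W \simeq L^H$ comparison and the choice of locally fibrant representatives, are formal consequences of the cited results.
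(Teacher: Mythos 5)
Your proposal is correct and is essentially the paper's own argument: the paper proves this corollary with the single line ``By Theorem~\ref{SimplicialLocalizationOfCatOfFibrantObjects}'', and your write-up simply makes explicit the implicit chain --- choosing locally fibrant representatives via Observation~\ref{SimplicialSheavesWithStalkwiseWeakEquivalencesModel1LocalicHypercompleteInfinityTopos}, invoking the standard DK-equivalence between $L_W$ and the hammock localization $L^H$, and then applying the cocycle theorem. The added remark about why one cannot use a model-structure mapping space (Remark~\ref{StalkwiseFibrationsAreNotModelStructureFibrations}) is a faithful reading of the paper's intent, not a deviation.
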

\proof
  By theorem \ref{SimplicialLocalizationOfCatOfFibrantObjects}.
\endofproof
We now discuss, for the general theory of principal $\infty$-bundles in 
$\mathbf{H}$ from \cite{NSSa}
a corresponding realization in the presentation for $\mathbf{H}$
given by $(\mathrm{sSh}(C), W)$.

By Proposition~\ref{InftyGroupsBySimplicialGroups} every $\infty$-group
in $\mathbf{H}$ is presented by an ordinary group in $\mathrm{sSh}(C)$.
It is too much to ask
that also every $G$-principal $\infty$-bundle is presented by a
principal bundle in $\mathrm{sSh}(C)$.
But something close is true: every principal $\infty$-bundle
is presented by a \emph{weakly principal} bundle in 
$\mathrm{sSh}(C)$. 

\medskip

\begin{definition}
  \label{WeaklyGPrincipalBundle}
Let $X \in \mathrm{sSh}(C)$ be any object, and let 
$G \in \mathrm{sSh}(C)$ 
be equipped with the structure of a group object. 
A {\em weakly $G$-principal bundle} is
\begin{itemize}
\item an object $P \in \mathrm{sSh}(C)$ (the {\em total space});

\item a local fibration $\pi\colon P\to X$ (the {\em bundle projection});

\item a right action 
  $$ 
    \raisebox{10pt}{
    \xymatrix{
      P \times G \ar[dr] \ar[rr]^{\rho} && P \ar[dl]
	  \\
	  & X
	}
	}
  $$ 
  of $G$ on $P$ over $X$
\end{itemize} 
such that 
\begin{itemize}
\item the action of $G$ is {\em weakly principal} in the sense that the \emph{shear map}
\[
 (p_1, \rho) :  P \times G \to P \times_X P \qquad (p,g) \mapsto (p,p g)
\]
is a local weak equivalence.
\end{itemize}
\end{definition}

\begin{remark}
We do not ask the $G$-action to be degreewise free as in \cite{JardineLuo}, 
where a similar notion is considered. However we show in 
Corollary \ref{befreiung} below that each weakly $G$-principal bundle 
is equivalent to one with free $G$-action.
\end{remark}

\begin{definition}
A morphism of weakly $G$-principal bundles $(\pi,\rho) \to (\pi',\rho')$ over $X$ 
is a morphism $f : P \to P'$ in $\mathrm{sSh}(C)$ 
that is $G$-equivariant and 
commutes with the bundle projections, hence such that it makes this diagram 
commute:
$$
  \raisebox{20pt}{
  \xymatrix{
    P \times G
	\ar[rr]^{(f, \mathrm{id})}
	\ar[d]^{\rho}
	&&
	P' \times G
	\ar[d]^{\rho'}
	\\
    P \ar[dr]_{\pi} \ar[rr]^{f} && P' \ar[dl]^{\pi'}
	\\
	& X
  }
  }
$$
Write 
$$
  \mathrm{w}G\mathrm{Bund}(X)
  \in 
  \mathrm{sSet}_{\mathrm{Quillen}}
$$
for the nerve of the category of weakly $G$-principal bundles and morphisms
as above. The $\infty$-groupoid that this presents under 
$\mathrm{Grpd}_{\infty} \simeq (\mathrm{sSet}_{\mathrm{Quillen}})^\circ$ 
(i.e. its Kan fibrant replacement), 
we call the \emph{$\infty$-groupoid of weakly $G$-principal bundles over $X$}.
\end{definition}

\begin{lemma}
 \label{multishear}
Let $\pi : P \to X$ be a weakly $G$-principal bundle.  Then the following statements are true: 
\begin{enumerate}
\item for any point $p : * \to P$ the action of $G$ induces a weak equivalence
\begin{equation*}
G \longrightarrow P_x  
\end{equation*}
where $x = \pi (p)$ and where $P_x$ is the fiber of $P\to X$ over $x$,
\item
for all $n \in \mathbb{N}$, the multi-shear maps 
\begin{equation*}
P \times G^n  \to P^{\times^{n+1}_X} \qquad (p,g_1,...,g_n) \mapsto (p,p g_1,...,p g_n)
\end{equation*}
are weak equivalences.
\end{enumerate}
\end{lemma}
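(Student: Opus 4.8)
The plan is to run the whole argument inside the category of fibrant objects $\mathrm{sSh}(C)_{\mathrm{lfib}}$ of Example~\ref{BasicExamplesOfCatsOfFibObjects}, whose weak equivalences and fibrations are the stalkwise weak equivalences and stalkwise Kan fibrations, and to reduce both statements to repeated application of the cogluing lemma (Lemma~\ref{cogluing lemma}). The only inputs I would use are: (i) the shear map $(p_1,\rho)\colon P\times G\to P\times_X P$ is a weak equivalence, by hypothesis; (ii) $\pi\colon P\to X$ is a local fibration, by definition of a weakly $G$-principal bundle, so that all its iterated base changes are local fibrations; and (iii) $G$ is locally fibrant, since stalkwise it is a simplicial group and hence a Kan complex. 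Because $C$ has enough points and stalks preserve finite limits, one could equivalently argue stalkwise in $\mathrm{sSet}_{\mathrm{Quillen}}$, but I prefer the intrinsic formulation, which is exactly where Lemma~\ref{cogluing lemma} lives.

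For the first statement I would note that the shear map $s$ is a morphism \emph{over} $P$: it commutes with the two first projections $\mathrm{pr}_1\colon P\times G\to P$ and $\mathrm{pr}_1\colon P\times_X P\to P$. The former is a base change of $G\to\ast$, the latter a base change of $\pi$ along $\pi$, so by stability of local fibrations under base change both are local fibrations. Taking the fibre over a chosen point $p\colon\ast\to P$ means pulling back the cospans $P\times G\xrightarrow{\mathrm{pr}_1}P\xleftarrow{p}\ast$ and $P\times_X P\xrightarrow{\mathrm{pr}_1}P\xleftarrow{p}\ast$. The left legs are fibrations and the three vertical comparison maps $s$, $\mathrm{id}_P$, $\mathrm{id}_\ast$ are weak equivalences, so Lemma~\ref{cogluing lemma} applies and shows the induced map of pullbacks is a weak equivalence. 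That map is precisely $G\to P_x$, $g\mapsto p\,g$, proving the first claim.

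For the second statement I would first reorganise the fibre products: along the first projections there are natural isomorphisms
\[
  P\times G^n\;\cong\;(P\times G)^{\times^n_P},
  \qquad
  P^{\times^{n+1}_X}\;\cong\;(P\times_X P)^{\times^n_P},
\]
the $n$-fold fibre powers over $P$ taken along $\mathrm{pr}_1$, under which the multi-shear $\phi_n\colon P\times G^n\to P^{\times^{n+1}_X}$, $(p,g_1,\dots,g_n)\mapsto(p,pg_1,\dots,pg_n)$, becomes the fibre power $s^{\times^n_P}$ of the shear map over $P$. I would then induct on $n$: the case $n=1$ is hypothesis~(i), and for the step I write $(P\times G)^{\times^n_P}=(P\times G)^{\times^{n-1}_P}\times_P(P\times G)$ and apply Lemma~\ref{cogluing lemma} to the two cospans over $P$ whose left legs $(P\times G)^{\times^{n-1}_P}\to P$ and $(P\times_X P)^{\times^{n-1}_P}\to P$ are the iterated first projections (local fibrations, as iterated base changes of local fibrations) and whose right legs are $\mathrm{pr}_1\colon P\times G\to P$ and $\mathrm{pr}_1\colon P\times_X P\to P$. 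The vertical comparison maps are $s^{\times^{n-1}_P}$ (a weak equivalence by induction), $\mathrm{id}_P$, and $s$, so the lemma identifies the induced map of pullbacks $s^{\times^n_P}\cong\phi_n$ as a weak equivalence.

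The step I expect to require the most care is the bookkeeping that turns the multi-shear into an honest iterated fibre power of $s$ \emph{over $P$}, together with the verification that at each stage the designated leg is a local fibration: this is exactly the hypothesis of the cogluing lemma, and it is what rescues the argument from the fact that fibre products do not preserve weak equivalences in general. The only genuinely geometric input beyond this formal manipulation is the identification of the fibre of $\mathrm{pr}_1\colon P\times_X P\to P$ over $p$ with $P_x$ and of the restricted shear with the action map $g\mapsto pg$; everything else is an application of Lemma~\ref{cogluing lemma} and the stability of local fibrations under base change.
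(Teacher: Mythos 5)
Your proof is correct and follows essentially the same route as the paper's: part 1 is verbatim the paper's argument (the shear map regarded as a morphism over $P$ via the first projections, both of which are fibrations, pulled back along $p$ using the cogluing lemma), and part 2 is the same induction on $n$ driven by the cogluing lemma together with stability of local fibrations under base change. The only difference is bookkeeping in part 2: you realize the multi-shear as the $n$-fold fibre power of the shear map over $P$ and apply the cogluing lemma over $P$ at each step, whereas the paper factors the $(n+1)$-fold multi-shear as the product of the $n$-fold one with $G$ followed by a base change of the shear along the fibration $P^{\times^n_X}\to X$; both organizations are valid and rest on exactly the same inputs.
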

\begin{proof}
We consider the first statement.  Regard the weak equivalence 
$P \times G \xrightarrow{\sim} P \times_X P$ 
as a morphism over $P$ where in both cases the map to $P$ is given by projection onto the first factor. 
By basic properties of categories of fibrant objects, both of these projections are
fibrations.
Therefore, by the cogluing lemma (Lemma~\ref{cogluing lemma}) 
the pullback of the shear map along $p$ is still a weak equivalence. 
But this pullback is just the map 
$G\to P_x$, which proves the claim.     

For the second statement, we use induction on $n$.  Suppose that 
$P\times G^n\to P^{\times^{n+1}_X}$ is a weak equivalence.  By
Lemma~\ref{cogluing lemma} again,    
the pullback 
$P^{\times^n_X}\times_X (P\times G)\to P^{\times^{n+2}_X}$ 
of the shear map $P\times G\to P\times_X  P$ along the fibration $P^{\times^n_X} \to X$ is again a 
weak equivalence.  Similarly 
the product $P\times G^n\times G\to P^{\times^{n+1}_X}\times G$
of the $n$-fold shear map with $G$ is also a weak equivalence.
The composite of these two weak equivalences is the multi-shear map
$ P \times G^{n+1} \to P^{\times^{n+2}_X}$, which is hence also a weak equivalence.
\end{proof}

\begin{proposition}
Let $P \to X$ be a weakly $G$-principal bundle and let $f: Y \to X$ be an arbitrary morphism. 
Then the pullback $f^*P \to Y$ exists and is also 
canonically a weakly $G$-principal bundle. This 
operation extends to define a pullback morphism
$$
  f^* : \mathrm{w}G\mathrm{Bund}(X) \to \mathrm{w}G\mathrm{Bund}(Y)
  \,.
$$
\end{proposition}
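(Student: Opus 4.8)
The plan is to realize the pullback as the ordinary fibre product $f^*P := Y\times_X P$ in $\mathrm{sSh}(C)$, which exists since this category is complete, and then to verify the three defining clauses of Definition~\ref{WeaklyGPrincipalBundle} in turn, working throughout in the category of fibrant objects $\mathrm{sSh}(C)_{\mathrm{lfib}}$. The bundle projection is the evident map $\bar\pi : f^*P \to Y$; since it is the pullback of the local fibration $\pi$ along $f$, and local fibrations are stable under pullback (axiom~3 of a category of fibrant objects), $\bar\pi$ is again a local fibration. The $G$-action is obtained by pulling back $\rho$: because forming the product with $G$ commutes with the fibre product over $X$, there is a canonical identification $(f^*P)\times G \cong Y\times_X (P\times G)$, and the composite $Y\times_X(P\times G)\xrightarrow{\mathrm{id}_Y\times_X\rho} Y\times_X P = f^*P$ is a right $G$-action over $Y$.

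The substantive point is weak principality, and here the first task is to identify the shear map of $f^*P$ explicitly. Combining the identification above with $(f^*P)\times_Y(f^*P)\cong Y\times_X(P\times_X P)$ — a point of either side being a triple $(y,p,p')$ with $\pi(p)=\pi(p')=f(y)$ — one reads off that the shear map $(f^*P)\times G \to (f^*P)\times_Y(f^*P)$ is precisely $\mathrm{id}_Y\times_X s$, the pullback along $f$ of the shear map $s : P\times G \to P\times_X P$ of $P$. Thus weak principality of $f^*P$ reduces to showing that the pullback of the local weak equivalence $s$ along the arbitrary map $f$ is again a local weak equivalence.

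This is exactly what the cogluing lemma (Lemma~\ref{cogluing lemma}) delivers, and this is the step I expect to be the crux. I would apply it to the comparison of cospans
$$
\xymatrix{
  P \times G \ar[d]_-{s} \ar[r] & X \ar@{=}[d] & Y \ar@{=}[d] \ar[l]_-{f} \\
  P \times_X P \ar[r] & X & Y \ar[l]^-{f}
}
$$
The left-hand legs $P\times G\to X$ and $P\times_X P\to X$ are local fibrations: $P\times G\to P$ is the pullback of $G\to *$ (a local fibration since $G$ is locally fibrant) and $P\times_X P\to P$ is the pullback of $\pi$, and composing either with the local fibration $\pi$ yields a local fibration. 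The three vertical maps $s$, $\mathrm{id}_X$, $\mathrm{id}_Y$ are local weak equivalences, so Lemma~\ref{cogluing lemma} shows that the induced map on fibre products, namely the shear map $\mathrm{id}_Y\times_X s$ of $f^*P$, is a local weak equivalence. Equivalently, one may argue stalkwise: the stalk functors preserve finite limits, fibrations and weak equivalences, and $\mathrm{sSet}_{\mathrm{Quillen}}$ is right proper, so the same conclusion follows without any fibrancy hypothesis on $Y$.

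Finally, for the functoriality statement I would fix a choice of pullbacks, making $P\mapsto f^*P = Y\times_X P$ a functor $\mathrm{sSh}(C)_{/X}\to\mathrm{sSh}(C)_{/Y}$, and send a morphism $h : P\to P'$ of weakly $G$-principal bundles over $X$ to $\mathrm{id}_Y\times_X h : f^*P\to f^*P'$. Since $f^*(-)$ carries the $G$-equivariance square and the projection triangle of $h$ to the corresponding diagrams for $\mathrm{id}_Y\times_X h$, the latter is again $G$-equivariant and compatible with the bundle projections, hence a morphism in $\mathrm{w}G\mathrm{Bund}(Y)$. Passing to nerves then yields the desired simplicial map $f^* : \mathrm{w}G\mathrm{Bund}(X)\to\mathrm{w}G\mathrm{Bund}(Y)$.
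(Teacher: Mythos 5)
Your proposal is correct and follows essentially the same route as the paper: existence of the pullback and stability of the local fibration come from the axioms of a category of fibrant objects, and weak principality of $f^*P$ is obtained by applying the cogluing lemma (Lemma~\ref{cogluing lemma}) to exhibit the shear map of $f^*P$ as the pullback along $f$ of the shear map of $P$. The paper's proof is just a terser version of yours; your explicit identification of the cospans fed into the cogluing lemma, and the verification that $P\times G\to X$ and $P\times_X P\to X$ are local fibrations, are exactly the details the paper leaves implicit.
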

\begin{proof}
Again this follows by basic properties of a category of fibrant objects:
the pullback $f^*P$ exists  
and the morphism $f^*P\to Y$ is again a local fibration;  
thus it only 
remains to show that $f^*P$ is weakly principal, i.e.\  that the 
morphism $f^*P \times G \to f^*P \times_Y f^*P$ is a weak 
equivalence. This follows from Lemma~\ref{cogluing lemma} again. 
\end{proof}

\begin{remark}
The functor $f^*$ associated to the 
map $f\colon Y\to X$ above is the restriction of a functor 
$f^*\colon \mathrm{sSh}(C)/X \to \mathrm{sSh}(C)/Y $ mapping from simplicial sheaves over 
$X$ to simplicial sheaves over $Y$.  This functor $f^*$ has a left 
adjoint 
$f_!\colon \mathrm{sSh}(C)_{/Y} \to \mathrm{sSh}_{/X} $
given by composition 
along $f$, in other words 
\[
  f_!(E\to Y) = E\to Y\xrightarrow{f} X.
\]  
Note that the functor $f_!$ does not usually restrict to a functor 
$f_!\colon \mathrm{w}G\mathrm{Bund}(Y) \to \mathrm{w}G\mathrm{Bund}(X)$. 
But when it does, we say that 
principal $\infty$-bundles {\em satisfy descent along $f$}.  In this situation, 
if $P$ is a weakly $G$-principal bundle on $Y$, then $P$ is weakly equivalent 
to the pulled-back principal $\infty$-bundle $f^*f_!P$ on $Y$, in other words 
$P$ `descends' to $f_!P$.  
\end{remark}
The next result 
says that weakly $G$-principal bundles satisfy descent along 
local acyclic fibrations (hypercovers).  
 
\begin{proposition}
  \label{PushforwardOfGBundlesAlongHypercovers}
Let $p: Y \to X$ be a local acyclic fibration in $\mathrm{sSh}(C)$.  
Then the functor $p_!$ defined above restricts to 
a functor $p_!\colon \mathrm{w}G\mathrm{Bund}(Y) \to \mathrm{w}G\mathrm{Bund}(X)$, 
left adjoint to $p^*\colon \mathrm{w}G\mathrm{Bund}(X) \to 
\mathrm{w}G\mathrm{Bund}(Y)$, hence to a homotopy equivalence in $\mathrm{sSet}_{\mathrm{Quillen}}$.
\end{proposition}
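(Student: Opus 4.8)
The plan is to establish three things: that $p_!$ carries weakly $G$-principal bundles to weakly $G$-principal bundles, that the ordinary slice adjunction $p_! \dashv p^*$ restricts to the subcategories of weakly principal bundles, and that an adjunction of categories induces a homotopy equivalence on nerves. First I would check that $p_! P$ is again weakly $G$-principal when $P \xrightarrow{\pi} Y$ is. The composite $P \to Y \xrightarrow{p} X$ is a local fibration and carries the same $G$-action, now viewed over $X$. The only point needing work is the shear map, and here I would observe that, because the action is already over $Y$, the shear map into $P \times_X P$ factors as
\[
  P \times G \xrightarrow{\ \mathrm{shear}_Y\ } P \times_Y P \hookrightarrow P \times_X P,
\]
the first arrow being the shear map of $P \to Y$, which is a local weak equivalence by hypothesis (Definition~\ref{WeaklyGPrincipalBundle}). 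Thus it suffices to show that the comparison $P \times_Y P \to P \times_X P$ is a local weak equivalence.

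For this I would realize $P \times_Y P$ as a base change along the diagonal. The map $q = (\pi,\pi)\colon P \times_X P \to Y \times_X Y$ is well defined and fits into a pullback square
\[
  \xymatrix{
    P \times_Y P \ar[r] \ar[d] & P \times_X P \ar[d]^{q} \\
    Y \ar[r]^-{\Delta} & Y \times_X Y.
  }
\]
Two facts then let me apply the cogluing lemma (Lemma~\ref{cogluing lemma}). First, $q$ is a fibration, being the composite $P \times_X P \to Y \times_X P \to Y \times_X Y$ of two pullbacks of the fibration $\pi$. Second, $\Delta$ is a local weak equivalence: the projections $Y \times_X Y \to Y$ are pullbacks of the acyclic fibration $p$, hence acyclic fibrations, and $\Delta$ is a section of one of them, so two-out-of-three applies. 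Comparing the cospan $(q,\Delta)$ with the cospan $(q,\mathrm{id}_{Y \times_X Y})$ along the weak equivalence $\Delta$, the cogluing lemma identifies the induced map of pullbacks as $P \times_Y P \to P \times_X P$ and shows it to be a weak equivalence. This is precisely where acyclicity of $p$ enters; for a general local fibration $\Delta$ need not be a weak equivalence, and $p_!$ would not preserve principality.

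It then remains to restrict the slice adjunction and to pass to nerves. The functors $p_!(E \to Y) = (E \to X)$ and $p^*(F \to X) = (F \times_X Y \to Y)$ are adjoint on slice categories, and $p^*$ already preserves weak principality by the preceding proposition. Under the bijection $\mathrm{Hom}_{/X}(p_! P, Q) \cong \mathrm{Hom}_{/Y}(P, p^* Q)$, a morphism $\phi\colon P \to Q$ over $X$ corresponds to $(\phi,\pi)\colon P \to Q \times_X Y$ over $Y$; since $\pi$ is $G$-invariant and the pulled-back action reads $(q,y)\cdot g = (qg,y)$, one checks directly that $\phi$ is $G$-equivariant exactly when its adjunct is, so the adjunction descends to $p_! \dashv p^*$ on $\mathrm{w}G\mathrm{Bund}(Y)$ and $\mathrm{w}G\mathrm{Bund}(X)$. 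Finally, any adjunction of categories induces maps of nerves that are mutually homotopy inverse, the unit and counit supplying the requisite simplicial homotopies, so $N p_!$ is a homotopy equivalence in $\mathrm{sSet}_{\mathrm{Quillen}}$ with inverse $N p^*$. I expect the crux of the argument to be the middle paragraph --- setting up $P \times_Y P \to P \times_X P$ as a base change and verifying both that $q$ is a fibration and that acyclicity of $p$ is what makes $\Delta$ a weak equivalence; the adjunction bookkeeping and the nerve argument are then routine.
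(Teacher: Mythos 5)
Your proposal is correct and follows essentially the same route as the paper: factor the shear map through $P\times_Y P$, reduce to showing that $P\times_Y P\to P\times_X P$ is a local weak equivalence via the cogluing lemma, and then note that the restricted adjunction $p_!\dashv p^*$ induces a homotopy equivalence on nerves. The only difference is in how the cogluing lemma is instantiated for the middle step: the paper applies it directly to the map of cospans $(P\xrightarrow{\pi}Y\xleftarrow{\pi}P)\to(P\xrightarrow{p\pi}X\xleftarrow{p\pi}P)$ over $\mathrm{id}_P$, $p$, $\mathrm{id}_P$, whereas you take a slightly longer but equally valid detour through the diagonal $\Delta\colon Y\to Y\times_X Y$.
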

\begin{proof}
Given a weakly $G$-principal bundle $P \to Y$, 
the first thing we have to check is that the map $P \times G \to P \times_X P$ 
is a weak equivalence. This map can be factored as 
$P \times G \to P\times_Y P \to P \times_X P$. 
Hence it suffices to show that the map $P \times_Y P \to P \times_X P$ 
is a weak equivalence. 
But this follows from Lemma~\ref{cogluing lemma}, 
since both pullbacks are along local fibrations and $Y \to X$ is a local weak equivalence
by assumption. 
This establishes the existence of the functor $p_!$. 
It is easy to see that it is left adjoint to $p^*$. This implies that it induces a homotopy equivalence in $\mathrm{sSet}_{\mathrm{Quillen}}$. 
\end{proof} 
\begin{corollary}
For $f: Y \to X$ a local weak equivalence,
the induced functor $f^*: \mathrm{w}G\mathrm{Bund}(X) \to \mathrm{w}G\mathrm{Bund}(Y)$ 
is a homotopy equivalence.
\end{corollary}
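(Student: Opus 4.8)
The plan is to reduce the case of a general local weak equivalence to the case of a local acyclic fibration, which is already settled by Proposition~\ref{PushforwardOfGBundlesAlongHypercovers}. I work throughout in the category of fibrant objects $\mathrm{sSh}(C)_{\mathrm{lfib}}$ of Example~\ref{BasicExamplesOfCatsOfFibObjects}, and I use freely that pullback is pseudofunctorial, so that $(g \circ h)^* \cong h^* \circ g^*$ as functors on weakly $G$-principal bundles. Since naturally isomorphic functors induce homotopic maps on nerves, these identifications up to natural isomorphism are enough for the homotopy-theoretic conclusion.

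First I would factor $f$. By the factorization available in any category of fibrant objects — the one used in the proof of Lemma~\ref{faclemma}, obtained by setting $M := Y \times_X X^I$ for a functorial path object $X^I$ — we may write $f = p \circ s$ where $s : Y \to M$ is a section of a local acyclic fibration $r : M \to Y$ (so that $r \circ s = \mathrm{id}_Y$) and $p : M \to X$ is a local fibration. As a section of an acyclic fibration, $s$ is a local weak equivalence, and since $f$ is a local weak equivalence by hypothesis, the 2-out-of-3 property forces $p$ to be a local weak equivalence as well; hence $p$ is a local acyclic fibration.

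Next I would handle the two factors separately. By contravariance of pullback, $f^* \cong s^* \circ p^*$. Applying Proposition~\ref{PushforwardOfGBundlesAlongHypercovers} to the local acyclic fibration $p$ shows that $p^*$ is a homotopy equivalence. For the factor $s^*$, I apply the same proposition to the local acyclic fibration $r$, obtaining that $r^*$ is a homotopy equivalence; from $r \circ s = \mathrm{id}_Y$ we get $s^* \circ r^* \cong \mathrm{id}$ on $\mathrm{w}G\mathrm{Bund}(Y)$, so $s^*$ is a one-sided homotopy inverse of the homotopy equivalence $r^*$ and is therefore itself a homotopy equivalence. Composing, $f^* \cong s^* \circ p^*$ is a homotopy equivalence, as claimed.

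The substantive content has already been spent in Proposition~\ref{PushforwardOfGBundlesAlongHypercovers}, so no serious obstacle remains; the only point requiring a little care is the treatment of the section $s$. Being a section, $s$ is in general \emph{not} a fibration, so Proposition~\ref{PushforwardOfGBundlesAlongHypercovers} does not apply to it directly — one must instead route through its retraction $r$ and invoke the one-sided-inverse argument above. Keeping the pseudofunctoriality isomorphisms $(g \circ h)^* \cong h^* \circ g^*$ honest, so that every identification is genuinely up to natural isomorphism and hence up to homotopy on nerves, is the only bookkeeping one has to watch.
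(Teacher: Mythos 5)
Your proposal is correct and follows essentially the same route as the paper: both factor $f$ via Brown's Factorization Lemma into a local acyclic fibration composed with a section of a local acyclic fibration, and then invoke Proposition~\ref{PushforwardOfGBundlesAlongHypercovers} for each factor. Your explicit treatment of the section $s$ via the retraction $r$ and the one-sided-inverse argument just spells out a step the paper leaves implicit.
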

\begin{proof}
Using the Factorization Lemma of \cite{Brown} we can factor the weak equivalence $f$ 
into a composite of a local acyclic fibration and a right inverse to a local acyclic fibration. 
Therefore, by Proposition~\ref{PushforwardOfGBundlesAlongHypercovers}, $f^*$
may be factored as the composite of two homotopy equivalences, hence is itself a homotopy
equivalence.
\end{proof}

\medskip

We discuss now how weakly $G$-principal bundles arise from the universal
$G$-principal bundle (Definition~\ref{UniversalSimplicialPrincipalBundle}) by pullback, and how
this establishes their equivalence with $G$-cocycles.

\begin{proposition}
  For $G$ a group object in $\mathrm{sSh}(C)$,
  the map $W G \to \Wbar G$ from Definition~\ref{UniversalSimplicialPrincipalBundle}
  equipped with the $G$-action of Proposition~\ref{PropertiesOfUniversalGPrincipalBundle}
  is a weakly $G$-principal bundle.
\end{proposition}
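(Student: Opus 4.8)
The plan is to verify the two defining conditions of Definition~\ref{WeaklyGPrincipalBundle} for the candidate $(\pi,\rho) = (WG \to \Wbar G,\rho)$: that the projection is a local fibration, and that its shear map is a local weak equivalence. Both will follow almost immediately from Proposition~\ref{PropertiesOfUniversalGPrincipalBundle}, the essential point being that the universal bundle is in fact \emph{strictly} principal, so that its shear map is even an isomorphism and hence a fortiori a local weak equivalence.

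First I would dispose of the fibration condition. By Proposition~\ref{PropertiesOfUniversalGPrincipalBundle}(1) the map $WG \to \Wbar G$ is isomorphic to the d\'ecalage morphism $\mathrm{Dec}_0 \Wbar G \to \Wbar G$, which by the remark following that proposition is an objectwise Kan fibration (using that $\Wbar G$ is objectwise a Kan complex by Theorem~\ref{SimplicialLoopingQuillenEquivalence}). An objectwise Kan fibration is in particular stalkwise a Kan fibration, since stalks are filtered colimits of the sections over a neighbourhood basis of a point and filtered colimits of Kan fibrations in $\mathrm{sSet}_{\mathrm{Quillen}}$ are again Kan fibrations. Thus $WG \to \Wbar G$ is a local fibration.

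For the second condition I would invoke Proposition~\ref{PropertiesOfUniversalGPrincipalBundle}(2), which equips $WG \to \Wbar G$ with a right $G$-action making it a $G$-principal bundle in the strict classical sense. For such a bundle the shear map $(p_1,\rho) : WG \times G \to WG \times_{\Wbar G} WG$ is an \emph{isomorphism} of simplicial sheaves -- this is precisely the principality condition, expressing that the action is free with quotient $\Wbar G$. Concretely, under the Borel-construction description of Proposition~\ref{TotalSimplicialObjectByBorelConstruction} one has $(WG)_n \cong G_n \times G_{n-1} \times \cdots \times G_0$, the projection to $\Wbar G$ is the last face map and forgets the leading factor $G_n$, and the $G$-action in degree $n$ is right translation on this leading factor while fixing the remaining coordinates; hence degreewise the shear map is $(w,h) \mapsto (w, w\cdot h)$, a bijection because right multiplication in $G_n$ is invertible. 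Since products and fibre products are finite limits, all these constructions are computed objectwise and preserved by sheafification, so the isomorphism holds in $\mathrm{sSh}(C)$.

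An isomorphism of simplicial sheaves is trivially a stalkwise weak equivalence, so the shear map is a local weak equivalence and both conditions of Definition~\ref{WeaklyGPrincipalBundle} are met. The only point requiring real attention -- and the one where I would concentrate -- is verifying that the $G$-action supplied by Proposition~\ref{PropertiesOfUniversalGPrincipalBundle} is genuinely vertical over $\Wbar G$ and acts fibrewise by group multiplication, for it is exactly this that turns the shear map into a fibrewise translation and hence an isomorphism; once that identification is made explicit, nothing further is needed.
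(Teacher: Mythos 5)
Your proof is correct, and it establishes the same key fact as the paper -- that the shear map of $WG \to \Wbar G$ is not merely a local weak equivalence but an honest isomorphism -- but by a different route. The paper's proof is a one-line categorical argument: it observes that the square
$$
\xymatrix{
  (G /\!/ G) \times G \ar[d]\ar[r]  & G /\!/ G \ar[d]
  \\
  G /\!/G \ar[r] & {*}/\!/G
}
$$
is a pullback of simplicial objects in $\mathrm{sSh}(C)$, and that $\sigma_*$, being a right adjoint, preserves it; applying $\sigma_*$ turns this square into the statement that $WG \times G \cong WG \times_{\Wbar G} WG$. You instead unwind the Borel-construction description $(WG)_n \cong G_n \times G_{n-1} \times \cdots \times G_0$ and check degreewise that the shear map is a fibrewise right translation, hence a bijection. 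Both arguments are valid; the paper's buys brevity and avoids any coordinate bookkeeping (in particular it sidesteps the question you flag at the end, of identifying exactly which factor carries the action), while yours is more explicit and makes the freeness of the action visible, which is the fact reused later in Corollary~\ref{befreiung}. Your appeal to Proposition~\ref{PropertiesOfUniversalGPrincipalBundle}(2) is legitimate since that is an imported result (Lemma 10 of \cite{RobertsStevenson}), though your subsequent degreewise computation renders it self-contained anyway. One further difference: you verify the local fibration condition explicitly (objectwise Kan fibration via d\'ecalage, then stalkwise via filtered colimits preserving Kan fibrations), whereas the paper leaves this to the remark following Proposition~\ref{PropertiesOfUniversalGPrincipalBundle}; your added care here is harmless and arguably an improvement in completeness.
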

Indeed, it is a genuine (strictly) $G$-principal bundle, in that the shear
map is an isomorphism. This is a classical fact,
for instance around Lemma 4.1 in chapter V of \cite{GoerssJardine}. In terms of
the total simplicial set functor it is  
observed in Section 4 of  \cite{RobertsStevenson}. \newline
\begin{proof}
  By inspection one finds that 
  $$
    \xymatrix{
	  (G /\!/G) \times G \ar[d]\ar[r]  & G /\!/ G \ar[d]
	  \\
	  G /\!/G \ar[r] & {*}/\!/G
	}
  $$
  is a pullback diagram in $[\Delta^{\mathrm{op}}, \mathrm{sSh}(C)]$. 
  Since the total simplicial object functor $\sigma_*$ of 
  Definition \ref{TotalSimplicialSet} 
  is right
  adjoint it preserves this pullback. This shows the principality of the shear map.
\end{proof}
\begin{definition}
  \label{CechNerve}
  For $Y \to X$ a morphism in $\mathrm{sSh}(C)$, write
  $$
    \check{C}(Y)
	\in [\Delta^{\mathrm{op}}, \mathrm{sSh}(C)]
  $$
  for its \emph{{\v C}ech nerve}, 
  given in degree $n$ by the $n$-fold fiber product of
  $Y$ over $X$
  $$
    \check{C}(Y)_n := Y^{\times_X^{n+1}}
	\,.
  $$
\end{definition}
\begin{observation}
  \label{CanonicalMorphismOutOfChechNerveHocolim}
  Under $\sigma_*$ the canonical morphism of simplicial objects $\check{C}(Y) \to X$, with $X$
  regarded as a constant simplicial object induces (by  
  Proposition~\ref{TotalSimpSetEquivalentToDiagonal}) 
  canonical morphism
  $$
    \sigma_* \check{C}(Y) \to X\;\; \in \mathrm{sSh}(C)
	\,.
  $$
\end{observation}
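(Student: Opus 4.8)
The plan is to realize the desired map as the $\sigma_*$-image of the canonical augmentation of the {\v C}ech nerve, followed by the natural identification $\sigma_* \mathrm{const}\, X \cong X$ furnished by the last clause of Proposition~\ref{TotalSimpSetEquivalentToDiagonal}. First I would record that the {\v C}ech nerve $\check{C}(Y)$ of Definition~\ref{CechNerve} carries a canonical augmentation $\check{C}(Y) \to \mathrm{const}\, X$ in $[\Delta^{\mathrm{op}}, \mathrm{sSh}(C)]$: in degree $n$ this is the structure map $Y^{\times_X^{n+1}} \to X$ of the iterated fiber product over $X$, and these maps are compatible with the face and degeneracy operators, so they assemble into a morphism of simplicial objects from $\check{C}(Y)$ into the constant simplicial object on $X$. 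Since each degree $\check{C}(Y)_n$ is itself a simplicial sheaf, $\check{C}(Y)$ is in effect a bisimplicial sheaf, i.e.\ objectwise over $C$ a bisimplicial set, on which $\sigma_*$ (Definition~\ref{TotalSimplicialSet}) acts in the external ({\v C}ech) direction.

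Next I would apply $\sigma_*$ objectwise over each $U \in C$, exactly as in Corollary~\ref{SimplicialHomotopyColimitByCodiagonal}. The one point that genuinely needs checking is that the result is again a \emph{sheaf}, not merely a simplicial presheaf; this is guaranteed by Remark~\ref{Total simplicial object is built from finite limits}, according to which $\sigma_* Z$ is in each degree an equalizer of maps between finite products of components of $Z$. Hence $\sigma_*$ is a finite-limit functor and therefore commutes with the inclusion $\mathrm{Sh}(C) \hookrightarrow \mathrm{PSh}(C)$ and with sheafification, so that $\sigma_* \check{C}(Y)$ lies in $\mathrm{sSh}(C)$. Functoriality of $\sigma_*$ then turns the augmentation into a morphism $\sigma_* \check{C}(Y) \to \sigma_* \mathrm{const}\, X$ in $\mathrm{sSh}(C)$.

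Finally I would invoke the natural isomorphism $\sigma_* \mathrm{const}\, X \cong X$ from Proposition~\ref{TotalSimpSetEquivalentToDiagonal} and compose to obtain the claimed canonical morphism $\sigma_* \check{C}(Y) \to X$. There is no substantial obstacle here: the construction is a straightforward functorial application of $\sigma_*$, and the only subtlety—compatibility of $\sigma_*$ with sheafification—is precisely what Remark~\ref{Total simplicial object is built from finite limits} supplies. The naturality of the identification $\sigma_* \mathrm{const}(-) \cong \mathrm{id}$ moreover ensures that the resulting map is canonical and functorial in the morphism $Y \to X$, as asserted.
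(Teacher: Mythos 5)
Your proposal is correct and is exactly the argument the paper intends: apply $\sigma_*$ to the canonical augmentation $\check{C}(Y)\to\mathrm{const}\,X$ and compose with the natural isomorphism $\sigma_*\mathrm{const}\,X\cong X$ from Proposition~\ref{TotalSimpSetEquivalentToDiagonal}, with Remark~\ref{Total simplicial object is built from finite limits} ensuring the construction stays within simplicial sheaves. The paper leaves this unproved as an observation, and your write-up simply makes the same reasoning explicit.
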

\begin{lemma}
  \label{CechNerveProjectionIsWeakEquivalence}
  For $p : Y \to X$ a local acyclic fibration, the morphism 
  $\sigma_* \check{C}(Y) \to X$ from Observation \ref{CanonicalMorphismOutOfChechNerveHocolim}
  is a local weak equivalence.
\end{lemma}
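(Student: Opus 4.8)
The plan is to exhibit the augmentation $\sigma_* \check{C}(Y) \to X$ as the image, under the homotopy colimit functor, of a degreewise local weak equivalence of simplicial objects, and then to invoke that homotopy colimits preserve such equivalences. First I would record that the {\v C}ech nerve carries a canonical augmentation $\check{C}(Y) \to \mathrm{const}\,X$ of simplicial objects in $\mathrm{sSh}(C)$, whose component in degree $n$ is the canonical projection $\check{C}(Y)_n = Y^{\times_X^{n+1}} \to X$; applying $\sigma_*$ to this augmentation and using the natural isomorphism $\sigma_* \mathrm{const}\,X \cong X$ of Proposition~\ref{TotalSimpSetEquivalentToDiagonal} reproduces exactly the morphism of Observation~\ref{CanonicalMorphismOutOfChechNerveHocolim}.

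The key claim is that this augmentation $\check{C}(Y) \to \mathrm{const}\,X$ is a \emph{degreewise} local weak equivalence. To see this, I would factor $\check{C}(Y)_n \to X$ as the composite $Y^{\times_X^{n+1}} \to Y^{\times_X^{n}} \to \cdots \to Y \to X$, in which each arrow is a pullback of $p$ along a structure map into $X$. Since the stalk functors $x^*$ preserve finite limits, a pullback of a local acyclic fibration is again a local acyclic fibration, and so is a composite of such; hence every $\check{C}(Y)_n \to X$ is a local acyclic fibration, in particular a local weak equivalence. This is precisely the place where the hypothesis that $p$ is a \emph{fibration} (and not merely a local weak equivalence) is used.

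Next I would apply the total simplicial set functor. By Corollary~\ref{SimplicialHomotopyColimitByCodiagonal}, applying $\sigma_*$ objectwise computes the homotopy colimit over $\Delta^{\mathrm{op}}$ in the local model structure, so that $\sigma_* \check{C}(Y) \simeq \mathbb{L}\varinjlim \check{C}(Y)$ and likewise $X \cong \sigma_* \mathrm{const}\,X \simeq \mathbb{L}\varinjlim \mathrm{const}\,X$. Because the homotopy colimit functor sends objectwise (degreewise) weak equivalences of $\Delta^{\mathrm{op}}$-diagrams to weak equivalences, the degreewise local weak equivalence of the previous paragraph is carried to a local weak equivalence $\sigma_* \check{C}(Y) \to X$, which is the assertion of the lemma.

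The one point that deserves care is exactly this passage from a degreewise weak equivalence of the bisimplicial object to a weak equivalence after $\sigma_*$: this is the content packaged by Corollary~\ref{SimplicialHomotopyColimitByCodiagonal} together with Proposition~\ref{TotalSimpSetEquivalentToDiagonal} (comparing $\sigma_*$ with the diagonal, which is the classical Bousfield--Friedlander-type fact that the diagonal preserves degreewise weak equivalences). If one prefers to avoid the homotopy-colimit formalism, the same statement may be checked stalkwise: since both $\sigma_*$ and the formation of the {\v C}ech nerve are built from finite limits (Remark~\ref{Total simplicial object is built from finite limits}) and each $x^*$ preserves finite limits, it suffices to treat an acyclic Kan fibration of simplicial sets, where each level of the {\v C}ech nerve is an acyclic fibration over $X$ and the diagonal visibly preserves the resulting degreewise weak equivalence onto $\mathrm{const}\,X$.
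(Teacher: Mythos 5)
Your proof is correct and follows essentially the same route as the paper's: the paper likewise observes that pullback stability of local acyclic fibrations makes each level $Y^{\times_X^{n+1}}\to X$ a local weak equivalence, and then invokes Remark~\ref{Total simplicial object is built from finite limits} and Proposition~\ref{TotalSimpSetEquivalentToDiagonal} to conclude that $\sigma_*$ preserves this degreewise weak equivalence. Your explicit factorization of the level maps as iterated pullbacks of $p$ is just a spelled-out version of the paper's one-line appeal to pullback stability.
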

\begin{proof}
  By pullback stability of local acyclic fibrations, for each $n \in \mathbb{N}$
  the morphism $Y^{\times^n_X} \to X$ is a local weak equivalence. By
  Remark \ref{Total simplicial object is built from finite limits}
  and Proposition \ref{TotalSimpSetEquivalentToDiagonal} this degreewise local weak 
  equivalence is preserved by the functor $\sigma_*$.
\end{proof}
The main statement now is the following.
\begin{theorem}
  \label{HomotopyQuotientWPrincBundleLoAcyclicFibration}
   For $P \to X$ a weakly $G$-principal bundle in $\mathrm{sSh}(C)$, 
   the canonical morphism
  $$
    P \!/_h G  \longrightarrow X
  $$
  is a local acyclic fibration.
\end{theorem}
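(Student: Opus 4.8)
The plan is to exploit the assumption that $C$ has enough points (Definition~\ref{def:enough points}) in order to reduce the claim to a statement about simplicial sets, and then to check directly that the induced map is a Kan fibration with contractible fibres. Since ``local acyclic fibration'' means ``stalkwise an acyclic Kan fibration'', it suffices to verify the assertion after applying each inverse image functor $x^*$. As $x^*$ preserves finite limits and sends local weak equivalences to weak equivalences, and since the action groupoid $P/\!/G$, the fibre products $P\times_X P$, and---by Remark~\ref{Total simplicial object is built from finite limits}---the total simplicial set functor $\sigma_*$ are all assembled from finite limits, one has $x^*(P/_h G)=(x^*P)/_h(x^*G)$ compatibly with the projection to $x^*X$. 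Working in $\mathrm{sSh}(C)_{\mathrm{lfib}}$, I may therefore assume that $G$ is a simplicial group, that $P$ and $X$ are Kan complexes, that $\pi\colon P\to X$ is a Kan fibration carrying a right $G$-action over $X$ whose shear map is a weak equivalence, and I must show that $\pi_h\colon P/_h G\to X$ is an acyclic Kan fibration.

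The main technical point I expect is showing that $\pi_h$ is a Kan fibration. Here the key tool is the quotient map $q\colon P\times WG\to P/_h G=P\times_G WG$ of the Borel construction (Proposition~\ref{TotalSimplicialObjectByBorelConstruction}); since $G$ acts freely on $WG$ (Proposition~\ref{PropertiesOfUniversalGPrincipalBundle}), the map $q$ is a surjective Kan fibration by Lemma~\ref{PropertiesOfSimplicialQuotientsBySimplicialGroups}, and the composite $\pi_h\circ q$ equals $P\times WG\xrightarrow{\mathrm{pr}}P\xrightarrow{\pi}X$, which is a Kan fibration because $WG$ is a Kan complex and $\pi$ is a fibration. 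Given a lifting problem for $\pi_h$ against a horn inclusion $\Lambda^i[n]\hookrightarrow\Delta[n]$, I would first lift the horn $a\colon\Lambda^i[n]\to P/_h G$ through $q$: choosing a vertex of $P\times WG$ over $a$ of the initial vertex (possible as $q$ is surjective) and extending it along the anodyne inclusion $\{v\}\hookrightarrow\Lambda^i[n]$ against the Kan fibration $a^*q$ produces a section of $a^*q$, hence a lift $\tilde a$. Solving the resulting lifting problem for the fibration $\pi_h\circ q$ and pushing the filler down along $q$ then solves the original problem, so $\pi_h$ is a (local) Kan fibration.

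Finally I would identify the fibres. Because $\sigma_*$ preserves pullbacks (it is a right adjoint), the fibre of $\pi_h$ over a vertex $x$ is strictly $P_x/_h G$ with $P_x=\pi^{-1}(x)$. By Lemma~\ref{multishear} the action gives a $G$-equivariant weak equivalence $G\xrightarrow{\simeq}P_x$; this is degreewise a weak equivalence of action groupoids $G/\!/G\to P_x/\!/G$, and by the reasoning of Remark~\ref{Total simplicial object is built from finite limits} together with Proposition~\ref{TotalSimpSetEquivalentToDiagonal} it is preserved by $\sigma_*$, yielding a weak equivalence $WG=G/_h G\xrightarrow{\simeq}P_x/_h G$ (using Definition~\ref{UniversalSimplicialPrincipalBundle}). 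Since $WG=\mathrm{Dec}_0\Wbar G$ deformation retracts onto $\mathrm{const}\,(\Wbar G)_0=*$ by Proposition~\ref{MorphismsOutOfPlainDecalage}, every such fibre is contractible. Here I use that $\pi$, being the projection of a principal bundle, is an effective epimorphism and hence stalkwise surjective, so that these contractible fibres are non-empty over every vertex and $\pi_h$ is itself surjective. A surjective Kan fibration all of whose fibres are contractible is a weak equivalence by the long exact sequence of homotopy groups, so $\pi_h$ is stalkwise an acyclic Kan fibration; as this holds at every point of $C$, the map $P/_h G\to X$ is a local acyclic fibration.
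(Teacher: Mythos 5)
Your proof is correct, and its two halves compare differently with the paper's argument. For the fibration half you follow essentially the paper's route: reduce to stalks, identify $P/_h G$ with the Borel construction $P\times_G WG$ (Proposition~\ref{TotalSimplicialObjectByBorelConstruction}), observe that $P\times WG\to P\times_G WG$ is a surjective Kan fibration and that the composite down to $X$ is a Kan fibration, and cancel; the paper simply quotes Lemma~\ref{RightCancelSurjectiveFibFromFib} (Exercise V3.8 of \cite{GoerssJardine}), whereas you prove that right-cancellation statement inline by the horn-lifting argument, which is a correct proof of that lemma. For the acyclicity half your route is genuinely different. The paper never looks at fibres: it factors $P/\!/G\to X$ through the {\v C}ech nerve $\check{C}(P)$ using the multi-shear maps (part 2 of Lemma~\ref{multishear}) and applies $\sigma_*$, so the weak equivalence is established independently of the fibration property. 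You instead use the fibration property already proved, identify the strict fibre over a vertex as $P_x/_h G$ (legitimate, since $\sigma_*$ and the action groupoid are assembled from finite limits), show it is weakly equivalent to $WG\simeq *$ using only part 1 of Lemma~\ref{multishear}, and conclude by the long exact sequence. Both are valid; yours uses less of the multi-shear lemma and makes the geometry of the fibres explicit, while the paper's factorization is basepoint-free and does not need the fibration property as input.

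One caveat, which you share with the paper rather than introduce: your argument needs $\pi$ to be stalkwise surjective on vertices, so that every fibre $P_x$ is inhabited and Lemma~\ref{multishear} applies, and you justify this by saying $\pi$ ``is an effective epimorphism''. Definition~\ref{WeaklyGPrincipalBundle} does not actually impose any surjectivity --- the empty simplicial sheaf with its unique $G$-action satisfies it over any $X$ --- so this is an implicit extra hypothesis. The paper's proof relies on the same hypothesis in a more hidden way: it invokes Lemma~\ref{CechNerveProjectionIsWeakEquivalence}, which is stated only for local \emph{acyclic} fibrations, whereas $\pi$ is merely a local fibration; what is really used there is again that $\pi$ is a locally surjective local fibration. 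So this is a defect of the surrounding definitions rather than of your proof relative to the paper's.
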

\begin{proof}
To see that the morphism is a local weak equivalence,
factor $P/\!/G \to X$ in $[\Delta^{\mathrm{op}}, \mathrm{sSh}(C)]$ 
via the multi-shear maps from Lemma~\ref{multishear} through the
{\v C}ech nerve, Definition~\ref{CechNerve}, as  
\[
  P/\!/G \to \check{C}(P) \to X
  \,.
\]
Applying the total simplicial object functor $\sigma_*$  
(Definition \ref{TotalSimplicialSet})
yields a factorization
$$
  P \!/_h G  \to  \sigma_* \check{C}(P)  \to X
  \,.
$$
The left morphism is a weak equivalence because, by Lemma~\ref{multishear}, 
the multi-shear maps are weak equivalences and by 
Corollary~\ref{SimplicialHomotopyColimitByCodiagonal} 
$\sigma_*$ preserves sends degreewise weak equivalences to weak equivalences. 
The right map is a weak equivalence by Lemma~\ref{CechNerveProjectionIsWeakEquivalence}.

We now prove that $P/_h G\to X$ is a local fibration.  
We need to show that for each topos point $p$ of $\mathrm{Sh}(C)$
the morphism  of stalks $p(P/_h G)\to p(X)$ is a Kan fibration of simplicial sets.
By Proposition~\ref{TotalSimplicialObjectByBorelConstruction} this means equivalently
that the morphism
$$
  p( P \times_G W G ) \to p(X) 
$$
is a Kan fibration.
By definition of \emph{topos point}, $p$ commutes with all the finite products 
and colimits involved here. Therefore equivalently we need to show that
$$
  p(P) \times_{p(G)} W p(G)  \to p(X) 
$$
is a Kan fibration for all topos points $p$.
Observe that this morphism factors the projection 
$p(P) \times W (p(G))\to p(X)$ as 
$$
  p(P)\times W(p(G))\to p(P) \times_{p(G)} W (p(G))\to p(X) 
$$
in $\mathrm{sSet}$. Here the first morphism is a Kan fibration by
Lemma~\ref{PropertiesOfSimplicialQuotientsBySimplicialGroups}, which 
in particular is also surjective on vertices. Also the total composite morphism is a Kan 
fibration, since $W (p(G))$ is Kan fibrant.  
From this the desired result follows with the next Lemma~\ref{RightCancelSurjectiveFibFromFib}.  
\end{proof}
\begin{lemma} 
  \label{RightCancelSurjectiveFibFromFib}
Suppose that $X\xrightarrow{p} Y \xrightarrow{q} Z$ is a 
diagram of simplicial sets such that $p$ is a Kan fibration surjective 
on vertices and $qp$ is a Kan 
fibration.  Then $q$ is also a Kan fibration.  
\end{lemma}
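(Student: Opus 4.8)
The plan is to verify directly that $q$ has the right lifting property against each horn inclusion $\Lambda^i[n] \hookrightarrow \Delta[n]$, $n \geq 1$, by splitting every lifting problem for $q$ into a lifting problem for $p$ followed by one for $qp$. So suppose we are given a commuting square whose top edge is a horn $a \colon \Lambda^i[n] \to Y$ and whose bottom edge is $b \colon \Delta[n] \to Z$, with $b|_{\Lambda^i[n]} = q \circ a$; the task is to produce a filler $\Delta[n] \to Y$ extending $a$ and lying over $b$.

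First I would lift the horn $a$ through $p$ to a map $\tilde a \colon \Lambda^i[n] \to X$ with $p\tilde a = a$. To do this, choose any vertex $v$ of $\Lambda^i[n]$ and use that $p$ is surjective on vertices to pick $x_v \in X_0$ with $p(x_v) = a(v)$. Since horns are weakly contractible, the inclusion $\{v\} \hookrightarrow \Lambda^i[n]$ is a monomorphism and a weak equivalence, hence anodyne; as $p$ is a Kan fibration it has the right lifting property against anodyne maps, so the square with corners $x_v \colon \{v\} \to X$ and $a \colon \Lambda^i[n] \to Y$ admits a diagonal filler $\tilde a$.

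Next, note that $qp\,\tilde a = q\,a = b|_{\Lambda^i[n]}$, so the pair $(\tilde a, b)$ constitutes a lifting problem for the horn inclusion against the Kan fibration $qp$. A filler $c \colon \Delta[n] \to X$ then satisfies $c|_{\Lambda^i[n]} = \tilde a$ and $qp\,c = b$. Finally I would set $d := pc \colon \Delta[n] \to Y$. On the horn it restricts to $p\tilde a = a$, and it satisfies $qd = qpc = b$, so $d$ is precisely the required filler. This shows $q$ has the right lifting property against all horn inclusions, i.e.\ $q$ is a Kan fibration.

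The only delicate point is the first step, lifting the horn into $X$: a Kan fibration surjective on vertices need not be an acyclic fibration, so one cannot simply lift an arbitrary map out of $\Lambda^i[n]$ along $p$. The idea to exploit is that $\Lambda^i[n]$ is contractible, so a single vertex already anchors the lift and the anodyne-lifting property of $p$ supplies the rest; this is the one place the surjectivity hypothesis enters, and indeed surjectivity \emph{on vertices} is all that is used.
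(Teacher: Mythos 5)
Your proof is correct, and it is the standard solution to this statement (the paper itself offers no argument, deferring to Exercise V.3.8 of Goerss--Jardine): lift the horn through $p$ by first lifting a single vertex via surjectivity on vertices and then extending along the anodyne inclusion $\{v\}\hookrightarrow\Lambda^i[n]$, solve the resulting lifting problem against the Kan fibration $qp$, and push the filler back down along $p$. You also correctly isolate the one delicate point -- that $p$ need not be an acyclic fibration, so the contractibility of the horn is what makes the lift of $a$ possible -- and this is exactly where the hypothesis of surjectivity on vertices is used.
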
 
This is Exercise V3.8 in \cite{GoerssJardine}.

\medskip

We now discuss the equivalence between weakly $G$-principal bundles and $G$-cocycles.
For $X, A \in \mathrm{sSh}(C)$, 
write $\mathrm{Cocycle}(X,A)$
for the category of cocycles from $X$ to $A$, according to \ref{GroupoidsOfMorphisms}.
\begin{definition}
 \label{FunctorsBetweenBundlesAndCocycles}
   Let $X \in \mathrm{sSh}(C)$ be locally fibrant, and let
   $G \in \mathrm{sSh}(C)$ be a group object.  
   Define a functor
   $$
     \mathrm{Extr} : \mathrm{w}G\mathrm{Bund}(X) \to \mathrm{Cocycle}(X, \Wbar G)
   $$
   (``extracting'' a cocycle) 
   on objects by sending a weakly $G$-principal bundle $P \to X$ to the cocycle  
$$
  \xymatrix{
    X \ar@{<<-}[r]^\sim & P/_h G \ar[r] & \Wbar G
  }
  \,,
$$
where the left morphism is the local acyclic fibration from 
Theorem~\ref{HomotopyQuotientWPrincBundleLoAcyclicFibration}, 
and where the right morphism
is the image under $\sigma_*$ 
(Definition~\ref{TotalSimplicialSet}) of the 
canonical morphism $P/\!/G \to */\!/G$ of simplicial objects.

Define also a functor 
\[
  \mathrm{Rec} : \mathrm{Cocycle}(X,\Wbar G) \to \mathrm{w}G\mathrm{Bund}(X) 
\]
(``reconstruction'' of the bundle)
which on objects takes a cocycle $X\xleftarrow{\pi} Y \xrightarrow{g} \Wbar G$ to the 
weakly $G$-principal bundle
$$
  g^* W G \to Y \stackrel{\pi}{\to} X
  \,,
$$
which is the pullback of the universal $G$-principal bundle 
(Definition~\ref{UniversalSimplicialPrincipalBundle})
along $g$,
and which on morphisms takes a coboundary to the morphism between pullbacks induced from
the corresponding morphism of pullback diagrams.
\end{definition}
\begin{observation}  
  \label{TheUniversalCocycle}
  The functor $\mathrm{Extr}$ sends the universal $G$-principal bundle
  $W G \to \Wbar G$ to the cocycle
  $$
    \Wbar G \simeq * \times_G W G \stackrel{\simeq}{\leftarrow}
	W G \times_G W G \stackrel{\simeq}{\to} W G \times_G * \simeq \Wbar G
	\,.
  $$
  Write 
  $$
    q : \mathrm{Cocycle}(X,  \Wbar G) \to \mathrm{Cocycle}(X,  \Wbar G)
  $$
  for the functor given by postcomposition with this universal cocycle.
  This has an evident left and right adjoint $\bar q$. Therefore 
  under the simplicial nerve these
  functors induce homotopy equivalences in $\mathrm{sSet}_{\mathrm{Quillen}}$.
\end{observation}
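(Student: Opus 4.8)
The plan is to prove the three assertions in turn: the explicit identification of $\mathrm{Extr}(W G \to \Wbar G)$, the existence of the adjoint $\bar q$, and the resulting homotopy equivalences.

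First I would unwind Definition~\ref{FunctorsBetweenBundlesAndCocycles} applied to the universal bundle $W G \to \Wbar G$ of Definition~\ref{UniversalSimplicialPrincipalBundle}, regarded as a weakly $G$-principal bundle over $X = \Wbar G$. By Proposition~\ref{TotalSimplicialObjectByBorelConstruction} the apex $W G /_h G$ of the extracted span is the Borel construction $W G \times_G W G$. The left leg is the local acyclic fibration of Theorem~\ref{HomotopyQuotientWPrincBundleLoAcyclicFibration}, and the right leg is the image under $\sigma_*$ of the collapse $W G /\!/ G \to \ast/\!/ G$ of Example~\ref{ActionOfSimplicialGroupOnPointAndOnItself}; tracing both through the Borel identification shows that each is one of the two evident maps $W G \times_G W G \to W G / G$ collapsing a single tensor factor, and the quotient $W G / G$ is precisely $\Wbar G = \ast/_h G$. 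Writing these two collapses as $\ast \times_G W G$ and $W G \times_G \ast$ reproduces the displayed span, and the interchange of the two factors of $W G \times_G W G$ exhibits its symmetry, swapping the two legs.

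Next I would produce the adjoint. The structural fact to exploit is the diagonal $s : \Wbar G = W G / G \to W G \times_G W G$ induced by $w \mapsto (w,w)$: it is a common section of both legs and hence, by two-out-of-three against either acyclic-fibration leg, a weak equivalence. Read as a morphism in $\mathrm{Cocycle}(\Wbar G, \Wbar G)$ it goes from the identity span (both legs the identity) to the universal cocycle $U$. I would take $\bar q$ to be postcomposition with the leg-swapped span $\bar U$, which the factor-swap $\tau$ identifies with $U$ itself; the units and counits of the two adjunctions $\bar q \dashv q$ and $q \dashv \bar q$ are then assembled from $s$ and $\tau$ by the evident span calculus. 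In particular, postcomposition with $s$ already furnishes a natural transformation $\mathrm{id} \Rightarrow q$ of endofunctors of $\mathrm{Cocycle}(X, \Wbar G)$.

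The homotopy-equivalence claim then follows from the standard fact that a natural transformation between functors induces a simplicial homotopy between the induced maps of nerves, since such a transformation is a functor out of $\mathcal{A} \times [1]$ and $N[1] = \Delta^1$; an adjoint pair accordingly yields mutually inverse homotopy equivalences through its unit and counit, and here $N q$ is in fact homotopic to the identity. I expect the main obstacle to be the first step: the careful bookkeeping required to match the abstractly defined legs --- the acyclic fibration of Theorem~\ref{HomotopyQuotientWPrincBundleLoAcyclicFibration} and the $\sigma_*$-image of $W G /\!/ G \to \ast/\!/ G$ --- with the concrete collapse maps under the Borel identification of Proposition~\ref{TotalSimplicialObjectByBorelConstruction}, keeping track of which tensor factor plays the role of the total space $P$. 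By comparison the adjunction is genuinely ``evident'' once the splitting $s$ and the symmetry $\tau$ are in hand, requiring only a routine check of the triangle identities in the span calculus.
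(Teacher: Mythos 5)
Your proposal is correct, and since the paper states this as an unproved \emph{Observation}, your argument is precisely the intended fleshing-out: the identification of the two legs as the collapse maps on $W G \times_G W G$ via Proposition~\ref{TotalSimplicialObjectByBorelConstruction}, and the diagonal section $s\colon \Wbar G \to W G \times_G W G$ giving a natural transformation $\mathrm{id} \Rightarrow q$, which already makes $N q$ homotopic to the identity. The only caveat is that the full adjunction $q \dashv \bar q \dashv q$ is not needed (and its triangle identities are the one point you defer); the natural transformation alone suffices for the stated homotopy-equivalence conclusion.
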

\begin{theorem}
 \label{Classification theorem for weakly G-principal bundles}
  The functors $\mathrm{Extr}$ and $\mathrm{Rec}$ from Definition~\ref{FunctorsBetweenBundlesAndCocycles}
  induce weak equivalences
 $$
   N \mathrm{w}G\mathrm{Bund}(X) \simeq N \mathrm{Cocycle}(X, \Wbar G)
   \;\;
   \in \mathrm{sSet}_{\mathrm{Quillen}}
 $$
 between the simplicial nerves of the category of weakly $G$-principal bundles
 and of cocycles, respectively.
\end{theorem}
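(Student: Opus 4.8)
The plan is to exhibit the two functors as mutually inverse up to natural transformations and then pass to nerves, using that the nerve sends a natural transformation of functors to a homotopy. Concretely, I would compute the two composites $\mathrm{Extr}\circ\mathrm{Rec}$ and $\mathrm{Rec}\circ\mathrm{Extr}$ separately, relating each to a functor already known to induce a homotopy equivalence on nerves. Starting with $\mathrm{Extr}\circ\mathrm{Rec}$: given a cocycle $X\xleftarrow{\pi}Y\xrightarrow{g}\Wbar G$, the functor $\mathrm{Rec}$ produces $P := g^*WG$, and by Proposition~\ref{TotalSimplicialObjectByBorelConstruction} its homotopy quotient is the Borel construction $P/_h G = (g^*WG)\times_G WG$. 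Since $g^*WG = Y\times_{\Wbar G}WG$, there is a canonical isomorphism $(g^*WG)\times_G WG \cong Y\times_{\Wbar G}(WG\times_G WG)$, whose right-hand side is exactly the apex of the span obtained by postcomposing $g$ with the universal cocycle $\Wbar G\xleftarrow{\simeq}WG\times_G WG\xrightarrow{\simeq}\Wbar G$ of Observation~\ref{TheUniversalCocycle}. Tracing the left and right legs, this upgrades to a natural isomorphism $\mathrm{Extr}\circ\mathrm{Rec}\cong q$, where $q$ is the universal-cocycle postcomposition functor. By Observation~\ref{TheUniversalCocycle} the functor $q$ admits an adjoint $\bar q$, so $Nq$, and hence $N(\mathrm{Extr}\circ\mathrm{Rec})$, is a homotopy equivalence.

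Next I would analyze $\mathrm{Rec}\circ\mathrm{Extr}$. Given a weakly $G$-principal bundle $P\to X$, the cocycle $\mathrm{Extr}(P)$ has classifying map $c\colon P/_h G = P\times_G WG\to\Wbar G$, and $\mathrm{Rec}$ forms $c^*WG$. The key computation is that the square exhibiting $P\times_G WG$ as a Borel construction is a genuine pullback, since $WG\to\Wbar G$ is $G$-principal (Proposition~\ref{PropertiesOfUniversalGPrincipalBundle}); hence
$$
  c^*WG \;\cong\; P\times WG
$$
with $G$ acting diagonally, the projection to $P/_h G$ being the quotient map and the structure map to $X$ factoring through it. The first projection $\mathrm{pr}_1\colon P\times WG\to P$ is then $G$-equivariant and lies over $X$, so it is a morphism in $\mathrm{w}G\mathrm{Bund}(X)$, and it is a stalkwise weak equivalence because $WG$ is stalkwise contractible (Proposition~\ref{WGToWbarGIsFibrationResolution}). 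As $\mathrm{pr}_1$ is natural in $P$, it furnishes a natural transformation $\mathrm{Rec}\circ\mathrm{Extr}\Rightarrow\mathrm{Id}$, and therefore $N(\mathrm{Rec}\circ\mathrm{Extr})\simeq\mathrm{id}$.

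Finally I would combine these. Writing $F=N\mathrm{Extr}$ and $R=N\mathrm{Rec}$, the two steps give $R\circ F\simeq\mathrm{id}$ and that $F\circ R$ is a homotopy equivalence. A map possessing both a left and a right homotopy inverse is a homotopy equivalence, with the two inverses agreeing up to homotopy; hence $F$ and $R$ are mutually inverse homotopy equivalences, which is the assertion of the theorem. The step I expect to be the main obstacle is the bookkeeping in the two Borel-construction identifications: pinning down the left-versus-right $G$-action conventions and verifying that the legs over $X$ and over $\Wbar G$ match, so that the isomorphisms are genuinely natural and respect the full cocycle and bundle data. By contrast, the contractibility input and the concluding formal argument are routine.
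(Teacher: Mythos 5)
Your proposal is correct and follows essentially the same strategy as the paper's proof: both compare $\mathrm{Extr}\circ\mathrm{Rec}$ with the postcomposition functor $q$ of Observation~\ref{TheUniversalCocycle} and construct a natural weak equivalence $\mathrm{Rec}\circ\mathrm{Extr}\Rightarrow\mathrm{id}$, then conclude via the fact that natural transformations induce homotopies on nerves. The only material divergence is in the second composite, where the paper identifies $\mathrm{Rec}(\mathrm{Extr}(P))$ as $(P\times G)/_h G$ and maps it to $P$ through the shear-map equivalence $(P\times G)/\!/G\xrightarrow{\sim}(P\times_X P)/\!/G$, whereas you identify it as $P\times WG$ using strict principality of $WG\to \Wbar G$ and take the projection $\mathrm{pr}_1$; these apexes are isomorphic and both routes produce the required natural weak equivalence over $X$.
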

\begin{proof}
We construct natural transformations
$$
  \mathrm{Extr} \circ \mathrm{Rec} \Rightarrow q
$$
and
$$
  \mathrm{Rec} \circ \mathrm{Extr} \Rightarrow \mathrm{id}
  \,,
$$
where $q$ is the homotopy equivalence from Observation~\ref{TheUniversalCocycle}.

For
$$
  X \xleftarrow{\pi} Y \xrightarrow{f} \Wbar G. 
$$
a cocycle, its image under $\mathrm{Extr} \circ \mathrm{Rec} $ is  
\[
  X \leftarrow (f^* WG) /_h G \to \Wbar G
  \,.
\] 
The morphism $(f^*WG) /_hG \to X$ factors through $Y$ by construction, 
so that the left triangle in the diagram
\[
\xymatrix@R=0.7pc {
  & & (f^*WG) /_hG \ar[dd]\ar@{->>}[lld]_\sim\ar[rrd] & & 
  \\
  X & & & & \Wbar G 
  \\
  & & Y \ar@{->>}[llu]^\sim\ar@{-->}[urr]^{q(f)} & & 
}
\]
commutes. The top right morphism is by definition the image under 
$\sigma_*$ (Definition \ref{TotalSimplicialSet}) of 
$(f^* WG) /\!/ G \to * /\!/ G$. This factors the top horizontal morphism as
$$
  \raisebox{20pt}{
  \xymatrix{
    (f^* W G) /\!/ G \ar[d] \ar[r] & (W G)/\!/G \ar[d] \ar[r] & {*} /\!/G 
	\\
	Y \ar[r]^f & \Wbar G \,.
  }
  }
$$
Applying the total simplicial object functor to this diagram gives the above commuting
triangle on the right. Clearly this construction is natural and hence provides 
a natural transformation $\mathrm{Extr} \;\mathrm{Rec} \Rightarrow q$.

For the other natural transformation, 
let now $P \to X$ be a weakly $G$-principal bundle. This induces the following
commutative diagram of simplicial objects (with $P$ and $X$ regarded as constant
simplicial objects)
$$
  \raisebox{20pt}{
  \xymatrix{
    P \ar@{<-}[r] \ar[d] & (P \times_X P) /\!/G \ar[d] &
    (P \times G) /\!/ G \ar[l]^<<<<\sim_<<<<{\phi} \ar[r]  \ar[d] &	G /\!/ G \ar[d]	  
    \\
    X \ar@{<-}[r] & P/\!/G  \ar@{=}[r] & P/\!/G \ar[r] & {*} /\!/ G
  }
  }
  \,,
$$
where the left and the right square are pullbacks, and where the 
top horizontal morphism $\phi$ is 
the degreewise local weak equivalence which is degreewise 
induced by the shear map, composed with exchange of the two factors. 


The image of the above diagram under $\sigma_*$, which 
preserves all the pullbacks and weak equivalences involved, is
$$
  \raisebox{20pt}{
  \xymatrix{
    P \ar@{<<-}[r]^<<<<<<<\sim \ar@{->>}[d] & P \times_X P /_h G \ar@{->>}[d] &
    (P \times G) /_h G \ar[l]_\sim \ar[r]  \ar@{->>}[d] &	W G \ar@{->>}[d]	  
    \\
    X \ar@{<<-}[r]^\sim & P/_hG  \ar@{=}[r] & P/_hG \ar[r] & \Wbar G
  }
  }
  \,.
$$
Here the total bottom span is the cocycle $\mathrm{Extr}(P)$, and so 
the object $(P \times G)/_h G$ over $X$ is $\mathrm{Rec}(\mathrm{Extr}(P))$. 
Therefore this exhibits a natural morphism $\mathrm{Rec} \,\mathrm{Extr} P \to P$.
\end{proof}
\begin{remark}
  \label{ClassificationTheoremRelatedToCocyclesSpaces}
  By Theorem~\ref{SimplicialLocalizationOfCatOfFibrantObjects}
  the simplicial set $N \mathrm{Cocycle}(X, \Wbar G)$ is a presentation of the 
  intrinsic cocycle $\infty$-groupoid $\mathbf{H}(X, \mathbf{B}G)$
  of the hypercomplete $\infty$-topos
 $\mathbf{H} = \mathrm{Sh}_\infty^{\mathrm{hc}}(C)$.
 Therefore the equivalence of Theorem~\ref{Classification theorem for weakly G-principal bundles}
 is a presentation of
 theorem 3.19 in \cite{NSSa},
 $$
   G \mathrm{Bund}_\infty(X) \simeq \mathbf{H}(X, \mathbf{B}G)
 $$
 between the $\infty$-groupoid of $G$-principal $\infty$-bundles in $\mathbf{H}$
  and 
 the intrinsic cocycle $\infty$-groupoid of $\mathbf{H}$.
\end{remark}
\begin{corollary}
 \label{befreiung}
For each weakly $G$-principal bundle $P \to X$ there is a weakly $G$-principal
bundle $P^{f}$ with a levelwise free $G$-action and a weak equivalence 
$P^{f} \xrightarrow\sim P$ of weakly $G$-principal bundles over $X$. 
In fact, the assignment $P \mapsto P^f$ is an homotopy inverse to the full inclusion
of weakly $G$-principal bundles with free action into all weakly $G$-principal bundles.
\end{corollary}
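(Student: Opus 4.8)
The plan is to realize the replacement by the standard trick of multiplying with the contractible total space of the universal bundle. Given a weakly $G$-principal bundle $\pi : P \to X$, I would set $P^f := P \times WG$, equipped with the diagonal right $G$-action and with bundle projection $P \times WG \to P \xrightarrow{\pi} X$. Since the $G$-action on $WG$ is levelwise free (it is strictly principal, Proposition~\ref{PropertiesOfUniversalGPrincipalBundle}), the diagonal action on $P^f$ is levelwise free. The projection $q_P : P^f \to P$ is manifestly $G$-equivariant and commutes with the projections to $X$, and it is a local weak equivalence because $WG$ is stalkwise a contractible Kan complex (its stalk at a topos point is $W(p(G))$, contractible by Proposition~\ref{WGToWbarGIsFibrationResolution}). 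Thus $q_P$ will be the asserted weak equivalence over $X$ once $P^f$ has been shown to be a weakly $G$-principal bundle.

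First I would check that $P^f \to X$ is a local fibration: the projection $P \times WG \to P$ is stalkwise the projection $p(P) \times W(p(G)) \to p(P)$ off a Kan complex, hence a Kan fibration, and composing with the local fibration $\pi$ yields a local fibration. Then I would verify weak principality of $P^f$ by a two-out-of-three argument. Because the bundle map $P^f \to X$ factors through $P$, there is a canonical identification $P^f \times_X P^f \cong (P \times_X P) \times WG \times WG$, under which the square
\[
\xymatrix{
  P^f \times G \ar[r] \ar[d]_{q_P \times \mathrm{id}_G} & P^f \times_X P^f \ar[d]^{q_P \times_X q_P} \\
  P \times G \ar[r] & P \times_X P
}
\]
(with horizontal arrows the shear maps) commutes, since both routes send $((p,w),g)$ to $(p,pg)$. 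The left vertical map is a weak equivalence because $q_P$ is; the right vertical map is a weak equivalence because it projects away the stalkwise contractible factor $WG \times WG$; and the bottom shear map is a weak equivalence by the weak principality of $P$. Hence the top shear map is a weak equivalence, so $P^f$ is a weakly $G$-principal bundle with free action and $q_P$ is the required weak equivalence.

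Finally, for the homotopy-inverse statement, I observe that $P \mapsto P^f = P \times WG$ is functorial, a morphism $f$ inducing $f \times \mathrm{id}_{WG}$, so that it defines a functor $F$ into the full subcategory of weakly $G$-principal bundles with free action, whose inclusion I denote $\iota$. The projections $q_P$ are natural in $P$ and hence assemble into a natural transformation $\iota \circ F \Rightarrow \mathrm{Id}$ on $\mathrm{w}G\mathrm{Bund}(X)$; restricting to free bundles gives a natural transformation $F \circ \iota \Rightarrow \mathrm{Id}$ on the free subcategory. Since any natural transformation induces a homotopy between the induced maps on nerves, both composites $N\iota \circ NF$ and $NF \circ N\iota$ are homotopic to the respective identities, exhibiting $NF$ and $N\iota$ as mutually inverse homotopy equivalences.

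I expect the main obstacle to be the verification of weak principality of $P^f$: pinning down the identification $P^f \times_X P^f \cong (P \times_X P) \times WG \times WG$ and confirming that the shear square commutes, so that the two-out-of-three argument applies. The remaining ingredients — freeness of the diagonal action, contractibility and Kan fibrancy of $WG$, the local fibrancy of $P^f \to X$, and the functoriality of $(-) \times WG$ — are routine given the results recalled above.
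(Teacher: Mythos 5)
Your proof is correct, but it takes a genuinely different route from the paper's. The paper simply sets $P^f := \mathrm{Rec}(\mathrm{Extr}(P)) = (P\times G)/_h G$ and harvests everything from Theorem \ref{Classification theorem for weakly G-principal bundles}: $\mathrm{Rec}$ always produces a pullback of the strictly principal bundle $W G \to \Wbar G$, hence a bundle with levelwise free action, and the natural morphism $P^f \to P$ is precisely the component of the natural transformation $\mathrm{Rec}\circ\mathrm{Extr}\Rightarrow \mathrm{id}$ already constructed in the proof of that theorem; the homotopy-inverse claim then follows by the same nerve-of-a-natural-transformation argument you give. Your construction $P^f := P\times W G$ with the diagonal action is the classical ``multiply by a free, stalkwise contractible object'' trick; via the standard trivialization $(P\times G)\times_G W G \cong P\times W G$ (together with Proposition \ref{TotalSimplicialObjectByBorelConstruction}) it yields essentially the same object, but your verification is self-contained: it uses only the freeness and stalkwise contractibility of $W G$, the identification $P^f\times_X P^f\cong (P\times_X P)\times W G\times W G$, and a two-out-of-three argument on the shear square, rather than the classification theorem. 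What the paper's formulation buys is that $P^f$ is manifestly of the form $\mathrm{Rec}(\text{cocycle})$, which is how it is exploited later (e.g.\ in Proposition \ref{StrictlyPrincipalBundlesSurjectOnPi0}); what yours buys is logical independence from Theorem \ref{Classification theorem for weakly G-principal bundles} and a more elementary argument. The steps you flag as delicate all go through as you describe.
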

\begin{proof}
Note that the universal bundle $WG \to \Wbar G$ carries a free $G$-action, in the sense that the levelwise action of $G_n$ on $(W G)_n$ is free. 
This means that the functor $\mathrm{Rec}$ 
from the proof of Theorem~\ref{Classification theorem for weakly G-principal bundles} 
indeed takes values in weakly $G$-principal budles with free action.
 Hence we can set 
 $$
   P^f 
     := 
    \mathrm{Rec}(\mathrm{Extr}(P)) = (P \times G) /_h G
	\,.
 $$
 By the discussion there we have a natural morphism $P^f \to P$ and one checks
 that this exhibits the homomotopy inverse.
\end{proof}

\subsection{Associated bundles}
\label{StrucRepresentations}

In Section 4.1 of \cite{NSSa} is discussed a 
general notion of $V$-fiber bundles which are \emph{associated} to a 
$G$-principal $\infty$-bundle via an action of $G$ on some $V$.
Here we discuss presentations of these structures in terms of the
weakly principal simplicial bundles from 
Section~\ref{Principal infinity-bundles presentations}.

\medskip

Let $C$ be a site with terminal object. 
By Proposition~\ref{InftyGroupsBySimplicialGroups}
every $\infty$-group over $C$ has a presentation by a 
sheaf of simplicial groups $G \in \mathrm{Grp}(\mathrm{sSh}(C)_{\mathrm{lfib}})$. 
Moreover, by Theorem~\ref{Classification theorem for weakly G-principal bundles}
every $\infty$-action of $G$ on an object $V$
according to Definition 3.1 of \cite{NSSa},
is exhibited by a weakly principal simplicial bundle $V \to V/_h G$
which is classified by a morphism $\mathbf{c} : V/_h \to \Wbar G$.
The resulting fiber sequence of simplicial presheaves
$$
  \raisebox{20pt}{
  \xymatrix{
    V \ar[r] & V /_h G
	  \ar[d]^{\mathbf{c}}
	\\
	& \Wbar G
  }
  }
$$
is therefore a presentation for the
\emph{universal $\rho$-associated $V$-bundle} from Section 4.1 of \cite{NSSa}.

In terms of this presentation, Proposition 4.6 in \cite{NSSa}
has the following ``strictification''.
\begin{proposition}
  Let $P \to X$ in $\mathrm{sSh}(C)_{\mathrm{lfib}}$ be a weakly $G$-principal bundle
  with classifying cocycle 
  $\xymatrix{
     X \ar@{<-}[r]^{\simeq} & Y \ar[r] & \Wbar G
  }$
  according to Theorem \ref{Classification theorem for weakly G-principal bundles}.
  Then the $\rho$-associated simplicial $V$-bundle $P \times_G V$ is 
  locally weakly equivalent to the pullback of $\mathbf{c}$ along $g$.
\end{proposition}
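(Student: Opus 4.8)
The plan is to reduce everything to the universal bundle $WG \to \Wbar G$ and then run a base-change computation. First I would invoke the classification Theorem~\ref{Classification theorem for weakly G-principal bundles}, together with Remark~\ref{UniversalPrincipalInfinityBundle} and Corollary~\ref{befreiung}: the classifying cocycle $X \xleftarrow{\simeq} Y \xrightarrow{g} \Wbar G$ exhibits $P$ as locally weakly equivalent, over $X$, to the bundle reconstructed from this cocycle, namely the pullback $g^* WG \to Y \to X$ of the universal weakly $G$-principal bundle along $g$. The point of passing to this model is that the $G$-action on $g^* WG$ is levelwise free, since the action on $WG$ is and freeness is inherited by the pullback, where $G$ acts only through the $WG$-factor.

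Next I would record that the associated-bundle construction is homotopy-invariant. Interpreting the $\rho$-associated bundle $P \times_G V$ as the presentation $(P \times V)/_h G = \sigma_*\big((P \times V)/\!/G\big)$ of the $\infty$-associated bundle (Corollary~\ref{SimplicialHomotopyColimitByCodiagonal}), a local weak equivalence $g^* WG \xrightarrow{\simeq} P$ of weakly $G$-principal bundles induces a degreewise local weak equivalence of action groupoid objects $(g^* WG \times V)/\!/G \to (P \times V)/\!/G$ --- because taking products with $V \times G^{\times n}$ preserves stalkwise weak equivalences --- and $\sigma_*$ sends degreewise weak equivalences to weak equivalences. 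Hence $(g^* WG) \times_G V \simeq P \times_G V$, and it suffices to analyze the free model.

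The heart of the argument is then a projection-formula computation. Since $G$ acts freely on $g^* WG \times V$, the homotopy quotient agrees up to local weak equivalence with the strict Borel quotient, using contractibility of $WG$ and Lemma~\ref{PropertiesOfSimplicialQuotientsBySimplicialGroups}. Writing $g^* WG = Y \times_{\Wbar G} WG$ with $G$ acting trivially on the factor $Y$, the quotient functor commutes with this pullback, giving
$$
  (g^* WG \times V)/G \;\cong\; Y \times_{\Wbar G} \big((WG \times V)/G\big) \;=\; Y \times_{\Wbar G}(V \times_G WG).
$$
By Proposition~\ref{TotalSimplicialObjectByBorelConstruction} applied to $V$ one has $V \times_G WG = V/_h G$, and $\mathbf{c} : V/_h G \to \Wbar G$ is exactly the map induced by $WG \to \Wbar G$; so the right-hand side is precisely the pullback of $\mathbf{c}$ along $g$. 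Chaining the equivalences yields $P \times_G V \simeq g^*(V/_h G)$, as claimed (compared over $X$ via the local weak equivalence $Y \xrightarrow{\simeq} X$).

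The main obstacle I anticipate is this base-change step: one must check carefully that (i) for the free action the homotopy quotient really does collapse to the strict Borel quotient, (ii) the strict quotient commutes with the pullback over $Y$ --- which is where the triviality of the $G$-action on $Y$ is essential --- and (iii) the resulting strict pullback presents the homotopy pullback, i.e.\ that $\mathbf{c}$ is a local fibration, which follows as in Theorem~\ref{HomotopyQuotientWPrincBundleLoAcyclicFibration} since $WG \to \Wbar G$ is a local fibration and this property is stable under forming associated bundles and pullbacks. Each ingredient is routine given the machinery already developed, but assembling them while keeping track of the various $G$-actions is the delicate part.
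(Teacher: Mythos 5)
Your proposal is correct and follows essentially the same route as the paper's proof: identify the pullback of $\mathbf{c}$ with $(g^*WG)\times_G V$ via the Borel-construction isomorphism $V/_h G \cong V\times_G WG$ of Proposition~\ref{TotalSimplicialObjectByBorelConstruction} and the fact that quotients commute with pullback in the $1$-topos $\mathrm{sSh}(C)$, then compare with $P\times_G V$ through the equivalence $g^*WG\simeq P$ supplied by the classification theorem. The extra care you take with freeness of the action and homotopy invariance of $-\times_G V$ only fills in details the paper leaves implicit.
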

\proof
  By the same argument as in
  the proof of Theorem~\ref{HomotopyQuotientWPrincBundleLoAcyclicFibration},
  the morphism $\mathbf{c} : V /_h G \to \Wbar G$
  is a local fibration.
   By Proposition~\ref{TotalSimplicialObjectByBorelConstruction}
  this in turn is isomorphic to the pullback of
  $V \times_G W G \to \Wbar G$. Since $\mathrm{sSh}(C)$ is a 1-topos, 
  pullbacks preserve quotients, and so this pullback finally is
  $$
    g^* (W G \times_G V) \simeq (g^* W G) \times_G V \simeq P \times_G V
	\,.
  $$  
\endofproof
\begin{remark}
According to Theorem 4.11 in \cite{NSSa}, every $V$-fiber bundle in an $\infty$-topos
is associated to an $\mathbf{Aut}(V)$-principal $\infty$-bundle. We observe that 
the main result of \cite{Wendt} is a presentation of this general theorem 
for 1-localic $\infty$-toposes (with a 1-site of definition) in terms of simplicial
presheaves. 
\end{remark}

\section{Models}
\label{Models}

So far we have discussed presentations of the theory of principal $\infty$-bundles
over arbitrary sites. Here we consider certain examples of sites and discuss
aspects of the resulting presentations.

\begin{itemize}
  \item 
    The trivial site models higher \emph{discrete geometry}. We show how in this case
    the general theory reduces to the classical theory of ordinary simplicial 
	principal bundles in Section \ref{DiscreteGeometry}.
 \item 
   The site of smooth manifolds models higher
   \emph{smooth geometry}/\emph{differential geometry}. 
   Since this site does not have all pullbacks, 
   item 2 of Proposition \ref{DegreewiseRepresentability} does not
   apply, and so it is of interest to identify conditions under which 
   a given principal $\infty$-bundle is presentable not just by 
   a simplicial smooth manifold, but by a \emph{locally Kan}
   simplicial smooth manifolds. This we discuss in  Section \ref{SmoothInfgrpds}.
\end{itemize}

\subsection{Discrete geometry}
\label{DiscreteGeometry}
\label{PrincipalBundlesDiscreteGeometry}

The terminal $\infty$-topos is the $\infty$-category
$\mathrm{Grpd}_{\infty}$ of $\infty$-groupoids, the one presented by the 
standard model category structures on simplicial sets and on topological spaces.
Regarded as a \emph{gros} $\infty$-topos akin to that of 
smooth $\infty$-groupoids discussed below in \ref{SmoothInfgrpds},
we are to think of $\mathrm{Grpd}_{\infty}$ as describing \emph{discrete} geometry: 
an object in $\mathrm{Grpd}_{\infty}$ is an $\infty$-groupoid without extra 
geometric structure. In order to amplify this geometric perspective, 
we will sometimes speak of \emph{discrete $\infty$-groupoids}. 

\medskip

We have $\mathrm{Grpd}_{\infty} \simeq \mathrm{Sh}_\infty(*)$, for $*$ the trivial
site. For this site the category of locally fibrant simplicial sheaves from Observation
\ref{SimplicialSheavesWithStalkwiseWeakEquivalencesModel1LocalicHypercompleteInfinityTopos}
is equivalent simply to the category of Kan complexes
$$
  \xymatrix{
    \mathrm{sSh}(*)_{\mathrm{lfib}}\  \ar@{^{(}->}[r] \ar[d]^\simeq & \ \mathrm{sSh}(*) \ar[d]^\simeq
	\\
	\mathrm{KanCpx}\  \ar@{^{(}->}[r] &\  \mathrm{sSet}
  }
$$
and local fibrations/equivalences are simply Kan fibrations and weak homotopy equivalences
of simplicial sets respectively. A group object $G$ in $\mathrm{sSh}(C)_{\mathrm{lfib}}$ is 
a simplicial group; therefore over the trivial site the presentation of
principal $\infty$-bundles from \ref{Principal infinity-bundles presentations}
is by weakly principal Kan simplicial bundles.

There is a traditional theory of \emph{strictly} principal Kan simplicial bundles,
i.e.\  simplicial bundles with $G$ action for which the shear map is an \emph{isomorphism}
instead of, more generally, a weak equivalence, see also 
Remark \ref{PrincipalityOfUniversalSimplicialBundle}.
A classical reference for this is \cite{May}. A standard modern reference is
Chapter V of \cite{GoerssJardine}. We now compare this classical theory 
of strictly principal simplicial bundles to the theory of weakly principal
simplicial bundles according to Section \ref{Principal infinity-bundles presentations}.

\medskip

\begin{definition}
\label{strictly principal bundle}
  Let $G$ be a simplicial group and $X$ a Kan simplicial set. A
  \emph{strictly $G$-principal bundle} over $X$ is a morphism of simplicial 
  sets $P \to X$ equipped with a $G$-action on $P$ over $X$ such that
  \begin{enumerate}
    \item the $G$ action is degreewise free;
	\item the canonical morphism $P/G \to X$ out of the ordinary (1-categorical)
	  quotient is an isomorphism of simplicial sets.
  \end{enumerate}
  A morphism of strictly $G$-principal bundles over $X$ is a map $P \to P'$
  respecting both the $G$-action as well as the projection to $X$.  
  Write $\mathrm{s}G\mathrm{Bund}(X)$ for the category of strictly $G$-principal bundles.
\end{definition}
In \cite{GoerssJardine} this is Definitions 3.1 and 3.2 of Chapter V.
\begin{lemma}
  \label{MorphismsOfStrictlyPrincipalSimplicialBundlesAreIsos}
  Every morphism in $\mathrm{s}G\mathrm{Bund}(X)$ is an isomorphism.
\end{lemma}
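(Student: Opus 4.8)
The plan is to reduce the statement to a degreewise, purely set-theoretic fact about torsors. A morphism $f : P \to P'$ in $\mathrm{s}G\mathrm{Bund}(X)$ consists of maps $f_n : P_n \to P'_n$ for each $n \in \mathbb{N}$, each $G_n$-equivariant and commuting with the projections to $X_n$. Since an isomorphism of simplicial sets is detected degreewise, and since the componentwise inverse of a $G$-equivariant bijection over $X$ is again $G$-equivariant and over $X$, it suffices to prove that every $f_n$ is a bijection of sets.

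First I would unwind the two defining conditions of a strictly $G$-principal bundle (Definition~\ref{strictly principal bundle}) in a fixed degree $n$. Because the $G_n$-action on $P_n$ is degreewise free and the quotient map $P_n \to P_n/G_n$ is identified with $P_n \to X_n$ via the isomorphism $P_n/G_n \cong X_n$, the fibre $(P_n)_x$ over any $x \in X_n$ is precisely a single $G_n$-orbit on which $G_n$ acts freely, and it is nonempty because the quotient map is surjective. In other words each fibre is a $G_n$-torsor, and the same analysis applies to $(P'_n)_x$.

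The core step is then the elementary observation that any $G_n$-equivariant map between two $G_n$-torsors is automatically a bijection. Choosing a basepoint $p \in (P_n)_x$, every element of the fibre is uniquely of the form $p g$, and equivariance forces $f_n(p g) = f_n(p)\, g$; freeness of the target torsor gives injectivity, while transitivity of the target gives surjectivity. Since $f_n$ covers the identity on $X_n$, it restricts to a well-defined map $(P_n)_x \to (P'_n)_x$ for each $x$, which is therefore a bijection; as this holds fibrewise over all of $X_n$, the map $f_n$ is itself a bijection.

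I do not expect any genuine obstacle here: the entire content is the elementary torsor lemma, and the only point requiring care is verifying that the two axioms of a strictly principal bundle genuinely force the fibres to be torsors rather than merely free $G_n$-sets. Once fibrewise bijectivity is established, assembling the $f_n$ into an isomorphism of simplicial objects and noting that the componentwise inverse is again equivariant and base-preserving completes the argument, exhibiting $f$ as an isomorphism in $\mathrm{s}G\mathrm{Bund}(X)$.
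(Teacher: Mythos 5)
Your proof is correct and is exactly the standard torsor argument: the two axioms force each fibre of $P_n \to X_n$ to be a $G_n$-torsor, an equivariant map of torsors is a bijection, and a degreewise bijection of simplicial sets with equivariant, base-preserving components is an isomorphism in $\mathrm{s}G\mathrm{Bund}(X)$. The paper gives no proof of its own, delegating to Remark 3.3 of Chapter V in Goerss--Jardine, which rests on the same observation, so your argument simply makes the cited reasoning explicit.
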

In \cite{GoerssJardine} this is Remark 3.3 of Chapter V.
\begin{observation}
  \label{InclusionOfStrictlyPrincipalSimplicialBundles}
  Evidently every strictly $G$-principal bundle is also a weakly $G$-principal
  bundle, Definition~\ref{WeaklyGPrincipalBundle}; in fact the strictly principal $G$-bundles
  are precisely those weakly $G$-principal bundles for which the shear map is an isomorphism.
  This identification induces a full inclusion of categories
  $$
    \mathrm{s}G\mathrm{Bund}(X) \hookrightarrow \mathrm{w}G\mathrm{Bund}(X)
	\,.
  $$
\end{observation}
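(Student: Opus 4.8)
The plan is to prove the three assertions in turn, the whole matter reducing to a degreewise analysis of the shear map. First I would record that a strictly $G$-principal bundle $P \to X$ (Definition~\ref{strictly principal bundle}) is in particular a weakly $G$-principal bundle (Definition~\ref{WeaklyGPrincipalBundle}). Over the trivial site one works inside $\mathrm{sSh}(*)_{\mathrm{lfib}} \simeq \mathrm{KanCpx}$, so a local fibration is just a Kan fibration and a local weak equivalence a weak homotopy equivalence; since $P$ is a Kan complex, the action is degreewise free, and $X \cong P/G$, Lemma~\ref{PropertiesOfSimplicialQuotientsBySimplicialGroups}(1) already shows that $\pi \colon P \to P/G \cong X$ is a Kan fibration. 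It then remains only to verify weak principality, and here I would prove the sharper statement that the shear map is in fact an isomorphism.

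Concretely, I would examine the shear map $(p_1,\rho)\colon P \times G \to P \times_X P$, $(p,g) \mapsto (p,pg)$, degreewise. Injectivity in degree $n$ is immediate from freeness, since $pg = pg'$ forces $g = g'$. For surjectivity, given $(p,p')$ with $\pi(p) = \pi(p')$, the identification $X \cong P/G$ means the fibres of $P_n \to (P/G)_n = X_n$ are precisely the $G_n$-orbits, so $p$ and $p'$ lie in a common orbit and $p' = pg$ for a (unique) $g$; hence $(p,p')$ lies in the image. Thus the shear map is a degreewise bijection, so an isomorphism of simplicial sets and a fortiori a weak equivalence. This establishes the first assertion together with the ``only if'' half of the characterization.

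For the converse I would begin with a weakly $G$-principal bundle whose shear map is an isomorphism and recover both conditions of Definition~\ref{strictly principal bundle}. Injectivity of the shear map again gives that the $G$-action is degreewise free, and the isomorphism $P \times G \cong P \times_X P$ identifies the relation ``$\pi(p) = \pi(p')$'' with ``$p,p'$ lie in a common orbit'', so that the canonical map $P/G \to X$ is a monomorphism; together with surjectivity of $\pi$ this upgrades to an isomorphism $P/G \cong X$, i.e.\ to strict principality. I expect this surjectivity to be the one point demanding care: the shear isomorphism by itself only identifies fibres with orbits and does not force $\pi$ to be onto (the empty total space over a nonempty base with trivial $G$ gives a shear isomorphism yet is not strictly principal), so I would note that the projection of the bundles at issue is an effective epimorphism, which is exactly what the isomorphism $P/G \cong X$ records over and above the monomorphism.

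Finally, for the full inclusion $\mathrm{s}G\mathrm{Bund}(X) \hookrightarrow \mathrm{w}G\mathrm{Bund}(X)$, I would observe that on objects the functor is the evident one sending a strictly principal bundle to its underlying data $(P,\pi,\rho)$, which is injective by the characterization just proved, while on morphisms both categories declare a morphism to be exactly a $G$-equivariant map over $X$. Hence $\mathrm{Hom}_{\mathrm{s}G\mathrm{Bund}(X)}(P,P') = \mathrm{Hom}_{\mathrm{w}G\mathrm{Bund}(X)}(P,P')$ literally, which is precisely fullness (and faithfulness) of the inclusion; no further argument is required.
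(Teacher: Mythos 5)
The paper offers no argument for this Observation---it is asserted as ``evident''---so there is no proof of record to compare against; your degreewise analysis of the shear map is precisely the argument the authors are implicitly relying on, and it is correct where it matters. The forward direction (freeness gives degreewise injectivity of the shear map, the identification $X\cong P/G$ makes the fibres of $\pi$ exactly the orbits and gives surjectivity, and Lemma~\ref{PropertiesOfSimplicialQuotientsBySimplicialGroups} supplies the Kan fibration property), together with your remark that the hom-sets of $\mathrm{s}G\mathrm{Bund}(X)$ and $\mathrm{w}G\mathrm{Bund}(X)$ between strictly principal bundles literally coincide, is everything the full inclusion requires.

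The one place where you are more careful than the paper is also the one place where your write-up wobbles: the converse half of the ``precisely'' clause. You are right that a shear isomorphism yields degreewise freeness and injectivity of $P/G\to X$ but cannot force surjectivity of $\pi$; your empty-bundle example (or $P=G$ sitting over one vertex of a disconnected $X$) is a genuine counterexample to the characterization as literally stated, since Definition~\ref{WeaklyGPrincipalBundle} nowhere requires the local fibration $\pi$ to be an epimorphism. But your repair---``note that the projection of the bundles at issue is an effective epimorphism''---is not a proof step: that property is not part of the definition, and it is exactly what fails in your own counterexample. The honest conclusion is that the ``precisely'' clause is valid only among weakly principal bundles whose projection is surjective (equivalently, it is a harmless imprecision in the Observation, harmless because the full inclusion and everything downstream use only the forward direction). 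State it that way rather than invoking an effective-epimorphism property you have not established.
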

\begin{lemma}
  \label{MorphismsOfWeaklyPrincipalSimplicialBundlesAreWeakEquivalences}
  Every morphism of weakly principal simplicial bundles in $\mathrm{KanCpx}$ 
  is a weak homotopy 
  equivalence on the underlying Kan complexes.
\end{lemma}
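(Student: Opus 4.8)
The plan is to reduce the global statement to a fiberwise one. Over the trivial site every object of $\mathrm{sSh}(\ast)_{\mathrm{lfib}}$ is a Kan complex and ``local fibration/equivalence'' means simply Kan fibration / weak homotopy equivalence, so a morphism of weakly $G$-principal bundles is a $G$-equivariant map $f\colon P\to P'$ of Kan complexes over $X$ with $\pi'f=\pi$, where $\pi\colon P\to X$ and $\pi'\colon P'\to X$ are Kan fibrations (Definition~\ref{WeaklyGPrincipalBundle}). First I would show that $f$ restricts to a weak homotopy equivalence on the fiber over each vertex of $X$, and then invoke the standard fact that a map over a common base between two Kan fibrations which is a weak equivalence on every fiber is itself a weak homotopy equivalence (the usual five-lemma argument on the long exact sequences of homotopy groups, together with a path-lifting bookkeeping on $\pi_0$).

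Before that I would record that every fiber is inhabited. By Theorem~\ref{HomotopyQuotientWPrincBundleLoAcyclicFibration} the map $P/_h G\to X$ is a (local) acyclic fibration, hence surjective on vertices; since by Proposition~\ref{TotalSimplicialObjectByBorelConstruction} one has $(P/_h G)_0=P_0$ and this map is $\pi$ in degree $0$, the projection $\pi$ is surjective on vertices, and likewise $\pi'$. Thus for every vertex $x\in X_0$ both fibers $P_x$ and $P'_x$ are nonempty Kan complexes.

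Now fix $x\in X_0$, choose a vertex $p\in P_x$, and set $p':=f(p)\in P'_x$. The two action maps $a_p\colon G\to P_x,\ g\mapsto p\cdot g$ and $a_{p'}\colon G\to P'_x,\ g\mapsto p'\cdot g$ are weak homotopy equivalences by the first part of Lemma~\ref{multishear}. Because $f$ is $G$-equivariant and is the identity on the group coordinate, one has the identity $f\circ a_p = a_{p'}$, so the triangle on $G$, $P_x$, $P'_x$ commutes; by two-out-of-three the restriction $f_x\colon P_x\to P'_x$ is a weak homotopy equivalence. Since this holds for every vertex $x$, the map $f$ is a fiberwise weak equivalence of Kan fibrations over $X$, and the standard comparison fact above yields that $f$ is a weak homotopy equivalence.

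I expect the main obstacle to be exactly the inhabitedness step, not the fiber comparison. The weak-principality (shear-map) condition by itself gives no control over the image of $\pi$: formally the empty total space also satisfies it, yet admits non-equivalences into genuine bundles, so surjectivity of $\pi$ on vertices is a genuinely extra input and is precisely what forces a basepoint into each fiber so that Lemma~\ref{multishear} can be applied. This is why I route through Theorem~\ref{HomotopyQuotientWPrincBundleLoAcyclicFibration}. The only other point requiring care is the $\pi_0$-part of the final fibration-comparison lemma over a possibly disconnected $X$, which is handled by lifting paths through the fibrations $\pi,\pi'$.
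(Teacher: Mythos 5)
The paper states this lemma without proof, so there is no argument of the authors' to compare yours against; judged on its own terms your proof is correct and complete. The key step is exactly the one you isolate: a vertex in each fiber is needed before Lemma~\ref{multishear} can be applied, and once you have one, equivariance gives the commuting triangle $f|_{P_x}\circ a_p=a_{p'}$ with both orbit maps weak equivalences by Lemma~\ref{multishear}, so two-out-of-three plus the standard fiberwise criterion for maps of Kan fibrations finishes the job. Your route to inhabitedness, reading off $(P/_hG)_0=P_0$ from Proposition~\ref{TotalSimplicialObjectByBorelConstruction} and using that the acyclic fibration of Theorem~\ref{HomotopyQuotientWPrincBundleLoAcyclicFibration} is surjective on vertices, is legitimate within the paper's logic. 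One caveat worth recording: as you observe, the empty simplicial set over a nonempty $X$ formally satisfies Definition~\ref{WeaklyGPrincipalBundle}, and for it both the lemma and Theorem~\ref{HomotopyQuotientWPrincBundleLoAcyclicFibration} fail (the theorem's own proof invokes Lemma~\ref{CechNerveProjectionIsWeakEquivalence}, which requires $P\to X$ to be an acyclic fibration, not merely a fibration); so the inhabitedness you need is really an implicit nondegeneracy hypothesis of the paper rather than something you can conjure from the shear-map condition alone. This is a defect of the source, not of your argument. An alternative proof avoiding the fiberwise analysis of $f$ itself: apply the five lemma to the map of principal fibrations $P\times WG\to P/_hG$ and $P'\times WG\to P'/_hG$, whose base map is a weak equivalence by Theorem~\ref{HomotopyQuotientWPrincBundleLoAcyclicFibration} and two-out-of-three and whose fiber maps are isomorphisms of $G$-torsors, then cancel the contractible factor $WG$; but this buys nothing over your version and still needs the same nonemptiness input.
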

\begin{proposition}
  \label{ModelStructureOnGSimplicialSets}
  For $G$ a simplicial group, the category $\mathrm{sSet}_G$ of $G$-actions on
  simplicial sets and $G$-equivariant morphisms carries the structure of a 
  simplicial model category where the fibrations and weak equivalences are
  those of the underlying simplicial sets.
\end{proposition}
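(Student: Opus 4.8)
The plan is to obtain this model structure by \emph{transfer} along the forgetful functor $U \colon \mathrm{sSet}_G \to \mathrm{sSet}$, which is precisely to say that $U$ creates the fibrations and the weak equivalences. First I would record the ambient categorical facts. The category $\mathrm{sSet}_G$ of simplicial sets with $G$-action is the category of algebras over the monad $T = G \times (-)$ on $\mathrm{sSet}$; since $\mathrm{sSet}$ is cartesian closed, $T$ (and hence $T^2$) preserves all colimits, so $\mathrm{sSet}_G$ is locally presentable, in particular complete and cocomplete, and $U$ creates all colimits and all filtered colimits. The free functor $F = G \times (-)$ is left adjoint to $U$, and smallness for the small object argument is guaranteed by local presentability.

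Next I would invoke the standard transfer theorem for cofibrantly generated model structures along a right adjoint (in the form used throughout \cite{GoerssJardine}). Taking $\mathrm{sSet}_{\mathrm{Quillen}}$ with generating cofibrations $\{\partial\Delta[n] \hookrightarrow \Delta[n]\}$ and generating acyclic cofibrations $J = \{\Lambda^i[n] \hookrightarrow \Delta[n]\}$, the transferred structure on $\mathrm{sSet}_G$ has the sets $FI$ and $FJ$ as generating (acyclic) cofibrations, has its fibrations and weak equivalences created by $U$, and has its cofibrations defined by the left lifting property against acyclic fibrations. The one nontrivial hypothesis of the theorem is the \emph{acyclicity condition}: every relative $FJ$-cell complex must be sent by $U$ to a weak equivalence. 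This is the step I expect to be the heart of the matter. Since $U$ preserves the pushouts and transfinite compositions involved, it suffices to observe that each map $U F(\Lambda^i[n] \hookrightarrow \Delta[n]) = (G \times \Lambda^i[n] \hookrightarrow G \times \Delta[n])$ is anodyne — this is the classical fact that the product of an anodyne extension with an arbitrary simplicial set is again anodyne — and that anodyne maps are closed under pushout and transfinite composition, hence remain weak equivalences. Establishing the existence of the model structure then reduces to citing the transfer theorem once this combinatorial closure property is in place.

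Finally I would upgrade the model structure to a \emph{simplicial} one. The category $\mathrm{sSet}_G$ is tensored and cotensored over $\mathrm{sSet}$ via $X \otimes K := X \times K$, with $G$ acting only on the factor $X$, and $X^K := \mathrm{Map}(K, X)$ with the pointwise $G$-action, the simplicial mapping space being $\mathrm{Map}_G(X,Y)_n := \mathrm{Hom}_{\mathrm{sSet}_G}(X \times \Delta[n], Y)$. The key point is that the underlying simplicial set of $X \otimes K$ is $U X \times K$ and that $U(X^K) = (U X)^K$, so the enrichment commutes with $U$. It then remains to verify the pushout-product axiom SM7, which I would check in its equivalent form for fibrations: given a fibration $p$ in $\mathrm{sSet}_G$ and a cofibration $j \colon K \to L$ in $\mathrm{sSet}$, the induced map from $X^L$ into the fibre product $X^K \times_{Y^K} Y^L$ becomes, after applying $U$, exactly the corresponding SM7 comparison map for $U p$ and $j$ in $\mathrm{sSet}$. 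Because $U$ detects fibrations and weak equivalences and commutes with the cotensoring, SM7 for $\mathrm{sSet}_G$ reduces immediately to SM7 for $\mathrm{sSet}_{\mathrm{Quillen}}$, which holds; this completes the argument.
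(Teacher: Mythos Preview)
The paper does not give its own proof here; it simply cites this as Theorem~V~2.3 of \cite{GoerssJardine}. Your argument is correct and is essentially the standard transfer argument that underlies that reference: the free--forgetful adjunction $G\times(-)\dashv U$, the acyclicity condition verified by the anodyne-closure fact that $G\times \Lambda^i[n]\hookrightarrow G\times\Delta[n]$ is anodyne, and the reduction of SM7 to $\mathrm{sSet}_{\mathrm{Quillen}}$ via the fact that $U$ preserves limits and cotensors and creates fibrations and weak equivalences. One minor wording wrinkle: you say ``$U$ creates all colimits and all filtered colimits,'' which is redundant; what you use is that $U$ creates colimits (since $T=G\times(-)$ preserves them) and creates limits (as any monadic forgetful functor does), the latter being what you need for the SM7 reduction.
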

This is Theorem 2.3 of Chapter V in \cite{GoerssJardine}.
\begin{corollary}
  \label{SliceModelStructureOnGSimplicialSetsOverX}
  For $G$ a simplicial group and $X$ a Kan complex, the slice category
  $\mathrm{sSet}_G/X$ carries a simplicial model structure where the fibrations
  and weak equivalences are those of the underlying simplicial sets, after
  forgetting the map to $X$.
\end{corollary}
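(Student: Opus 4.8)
The plan is to obtain the corollary as a direct application of the slice construction of Proposition~\ref{SliceModelStructure} to the simplicial model category $\mathrm{sSet}_G$ furnished by Proposition~\ref{ModelStructureOnGSimplicialSets}. First I would regard the given Kan complex $X$ as an object of $\mathrm{sSet}_G$ by equipping it with the trivial $G$-action. Since the fibrations in $\mathrm{sSet}_G$ are precisely those morphisms whose underlying map of simplicial sets is a Kan fibration, and $X \to \ast$ is a Kan fibration, the object $X$ is fibrant in $\mathrm{sSet}_G$.

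With $X$ fibrant, Proposition~\ref{SliceModelStructure} applies and endows the slice $\mathrm{sSet}_G/X$ with a model structure transferred along the forgetful functor $\mathrm{sSet}_G/X \to \mathrm{sSet}_G$. By construction its weak equivalences and fibrations are created in $\mathrm{sSet}_G$, and these in turn are created in $\mathrm{sSet}_{\mathrm{Quillen}}$ by Proposition~\ref{ModelStructureOnGSimplicialSets}. Composing the two forgetful functors therefore shows that a morphism in $\mathrm{sSet}_G/X$ is a fibration (resp.\ weak equivalence) exactly when its underlying map of simplicial sets is, which is the assertion of the corollary.

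It then remains to upgrade this to a \emph{simplicial} model structure. Here I would give the slice its standard simplicial enrichment: the mapping space from $(A \xrightarrow{\alpha} X)$ to $(B \xrightarrow{\beta} X)$ is the fiber of the map $\mathrm{Map}_{\mathrm{sSet}_G}(A,B) \to \mathrm{Map}_{\mathrm{sSet}_G}(A,X)$ induced by $\beta$ over the vertex $\alpha$, with tensoring $(A,\alpha)\otimes K := (A\otimes K \to A \xrightarrow{\alpha} X)$ built from the projection $A \otimes K \to A\otimes \ast \cong A$, and with the corresponding cotensoring defined by pullback. The pushout-product axiom SM7 for this enrichment then follows from the one already available in $\mathrm{sSet}_G$ by a routine pullback manipulation.

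The step I expect to require the most care — though it is still essentially routine — is the verification of SM7 for the slice: one must check that a cofibration in $\mathrm{sSet}_G/X$ (which is just a cofibration in $\mathrm{sSet}_G$) pushed against a cofibration of simplicial sets yields a map whose structure morphism to $X$ has the right lifting behaviour, and that acyclicity is preserved. Since all of this is detected on underlying simplicial sets, it reduces to SM7 in $\mathrm{sSet}_G$ together with the fibrancy of $X$, so no genuinely new input beyond the two cited propositions is needed.
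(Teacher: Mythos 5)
Your argument is correct and is exactly the intended derivation: the paper states this as an unproved corollary of Proposition~\ref{ModelStructureOnGSimplicialSets}, to be obtained by applying the slice construction of Proposition~\ref{SliceModelStructure} to $\mathrm{sSet}_G$ with $X$ carrying the trivial $G$-action (fibrant there since its underlying simplicial set is Kan), with the standard simplicial enrichment on the slice. No divergence from the paper's route.
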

\begin{lemma}
  \label{WeaklyPrincipalSimplicialBundlesSitHomotopyFFInModelStructureOnGSimplicialSets}
  Let $G$ be a simplicial group and $P \to X$ a weakly $G$-principal simplicial bundle.
  Then the loop space $\Omega_{(P \to X)} \mathrm{Ex}^\infty N (\mathrm{w}G\mathrm{Bund}(X))$   
  has the same homotopy type as the derived hom space
  $\mathbb{R}\mathrm{Hom}_{\mathrm{sSet}_G/X}(P,P)$.
\end{lemma}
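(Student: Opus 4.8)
The plan is to recognize $N(\mathrm{w}G\mathrm{Bund}(X))$ as the nerve of a category all of whose morphisms are weak equivalences, and to compute its based loop space via the Dwyer--Kan classification-space theorem. First I would fix the ambient setting. By Corollary~\ref{SliceModelStructureOnGSimplicialSetsOverX} the slice $\mathrm{sSet}_G/X$ is a simplicial model category whose weak equivalences and fibrations are detected on underlying simplicial sets (Proposition~\ref{ModelStructureOnGSimplicialSets}), and $P \to X$ is fibrant there because it is a Kan fibration. By Lemma~\ref{MorphismsOfWeaklyPrincipalSimplicialBundlesAreWeakEquivalences} every morphism of $\mathrm{w}G\mathrm{Bund}(X)$ is a weak equivalence, so $\mathrm{w}G\mathrm{Bund}(X)$ is exactly the full subcategory of the subcategory of weak equivalences of $\mathrm{sSet}_G/X$ spanned by the weakly $G$-principal bundles; applying $\mathrm{Ex}^\infty$ merely replaces its nerve by a weakly equivalent Kan complex so that the loop space at the vertex $(P\to X)$ is defined.

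Second, I would invoke the classification-space theorem in the circle of ideas of \cite{DwyerKanComputations}: for a simplicial model category $\mathcal{M}$ the nerve of the subcategory of weak equivalences spanned by a single weak-equivalence class of objects containing $P$ is connected and naturally weakly equivalent to $\Wbar\,\mathrm{haut}(P)$, the classifying space of the simplicial group of \emph{derived} self-equivalences $\mathrm{haut}(P)$, which is the union of the invertible components of $\mathbb{R}\mathrm{Hom}_{\mathcal{M}}(P,P)$. The point to check before applying this is that restricting from all objects of $\mathrm{sSet}_G/X$ weakly equivalent to $P$ to the weakly $G$-principal bundles does not alter the homotopy type of the relevant component. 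Here I would use the cogluing lemma (Lemma~\ref{cogluing lemma}): among fibrant objects the property of being weakly principal is invariant under weak equivalence, since for a $G$-equivariant weak equivalence $Q\xrightarrow{\sim}P$ over $X$ between fibrant objects the induced maps $Q\times G \to P\times G$ and $Q\times_X Q \to P\times_X P$ are weak equivalences, so by 2-out-of-3 the shear map of $Q$ is a weak equivalence iff that of $P$ is. Combined with fibrant replacement, which moves every object of the component into the fibrant, hence bundle, locus, this shows the inclusion of bundles into all objects weakly equivalent to $P$ is cofinal, so the component of $(P\to X)$ is $\Wbar\,\mathrm{haut}(P)$ and its loop space is $\mathrm{haut}(P)$.

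Finally I would identify $\mathrm{haut}(P)$ with the whole derived hom space $\mathbb{R}\mathrm{Hom}_{\mathrm{sSet}_G/X}(P,P)$. It suffices to show that $\pi_0\mathbb{R}\mathrm{Hom}_{\mathrm{sSet}_G/X}(P,P)$ is a group, i.e.\ that every homotopy class of self-maps of $P$ is a weak equivalence; then every component is invertible and $\mathrm{haut}(P)$ exhausts $\mathbb{R}\mathrm{Hom}(P,P)$. Taking a cofibrant-fibrant replacement $\hat P$ of $P$, the space $\mathbb{R}\mathrm{Hom}(P,P)$ is modelled by the simplicial set of $G$-equivariant self-maps of $\hat P$ over $X$; since $\hat P$ is fibrant and weakly equivalent to $P$ it is again a weakly principal bundle by the invariance just established, so each such self-map is a morphism in $\mathrm{w}G\mathrm{Bund}(X)$ and hence a weak equivalence by Lemma~\ref{MorphismsOfWeaklyPrincipalSimplicialBundlesAreWeakEquivalences} (concretely, it is a fibrewise equivalence via the identifications $G\xrightarrow{\sim}P_x$ of Lemma~\ref{multishear}). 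Assembling the three steps yields $\Omega_{(P\to X)}\mathrm{Ex}^\infty N(\mathrm{w}G\mathrm{Bund}(X)) \simeq \mathrm{haut}(P) = \mathbb{R}\mathrm{Hom}_{\mathrm{sSet}_G/X}(P,P)$. The main obstacle is the second step: the rigorous comparison of the \emph{strict} nerve of the category of weak equivalences with the \emph{derived} classification space $\Wbar\,\mathrm{haut}(P)$, and in particular the cofinality reduction to bundle objects, rather than the essentially formal first and third steps.
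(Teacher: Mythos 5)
Your proposal is correct and follows essentially the same route as the paper: identify the derived hom space via a cofibrant--fibrant replacement in $\mathrm{sSet}_G/X$, observe that all self-maps are weak equivalences, and apply the Dwyer--Kan classification theorem to identify this with the loop space of the nerve of the weak-equivalence component. The only cosmetic difference is that the paper uses the explicit free resolution $P^f$ of Corollary~\ref{befreiung} as its cofibrant--fibrant model and asserts the identification of components in one line, whereas you work with an abstract replacement and spell out (via the cogluing lemma) why weak principality is preserved across the component --- a point the paper's proof passes over rather quickly.
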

\proof
  By Theorem 2.3, Chapter V of \cite{GoerssJardine} and 
  Lemma~\ref{MorphismsOfWeaklyPrincipalSimplicialBundlesAreWeakEquivalences} 
  the free resolution $P^f$ of $P$
  from Corollary~\ref{befreiung} is a cofibrant-fibrant resolution of $P$ in 
  the slice model structure of Corollary~\ref{SliceModelStructureOnGSimplicialSetsOverX}.
  Therefore the derived hom space is presented by the simplicial set
  of morphisms $\mathrm{Hom}_{\mathrm{sSet}_G/X}(P^f \cdot \Delta^\bullet, P^f)$
  and all these morphisms are equivalences. Therefore by Proposition 2.3 in 
  \cite{DwyerKanClassification} this simplicial set is equivalent to the 
  loop space of the nerve of the subcategory of $\mathrm{sSet}_G/X$ on the 
  weak equivalences connected to $P^f$. 
  By Lemma~\ref{MorphismsOfWeaklyPrincipalSimplicialBundlesAreWeakEquivalences}
  this subcategory is 
  equivalent (isomorphic even) to the connected component of $\mathrm{w}G\mathrm{Bund}(X)$
  on $P$.
\endofproof
\begin{proposition}
  \label{StrictlyPrincipalBundlesSurjectOnPi0}
  Under the nerve,  the inclusion of 
  observation \ref{InclusionOfStrictlyPrincipalSimplicialBundles} yields a morphism
  $$
    N \mathrm{s}G\mathrm{Bund}(X) \to N \mathrm{w}G\mathrm{Bund}(X)
  $$
  which is 
  \begin{itemize}
    \item for all $G$ and $X$ an isomorphism on connected components;
	\item not in general a weak equivalence in $\mathrm{sSet}_{\mathrm{Quillen}}$.
  \end{itemize}
\end{proposition}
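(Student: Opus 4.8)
The plan is to prove the two bullets separately, in each case reducing to a classification statement that is already available: the classical classification of strictly principal simplicial bundles and our Theorem~\ref{Classification theorem for weakly G-principal bundles}.

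For the first bullet I would begin by noting that $\mathrm{s}G\mathrm{Bund}(X)$ is a groupoid by Lemma~\ref{MorphismsOfStrictlyPrincipalSimplicialBundlesAreIsos}, so that $\pi_0 N\mathrm{s}G\mathrm{Bund}(X)$ is the set of isomorphism classes of strictly $G$-principal bundles over $X$. The classical theory (for instance Chapter V of \cite{GoerssJardine}, or \S21 of \cite{May}) identifies this set with $[X,\Wbar G]$ via $f \mapsto f^*WG$, the pullback of the universal bundle $WG \to \Wbar G$, which is strictly principal (its shear map being an isomorphism) with strict principality preserved under pullback. On the other side, Theorem~\ref{Classification theorem for weakly G-principal bundles} together with Remark~\ref{ClassificationTheoremRelatedToCocyclesSpaces} gives $\pi_0 N\mathrm{w}G\mathrm{Bund}(X) \cong \pi_0 \mathbf{H}(X,\mathbf{B}G) = [X,\Wbar G]$, where the class of the cocycle $X \xleftarrow{=} X \xrightarrow{f} \Wbar G$ maps to $f$, and the functor $\mathrm{Rec}$ of Definition~\ref{FunctorsBetweenBundlesAndCocycles} sends this cocycle precisely to $f^*WG \to X$.

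I would then observe that these two identifications are strictly compatible: the inclusion of Observation~\ref{InclusionOfStrictlyPrincipalSimplicialBundles} sends the strictly principal bundle $f^*WG$ to the very same object $f^*WG$ regarded as a weakly principal bundle, which is exactly $\mathrm{Rec}$ applied to the cocycle of $f$. Hence the square relating the inclusion-induced map on $\pi_0$ to the two bijections with $[X,\Wbar G]$ commutes with the identity along the top, and since the vertical maps are bijections the inclusion-induced map is a bijection, establishing the first bullet.

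For the second bullet I would exhibit an explicit counterexample over the terminal object $X = *$. On one hand $N\mathrm{s}G\mathrm{Bund}(*)$ is the nerve of a groupoid (again by Lemma~\ref{MorphismsOfStrictlyPrincipalSimplicialBundlesAreIsos}), hence a homotopy $1$-type, so $\pi_n N\mathrm{s}G\mathrm{Bund}(*) = 0$ for $n \geq 2$. On the other hand, by Theorem~\ref{Classification theorem for weakly G-principal bundles} and Remark~\ref{ClassificationTheoremRelatedToCocyclesSpaces}, $N\mathrm{w}G\mathrm{Bund}(*)$ presents $\mathbf{H}(*,\mathbf{B}G) \simeq \Wbar G$, whose homotopy groups satisfy $\pi_n \Wbar G \cong \pi_{n-1} G$ by the fiber sequence $G \to WG \to \Wbar G$ of Corollary~\ref{GToWGToWbarGPresentsLoopingFiberSequence} (with $WG$ contractible). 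Choosing $G$ to be any simplicial group with $\pi_1 G \neq 0$ — for example a simplicial model of the circle group $U(1)$, for which $\Wbar G \simeq K(\mathbb{Z},2)$ — yields $\pi_2 N\mathrm{w}G\mathrm{Bund}(*) \cong \pi_1 G \neq 0$ while $\pi_2 N\mathrm{s}G\mathrm{Bund}(*) = 0$, so the map is not a weak equivalence. Alternatively one may argue via Lemma~\ref{WeaklyPrincipalSimplicialBundlesSitHomotopyFFInModelStructureOnGSimplicialSets}: the derived self-equivalence space of the unique bundle $G$ over $*$ carries the homotopy of $G$, whereas its strict automorphisms form a discrete group. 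The main obstacle is the compatibility check in the first bullet: one must verify that the classical bijection and the one from Theorem~\ref{Classification theorem for weakly G-principal bundles} assign to $f \in [X,\Wbar G]$ literally the same bundle $f^*WG$ and that the inclusion respects the identifications on the nose. This is bookkeeping, tracing the definitions of $\mathrm{Rec}$ and of the universal bundle, but it is where the argument could fail if the normalizations (left- versus right-actions, or the direction of the classifying bijection) do not match, so care is needed to ensure the diagram commutes rather than commuting up to a nontrivial automorphism.
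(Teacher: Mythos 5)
Your proposal is correct, and the second bullet is essentially the paper's own argument (the nerve of the groupoid $\mathrm{s}G\mathrm{Bund}(X)$ is a homotopy $1$-type, while $N\mathrm{w}G\mathrm{Bund}(X)$ has the homotopy type of $\mathrm{Maps}(X,\Wbar G)$ and so is not an $n$-type when $G$ is not an $(n-1)$-type). The first bullet, however, is handled by a genuinely different route. The paper argues self-containedly: for surjectivity on $\pi_0$ it explicitly strictifies a given weakly principal bundle $P$, first replacing it by the free resolution $P^f = \mathrm{Rec}(\mathrm{Extr}(P))$ of Corollary~\ref{befreiung} and then choosing a section $\sigma$ of the acyclic fibration $P/_h G \to X$ to produce a strictly principal $P^s = \mathrm{Rec}(X \xleftarrow{=} X \to \Wbar G)$ together with a zig-zag-free morphism $P^s \to P^f \to P$; for injectivity it invokes Lemma~\ref{WeaklyPrincipalSimplicialBundlesSitHomotopyFFInModelStructureOnGSimplicialSets} to replace any zig-zag between bundles with degreewise free action by a direct morphism, which between strict bundles is an isomorphism by Lemma~\ref{MorphismsOfStrictlyPrincipalSimplicialBundlesAreIsos}. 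You instead identify both sets of components with $[X,\Wbar G]$ --- the strict side via the classical Goerss--Jardine classification, the weak side via Theorem~\ref{Classification theorem for weakly G-principal bundles} --- and check compatibility through $\mathrm{Rec}$; the compatibility does go through, since $\mathrm{Rec}$ applied to the cocycle with identity left leg is literally $f^*WG$. What this buys is brevity, at the cost of importing Theorem V.3.9 of \cite{GoerssJardine} as a black box: note that in the paper that classical result is \emph{deduced} from the present proposition as Corollary~\ref{StrictlyPrincipalSimplicialBundlesModelCohomologyClasses}, so with your argument that corollary would cease to be an independent re-derivation and would merely restate its own input. You also leave implicit the identification $\pi_0 N\mathrm{Cocycle}(X,\Wbar G) \cong [X,\Wbar G]$ in the precise form you use (every cocycle is connected to one with identity left leg by choosing a section of its acyclic fibration, and two such are connected precisely if the maps to $\Wbar G$ are simplicially homotopic); this is routine bookkeeping, but it is exactly the content that the paper's direct construction replaces.
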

\proof
  Let $P \to X$ be a weakly $G$-principal bundle. 
  To see that it is connected in $\mathrm{w}G\mathrm{Bund}(X)$ to some
  strictly $G$-principal bundle, first observe that by Corollary~\ref{befreiung} it is 
  connected via a morphism $P^f \to P$ to the bundle 
  $$
    P^f := \mathrm{Rec}(X \leftarrow P/_h G \xrightarrow{f} \Wbar G)
	\,,
  $$
  which has free $G$-action, but does not necessarily satisfy strict principality.
  Since, by Theorem~\ref{HomotopyQuotientWPrincBundleLoAcyclicFibration}, 
  the morphism $P/_h G \to X$ is an acyclic fibration of simplicial sets
  it has a section $\sigma : X \to P /_h G$ (every simplicial set
  is cofibrant in $\mathrm{sSet}_{\mathrm{Quillen}}$). The bundle 
  $$
    P^s := 
	  \mathrm{Rec}(X \xleftarrow{\mathrm{id}} X \xrightarrow{f\circ \sigma} \Wbar G)
  $$
  is strictly $G$-principal, and with the morphism
  $$
    (P^s \to P^f)
	:=
    \mathrm{Rec}
	\left(
	   \raisebox{44pt}{
	   \xymatrix{
	     & P /_h G
		 \ar@{->>}[dl]_\sim \ar[dr]^f
	     \\ 
	     X && \Wbar G
		 \\
		 & X \ar@{->>}[ul]^{\mathrm{id}} \ar[uu]_\sigma \ar[ur]_{f \circ \sigma}
	   }
	   }
	\right)
  $$
  we obtain (non-naturally, due to the choice of section) in total a morphism
  $P^s \to P^f \to P$
  of weakly $G$-principal bundles from a strictly $G$-principal replacement $P^s$ to $P$.
  
  To see that the full embedding of strictly $G$-principal bundles is also injective on connected
  components, notice that by Lemma~\ref{WeaklyPrincipalSimplicialBundlesSitHomotopyFFInModelStructureOnGSimplicialSets}
  if a weakly $G$-principal bundle $P$ with degreewise free $G$-action is connected
  by a zig-zag of morphisms to some other weakly $G$-principal bundle $P$, then there is 
  already a direct morphism $P \to P'$. Since all strictly $G$-principal bundles
  have free actions by definition, this shows that two of them that are
  connected in $\mathrm{w}G\mathrm{Bund}(X)$ are already connected in 
  $\mathrm{s}G\mathrm{Bund}(X)$.
  
  To see that in general $N\mathrm{s}G\mathrm{Bund}(X)$ 
  nevertheless does not have the 
  correct homotopy type,  it is sufficient to notice that 
  the category $\mathrm{s}G\mathrm{Bund}(X)$ is always a groupoid,
  by Lemma~\ref{MorphismsOfStrictlyPrincipalSimplicialBundlesAreIsos}. Therefore 
  $N\mathrm{s}G\mathrm{Bund}(X)$ is always a homotopy 1-type. 
  But by Theorem~\ref{Classification theorem for weakly G-principal bundles}
  the object $N\mathrm{w}G\mathrm{Bund}(X)$ is not an $n$-type if $G$ is not an
  $(n-1)$-type. 
\endofproof
\begin{corollary}
 \label{StrictlyPrincipalSimplicialBundlesModelCohomologyClasses}
 For all Kan complexes $X$ and simplicial groups $G$ there is an isomorphism
 $$
	\pi_0 N \mathrm{s}G\mathrm{Bund} \simeq  H^1(X,G) := \pi_0 \mathrm{Grpd}_{\infty}(X, \mathbf{B}G)
 $$
 between the isomorphism classes of strictly $G$-principal bundles over $X$ and
 the first nonabelian cohomology of $X$ with coefficients in $G$ (but this 
 isomorphism on cohomology does not in general lift to an equivalence
  on cocycle spaces).
\end{corollary}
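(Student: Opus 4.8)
The plan is to obtain this corollary by assembling, on the level of connected components, the results already proved in Section~\ref{Principal infinity-bundles presentations} together with the specialization to discrete geometry recorded at the start of Section~\ref{DiscreteGeometry}; no new homotopy-theoretic input is needed beyond what has been established. First I would fix the setting: over the trivial site we have $\mathbf{H} = \mathrm{Grpd}_\infty$, and by Observation~\ref{SimplicialSheavesWithStalkwiseWeakEquivalencesModel1LocalicHypercompleteInfinityTopos} the category $\mathrm{sSh}(*)_{\mathrm{lfib}}$ is equivalent to $\mathrm{KanCpx}$, with local fibrations and local weak equivalences being Kan fibrations and weak homotopy equivalences. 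Thus all constructions of Section~\ref{Principal infinity-bundles presentations} --- in particular the categories $\mathrm{w}G\mathrm{Bund}(X)$ and $\mathrm{Cocycle}(X,\Wbar G)$ --- apply with the weak equivalences of $\mathrm{sSet}_{\mathrm{Quillen}}$.

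The heart of the derivation is a chain of three identifications on $\pi_0$. By the first bullet of Proposition~\ref{StrictlyPrincipalBundlesSurjectOnPi0}, the full inclusion $N\mathrm{s}G\mathrm{Bund}(X) \hookrightarrow N\mathrm{w}G\mathrm{Bund}(X)$ of Observation~\ref{InclusionOfStrictlyPrincipalSimplicialBundles} is an isomorphism on connected components, so $\pi_0 N\mathrm{s}G\mathrm{Bund}(X) \cong \pi_0 N\mathrm{w}G\mathrm{Bund}(X)$. Next, Theorem~\ref{Classification theorem for weakly G-principal bundles} provides a weak homotopy equivalence $N\mathrm{w}G\mathrm{Bund}(X) \simeq N\mathrm{Cocycle}(X,\Wbar G)$, hence an isomorphism on $\pi_0$. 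Finally, by Remark~\ref{ClassificationTheoremRelatedToCocyclesSpaces} --- which rests on Theorem~\ref{SimplicialLocalizationOfCatOfFibrantObjects} --- the Kan complex $N\mathrm{Cocycle}(X,\Wbar G)$ presents the intrinsic cocycle $\infty$-groupoid $\mathbf{H}(X,\mathbf{B}G) = \mathrm{Grpd}_\infty(X,\mathbf{B}G)$, so that $\pi_0 N\mathrm{Cocycle}(X,\Wbar G) \cong H^1(X,G)$ by the very definition of $H^1(X,G)$. Composing the three isomorphisms gives the asserted bijection.

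For the parenthetical non-lifting statement I would simply point back to the last part of Proposition~\ref{StrictlyPrincipalBundlesSurjectOnPi0}: since every morphism of strictly $G$-principal bundles is an isomorphism (Lemma~\ref{MorphismsOfStrictlyPrincipalSimplicialBundlesAreIsos}), the category $\mathrm{s}G\mathrm{Bund}(X)$ is a groupoid and $N\mathrm{s}G\mathrm{Bund}(X)$ is a homotopy $1$-type, while $N\mathrm{w}G\mathrm{Bund}(X) \simeq \mathbf{H}(X,\mathbf{B}G)$ fails to be an $n$-type whenever $G$ is not an $(n-1)$-type; hence the $\pi_0$-bijection cannot in general be promoted to a weak equivalence of cocycle spaces. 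Because all the substantive work is already carried out in the cited propositions, there is no real obstacle in the argument itself; the only point requiring care is bookkeeping --- checking that the $\pi_0$-isomorphism of Proposition~\ref{StrictlyPrincipalBundlesSurjectOnPi0} is induced by the \emph{same} inclusion functor whose target is subsequently identified with cohomology, so that the resulting bijection is the natural comparison map and not merely an abstract isomorphism of sets.
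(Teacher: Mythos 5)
Your proposal is correct and follows essentially the same route as the paper, whose proof is exactly the combination of Proposition~\ref{StrictlyPrincipalBundlesSurjectOnPi0} (the $\pi_0$-isomorphism onto weakly principal bundles, plus the $1$-type obstruction for the parenthetical claim) with Remark~\ref{ClassificationTheoremRelatedToCocyclesSpaces} identifying $N\mathrm{Cocycle}(X,\Wbar G)$ with $\mathbf{H}(X,\mathbf{B}G)$. Your extra care about the bijection being induced by the actual comparison functor is a reasonable bookkeeping point but adds nothing beyond what the cited results already provide.
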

\proof
  By Proposition~\ref{StrictlyPrincipalBundlesSurjectOnPi0} and Remark~\ref{ClassificationTheoremRelatedToCocyclesSpaces}.
\endofproof
\begin{remark}
The first statement of corollary \ref{StrictlyPrincipalSimplicialBundlesModelCohomologyClasses} 
is the classical classification result for strictly principal simplicial bundles,
for instance Theorem V3.9 in \cite{GoerssJardine}. 
\end{remark}

\subsection{Smooth geometry}
\label{SmoothInfgrpds}

We discuss the canonical homotopy theoretic context for higher
differential geometry.

\begin{definition}
Let $\mathrm{SmthMfd}$ be the category of finite dimensional 
smooth manifolds. We regard this as a site with the covers being the
standard open covers. 
Write 
$$
  \mathrm{CartSp} \hookrightarrow \mathrm{SmthMfd}
$$ 
for the full subcategory on the Cartesian spaces $\mathbb{R}^n$, for all $n \in \mathbb{N}$,
equipped with their canonical structure of smooth manifolds.
\end{definition}
\begin{proposition}
  The inclusion $\mathrm{CartSp} \hookrightarrow \mathrm{SmthMfd}$
  exhibits $\mathrm{CartSp}$ as a \emph{dense subsite} of $\mathrm{SmthMfd}$.
  Accordingly, there is an equivalence of categories between the sheaf toposes
  over both sites,
  $\mathrm{Sh}(\mathrm{CartSp}) \simeq \mathrm{Sh}(\mathrm{SmthMfd})$.
\end{proposition}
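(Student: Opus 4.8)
The plan is to deduce the equivalence from Grothendieck's comparison lemma for sites (SGA 4, Expos\'e III, or Johnstone's \emph{Elephant}, C2.2.3), which asserts that if $D$ is a site and $C \hookrightarrow D$ is a \emph{full} subcategory that is \emph{dense} --- in the sense that every object of $D$ admits a covering by objects of $C$ --- and $C$ is equipped with the topology \emph{induced} from $D$, then restriction of (pre)sheaves along the inclusion induces an equivalence $\mathrm{Sh}(D) \xrightarrow{\sim} \mathrm{Sh}(C)$. Applying this with $D = \mathrm{SmthMfd}$ and $C = \mathrm{CartSp}$ reduces the proposition to verifying its two hypotheses: density, and compatibility of the coverages. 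The two claims of the proposition --- that $\mathrm{CartSp}$ is a dense subsite, and that the sheaf toposes agree --- then correspond exactly to the hypothesis and the conclusion of the lemma.

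First I would verify density. Every finite-dimensional smooth manifold $M$, say of dimension $n$, admits a smooth atlas: around each point there is a chart, and after shrinking its image to an open ball --- which is diffeomorphic to $\mathbb{R}^n$ --- one obtains for each point an open embedding $\mathbb{R}^n \cong U \hookrightarrow M$. The resulting family $\{\mathbb{R}^n \to M\}$ is an open cover of $M$ by objects of $\mathrm{CartSp}$, so the sieve on $M$ generated by all morphisms out of Cartesian spaces contains a covering family and is hence itself covering. This establishes density. I would also note that $\mathrm{CartSp}$ is genuinely a \emph{full} subcategory, since the morphisms $\mathbb{R}^m \to \mathbb{R}^n$ are the same smooth maps in both categories.

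Next I would check that the standard open-cover coverage carried by $\mathrm{CartSp}$ coincides with the coverage induced from $\mathrm{SmthMfd}$. By definition of the induced topology, a sieve on $\mathbb{R}^n \in \mathrm{CartSp}$ is covering precisely when the sieve it generates in $\mathrm{SmthMfd}$ contains an open cover of $\mathbb{R}^n$. The required comparison is then a cofinality statement: every open cover of $\mathbb{R}^n$ is refined by one whose patches are open balls (hence Cartesian spaces, since open balls are diffeomorphic to $\mathbb{R}^n$), while conversely any cover by embedded Cartesian spaces is in particular an open cover. Hence the two notions generate the same covering sieves, both hypotheses of the comparison lemma are met, and the conclusion $\mathrm{Sh}(\mathrm{CartSp}) \simeq \mathrm{Sh}(\mathrm{SmthMfd})$ follows.

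The main obstacle I anticipate is precisely the bookkeeping in this last step: one must check that the topology placed on $\mathrm{CartSp}$ (open covers, or good open covers by Cartesian spaces) is \emph{literally} the topology induced from $\mathrm{SmthMfd}$, since the comparison lemma is stated for the induced coverage. This is handled by the refinement argument above --- the existence of good open covers (for instance via geodesically convex neighbourhoods for an auxiliary Riemannian metric) ensures that covers by Cartesian spaces are cofinal among all open covers, so that passing between the two generates identical covering sieves and no instance of the sheaf condition is lost.
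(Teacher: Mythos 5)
Your argument is correct, and it is worth noting that the paper itself supplies no proof of this proposition at all --- it is stated as a known fact, with the surrounding discussion deferred to Dugger's \emph{Sheaves and homotopy theory} and to Section 4.4 of Schreiber's habilitation. The route you take, via Grothendieck's comparison lemma, is exactly the standard argument those references have in mind: density follows from the existence of atlases by charts diffeomorphic to $\mathbb{R}^n$, fullness is automatic since $\mathrm{CartSp}$ is defined as a full subcategory, and the only genuine point of care --- that the open-cover coverage on $\mathrm{CartSp}$ agrees with the coverage induced from $\mathrm{SmthMfd}$ --- is correctly disposed of by your cofinality observation that open balls form a basis for the topology of $\mathbb{R}^n$ and are diffeomorphic to Cartesian spaces. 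Since the paper later relies on \emph{good} open covers (Observation \ref{ResolutionOfManifoldsByGoodCovers}) and on $\mathrm{CartSp}$ being an $\infty$-cohesive site, your closing remark about geodesically convex neighbourhoods is a useful extra, though strictly speaking the 1-categorical comparison lemma only needs ordinary refinement by balls, not goodness of the covers. In short: your proof is complete and fills in a verification the paper leaves to the reader.
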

\begin{lemma}
  \label{ToposPointsOfSheavesOverSmoothManifolds}
  The sheaf topos $\mathrm{Sh}(\mathrm{CartSp})\simeq \mathrm{Sh}(\mathrm{SmthMfd})$ 
  has enough points. 
  A complete set of points
  $$
    \left\{
    \xymatrix{
	   \mathrm{Set}
	     \ar@{<-}@<+3pt>[r]^<<<<<{p^n}
		 \ar@<-3pt>[r]
		 &
	   \mathrm{Sh}(\mathrm{CartSp})
	}
	\;\Big|\; n \in \mathbb{N}
	\right\}
  $$
  is given by the stalks at the origin of the
  open $n$-disk, for all $n \in \mathbb{N}$.
\end{lemma}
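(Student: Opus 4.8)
The plan is to prove that $\mathrm{Sh}(\mathrm{CartSp})$ has enough points by exhibiting a concrete conservative family of stalk functors. Recall that a site has enough points (Definition~\ref{def:enough points}) exactly when a morphism of sheaves is an isomorphism whenever all its stalks are isomorphisms. The proposed complete set of points is indexed by $n \in \mathbb{N}$, with $p^n$ the stalk at the origin of the open $n$-disk $\mathbb{R}^n$. Because $\mathrm{CartSp}$ has $\mathbb{R}^n$ as its objects, the natural candidate for $p^n_*$ is the filtered colimit over the poset of open neighbourhoods of $0 \in \mathbb{R}^n$, i.e.\ the germ functor
$$
  (p^n)^* F \;=\; \varinjlim_{0 \in U \subseteq \mathbb{R}^n} F(U)
  \,,
$$
where the colimit is taken over open neighbourhoods of the origin ordered by reverse inclusion. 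Since $\mathrm{Sh}(\mathrm{CartSp}) \simeq \mathrm{Sh}(\mathrm{SmthMfd})$ by the preceding proposition, one may equivalently work with sheaves on all manifolds and take germs of sections, which is technically cleaner.

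First I would verify that each germ functor $(p^n)^*$ is the inverse-image part of a genuine geometric morphism. The key points are that a filtered colimit of sets is exact, so $(p^n)^*$ preserves finite limits and all colimits; and that $(p^n)^*$ has a right adjoint $(p^n)_*$ (this is automatic once $(p^n)^*$ is cocontinuous on a presheaf-generated topos, or can be constructed via the standard formula for a point supported at a topological point). I would cite the general theory (e.g.\ from topos theory, points correspond to flat cover-preserving functors on the site, and the origin-germ functors manifestly send open covers of $U$ to jointly surjective families on germs). The second and main task is conservativity: I must show that if $f : F \to G$ in $\mathrm{Sh}(\mathrm{CartSp})$ induces an isomorphism $(p^n)^* f$ for every $n$, then $f$ itself is an isomorphism.

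The hard part will be the conservativity argument, and this is where the geometric content lives. Working with sheaves on $\mathrm{SmthMfd}$, I would argue that $f_M : F(M) \to G(M)$ is a bijection for every manifold $M$ by using that the germ stalks detect both injectivity and surjectivity of sheaf morphisms. Concretely, for injectivity: if two sections $s, t \in F(M)$ have $f_M(s) = f_M(t)$, then they agree on all germs at all points of $M$ (each point of an $n$-manifold admits a chart identifying a neighbourhood with $\mathbb{R}^n$, so its germ functor is one of the $(p^n)^*$ up to the equivalence), hence agree locally everywhere, hence agree by the sheaf separation axiom. For surjectivity: given $g \in G(M)$, its germ at each point lifts through the isomorphism $(p^n)^* f$ to a germ of a local section of $F$; these local lifts are unique by injectivity, hence glue by the sheaf gluing axiom to a global section mapping to $g$. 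The essential technical lemma underlying all of this is that for sheaves on $\mathrm{SmthMfd}$ every point of an $n$-manifold has a neighbourhood base of charts diffeomorphic to open subsets of $\mathbb{R}^n$, and moreover to open $n$-disks, so that the germ functors $(p^n)^*$ for $n \in \mathbb{N}$ genuinely capture the germ at \emph{every} point of \emph{every} manifold; establishing that this family is jointly conservative is the crux. I would then conclude that the displayed family is indeed a complete set of points, so the topos has enough points in the sense of Definition~\ref{def:enough points}.
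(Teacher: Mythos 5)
Your proposal is correct and follows essentially the same route as the paper, which defines $p_n$ as the filtered colimit over the shrinking disks $D^n_k$ (a cofinal family in your poset of neighbourhoods of the origin) and otherwise defers the verification to Dugger's notes. Your fleshing-out of the two halves --- exactness of filtered colimits giving a genuine point, and joint conservativity via charts identifying the stalk at any point of any $n$-manifold with $p^n$ --- is exactly the standard argument being cited.
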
 
This  statement was first highlighted in \cite{DuggerSheavesAndHomotopy}.
  In more detail, $p_n$ is given as follows. 
  Let $D^n_k\subset \RR^n$ denote the smooth manifold given by the standard 
open $n$-disk of radius $1/k$ centered at the origin.    
  For $X \in \mathrm{Sh}(\mathrm{CartSp})$ and $n \in \mathbb{N}$ the 
  \emph{$n$-stalk} of $X$ is the colimit 
  $$
     p_n(X) =   \varinjlim_{k\to \infty} \mathrm{Hom}(D^n_k,X). 
  $$
  of the values on $X$ on these disks.
  In particular the set $p_0(X)$ is the set of global sections of $X$.

\begin{definition}
  The $\infty$-topos of \emph{smooth $\infty$-groupoids} is
  $$
    \mathrm{Smooth}\mathrm{Grpd}_{\infty} := \mathrm{Sh}_{\infty}(\mathrm{CartSp})
	\,.
  $$
\end{definition}
\begin{proposition}
The $\infty$-topos $\mathrm{Smooth}\mathrm{Grpd}_{\infty}$ has the following properties:
\begin{enumerate}
 \item It is hypercomplete.
 \item It is equivalent to $\mathrm{Sh}_\infty(\mathrm{SmthMfd})$.
 \item The site $C$ is a \emph{$\infty$-cohesive site} (Definition \ref{CohesiveSite}).
\end{enumerate}
\end{proposition}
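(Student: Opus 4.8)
The plan is to verify the three claims in turn, with the bulk of the work going into the $\infty$-cohesiveness of item 3, from which the remaining two items follow with comparatively little extra effort. First I would treat item 3 by checking the axioms of Definition~\ref{CohesiveSite}. The terminal object is $\mathbb{R}^0 = *$. The key step is to identify the good covers with the \emph{differentiably good open covers}: open covers $\{U_i \to U\}$ all of whose finite non-empty intersections $U_{i_0} \cap \cdots \cap U_{i_k}$ are diffeomorphic to a Cartesian space. Such covers are cofinal among all open covers, since every point of a Riemannian manifold has a basis of geodesically convex neighbourhoods and a finite intersection of geodesically convex sets is, if non-empty, again convex and hence diffeomorphic to some $\mathbb{R}^m$. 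With this choice the {\v C}ech nerve $\check{C}(\{U_i\})$ is degreewise a coproduct of representables, which is condition 2(a).

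For condition 2(b) I would argue the two functors separately. Applying $\varinjlim$ degreewise collapses each intersection to a point, so $\varinjlim \check{C}(\{U_i\})$ is exactly the simplicial set obtained from the {\v C}ech nerve by replacing each $k$-fold intersection by an abstract $k$-simplex; its contractibility, i.e. $\varinjlim \check{C}(\{U_i\}) \xrightarrow{\simeq} *$, is precisely the conclusion of the nerve theorem quoted above. For the limit, I would use the remark that $\varprojlim$ evaluates a presheaf on the terminal object, so that $\varprojlim \check{C}(\{U_i\})$ is the {\v C}ech nerve of the induced family $\{C(*,U_i) \to C(*,U)\}$ on underlying point sets. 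Since every point of $U$ lies in some patch, this is the {\v C}ech nerve of a surjection of sets, whose realization recovers its target, giving $\varprojlim \check{C}(\{U_i\}) \xrightarrow{\simeq} \varprojlim U$. This establishes that $\mathrm{CartSp}$ is $\infty$-cohesive.

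Item 2 is the $\infty$-categorical upgrade of the dense-subsite statement already recorded, and in fact appears as Example~\ref{SmoothInfinityGroupoidByManifolds}: the Comparison Lemma yields $\mathrm{Sh}(\mathrm{CartSp}) \simeq \mathrm{Sh}(\mathrm{SmthMfd})$ on $1$-sheaves, and because the two sites have the same points (Lemma~\ref{ToposPointsOfSheavesOverSmoothManifolds}) the dense inclusion induces a functor on simplicial presheaves compatible with the stalkwise weak equivalences $W_C$, hence an equivalence on the simplicial localizations $\mathrm{Sh}_\infty(\mathrm{CartSp}) \simeq \mathrm{Sh}_\infty(\mathrm{SmthMfd})$.

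The main obstacle is item 1, hypercompleteness. Since $\mathrm{Sh}_\infty(C)$ is defined as the localization at the stalkwise weak equivalences, the genuine content is that the {\v C}ech-local projective model structure $[C^{\mathrm{op}},\mathrm{sSet}]_{\mathrm{proj},\mathrm{loc}}$ already presents it, equivalently that {\v C}ech descent implies hyperdescent over $\mathrm{CartSp}$. I see two routes. The first invokes finite homotopy dimension directly: each Cartesian space $\mathbb{R}^n$ has covering dimension $n$, so one expects $\mathrm{Sh}_\infty(\mathrm{CartSp})$ to be locally of finite homotopy dimension and hence hypercomplete by Lurie's criterion (HTT~7.2.1.10--12). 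The delicate point here is the dimension bookkeeping, relating the covering dimension of the disks to the homotopy dimension of the relevant slice $\infty$-topoi uniformly enough to apply the ``locally of homotopy dimension $\le n$'' criterion. The second route simply cites the general fact, established in \cite{Schreiber}, that the local projective model structure over any $\infty$-cohesive site is already hypercomplete; by item 3 this applies verbatim, and this is the argument I would present as the primary one, keeping the homotopy-dimension discussion as a remark.
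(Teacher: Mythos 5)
Your proposal is correct, but note that the paper itself offers no proof of this proposition at all: immediately after the statement it simply says that ``these and the following statements are discussed in detail in Section 4.4 of \cite{Schreiber}.'' What you have written is essentially a reconstruction of the argument in that cited reference, so there is no internal proof to compare against; your sketch supplies content the paper omits. Two points deserve flagging. First, the cofinality of differentiably good open covers is itself a nontrivial folklore statement: the substance is not that geodesically convex neighbourhoods exist and that convexity is stable under finite intersection (both easy), but that an open geodesically convex subset of a Riemannian manifold is \emph{diffeomorphic} to a Cartesian space --- this requires a separate argument and is usually only cited, not proved. Second, on hypercompleteness your honest hesitation about the ``dimension bookkeeping'' is warranted: Lurie's criterion in the form ``locally of homotopy dimension $\leq n$'' asks for a single uniform $n$ over a generating family, whereas the slices over the $\mathbb{R}^n$ have homotopy dimensions that are finite but unbounded, so one must either invoke the local nature of hypercompleteness together with the hypercompleteness of each slice $\mathbf{H}_{/\mathbb{R}^n}$ separately, or fall back on the general cohesive-site argument of \cite{Schreiber}; your choice to present the latter as primary matches what the paper (implicitly) does. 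The verifications of conditions 2(a) and 2(b) of Definition \ref{CohesiveSite} via the nerve theorem for $\varinjlim$ and evaluation at the terminal object for $\varprojlim$, and the dense-subsite argument for item 2, are all correct and are exactly the intended arguments.
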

These and the following statements are discussed in detail in Section 4.4. of \cite{Schreiber}.
In particular we have
\begin{observation}
  The $\infty$-topos of smooth $\infty$-groupoids is presented by the local projective
  model structure on simplicial presheaves over $\mathrm{CartSp}$
  $$
    \mathrm{Smooth}\mathrm{Grpd}_\infty \simeq ([\mathrm{CartSp}^{\mathrm{op}}, \mathrm{sSet}]_{\mathrm{proj}, \mathrm{loc}})
	\,.
  $$
  For 
  $$
    X \in \mathrm{SmthMfd} \hookrightarrow [\mathrm{CartSp}^{\mathrm{op}}, \mathrm{sSet}]
  $$
  a smooth manifold and $\{U_i \to U\}$ a \emph{good open cover} in the sense that every
  non-empty finite intersection of the $U_i$ is diffeomorphic to an open ball, the
  {\v C}ech nerve $\check{C}(\{U_i \}) \to X$ is a split hypercover. Hence every
  morphism out of 
  $X \in \mathrm{Smooth}\mathrm{Grpd}_\infty$ is presented by a hyper-{\v C}ech cocycle
  with respect to this cover.
  \label{ResolutionOfManifoldsByGoodCovers}
\end{observation}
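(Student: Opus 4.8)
The first equivalence is not really new: it is an instance of the general presentation result recalled in Section~\ref{InfinityToposPresentation}. By the preceding Proposition the site $\mathrm{CartSp}$ is $\infty$-cohesive in the sense of Definition~\ref{CohesiveSite}, and for such sites it was recorded there that the left Bousfield localization $[\mathrm{CartSp}^{\mathrm{op}}, \mathrm{sSet}]_{\mathrm{proj}, \mathrm{loc}}$ is already hypercomplete, so that its full subcategory of fibrant--cofibrant objects presents $\mathrm{Sh}_\infty(\mathrm{CartSp}) = \mathrm{Smooth}\mathrm{Grpd}_\infty$. Thus the first display requires no argument beyond assembling these facts.

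The substantive content is the claim that for a good open cover $\{U_i \to X\}$ of a smooth manifold $X$ the {\v C}ech nerve $\check{C}(\{U_i\}) \to X$ is a split hypercover in the sense of Definition~4.8 of \cite{DHI}, and the plan is to verify the three defining conditions in turn. First I would check \emph{degreewise representability}: in degree $n$ one has $\check{C}(\{U_i\})_n = \coprod_{i_0, \ldots, i_n} U_{i_0} \cap \cdots \cap U_{i_n}$, and the good-cover hypothesis says that each nonempty finite intersection is diffeomorphic to an open ball, hence to some $\mathbb{R}^m$, which is an object of $\mathrm{CartSp}$; so each degree is a coproduct of representables. Second I would observe that the {\v C}ech nerve is a \emph{split} simplicial object: the degenerate part in degree $n$ is exactly the coproduct over those tuples $(i_0, \ldots, i_n)$ having two equal adjacent indices, and the complementary nondegenerate part is again a coproduct of the (representable) intersections, which furnishes the required splitting of the latching maps. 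Third I would check the \emph{hypercover} condition: since $\check{C}(\{U_i\})$ is the relative $0$-coskeleton $\mathrm{cosk}_0^X(\coprod_i U_i)$, the matching maps in all degrees $\geq 1$ are isomorphisms, so the only nontrivial condition is that $\coprod_i U_i \to X$ be a local epimorphism, which holds because $\{U_i\}$ is an open cover. This is precisely the representability-and-contractibility already built into clause~2 of Definition~\ref{CohesiveSite}.

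The concluding statement, that every morphism out of $X$ is presented by a hyper-{\v C}ech cocycle, then follows by combining this with the mapping-space principle recalled after Theorem~\ref{CAcyclicityAndLocalFibrancy}: a split hypercover is cofibrant in $[\mathrm{CartSp}^{\mathrm{op}}, \mathrm{sSet}]_{\mathrm{proj}, \mathrm{loc}}$ (by \cite{DHI}, building on \cite{Dugger}) and the augmentation $\check{C}(\{U_i\}) \to X$ is a local weak equivalence, so that for any locally fibrant $A$ the ordinary simplicial hom-complex $[\mathrm{CartSp}^{\mathrm{op}}, \mathrm{sSet}](\check{C}(\{U_i\}), A)$ computes the derived mapping space $\mathbf{H}(X, A)$. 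Its vertices are exactly the morphisms of simplicial presheaves $\check{C}(\{U_i\}) \to A$, i.e.\ the hyper-{\v C}ech cocycles relative to the cover.

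The main obstacle --- and the only place the geometry genuinely enters --- is the degreewise representability of the {\v C}ech nerve. Because $\mathrm{CartSp}$ is \emph{not} closed under the intersections appearing in $\check{C}(\{U_i\})$ (a general open subset of a Cartesian space need not be diffeomorphic to a Cartesian space, and $\mathrm{SmthMfd}$ lacks the relevant pullbacks, cf.\ item~2 of Proposition~\ref{DegreewiseRepresentability}), the statement fails for arbitrary open covers; it is exactly the good-cover hypothesis that forces every finite intersection to be diffeomorphic to $\mathbb{R}^m$ and hence representable. Everything else in the argument is either formal (the splitting and the coskeletal reduction) or a direct citation, so it is the establishing and correct use of this representability where care is required.
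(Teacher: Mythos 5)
Your proposal is correct. The paper itself gives no proof of this Observation --- it is stated with only a pointer to Section 4.4 of \cite{Schreiber} --- and the argument you supply (hypercompleteness over the $\infty$-cohesive site $\mathrm{CartSp}$ for the first display; degreewise representability, the adjacent-repeat splitting, and the $0$-coskeletal reduction for the split-hypercover claim; cofibrancy of split hypercovers plus the local acyclicity of the augmentation for the cocycle statement) is exactly the standard one the paper presupposes, e.g.\ in the remark following Theorem~\ref{CAcyclicityAndLocalFibrancy}. Your closing emphasis that representability of the intersections is the one point where the good-cover hypothesis is genuinely used is also well placed.
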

\begin{definition}
  \label{LocallyFibrantSimplicialSheavesOnSmoothManifolds}
  Write
  $$
    \mathrm{sSh}(\mathrm{CartSp}) := [\Delta^{\mathrm{op}}, \mathrm{Sh}(\mathrm{CartSp})]
  $$
  for the category of simplicial objects in the sheaf topos over $\mathrm{CartSp}$.
  As in Section~\ref{InfinityToposPresentation}, 
  we say that a morphism in $\mathrm{sSh}(\mathrm{CartSp})$ is 
  \begin{itemize} 
\item a {\em local weak equivalence} if it is stalkwise a weak equivalence of simplicial sets;

\item  a {\em local fibration} if it is stalkwise a Kan fibration of simplicial sets,
  \end{itemize}
   where the stalks $\{p_n\}_{n \in \mathbb{N}}$ are those of 
   Lemma~\ref{ToposPointsOfSheavesOverSmoothManifolds}. Write
   $$
     \mathrm{sSh}(\mathrm{CartSp})_{\mathrm{lfib}}
	 \hookrightarrow
	 \mathrm{sSh}(\mathrm{CartSp})
   $$
   for the full subcategory on the locally fibrant objects.
\end{definition}
\begin{proposition}
  \label{Presentations of Smooth infinity-groupoids by locally fibrant simplicial sheaves}
  The $\infty$-topos $\mathrm{Smooth}\mathrm{Grpd}_{\infty}$ is presented 
  by the
  category $\mathrm{sSh}(\mathrm{CartSp})_{\mathrm{lfib}}$ from 
  Definition~\ref{LocallyFibrantSimplicialSheavesOnSmoothManifolds} with weak equivalences
  the local weak equivalences
  $$
    \mathrm{Smooth}\mathrm{Grpd}_{\infty} \simeq L_W \mathrm{sSh}(\mathrm{CartSp})_{\mathrm{lfib}}
	\,.
  $$
  Together with the local fibrations this is 
  a category of fibrant objects, Definition~\ref{CategoryOfFibrantObjects}.
\end{proposition}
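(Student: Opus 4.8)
The plan is to obtain both assertions as instances of general statements already established earlier in the paper, once the two standing hypotheses on the site are checked for $C = \mathrm{CartSp}$. Concretely, I would first record that $\mathrm{Smooth}\mathrm{Grpd}_\infty = \mathrm{Sh}_\infty(\mathrm{CartSp})$ is hypercomplete (this is part (1) of the preceding proposition) and that $\mathrm{CartSp}$ is a 1-site with enough points (Lemma~\ref{ToposPointsOfSheavesOverSmoothManifolds}, the complete set of stalks being the disk-stalks $\{p_n\}_{n\in\mathbb{N}}$). These are exactly the hypotheses under which Observation~\ref{SimplicialSheavesWithStalkwiseWeakEquivalencesModel1LocalicHypercompleteInfinityTopos} is formulated.

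For the presentation statement I would then apply Observation~\ref{SimplicialSheavesWithStalkwiseWeakEquivalencesModel1LocalicHypercompleteInfinityTopos} verbatim to $\mathbf{H} = \mathrm{Smooth}\mathrm{Grpd}_\infty$ and $C = \mathrm{CartSp}$: it yields both the equivalence $\mathbf{H} \simeq L_W \mathrm{sSh}(\mathrm{CartSp})$ and the refinement that the full subcategory $\mathrm{sSh}(\mathrm{CartSp})_{\mathrm{lfib}}$ of locally fibrant objects already presents $\mathbf{H}$ under simplicial localization. Since the weak equivalences $W$ of Definition~\ref{LocallyFibrantSimplicialSheavesOnSmoothManifolds} are precisely the stalkwise weak equivalences for this set of points, the displayed equivalence follows with no further work.

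For the category-of-fibrant-objects statement I would verify the axioms of Definition~\ref{CategoryOfFibrantObjects} directly, exhibiting the structure as the instance of the second item of Examples~\ref{BasicExamplesOfCatsOfFibObjects} attached to the site $\mathrm{CartSp}$. The unifying principle is that the inverse image $p_n^*$ of each topos point preserves all finite limits, so that finite products, pullbacks and the path-object construction are computed stalkwise, and each axiom reduces to a classical fact about Kan complexes. Thus $\mathrm{sSh}(\mathrm{CartSp})$ has finite products preserving local fibrancy (a finite product of Kan complexes is Kan); local fibrations and local weak equivalences contain all isomorphisms and the latter satisfy 2-out-of-3 stalkwise; and pullbacks of local fibrations (resp.\ local acyclic fibrations) along arbitrary maps exist in the sheaf topos, are again locally fibrant, and project by a local fibration (resp.\ local acyclic fibration), since stalkwise one is pulling back a Kan fibration over a Kan complex. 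The one axiom requiring an actual construction---and the main technical point---is the existence of functorial path objects. Here I would take the simplicial cotensor $A \mapsto A^{\Delta[1]}$, note that it lands in simplicial sheaves (being built degreewise from finite products of the sheaves $A_m$) and is functorial, and check that it factors the diagonal as $A \to A^{\Delta[1]} \to A \times A$ with the first map a local weak equivalence and the second a local fibration. Because $\Delta[1]$ is a finite simplicial set, $p_n^*(A^{\Delta[1]}) \cong (p_n^* A)^{\Delta[1]}$, so at each stalk this is the standard path object of the Kan complex $p_n^* A$ (the inclusion $\partial\Delta[1]\hookrightarrow\Delta[1]$ being a cofibration and $A$ locally fibrant), giving simultaneously the required factorization properties and the local fibrancy of $A^{\Delta[1]}$.

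The conceptual point worth flagging, to forestall an apparent shortcut, is that---as recorded in Remark~\ref{StalkwiseFibrationsAreNotModelStructureFibrations}---the local fibrations are genuinely not the fibrations of any of the model structures on simplicial sheaves over a nontrivial site, so the category-of-fibrant-objects structure here is really the second, independent example of Examples~\ref{BasicExamplesOfCatsOfFibObjects} and cannot be read off from ``fibrant objects of a model category''. The only real labor is therefore the verification that the cotensor path object is a sheaf and is stalkwise the correct object; all remaining axioms are formal consequences of left-exactness of the stalk functors together with the classical theory of Kan complexes.
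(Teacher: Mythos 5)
Your proposal is correct and follows essentially the route the paper intends: the paper gives no explicit proof of this proposition, deferring to Observation~\ref{SimplicialSheavesWithStalkwiseWeakEquivalencesModel1LocalicHypercompleteInfinityTopos} for the presentation statement and to the second item of Examples~\ref{BasicExamplesOfCatsOfFibObjects} for the category-of-fibrant-objects structure, exactly as you do. Your explicit verification of the axioms --- in particular the cotensor path object $A^{\Delta[1]}$ being a sheaf and stalkwise the standard path object via left-exactness of the point functors --- simply fills in details the paper leaves implicit.
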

  Therefore the hom-$\infty$-groupoids are equivalently given by the 
  cocycle categories of Proposition~\ref{SimplicialLocalizationOfCatOfFibrantObjects}.

\subsubsection{Locally fibrant simplicial manifolds}
\label{Locally fibrant simplicial smooth manifolds}

By Proposition~\ref{Presentations of Smooth infinity-groupoids by locally fibrant simplicial sheaves} 
smooth $\infty$-groupoids
are presented by locally fibrant simplicial sheaves on $\mathrm{CartSp}$. Every
simplicial manifold represents a simplicial sheaf over this site.
We discuss now the full sub-$\infty$-category of $\mathrm{Smooth}\mathrm{Grpd}_{\infty}$
on those objects that are presented by locally Kan fibrant simplicial smooth manifolds.

\medskip

\begin{definition}
  \label{LocallyFibrantSimplicialSmoothManifolds}
  Let the category of \emph{locally fibrant simplicial smooth manifolds}
  be the full subcategory 
  $$
    \mathrm{sSmthMfd}_{\mathrm{lfib}}
	\hookrightarrow
	\mathrm{sSh}(\mathrm{CartSp})_{\mathrm{lfib}} 
  $$
  of the category of locally fibrant simplicial sheaves over smooth manifolds,
  Definition~\ref{LocallyFibrantSimplicialSheavesOnSmoothManifolds}, 
  on those simplicial sheaves that are represented by simplicial smooth manifolds.
\end{definition}
The structure of a category of fibrant objects on $\mathrm{sSh}(\mathrm{CartSp})_{\mathrm{lfib}}$,
Proposition~\ref{Presentations of Smooth infinity-groupoids by locally fibrant simplicial sheaves},
does not quite transfer along this inclusion, because pullbacks in $\mathrm{SmthMfd}$
do not generally exist. Pullbacks in $\mathrm{SmthMfd}$ do however exist, notably, along surjective submersions.  

Following \cite{Henriques} we will take advantage of this last fact and give the 
following enhanced definition of the 
notion of local fibration between smooth simplicial manifolds.  Before we 
do this however we briefly review the notion of matching object 
for simplicial objects in $\mathrm{SmthMfd}$.  Recall (see for example 
\cite{GoerssJardine} Section VIII) that if $X$ is a simplicial object in $\mathrm{SmthMfd}$ 
and $K$ is a simplicial set, then the limit 
\[
\varprojlim_{\Delta^n\to K} X_n 
\]
in $\mathrm{SmthMfd}$ (if it exists) is denoted $M_KX$ and is called the {\em (generalized) matching 
object} of $X$ at $K$.  Here the limit is taken over the simplex category $\Delta/K$ of 
$K$.  This notion has a straightforward generalization to simplicial objects in an 
arbitrary category $\mathcal{C}$.  To talk about matching objects we need to confront the afore-mentioned problem 
that $\mathrm{SmthMfd}$ does not have all of the limits that one would like --- very often 
we would like to talk about the limit $M_KX$ without knowing that it actually exists.  
In this situation, we will (as in \cite{Henriques}) interpret $M_KX$ as the matching 
object of the simplicial sheaf on $\mathrm{SmthMfd}$ represented by $X$.  If the 
sheaf $M_KX$ is representable then the matching objects exists in $\mathrm{SmthMfd}$.  
With these remarks out of the way we can state the following definition.

\begin{definition}  
 \label{SubmersiveLocalFibration}
 A morphism $f : X \to Y$ in $\mathrm{sSmthMfd}$ is 
 \begin{itemize}
   \item a  {\em submersive local fibration} if $f_0\colon X_0\to Y_0$ is a surjective 
submersion and for all $0\leq k\leq n$, $n\geq 1$ the canonical morphism 
\[
X_n\to Y_n\times_{M_{\Lambda^k[n]}Y}M_{\Lambda^k[n]}X 
\]
is a surjective submersion; 
 \item 
   a \emph{submersion} if $f_n : X_n \to Y_n$ is a submersion for each $n \in \mathbb{N}$;
  \item
     A simplicial smooth manifold $X$ is said to be a \emph{Lie $\infty$-groupoid} 
     if $X \to *$ is a submersive local
	 fibration and all of the face maps of $X$ are submersions.  
 \end{itemize}
\end{definition}     
\begin{example}
  \label{CechNerveProjectionOfOpenCoverIsSubmersiveLocalFibration}
  Let $X$ be a smooth manifold and $\{U_i \to X\}$ an open cover. Then the
  {\v C}ech nerve projection
  $\check{C}(\{U_i\}) \to X$ is a submersive local acyclic fibration between locally
  fibrant simplicial smooth manifolds.
\end{example}

\begin{lemma}[\cite{Henriques}] 
\label{henriques lemma}
Let $A\hookrightarrow B$ be an acyclic cofibration between finite simplicial sets.  Suppose that 
$f\colon X\to Y$ is a submersive local fibration and that 
\[
M_AX\times_{M_AY} M_BY 
\]
is a manifold.  Then $M_BX$ is a manifold and 
\[
M_B X\to M_AX\times_{M_AY} M_BY 
\]
is a surjective submersion.  
\end{lemma}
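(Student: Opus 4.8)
The plan is to reduce the statement to the generating case of horn inclusions and then propagate it along the cellular structure of $A \hookrightarrow B$. For an inclusion $S \hookrightarrow T$ of finite simplicial sets write
\[
\mu_{S,T} \colon M_T X \to M_S X \times_{M_S Y} M_T Y
\]
for the canonical relative matching map. Since $M_{\Delta[n]}X = X_n$ and the $M_{\Lambda^k[n]}$ are precisely the matching objects appearing in Definition~\ref{SubmersiveLocalFibration}, the hypothesis that $f$ is a submersive local fibration says exactly that $\mu_{\Lambda^k[n],\Delta[n]}$ is a (representable) surjective submersion for every horn inclusion $\Lambda^k[n] \hookrightarrow \Delta[n]$, the case $n=0$ recovering the condition that $X_0 \to Y_0$ is a surjective submersion. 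The goal is then to prove that $\mu_{A,B}$ is a surjective submersion, which, given that its target $M_A X \times_{M_A Y} M_B Y$ is a manifold, simultaneously forces $M_B X$ to be a manifold.

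First I would record that $A \hookrightarrow B$, being an acyclic cofibration of simplicial sets, is an anodyne extension and hence lies in the saturated class generated by the horn inclusions; as $B$ has only finitely many nondegenerate simplices this expresses $A \hookrightarrow B$ as a finite composite of pushouts of horns (up to retract). I would then establish the two stability properties of $\mu$ that make this decomposition usable, both consequences of the fact that the matching functor $T \mapsto M_T X$ sends colimits of simplicial sets to limits. For a pushout $T' = T \cup_S S'$ one gets $M_{T'} X = M_T X \times_{M_S X} M_{S'} X$, and a direct identification shows $\mu_{S',T'}$ is a base change of $\mu_{S,T}$; for a composite $S \hookrightarrow T \hookrightarrow U$ the map $\mu_{S,U}$ factors as $\mu_{T,U}$ followed by a base change of $\mu_{S,T}$ along $M_U Y \to M_T Y$. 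Because in $\mathrm{SmthMfd}$ surjective submersions are stable under pullback (along arbitrary maps), composition and retracts, these two facts show that the collection of inclusions for which $\mu$ is a surjective submersion is saturated and contains the horns, and therefore contains $A \hookrightarrow B$.

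The delicate point is that this saturation argument lives a priori in the category of simplicial sheaves on $\mathrm{SmthMfd}$, where all the limits exist, and not in $\mathrm{SmthMfd}$ itself, where pullbacks are only guaranteed along surjective submersions and a matching object $M_T X$ need not be representable. I would therefore run the induction over the finite horn-by-horn filtration $A = B_0 \subset B_1 \subset \cdots \subset B_m = B$ and upgrade the sheaf-level conclusion to manifolds by a descending argument: knowing $\mu_{A,B}$ is a surjective submersion of sheaves whose target is the manifold $M_A X \times_{M_A Y} M_B Y$, each intermediate $\mu$ exhibits the preceding matching object as the pullback of a representable surjective submersion along a map out of an already-established manifold, and pullbacks of surjective submersions of manifolds along maps of manifolds are again manifolds. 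This is exactly the interpretation of matching objects via representable sheaves set up before the statement.

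The hard part, and the reason the manifold hypothesis on $M_A X \times_{M_A Y} M_B Y$ is indispensable, is this representability bookkeeping: one must order the horn fillings and track the base objects of the successive pullbacks so that every pullback is formed along a map from a manifold, ensuring that every matching object $M_{B_i} X$ encountered --- and finally $M_B X$ --- is genuinely representable rather than merely a simplicial sheaf. The abstract closure properties of surjective submersions are routine; the content of Lemma~\ref{henriques lemma} is precisely that the single representability hypothesis at the top suffices to carry representability down the whole tower.
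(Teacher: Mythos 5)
The paper does not actually prove this lemma --- it is imported verbatim from \cite{Henriques} and used as a black box --- so there is no in-paper argument to compare against. Your overall strategy is the right one and is exactly that of the cited source: reduce to horn inclusions, propagate the relative matching map $\mu_{S,T}$ through pushouts and composites using the stability of surjective submersions under base change and composition, and recover representability of the intermediate matching objects by descending from the manifold hypothesis on $M_AX\times_{M_AY}M_BY$. Your formal identifications of $\mu_{S',T'}$ as a base change of $\mu_{S,T}$ for a pushout, and of $\mu_{S,U}$ as $\mu_{T,U}$ followed by a base change of $\mu_{S,T}$, are both correct.

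There is, however, one genuine gap, and it sits at your very first reduction. An acyclic cofibration of simplicial sets is anodyne, hence a \emph{retract} of a (possibly transfinite) composite of pushouts of horn inclusions; finiteness of $B$ does not let you drop the retract or truncate the composite. The intermediate object supplied by the small object argument is an infinite simplicial set, so its matching objects are infinite limits that leave $\mathrm{SmthMfd}$ altogether, and the closure of surjective submersions under retracts --- which does hold in the arrow category of manifolds --- cannot be invoked, because the retract diagram no longer lives there. Nor is it justified by finiteness alone that every acyclic cofibration between finite simplicial sets is an honest finite composite of pushouts of horns; this is a simple-homotopy-type question, not a formality. What your induction actually establishes is the lemma for inclusions $A\hookrightarrow B$ obtained by filling finitely many horns. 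That is the version Henriques proves, and it is all that is ever used in this paper (the vertex inclusion $\Delta[0]\hookrightarrow\Delta[n]$ in Corollary \ref{SubmersiveLocalFibrationsAreSurjectiveSubmersions}, and the $C\Lambda^{n-1}_{k}\cup_{\Lambda^{n-1}_{k}}\Delta^{n-1}$ decompositions in the proofs of Propositions \ref{Lie oo-grps fib} and \ref{Wbar G submersively locally fibrant}). Either restrict the hypothesis to that class, or supply a genuinely new argument for an arbitrary acyclic cofibration of finite simplicial sets; the closure properties you list do not deliver the statement at the stated level of generality.
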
 

As a corollary we have the following statement.  
\begin{corollary} 
  \label{SubmersiveLocalFibrationsAreSurjectiveSubmersions}
If $f\colon X\to Y$ is a submersive local fibration, then $f$ is a surjective submersion.  
\end{corollary}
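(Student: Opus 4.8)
The plan is to deduce the statement degreewise from Henriques' Lemma~\ref{henriques lemma}, applied to the vertex inclusion $\iota\colon \Delta[0] \hookrightarrow \Delta[n]$. First I would record the relevant matching objects: since the simplex category $\Delta/\Delta[m]$ has a terminal object (the identity on $[m]$), one has $M_{\Delta[0]}X = X_0$ and $M_{\Delta[n]}X = X_n$, and likewise for $Y$. The morphism $\iota$ is a monomorphism of finite simplicial sets and a weak equivalence because $\Delta[n]$ is contractible, hence an acyclic cofibration, so Lemma~\ref{henriques lemma} is applicable to it with $A = \Delta[0]$ and $B = \Delta[n]$.

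Next I would verify the manifold hypothesis of the lemma. The object $M_{\Delta[0]}X \times_{M_{\Delta[0]}Y} M_{\Delta[n]}Y$ is the fiber product $X_0 \times_{Y_0} Y_n$ formed along the cospan $X_0 \xrightarrow{f_0} Y_0 \xleftarrow{} Y_n$, where the right leg is the face operator $Y_n \to Y_0$ induced by $\iota$. Since $f_0$ is a surjective submersion by the definition of a submersive local fibration (Definition~\ref{SubmersiveLocalFibration}), this pullback exists in $\mathrm{SmthMfd}$ and is a manifold; moreover its projection to $Y_n$ is again a surjective submersion, being a base change of $f_0$ along $Y_n \to Y_0$. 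With the hypothesis satisfied, Henriques' lemma then yields that the canonical map $X_n = M_{\Delta[n]}X \to X_0 \times_{Y_0} Y_n$ is a surjective submersion.

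Finally I would identify $f_n$ with the composite $X_n \to X_0 \times_{Y_0} Y_n \to Y_n$. By naturality of $f$ the two composites $X_n \to X_0 \xrightarrow{f_0} Y_0$ and $X_n \xrightarrow{f_n} Y_n \to Y_0$ agree, so the canonical morphism $X_n \to X_0 \times_{Y_0} Y_n$ has second component exactly $f_n$; postcomposing with the projection to $Y_n$ recovers $f_n$. As a composite of two surjective submersions, $f_n$ is a surjective submersion for every $n \geq 1$, while the case $n=0$ is the hypothesis itself. This proves that $f$ is a surjective submersion.

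There is no genuine obstacle here beyond setting up the invocation of Lemma~\ref{henriques lemma}: the one point requiring care is the verification that $X_0 \times_{Y_0} Y_n$ is representable by a manifold, which is precisely where the surjective-submersion condition on $f_0$ enters and which legitimizes feeding the hypothesis of the lemma. The only mild subtlety is checking that the factorization $X_n \to X_0 \times_{Y_0} Y_n \to Y_n$ really reproduces $f_n$, which follows from naturality as indicated.
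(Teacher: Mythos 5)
Your proof is correct and follows essentially the same route as the paper: both apply Henriques' lemma to the vertex inclusion $\Delta[0]\subset\Delta[n]$, form the pullback $X_0\times_{Y_0}Y_n$ (which exists because $f_0$ is a surjective submersion), and conclude by factoring $f_n$ through it. Your write-up is, if anything, slightly more careful about identifying the matching objects and verifying the representability hypothesis.
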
 
\begin{proof} 
Consider the acyclic cofibration $\Delta[0]\subset \Delta[n]$ corresponding to the vertex 
$0$ of $\Delta[n]$.  Then the diagram 
\[
\xymatrix{ 
X_0\times_{Y_0} Y_n \ar[d] \ar[r] & X_0 \ar[d] \\ 
Y_n \ar[r] & Y_0 } 
\]
is a pullback in $\mathrm{SmthMfd}$ and $X_0\times_{Y_0}Y_n\to Y_n$ is a surjective submersion.  
From Lemma~\ref{henriques lemma} we see that $X_n\to  X_0\times_{Y_0}Y_n$ 
is a surjective submersion.  Since the map $f_n\colon X_n\to Y_n$ factors through 
$X_0\times_{Y_0}Y_n$ we see that $f_n$ is a surjective submersion.    
\end{proof} 
%
\begin{proposition}
  \label{SubmersiveLocalFibrationsAreStableUnderPulback}
The pullback of a (locally acyclic) submersive local fibration in $\mathrm{sSmthMfd}$
exists and is again a (locally acyclic) submersive local fibration.
\end{proposition}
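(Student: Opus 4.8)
The plan is to construct the pullback degreewise and then verify the two clauses of Definition~\ref{SubmersiveLocalFibration} by exhibiting each structure map of the pullback as a base change of the corresponding structure map of the original fibration.

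First I would address existence. By Corollary~\ref{SubmersiveLocalFibrationsAreSurjectiveSubmersions}, if $f : X \to Y$ is a submersive local fibration then each $f_n : X_n \to Y_n$ is a surjective submersion. Given any $g : Z \to Y$ in $\mathrm{sSmthMfd}$, the degreewise fibre product $W_n := X_n \times_{Y_n} Z_n$ therefore exists in $\mathrm{SmthMfd}$, since pullbacks along surjective submersions always exist there and are preserved by the embedding $\mathrm{SmthMfd} \hookrightarrow \mathrm{Sh}(\mathrm{CartSp})$. The face and degeneracy maps are induced on these fibre products, so the $W_n$ assemble into a simplicial smooth manifold $W$; as limits of simplicial objects are computed degreewise, $W$ is the pullback $X \times_Y Z$ in $\mathrm{sSmthMfd}$, with projection $p : W \to Z$.

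Next I would check that $p$ is a submersive local fibration. The first clause is immediate: $p_0 : W_0 \to Z_0$ is the base change of the surjective submersion $f_0$ along $g_0$, hence a surjective submersion. For the matching clause, fix $0 \le k \le n$ with $n \ge 1$ and write $L := \Lambda^k[n]$. Since $M_L(-)$ is a limit over $\Delta/L$ and limits commute with pullbacks, there is a canonical isomorphism $M_L W \cong M_L X \times_{M_L Y} M_L Z$. Substituting this into the target $Z_n \times_{M_L Z} M_L W$ and cancelling the repeated factor $M_L Z$ identifies it with $Z_n \times_{M_L Y} M_L X$, where $Z_n \to M_L Y$ runs through $g$ and $M_L X \to M_L Y$ through $f$. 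One then checks that the square
$$
\xymatrix{
  W_n \ar[r] \ar[d] & X_n \ar[d] \\
  Z_n \times_{M_L Y} M_L X \ar[r] & Y_n \times_{M_L Y} M_L X
}
$$
is cartesian, so that the canonical map $W_n \to Z_n \times_{M_L Z} M_L W$ is the base change, along $g_n : Z_n \to Y_n$, of the map $X_n \to Y_n \times_{M_L Y} M_L X$. The latter is a surjective submersion by hypothesis, and surjective submersions are stable under base change, so the former is as well, establishing the matching clause.

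For the locally acyclic case, I would argue stalkwise. A complete set of topos points is given by the disk stalks of Lemma~\ref{ToposPointsOfSheavesOverSmoothManifolds}, and each inverse image functor preserves finite limits, hence sends the cartesian square defining $W$ to a pullback square of simplicial sets. If $f$ is in addition a local weak equivalence, then each stalk of $f$ is an acyclic Kan fibration; since acyclic Kan fibrations are stable under pullback in $\mathrm{sSet}_{\mathrm{Quillen}}$, every stalk of $p$ is again an acyclic Kan fibration, so $p$ is a locally acyclic submersive local fibration. The main obstacle throughout is representability: one must know that the matching objects and fibre products above are genuine smooth manifolds rather than merely sheaves, so that the phrase ``surjective submersion'' is meaningful. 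This is exactly what Lemma~\ref{henriques lemma} provides, applied along the acyclic cofibrations $\Lambda^k[n] \hookrightarrow \Delta[n]$; it guarantees that the relevant matching objects of $W$ are manifolds and the comparison maps are surjective submersions, so the identifications can be carried out in $\mathrm{SmthMfd}$ and not just in $\mathrm{Sh}(\mathrm{CartSp})$.
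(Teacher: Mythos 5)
Your construction follows the same route as the paper: existence of the degreewise pullback via Corollary~\ref{SubmersiveLocalFibrationsAreSurjectiveSubmersions}, the identification
$Z_n\times_{M_{\Lambda^k[n]}Z}M_{\Lambda^k[n]}(X\times_Y Z)\cong Z_n\times_{M_{\Lambda^k[n]}Y}M_{\Lambda^k[n]}X\cong \bigl(Y_n\times_{M_{\Lambda^k[n]}Y}M_{\Lambda^k[n]}X\bigr)\times_{Y_n}Z_n$,
the recognition of the comparison map for the pullback as a base change along $g_n$ of the comparison map $X_n\to Y_n\times_{M_{\Lambda^k[n]}Y}M_{\Lambda^k[n]}X$, and the stalkwise argument for the acyclic case. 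All of that is correct and is exactly what the paper does.

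The one step that does not hold up is your justification of representability at the end. You correctly flag that one must know the target $Z_n\times_{M_{\Lambda^k[n]}Y}M_{\Lambda^k[n]}X$ is a manifold before calling the comparison map a surjective submersion, but Lemma~\ref{henriques lemma} does not deliver this: applied to $\Lambda^k[n]\hookrightarrow\Delta[n]$ for $f\colon X\to Y$ it only reproduces data already contained in the hypothesis that $f$ is a submersive local fibration, and applying it to $W\to Z$ would presuppose the very conclusion you are proving. (Note also that $M_{\Lambda^k[n]}Z$, hence $M_{\Lambda^k[n]}W$, need not be representable for arbitrary $Z$, so the isomorphism $M_{\Lambda^k[n]}W\cong M_{\Lambda^k[n]}X\times_{M_{\Lambda^k[n]}Y}M_{\Lambda^k[n]}Z$ should only be asserted at the level of sheaves; this is harmless since only the fibre product with $Z_n$ over $M_{\Lambda^k[n]}Z$ needs to be a manifold.) The correct and needed observation, which is the one the paper makes, is that the surjective submersion $f_n\colon X_n\to Y_n$ factors as
$X_n\to Y_n\times_{M_{\Lambda^k[n]}Y}M_{\Lambda^k[n]}X\to Y_n$
with first factor a surjective submersion by hypothesis, so the second factor is a surjective submersion as well; its base change along $g_n\colon Z_n\to Y_n$ therefore exists in $\mathrm{SmthMfd}$, which is exactly the representability you need. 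With that one-line replacement your argument is complete.
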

\begin{proof}
Suppose that $p\colon X\to Y$ is a submersive local fibration.  
Then by Lemma~\ref{SubmersiveLocalFibrationsAreSurjectiveSubmersions} 
$p_n\colon X_n\to Y_n$ is 
a surjective submersion for all $n$ and hence the pullback $X\times_Y Z$ exists in 
$\mathrm{sSmthMfd}$.  
We need to show that the projection $X\times_Y Z\to Z$ is a submersive local fibration.  
Clearly $X_0\times_{Y_0}Z_0\to Z_0$ is a submersion.  
Next, observe that we have isomorphisms of topological spaces 
\begin{align*}
M_{\Lambda^k[n]}(X\times_Y Z)\times_{M_{\Lambda^k[n]}Z} Z_n & = 
M_{\Lambda^k[n]}X\times_{M_{\Lambda^k[n]}Y}Z_n \\ 
& = (M_{\Lambda^k[n]}X\times_{M_{\Lambda^k[n]}Y}Y_n)\times_{Y_n} Z_n.  
\end{align*} 
Since the surjective submersion $X_n\to Y_n$ factors as 
\[
X_n\to M_{\Lambda^k[n]}X\times_{M_{\Lambda^k[n]}Y}Y_n \to Y_n 
\]
and $X\to Y$ is a submersive local fibration, it follows that 
\[
M_{\Lambda^k[n]}X\times_{M_{\Lambda^k[n]}Y}Y_n \to Y_n
\]
is also a surjective submersion.  Hence 
\[
M_{\Lambda^k[n]}(X\times_Y Z)\times_{M_{\Lambda^k[n]}Z} Z_n =  
(M_{\Lambda^k[n]}X\times_{M_{\Lambda^k[n]}Y}Y_n)\times_{Y_n} Z_n
\]
is a manifold and 
\[
X_n\times_{Y_n} Z_n\to M_{\Lambda^k[n]}(X\times_Y Z)\times_{M_{\Lambda^k[n]}Z} Z_n
\]
is a surjective submersion.  

To check the statement about local weak equivalences, 
use the facts that stalks commute with pullbacks and that acyclic fibrations in 
$\sSet$ are stable under pullback.  
\end{proof}

\subsubsection{Groups}

By Theorem~\ref{SimplicialLoopingQuillenEquivalence} every $\infty$-group
in $\mathrm{Smooth}\mathrm{Grpd}_{\infty}$ is presented by 
some group object in $\mathrm{sSh}(\mathrm{CartSp})$. In view 
of the discussion in Section~\ref{Locally fibrant simplicial smooth manifolds} it is of interest to 
determine those which are in the inclusion 
$\mathrm{sSmthMfd}_{\mathrm{lfib}} \hookrightarrow \mathrm{sSh}(\mathrm{CartSp})$
from Definition~\ref{LocallyFibrantSimplicialSmoothManifolds}.

\medskip

\begin{proposition} 
\label{Lie oo-grps fib}
Let $G$ be a simplicial Lie group.  Then $G$ is a Lie $\infty$-groupoid, 
and so in particular is a locally fibrant simplicial smooth manifold,
Definition~\ref{LocallyFibrantSimplicialSmoothManifolds}.  
\end{proposition}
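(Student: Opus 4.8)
The plan is to verify directly the two defining conditions of Definition~\ref{SubmersiveLocalFibration} for the simplicial smooth manifold $X = G$ and the terminal object $Y = *$: that every face map $d_i \colon G_n \to G_{n-1}$ is a submersion, and that the unique map $G \to *$ is a submersive local fibration. Since $Y = *$ is terminal we have $Y_n = *$ and $M_{\Lambda^k[n]} Y = *$, so the matching condition collapses, and what must be shown is that the canonical map $p_{n,k} = (d_i)_{i \ne k} \colon G_n \to M_{\Lambda^k[n]} G$ is a surjective submersion for all $0 \le k \le n$, $n \ge 1$ (the condition that $G_0 \to *$ be a surjective submersion is automatic, $G_0$ being a nonempty Lie group). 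Throughout I would use that in a simplicial Lie group each $G_n$ is a finite-dimensional Lie group and all face and degeneracy maps are smooth group homomorphisms.

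The key structural point, and exactly what makes the group case succeed where a general locally fibrant simplicial manifold fails, is that the matching object $M_{\Lambda^k[n]} G$ is representable. I would identify it explicitly as the subgroup of the \emph{finite} product $\prod_{i \ne k} G_{n-1}$ consisting of those tuples $(x_i)_{i\ne k}$ satisfying the face identities $d_i x_j = d_{j-1} x_i$ for $i < j$ with $i,j \ne k$ (a compatible family of simplices on the horn $\Lambda^k[n]$ is determined by its values on the top non-degenerate faces, subject precisely to these identities). This is a closed subgroup of a Lie group, hence by Cartan's closed-subgroup theorem an embedded Lie subgroup, in particular a smooth manifold. With representability in hand, $p_{n,k}$ is a homomorphism of Lie groups.

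To see that $p_{n,k}$ is a surjective submersion I would invoke the explicit form of Moore's theorem that every simplicial group is a Kan complex (see \cite{May}, or Chapter~V of \cite{GoerssJardine}): the standard Kan filler is a specific word built from the input faces $x_i$, their faces and degeneracies, and group inverses, all of which are smooth operations. This produces a \emph{smooth} section $\sigma \colon M_{\Lambda^k[n]} G \to G_n$ of $p_{n,k}$, which in particular is surjective. A surjective homomorphism of Lie groups with a global smooth section splits: the map $\ker p_{n,k} \times M_{\Lambda^k[n]} G \to G_n$, $(z,m) \mapsto z\cdot\sigma(m)$, is a diffeomorphism (its inverse $w \mapsto (w\,\sigma(p_{n,k}(w))^{-1}, p_{n,k}(w))$ is smooth), under which $p_{n,k}$ becomes the projection onto the second factor, manifestly a surjective submersion. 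The face maps are handled the same way but more cheaply: each $d_i$ admits a homomorphic degeneracy section ($d_i s_i = \mathrm{id}$ for $i \le n-1$ and $d_n s_{n-1} = \mathrm{id}$), hence is a split surjective homomorphism and so a submersion. This establishes that $G \to *$ is a submersive local fibration with submersive faces, i.e. that $G$ is a Lie $\infty$-groupoid, and therefore a locally fibrant simplicial smooth manifold.

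The step I expect to require the most care is the representability of $M_{\Lambda^k[n]} G$: for a general locally fibrant simplicial manifold the relevant matching objects are iterated fibre products along face maps and need not exist in $\mathrm{SmthMfd}$ without transversality, which is exactly why Definition~\ref{SubmersiveLocalFibration} is phrased through represented sheaves and why Lemma~\ref{henriques lemma} is needed in the general theory. The leverage of the group case is that each matching object inherits a group structure as a closed subgroup of a finite product, so Cartan's theorem supplies representability for free. I would take care to pin down the identification of $M_{\Lambda^k[n]} G$ with that closed subgroup. I would also note that routing the submersion conclusion through the explicit smooth Kan section, rather than through the bare assertion that a surjective homomorphism of Lie groups is a submersion, sidesteps the point-set subtleties associated with the countability hypotheses on the groups involved.
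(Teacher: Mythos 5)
Your proof is correct, and the surjectivity step is essentially the one in the paper (both arguments ultimately extract a \emph{smooth} global section of $G_n\to M_{\Lambda^k[n]}G$ from the Kan-filler property of simplicial groups --- you via the explicit Moore word, the paper via the Yoneda argument applied to $\mathrm{SmthMfd}(Y,G)$ with $Y=M_{\Lambda^k[n]}G$ --- and then upgrades a split surjective homomorphism of Lie groups to a surjective submersion). Where you genuinely diverge is the existence of the matching objects in $\mathrm{SmthMfd}$, which is the part the paper calls delicate. You identify $M_{\Lambda^k[n]}G$ in one stroke as the closed subgroup of the finite product $\prod_{i\neq k}G_{n-1}$ cut out by the face identities and invoke Cartan's closed-subgroup theorem; since a closed embedded submanifold represents the corresponding subsheaf, this settles representability with no induction. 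The paper instead runs an induction on $n$, decomposing $\Lambda^n_k = C\Lambda^{n-1}_{k}\cup_{\Lambda^{n-1}_{k}}\Delta^{n-1}$ and exhibiting $M_{\Lambda^n_k}G$ as a pullback of $M_{\Lambda^{n-1}_{k}}\mathrm{Dec}_0 G\to M_{\Lambda^{n-1}_{k}}G\leftarrow G_{n-1}$, which exists because the inductive hypothesis makes one leg a surjective submersion (with the $k=n$ case handled by passing to the opposite simplicial group). Your route is shorter and self-contained for this proposition; the payoff of the paper's heavier inductive d\'ecalage machinery is that it is exactly what is reused in Proposition~\ref{Wbar G submersively locally fibrant} for $\Wbar G$, which is not a simplicial group, so the closed-subgroup shortcut is unavailable there. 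Your closing remarks --- that the compatibility equations define a subgroup because the face maps are homomorphisms, and that routing through the explicit section avoids rank/countability issues --- are exactly the right points to flag.
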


\begin{proof} 
Clearly all of the face maps of $G$ are surjective submersions.  Therefore we need 
to prove that $G$ is locally fibrant.  Our proof is based on the observation in 
Lemma 3.3 of \cite{Stevenson2} that for any smooth manifold $Y$, 
the simplicial set $\mathrm{SmthMfd}(Y,G)$ whose set of $n$-simplices 
is the set $\mathrm{SmthMfd}(Y,G_n)$ has the structure of a simplicial 
group and hence the various maps 
\[
\mathrm{SmthMfd}(Y,G_n)\to M_{\Lambda^n_k}\mathrm{SmthMfd}(Y,G) 
\]
are all surjective.  Therefore, if the limit $M_{\Lambda^n_k}G$ 
exists in $\mathrm{SmthMfd}$ then we can identify 
\[
M_{\Lambda^n_k}\mathrm{SmthMfd}(Y,G) = \mathrm{SmthMfd}(Y,M_{\Lambda^n_k}G)
\] 
and hence conclude that 
\[
G_n\to M_{\Lambda^n_k}G
\]
admits a global section and hence is a surjective submersion.  The details are 
more delicate here than in \cite{Stevenson2} since we need to show that all of the 
requisite limits exist in $\mathrm{SmthMfd}$.  

When $n=1$ we need to show that the two face maps $d_0,d_1\colon G_1\to G_0$ 
are surjective submersions which is again clear since $s_0\colon G_0\to G_1$ is a 
global section of both of these maps.  When $n=2$ the matching objects 
$M_{\Lambda^2_k}G$ for $0\leq k\leq 2$ can be identified with pullbacks $G_1\times_{G_0}G_1$ 
which exist in $\mathrm{SmthMfd}$ since $d_0,d_1\colon G_1\to G_0$ are submersions.  
The Yoneda argument above then shows that $G_2\to M_{\Lambda^2_k}G$ is a 
surjective submersion in these cases.  

The case $n=3$ makes the general pattern clear: in this case any of the matching objects 
$M_{\Lambda^3_k}G$ for $0\leq k\leq 3$ can be identified with pullbacks of the form 
\[
G_2\times_{G_1}G_2\times_{G_1\times_{G_0}G_1}G_2
\]
in which the map $G_2\to G_1\times_{G_0}G_1$ is the canonical map $G_2\to M_{\Lambda^2_{k}}G$.  
Likewise the pullback $G_2\times_{G_1}G_2$ is the matching object $M_{\Lambda^2_k}\Dec_0G$ 
where $\Dec_0G$ is the simplicial Lie group which is the d\'{e}calage of $G$ 
(Definition~\ref{Decalage}).    

This observation forms the basis for a proof by induction on $n\geq 1$ that for 
any simplicial Lie group $G$, and any integer $0\leq k\leq n$, the limit $M_{\Lambda^n_k}G$ 
exists in $\mathrm{SmthMfd}$ (the 
Yoneda argument above then shows that $G$ is locally fibrant).  The case $n=1$ is 
clear.  

For the inductive step, first suppose that $k<n$.  
Observe that we have an identification 
\[
\Lambda^n_k = C\Lambda^{n-1}_{k}\cup_{\Lambda^{n-1}_{k}}\Delta^{n-1} 
\]
where $C\Lambda^{n-1}_{k}$ denotes the usual cone construction on $\Lambda^{n-1}_{k}$ (see 
\cite{GoerssJardine} Chapter III).  
It follows, using Corollary 2.2 of \cite{Stevenson2} and the fact that the matching objects functor 
$M_{(-)}G\colon s\Set^\op\to \mathrm{Sh}(\mathrm{CartSp})$ 
on the representable simplicial sheaf $G$ preserves limits, that the diagram 
\[
\xymatrix{ 
M_{\Lambda^n_k}G \ar[d] \ar[r] & G_{n-1} \ar[d] \\ 
M_{\Lambda^{n-1}_{k}}\Dec_0 G \ar[r] & M_{\Lambda^{n-1}_{k}}G } 
\] 
in $\mathrm{Sh}(\mathrm{CartSp})$ is a pullback.  
Hence $M_{\Lambda^n_k}G$ acquires the unique structure of a smooth manifold 
so that this diagram is a pullback in the category $\mathrm{SmthMfd}$.  It follows that 
with this unique smooth structure $M_{\Lambda^n_k}G$ is a model for the corresponding 
limit in $\mathrm{SmthMfd}$.  

For the case $k=n$ we apply the statement just 
proven with $G$ replaced by its {\em opposite} simplicial Lie group $G^o$; 
this has the property that $M_{\Lambda^n_0}G^o = M_{\Lambda^n_{n}}G$, 
which shows that the limit $M_{\Lambda^n_n}G$ exists, completing the inductive step.            
\end{proof}

\subsubsection{Principal bundles}

By the discussion in \ref{Principal infinity-bundles presentations} 
and using 
Proposition~\ref{Presentations of Smooth infinity-groupoids by locally fibrant simplicial sheaves} 
we have a presentation of principal $\infty$-bundles
in the $\infty$-topos $\mathrm{Smooth}\mathrm{Grpd}_{\infty}$ by weakly principal bundles in 
the category $\mathrm{sSh}(\mathrm{CartSp})_{\mathrm{lfib}}$ of locally fibrant
simplicial sheaves. Here we discuss how parts of this construction may be restricted
further along the inclusion 
$\mathrm{sSmthMfd}_{\mathrm{lfib}} \hookrightarrow \mathrm{sSh}(\mathrm{CartSp})_{\mathrm{locfib}}$
of locally fibrant simplicial smooth manifolds,  
Definition~\ref{Locally fibrant simplicial smooth manifolds}.

\medskip

\begin{proposition}
\label{Wbar G submersively locally fibrant}
  Let $G$ be a simplicial lie group.  Then the following statements 
  are true: 
  \begin{enumerate} 
  \item  the object $\Wbar G \in \mathrm{sSh}(\mathrm{CartSp})$,
  Definition~\ref{BarWAsCompositeWithTotal}, is presented by a submersively locally fibrant
  simplicial smooth manifold.
\item   
   the universal $G$-principal bundle
  $W G \to \Wbar G$, Definition~\ref{WGToWbarG}, formed
  in $\mathrm{sSh}(\mathrm{CartSp})$  is presented by a 
  \emph{submersive} local fibration of simplicial smooth manifolds.
  \end{enumerate}
\end{proposition}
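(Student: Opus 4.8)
The plan is to prove both statements by running the Yoneda-style argument used for Proposition~\ref{Lie oo-grps fib}, after feeding in the two structural facts that make $\Wbar G$ and $WG$ amenable to it. First I would record the explicit degreewise descriptions: by Definition~\ref{BarWAsCompositeWithTotal} together with Proposition~\ref{TotalSimplicialObjectByBorelConstruction} one has $(\Wbar G)_n \cong G_{n-1}\times\cdots\times G_0$ and $(WG)_n \cong G_n\times G_{n-1}\times\cdots\times G_0$, so both are finite products of the Lie groups $G_i$ and hence represented by simplicial smooth manifolds; moreover every face map is built from coordinate projections, $G$-multiplications, and the face maps of $G$, so is a submersion. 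The second input is a Yoneda identity: since $\Wbar$ and $\sigma_*$ are assembled from finite limits (Remark~\ref{Total simplicial object is built from finite limits}) and each $\mathrm{SmthMfd}(Y,-)$ preserves limits, for every smooth manifold $Y$ one has $\mathrm{SmthMfd}(Y,\Wbar G)\cong \Wbar(\mathrm{SmthMfd}(Y,G))$ and $\mathrm{SmthMfd}(Y,WG)\cong W(\mathrm{SmthMfd}(Y,G))$, where $\mathrm{SmthMfd}(Y,G)$ is a genuine simplicial group. By Theorem~\ref{SimplicialLoopingQuillenEquivalence} the former is a Kan complex and the latter is contractible and fibred over it, so all horn-restriction maps $\mathrm{SmthMfd}(Y,(\Wbar G)_n)\to M_{\Lambda^k[n]}\mathrm{SmthMfd}(Y,\Wbar G)$, and their relative analogues for $WG\to\Wbar G$, are surjective.

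For part 1 I would verify the clauses of Definition~\ref{SubmersiveLocalFibration} for $\Wbar G\to *$. The clause at $n=0$ is trivial since $(\Wbar G)_0=*$, and for $n\geq 1$, the codomain being terminal, the condition reduces to asking that the matching map $(\Wbar G)_n\to M_{\Lambda^k[n]}\Wbar G$ be a surjective submersion. I would first show, by the same induction on $n$ as in Proposition~\ref{Lie oo-grps fib} (using the cone decomposition $\Lambda^k[n]=C\Lambda^k[n-1]\cup_{\Lambda^k[n-1]}\Delta[n-1]$, suitably adapted for the outer case $k=n$, and the fact that $M_{(-)}\Wbar G$ preserves limits on the representable $\Wbar G$), that each $M_{\Lambda^k[n]}\Wbar G$ is representable by a smooth manifold. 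Given representability, the Yoneda identity yields a global section of $(\Wbar G)_n\to M_{\Lambda^k[n]}\Wbar G$, hence its surjectivity.

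The remaining point, promoting surjectivity to surjective submersion, is where I expect the main obstacle. In Proposition~\ref{Lie oo-grps fib} this step used the shortcut that a surjective homomorphism of Lie groups is automatically a submersion; that shortcut is unavailable here because neither $\Wbar G$ nor $WG$ is a group object, so a global section alone only controls the rank along its image. I would resolve this by unwinding the explicit twisted-product face formulas and exhibiting the matching map as a composite of coordinate projections, group multiplications, and the matching maps $G_i\to M_{\Lambda^j[i]}G$ of $G$ itself; the last are surjective submersions by Proposition~\ref{Lie oo-grps fib} and the first two are manifestly so, whence the composite is a surjective submersion. The delicate part is purely the bookkeeping of these formulas, which is more hands-on than the homomorphism argument it replaces.

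For part 2 I would use the décalage presentation $WG=\mathrm{Dec}_0\Wbar G$ of Definition~\ref{WGToWbarG} and Proposition~\ref{PropertiesOfUniversalGPrincipalBundle}, exhibiting $WG\to\Wbar G$ as a degreewise principal $G$-bundle with $(WG)_n\cong(\Wbar G)_n\times G_n$. The clause at $n=0$ reads $G_0\to *$, a surjective submersion. For $n\geq 1$ the relative matching map $(WG)_n\to(\Wbar G)_n\times_{M_{\Lambda^k[n]}\Wbar G}M_{\Lambda^k[n]}WG$ is analysed in the same explicit coordinates: its base behaviour is governed by part 1, while the extra $G_n$-direction contributes the matching maps of $G$, again surjective submersions by Proposition~\ref{Lie oo-grps fib}; surjectivity of everything in sight is guaranteed by the contractibility of $W(\mathrm{SmthMfd}(Y,G))$ over $\Wbar(\mathrm{SmthMfd}(Y,G))$. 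This is the smooth enhancement of the classical statement (Proposition~\ref{MorphismsOutOfPlainDecalage}) that the décalage projection of a Kan complex is a Kan fibration, and Corollary~\ref{SubmersiveLocalFibrationsAreSurjectiveSubmersions} provides a consistency check on the resulting surjective submersion.
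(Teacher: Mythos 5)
Your overall skeleton matches the paper's: degreewise triviality of $WG\to\Wbar G$, the Yoneda argument producing sections and hence surjectivity of the (relative) matching maps, and an induction on $n$ driven by the cone decomposition $\Lambda^n_k=C\Lambda^{n-1}_k\cup_{\Lambda^{n-1}_k}\Delta^{n-1}$ together with $\mathrm{Dec}_0\Wbar G=WG$. But the two places you defer to ``bookkeeping'' are exactly where the content lives, and as stated your plan has a gap there. First, the representability induction: the cone decomposition identifies $M_{\Lambda^n_k}\Wbar G$ with the fiber product $M_{\Lambda^{n-1}_k}WG\times_{M_{\Lambda^{n-1}_k}\Wbar G}\Wbar G_{n-1}$, so to conclude it is a manifold you must know that $M_{\Lambda^{n-1}_k}WG$ exists in $\mathrm{SmthMfd}$ and that one leg over $M_{\Lambda^{n-1}_k}\Wbar G$ is a submersion. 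An inductive hypothesis about the matching objects of $\Wbar G$ alone gives neither, and $WG$ is not of the form $\Wbar H$, so you cannot simply re-quantify over all simplicial Lie groups as in the proof of Proposition~\ref{Lie oo-grps fib}. Second, your proposed route to submersivity --- factoring $\Wbar G_n\to M_{\Lambda^n_k}\Wbar G$ as a composite of projections, multiplications and matching maps of $G$ --- is left unexecuted and is not obviously available: the target is a twisted fiber product, not a product of matching objects of $G$.

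The paper closes both gaps at once with a structural device absent from your proposal: two lemmas about degreewise-trivial strictly principal $G$-bundles $P\to X$ in simplicial manifolds. Lemma~\ref{lemm: matching objects for P} shows that $M_{\Lambda^n_k}P$ exists and is the quotient $P_n/\ker(h^n_k)$, a smooth principal $M_{\Lambda^n_k}G$-bundle over $M_{\Lambda^n_k}X$ (supplying both the existence and the submersivity needed for the fiber product above); Lemma~\ref{prop: limits for horns in smth mfd} shows that the relative matching map $P_n\to M_{\Lambda^n_k}P\times_{M_{\Lambda^n_k}X}X_n$ is a morphism of principal bundles covering the homomorphism $h^n_k\colon G_n\to M_{\Lambda^n_k}G$, so that the single Yoneda-supplied section can be translated by the $G_n$-action to pass through any prescribed point, whence the map is a surjective submersion. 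Applied to $P=WG$, $X=\Wbar G$ and combined with $\Wbar G_n=WG_{n-1}$, this yields both parts of the proposition simultaneously. Note in particular that, contrary to your diagnosis, the ``homomorphism of Lie groups admits a section'' trick is not abandoned because $\Wbar G$ fails to be a group --- it is precisely what rescues the argument, in its equivariant form. To complete your proof you would need to either establish these two lemmas or find a genuine substitute; a raw coordinate computation of the matching maps of $\Wbar G$ is likely to be considerably harder than you anticipate.
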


 \begin{proof}
We first prove 1.  Our proof of this essentially follow the proof of the corresponding result (Lemma 4.3) in \cite{Stevenson2}, 
some extra care is needed however since it is not immediately clear 
that all of the requisite limits exist in $\mathrm{SmthMfd}$.  
Therefore we will prove by induction on $n\geq 1$ that for any simplicial 
Lie group $G$ and any integer $0\leq k\leq n$,  the limit 
$M_{\Lambda^n_k}\overline{W}G$ exists in $\mathrm{SmthMfd}$ 
and the canonical map $\overline{W}G_n\to 
M_{\Lambda^n_k}\overline{W}G$ is a surjective submersion.

Suppose we have shown that for any simplicial 
Lie group $G$, the canonical map $\overline{W}G_{n-1}\to M_{\Lambda^{n-1}_k}\overline{W}G$ 
is a surjective submersion for all $0\leq k\leq n-1$.  Let $0\leq k<n$.  
We claim that, under this assumption, the following statements are true: 
\begin{description} 
\item[$\mathrm{(a)}$] the limit $M_{\Lambda^{n-1}_{k-1}}WG$ exists in $\mathrm{SmthMfd}$, 

\item[$\mathrm{(b)}$] the map 
\[
WG_{n-1}\to M_{\Lambda^{n-1}_{k-1}}WG\times_{M_{\Lambda^{n-1}_{k-1}}\overline{W}G}\overline{W}G_{n-1} 
\]
is a surjective submersion
\end{description} 
Granted these statements, we shall show that the map in (b) is 
the canonical map 
\[
\overline{W}G_n\to M_{\Lambda^n_k}\overline{W}G.
\] 
As in \cite{Stevenson2} and the proof 
of Proposition~\ref{Lie oo-grps fib} above observe that we have 
an identification 
\[
\Lambda^n_k =   C\Lambda^{n-1}_{k}\cup_{\Lambda^{n-1}_{k}}\Delta^{n-1}.  
\]
It follows that we have an identification of sheaves on $\mathrm{CartSp}$  
\[
M_{\Lambda^n_k} \overline{W}G = M_{\Lambda^{n-1}_{k}}WG\times_{M_{\Lambda^{n-1}_{k}\overline{W}G}} 
\overline{W}G_{n-1}  
\]
which belongs to the image of $\mathrm{SmthMfd}\hookrightarrow \mathrm{Sh(CartSp)}$.  
It follows that the limit $M_{\Lambda^n_k}\overline{W}G$ exists in $\mathrm{SmthMfd}$, 
as required.  

To complete the inductive step we need to deal with the case when $k=n$.  Just as in the proof 
of Proposition~\ref{Lie oo-grps fib} above, we can settle this case by replacing the group $G$ with its opposite 
simplicial group $G^o$.

 It remains to prove the statements (a) and (b) above and the second statement 
 of the Proposition.  Before we do so, let us note 
 that in analogy with Definition~\ref{strictly principal bundle} 
we have a notion of a strictly principal bundle in simplicial manifolds, the 
only difference being that we require the bundle projection to be a 
submersion.  
\begin{definition} 
Let $G$ be a simplicial Lie group and let $X$ be a simplicial manifold.  A {\em strictly principal 
$G$-bundle} on $X$ is a simplicial manifold $P$ together with a submersion $P\to X$ 
and an action of $G$ on $P$ such that for every $n\geq 0$, the action of $G_n$ on 
$P_n$ equips $P_n\to X_n$ with the structure of a (smooth) principal $G_n$ bundle.   
\end{definition} 
   
To prove the second statement of the Proposition, and the 
statements (a) and (b) above, it is enough to prove the following lemmas.  
 
\begin{lemma} 
\label{prop: limits for horns in smth mfd}
Suppose that $P$ is a strictly principal $G$-bundle on $X$ in $\mathrm{SmthMfd}$ such that 
$P_n\to X_n$ admits a section for all $n\geq 0$.  If for some $0\leq k\leq n$ 
and some $n\geq 1$, $X_n\to M_{\Lambda^n_k}X$ is a surjective submersion, 
and the limit $M_{\Lambda^n_k}P$ exists in $\mathrm{SmthMfd}$, then 
the map 
\[
P_n\to M_{\Lambda^n_k}P\times_{M_{\Lambda^n_k}X}X_n 
\] 
is a surjective submersion and hence 
\[
P_n\to   M_{\Lambda^n_k}P
\]
is also a surjective submersion.
\end{lemma} 

\begin{lemma} 
\label{lemm: matching objects for P}
Suppose that $P$ is a strictly principal $G$-bundle on $X$ in $\mathrm{SmthMfd}$.  
Suppose that  
for some $0\leq k\leq n$ and some $n\geq 1$ the canonical map 
$X_n\to M_{\Lambda^n_k}X$ is a surjective submersion.  
Then 
\[
M_{\Lambda^n_k}P\to M_{\Lambda^n_k}X
\]
 is a 
smooth principal bundle with structure group the Lie group $M_{\Lambda^n_k}G$. 
\end{lemma}

\begin{proof}[Proof of Lemma~\ref{prop: limits for horns in smth mfd}]
Let $Y$ be an object of $\mathrm{SmthMfd}$.  Then we can form simplicial sets 
$\mathrm{SmthMfd}(Y,P)$ and $\mathrm{SmthMfd}(Y,X)$ whose sets of $n$-simplices are 
given by $\mathrm{SmthMfd}(Y,P_n)$ and $\mathrm{SmthMfd}(Y,X_n)$ respectively.  Since the functor 
$\mathrm{SmthMfd}(Y,-)$ preserves limits and the projections 
$P_n\to X_n$ admit sections for all $n\geq 0$, we see that the induced map 
\[
\mathrm{SmthMfd}(Y,P)\to \mathrm{SmthMfd}(Y,X) 
\] 
is a strictly principal bundle in $s\Set$ with structure group $\mathrm{SmthMfd}(Y,G)$.  
In particular the map 
\[
\mathrm{SmthMfd}(Y,P_n)\to M_{\Lambda^n_k}\mathrm{SmthMfd}(Y,P)\times_{M_{\Lambda^n_k}\mathrm{SmthMfd}(Y,X)} 
\mathrm{SmthMfd}(Y,X_n) 
\]
is surjective.  Taking $Y = M_{\Lambda^n_k}P\times_{M_{\Lambda^n_k}X} X_n$ we see that 
the map 
\begin{equation} 
\label{eq: smooth horn filling map}
P_n\to M_{\Lambda^n_k}P\times_{M_{\Lambda^n_k}X} X_n 
\end{equation}
admits a section.  The map~\eqref{eq: smooth horn filling map} is a morphism of principal 
bundles over $X_n$, covering the homomorphism of Lie groups 
$G_n\to M_{\Lambda^n_k}G$.  Since the smooth map underlying 
this homomorphism admits a section it follows that we 
can find a section of~\eqref{eq: smooth horn filling map} through every point of $P_n$.  
Therefore~\eqref{eq: smooth horn filling map} is a surjective submersion.  
\end{proof}


\begin{proof}[Proof of Lemma~\ref{lemm: matching objects for P}] 
The limit $M_{\Lambda^n_k}P$, if it exists, is uniquely determined 
by the requirement that $M_{\Lambda^n_k}P\to M_{\Lambda^n_k}X$ 
is a smooth $M_{\Lambda^n_k}G$ bundle, 
and that $P_n\to M_{\Lambda^n_k}P$ 
is equivariant for the homomorphism 
$h^n_k\colon G_n\to M_{\Lambda^n_k}G$.  
Since the quotient $P_n/\ker(h^n_k)$ of 
$P_n$ by the free action of the normal Lie subgroup 
$\ker(h^n_k)$ has both of these properties, it follows 
that $M_{\Lambda^n_k}P$ exists and is isomorphic to 
$P_n/\ker(h^n_k)$.  
\end{proof} 
\end{proof}

\begin{proposition}
  Let $G$ be a simplicial Lie group which presents a smooth $\infty$-group 
  in $\mathrm{Grp}(\mathrm{Smooth}\mathrm{Grpd}_{\infty})$.  Suppose that 
  $\Wbar G$ is $\mathrm{CartSp}$-acyclic (Definition~\ref{CAcyclic}). 
  Then every $G$-principal $\infty$-bundle
  over a smooth manifold 
  $X \in \mathrm{SmthMfd} \hookrightarrow \mathrm{Smooth}\mathrm{Grpd}_{\infty}$
  has a presentation by a weakly principal $G$-bundle $P\to X$ for 
  which $P$ is a locally fibrant simplicial smooth manifold and $P\to X$ is a submersive local fibration.
\end{proposition}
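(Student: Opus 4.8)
The plan is to produce the bundle $P$ by means of the reconstruction functor $\mathrm{Rec}$ of Definition~\ref{FunctorsBetweenBundlesAndCocycles}, applied to a cocycle whose domain is the {\v C}ech nerve of a good open cover of $X$. First I would observe that, since $\Wbar G$ is reduced, hence $\mathrm{CartSp}$-connected, and is $\mathrm{CartSp}$-acyclic by hypothesis, the second bullet of Theorem~\ref{CAcyclicityAndLocalFibrancy} guarantees that $\Wbar G$ is fibrant in $[\mathrm{CartSp}^{\mathrm{op}}, \mathrm{sSet}]_{\mathrm{proj}, \mathrm{loc}}$. Choosing a good open cover $\{U_i \to X\}$ (so that all finite non-empty intersections are diffeomorphic to Cartesian spaces), Observation~\ref{ResolutionOfManifoldsByGoodCovers} exhibits $\check{C}(\{U_i\}) \to X$ as a split hypercover; local fibrancy of $\Wbar G$ then ensures, via the remark following Theorem~\ref{CAcyclicityAndLocalFibrancy} together with Remark~\ref{ClassificationTheoremRelatedToCocyclesSpaces}, that the class in $\mathbf{H}(X, \mathbf{B}G)$ determined by the given $G$-principal $\infty$-bundle is represented by an honest {\v C}ech cocycle $g : \check{C}(\{U_i\}) \to \Wbar G$. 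As $\check{C}(\{U_i\})$ and $\Wbar G$ are representable by simplicial smooth manifolds, $g$ is a morphism in $\mathrm{sSmthMfd}$. Setting $P := \mathrm{Rec}(X \xleftarrow{\simeq} \check{C}(\{U_i\}) \xrightarrow{g} \Wbar G) = g^* WG$, Theorem~\ref{Classification theorem for weakly G-principal bundles} shows that $P \to X$ is a weakly $G$-principal bundle presenting the given $\infty$-bundle.

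It then remains to verify the smooth and fibration properties of this $P$. By Proposition~\ref{Wbar G submersively locally fibrant} the universal bundle $WG \to \Wbar G$ is a submersive local fibration of simplicial smooth manifolds; pulling it back along $g$, Proposition~\ref{SubmersiveLocalFibrationsAreStableUnderPulback} shows that $P = g^* WG$ exists in $\mathrm{sSmthMfd}$ and that $P \to \check{C}(\{U_i\})$ is again a submersive local fibration. Meanwhile Example~\ref{CechNerveProjectionOfOpenCoverIsSubmersiveLocalFibration} gives that $\check{C}(\{U_i\}) \to X$ is a submersive local acyclic fibration between locally fibrant simplicial smooth manifolds. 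Since the bundle projection of $\mathrm{Rec}(g)$ is by construction the composite $P \to \check{C}(\{U_i\}) \to X$, the proposition follows once I know that submersive local fibrations are closed under composition; this also yields at once that $P$ is locally fibrant, since $P \to \check{C}(\{U_i\}) \to \ast$ is then a submersive local fibration and hence, by Corollary~\ref{SubmersiveLocalFibrationsAreSurjectiveSubmersions}, in particular a local fibration.

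I expect the closure of submersive local fibrations under composition to be the main point requiring care, since it must be checked at the level of matching objects and one must ensure these remain representable by manifolds. Given $A \xrightarrow{f} B \xrightarrow{h} C$ submersive local fibrations, for each horn $\Lambda^k[n] \hookrightarrow \Delta[n]$ I would factor the canonical map $A_n \to C_n \times_{M_{\Lambda^k[n]}C} M_{\Lambda^k[n]} A$ as $A_n \to B_n \times_{M_{\Lambda^k[n]} B} M_{\Lambda^k[n]} A \to C_n \times_{M_{\Lambda^k[n]} C} M_{\Lambda^k[n]} A$, using the identity $C_n \times_{M_{\Lambda^k[n]}C} M_{\Lambda^k[n]} A \cong \bigl(C_n \times_{M_{\Lambda^k[n]}C} M_{\Lambda^k[n]}B\bigr) \times_{M_{\Lambda^k[n]}B} M_{\Lambda^k[n]} A$. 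The first factor is a surjective submersion because $f$ is a submersive local fibration, and the second is the base change along $M_{\Lambda^k[n]} A \to M_{\Lambda^k[n]} B$ of the surjective submersion $B_n \to C_n \times_{M_{\Lambda^k[n]} C} M_{\Lambda^k[n]} B$ witnessing that $h$ is a submersive local fibration. Since surjective submersions are stable under base change (the pullback existing because one leg is a submersion), and the relevant matching objects are manifolds by the same inductive scheme already used in Proposition~\ref{Wbar G submersively locally fibrant} and Proposition~\ref{SubmersiveLocalFibrationsAreStableUnderPulback}, the composite is a surjective submersion as required, completing the argument.
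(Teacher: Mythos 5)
Your proof follows the same route as the paper's: local fibrancy of $\Wbar G$ from $\mathrm{CartSp}$-acyclicity via Theorem~\ref{CAcyclicityAndLocalFibrancy}, a {\v C}ech cocycle $g$ on the nerve of a differentiably good open cover as in Observation~\ref{ResolutionOfManifoldsByGoodCovers}, and $P = g^*WG$ obtained from Proposition~\ref{Wbar G submersively locally fibrant} together with Proposition~\ref{SubmersiveLocalFibrationsAreStableUnderPulback}. The only difference is that you explicitly verify that submersive local fibrations are closed under composition, a step the paper asserts without proof when it passes to the composite $g^*WG \to \check{C}(\{U_i\}) \to X$; your factorization of $A_n \to C_n \times_{M_{\Lambda^k[n]}C} M_{\Lambda^k[n]}A$ through $B_n \times_{M_{\Lambda^k[n]}B} M_{\Lambda^k[n]}A$ is a correct way to supply that detail (and in the case at hand the target $X$ is simplicially constant, which makes the requisite matching objects unproblematic).
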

\proof
  By assumption of $\mathrm{CartSp}$-acyclicity 
  and theorem \ref{CAcyclicityAndLocalFibrancy}, we have that 
  $$
    \Wbar G \in [\mathrm{CartSp}^{\mathrm{op}}, \mathrm{sSet}]_{\mathrm{proj}, \mathrm{loc}}
  $$
  is fibrant. It follows that any cocycle that classifies a given
  $G$-principal $\infty$-bundle according to 
  Theorem \ref{Classification theorem for weakly G-principal bundles}
  is presented by a morphism of simplicial presheaves into $\Wbar G$ out of 
  a cofibrant resolution of $X$. By 
  Observation~\ref{ResolutionOfManifoldsByGoodCovers} we may choose this to be
  given by the {\v C}ech nerve $\check{C}(\{U_i\})\to X$ of a (differentiably) good open cover 
  $\{U_i \to X\}$ of $X$. By example \ref{CechNerveProjectionOfOpenCoverIsSubmersiveLocalFibration} this is itself a submersive local acyclic fibration.
  By Proposition~\ref{Wbar G submersively locally fibrant} 
  and Proposition~\ref{SubmersiveLocalFibrationsAreStableUnderPulback} 
  the morphism 
  $g^*WG\to \check{C}(U_i)$ in the pullback diagram of simplicial sheaves 
  \[
    \xymatrix{
	  g^* W G \ar[r] \ar[d]  & W G  \ar[d]
      \\
      \check{C}(U_i) \ar[r]^g & \Wbar G	  
	}
  \]
  is a submersive local fibration between
  locally Kan simplicial smooth manifolds.  Hence 
  so is the composite 
  $P := g^* W G \to \check{C}(U_i)\to  X$, which, by 
  Theorem~\ref{Classification theorem for weakly G-principal bundles} 
  is the principal $\infty$-bundle
  $P \xrightarrow{p} X$
  classified by $g$.  
\endofproof

\medskip

\noindent{\bf Acknowledgements.}
The writeup of this article and the companions \cite{NSSa, NSSc} was
initiated during a visit by the first two authors to the third author's institution, 
University of Glasgow, in summer 2011.  It was completed in summer 2012
when all three authors were guests at the 
Erwin Schr\"{o}dinger Institute in Vienna.  
The authors gratefully acknowledge the support of 
the Engineering and Physical Sciences Research Council 
grant number EP/I010610/1 and the support of the ESI.

\newpage

\addcontentsline{toc}{section}{References}

\end{document}